\def\BState{\State\hskip-\ALG@thistlm}
\newtheorem{thm}{Theorem}[section]
\newtheorem{prop}[thm]{Proposition}
\newtheorem{eg}[thm]{Example}
\newtheorem{cor}[thm]{Corollary}
\newtheorem{lem}[thm]{Lemma}
\newtheorem*{claim}{Claim}
\theoremstyle{definition}
\newtheorem{defn}[thm]{Definition}
\theoremstyle{remark}
\newtheorem{quest}[thm]{Question}
\newtheorem*{rem}{Remark}
\newtheorem*{examples}{Examples}
\newcommand{\isom}{\textrm{Isom}} 
\newcommand{\mcg}{\textrm{Mod}(\Sigma_g)}  
\newcommand{\Gx }{\mathscr{G} (G, S)}
\newcommand{\fl }{\len_\lambda}
\newcommand{\act}{\curvearrowright}
\newcommand{\Gf}{\overline{G}_\lambda}
\newcommand{\pGf}{\partial_\lambda{G}}
\newcommand{\pG}{\Lambda G}
\newcommand{\cG}{\Lambda^{\mathrm c}{G}}
\newcommand{\ncG}{\Lambda^{\mathrm nc}{G}}
\newcommand{\mG}{\Lambda^{\mathrm m}{G}}
\newcommand{\uG}{\Lambda^{\mathrm u}{G}}
\newcommand{\collar}{\operatorname{\mathbf{collar}}}
\newcommand\ep{\epsilon}
\newcommand{\I}{{\mathbf I}}
\newcommand{\U}{X}
\newcommand{\bU}{\overline{\U}}
\newcommand{\pU}{\partial{\U}}
\newcommand\Z{{\mathbb Z}}
\newcommand\R{{\mathbb R}}
\newcommand\Hyp{{\mathbb H}}
\newcommand\til{\widetilde}
\newcommand{\e}[1]{\omega_{#1}}
\newcommand{\HD}{\mathrm{Hdim}}
\newcommand{\ax}{\mathrm{Ax}}
\newcommand{\Ar}{\mathrm{Arc}}
\newcommand{\Ax}{\mathbf{Ax}}
\newcommand{\Dc}{\mathrm{Dbc}}
\newcommand{\pmf}{\mathscr {PMF}}
\newcommand{\T}{\mathcal {T}}
\newcommand{\natls}{{\mathbb N}}
\newcommand\AAA{{\mathcal A}}
\newcommand\CC{{\mathcal C}}
\newcommand\HH{{\mathcal H}}
\newcommand\II{{\mathcal I}}
\newcommand\LL{{\mathcal L}}
\newcommand\PP{{\mathcal P}}
\newcommand\TT{{\mathcal T}}
\newcommand{\diam }[1]{{\textbf{diam}\big(#1\big)}}
\newcommand{\proj}{\textbf{d}}
\newcommand{\len }{\ell}
\newcommand{\inj }{\mbox{Inj}}
\newtheoremstyle{query}%
{}{}%space above/below
{\color{red}}%body style
{}%heading indent
{\sffamily\bfseries}{:}{12pt}%heading style/punctuation/space after
{}% head spec
\theoremstyle{query}
\begin{document}

\title[Hausdorff Dimension of non-conical and Myrberg limit sets]{Hausdorff Dimension of non-conical and Myrberg limit sets}

%    Information for first author
\author{Mahan Mj} 
\address{School of Mathematics,
	Tata Institute of Fundamental Research. 1, Homi Bhabha Road, Mumbai-400005, India}

\email{mahan@math.tifr.res.in}
\email{mahan.mj@gmail.com}

\author{Wenyuan Yang}

%    Address of record for the research reported here
\address{Beijing International Center for Mathematical Research\\
Peking University\\
 Beijing 100871, China
P.R.}
\email{wyang@math.pku.edu.cn}
%    Current address
%\curraddr{Le Département de Mathématiques de la Faculté des
%Sciences d'Orsay, Université Paris-Sud 11, France}
%\email{yabziz@gmail.com}
%    \thanks will become a 1st page footnote.
\thanks{}

%    Information for second author
% \author{Author Two}
% \address{Mathematical Research Section, School of Mathematical Sciences,
% Australian National University, Canberra ACT 2601, Australia}
% \email{two@maths.univ.edu.au}
% \thanks{Support information for the second author.}

%    General info
\subjclass[2000]{Primary 20F65, 20F67, 37D40}

\date{\today}

\dedicatory{}

\keywords{Non-conical points,  Myrberg points, Hausdorff dimension, amenability, geometric limits}

\begin{abstract} 
In this paper, we develop techniques to study the   Hausdorff dimensions of  non-conical 
and Myrberg limit sets for groups acting on negatively curved spaces. We establish maximality of the Hausdorff dimension of the non-conical limit set of $G$ in the following cases.
\begin{itemize}
\item  $M$ is a finite volume complete Riemannian manifold of pinched negative curvature and  $G$ is an
infinite normal subgroups of infinite index in $\pi_1(M)$.
\item $G$ acts on a regular tree $X$ with $X/G$ infinite and amenable
(dimension 1).
\item $G$ acts on the hyperbolic plane $\Hyp^2$ such that  
$\Hyp^2/G$ has Cheeger constant zero (dimension 2).
\item  $G$ is a finitely generated geometrically infinite 
Kleinian group (dimension 3).
\end{itemize}

We also show that the Hausdorff dimension  of the Myrberg limit set is the same as the critical exponent, confirming a conjecture of Falk-Matsuzaki.

\end{abstract}

\maketitle

\tableofcontents

\section{Introduction}

Let $M$ be a complete Riemannian manifold with pinched negative sectional curvature. Fix a point $o\in M$. A geodesic ray $\gamma$ issuing from $o$ is called \textit{recurrent} if it returns to a fixed compact set of $M$  infinitely often. Otherwise, $\gamma$ is called \textit{escaping}. Denote by $\mathcal R(o)$ and $\mathcal E(o)$ the  set of recurrent and escaping geodesics. Their size is measured   in terms of the Hausdorff dimension of their limit sets.

The goal of this paper is thus to study the  behavior of geodesic rays  in terms of limit sets. Much of the discussion works in the general framework of   Gromov hyperbolic spaces. Let  $X$ be Gromov hyperbolic.
Let  $\pU$ be its  Gromov boundary.  
A point $\xi\in \pU$ is called a \textit{limit point} if it is an accumulation point of the orbit $Go$ for some (and hence any) $o\in  X$. The set of limit points of $Go$ is called  the \textit{limit set} of $G$ denoted as $\Lambda G$. A \emph{non-wandering geodesic ray} is a geodesic ray in $X$ ending at a  point in $\Lambda G$. We say that a limit point $\xi\in \Lambda G$ is \textit{conical} if there exists a sequence  $g_n\in G$ and a  geodesic ray $\sigma \subset X$ ending at $\xi$
so that $\{g_no\} $ is contained in a finite neighborhood of $\sigma$. Hence, $\sigma$ projects to a recurrent geodesic  in the quotient $M= X/G$. If $\xi\in \pU$ is \textit{non-conical}, then any geodesic ray $\sigma \subset X$ ending at $\xi$ projects to an escaping geodesic.

%Suppose that a countable group $G$ acts on a proper  metric space $( X,d)$. Fix a basepoint $o\in  X$. We define the growth rate 
%Assume that $\e G<\infty$. If the action is proper and non-elementary, then the limit set is infinite. In this case, $G$ contains a non-abelian free subgroup, so $\e G>0$. 

% We say that a subgroup $H$ is of second kind if the limit set of $H$ is a proper subset of $\Lambda G$. Otherwise, $H$ is called of first kind.

There are two important and complementary sub-classes of  conical limit points: uniformly conical points and {Myrberg limit points}. The former class corresponds  exactly to geodesic rays with compact closure on $M$. The latter exhibits opposite behavior. For $M= X/G$ a negatively curved manifold the corresponding geodesic rays are dense in the unit tangent bundle of $M$. For this reason, the corresponding geodesic rays are sometimes called transitive geodesic rays.  The definition in terms of limit points is a bit involved, but intuitively suggestive:  $\xi\in \pU$ is a \textit{Myrberg limit point} if there exists a geodesic ray starting at some $o\in X$ ending at $\xi$ so that the set $G(o,\xi)=\{(go,g\xi): g\in G\}$ is dense in the ordered pairs of distinct points in $\pG$. We refer to Table \ref{tbl-geodesic-limitpoints} for a summary of  limit points and geodesic rays considered in this paper.

The conical (resp.\ non-conical) limit sets will be denoted as $\cG$ (resp.\ $\ncG$). 
We denote by  $\uG$ and $\mG$ the sets of uniformly conical points and Myrberg limit points respectively. 

\subsection{Statement of results: escaping geodesics}

%Let $ X$ be a simply connected, negatively pinched, complete  Riemiannian manifold. The volume entropy $h( X)$ of $ X$ is defined as follows:
%$$
%h( X) =\limsup_{r\to\infty} \frac{\log \textrm{Vol} B_X(x,r)}{r} 
%$$
%where $B_X(x,r)$ is the ball of radius $r$ and center at $x$ in $X$.
%First of all, let us recall the following classical result characterizing the Brownian motion on locally symmetric manifolds.
%Let $G$ be a discrete torsion-free group of isometries of a rank-1 symmetric space $X$. The following conditions are equivalent (\cite[Theorem 2.1]{HP97}):
%\begin{enumerate}
%    \item 
%    the Brownian motion on $M=X/G$ is recurrent.
%\item 
%$M=X/G$ has no finite Green function.
%\item 
%$G\act X$ is a divergence group and the critical exponent of $\e G$ is the volume entropy $h(X)$. 
%\end{enumerate}
%The same holds for groups of $\mathbb F_d$ acting on $2d$-regular trees.
%Note that, if $X$ is a rank-1 symmetric space, $\Lambda^{nc}\Gamma$ has full Lebesgue measure if and only if $X/\Gamma$ is transient.  

Let $X$ be a proper Gromov hyperbolic space. We equip the Gromov boundary with a canonical class of visual metrics with parameter $\epsilon$
(see \cite[Chapter III.H]{bridson-haefliger} or \cite{GhH} for details). If $X$ is CAT(-1), the visual metric could be explicitly written (with $\epsilon=1$) as 
$$
\rho_o(\xi, \eta) = \mathrm{e}^{-\langle\xi,\eta\rangle_o}
$$
where $\langle\xi,\eta\rangle_o$ is  the continuous extension to $\pU$ of Gromov product $\langle x,y\rangle_o=d(x,o)+d(y,o)-d(x,y)/2$. 
We shall denote the  Hausdorff of a set $A$ by $\HD (A)$ and the limit set of a group $\Gamma$ acting on $X$ by $\Lambda \Gamma$.

  The following definition defines the framework we explore in this paper in the context of non-conical limit sets.
  \begin{defn}\label{def-max}
  Let $X$ be a proper Gromov hyperbolic space and $\pU$ its boundary equipped with a visual metric. Let $\Gamma$ be a group acting properly on $X$ and 
  $G<\Gamma$ be a subgroup.
  
  If $\HD (\ncG) = \HD (\Lambda \Gamma)$, we shall say that $\ncG$ has \emph{maximal Hausdorff dimension in $\Lambda \Gamma$.}
  If $\HD (\ncG) = \HD (\pU)$, we shall say that $\ncG$ has \emph{maximal Hausdorff dimension in $\pU$.}
  \end{defn}
  
  A substantial part of this paper is devoted to obtaining positive answers to the following question.
  
  \begin{quest}\label{qn-max}
  Let $X, \Gamma, G$ be as in Definition~\ref{def-max}. Find conditions 
  on $X, \Gamma, G$ such that
  \begin{enumerate}
  \item $\ncG$ has maximal Hausdorff dimension in $\Lambda \Gamma$.
  \item $\ncG$ has {maximal Hausdorff dimension in $\pU$.}
  \end{enumerate}
  \end{quest}
 
 We start with the following theorem that provides a positive answer to 
 Question~\ref{qn-max} (see Corollary~\ref{cor-normalsubgp}).

\begin{thm}\label{nonconical-normal-covering-intro}
	Let $N$ be a complete finite volume Riemannian manifold of pinched
	negative curvature. Let $\Gamma=\pi_1(N)$ and $G$ an infinite normal subgroup of $\Gamma$ with $\Gamma/N$ infinite.
	Let $M$ be the cover of   $N$ corresponding to the subgroup $\Gamma$.
	Let $X = \til N$. Then 
	  Then $\ncG$ has {maximal Hausdorff dimension in $\pU$.}
\end{thm}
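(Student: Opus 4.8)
The plan is to deduce the theorem from the general lower-bound technique for $\HD(\ncG)$ developed in this paper, after recording a few standard reductions and then splitting into two cases according to whether or not $\delta_G=\delta_\Gamma$. Since $N$ has finite volume and pinched negative curvature, $\Gamma=\pi_1(N)$ is a lattice, hence geometrically finite; therefore $\Lambda\Gamma=\pU$ and $\HD(\pU)=\HD(\Lambda\Gamma)=\delta_\Gamma$ with respect to the visual metric fixed above (a standard fact for geometrically finite $\Gamma$, going back to Sullivan). An infinite normal subgroup $G\triangleleft\Gamma$ is automatically non-elementary: $\Lambda G$ is a nonempty $\Gamma$-invariant closed subset of $\Lambda\Gamma$, so it equals $\Lambda\Gamma$ by minimality of the limit set. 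Hence $\Lambda G=\Lambda\Gamma=\pU$, so $\pU=\cG\sqcup\ncG$, and $\HD(\cG)=\delta_G$ by the Bishop--Jones theorem applied to $G$. (Note $\ncG$ is itself $\Gamma$-invariant and nonempty, hence dense in $\pU$; but density alone gives no information on Hausdorff dimension.) If $\delta_G<\delta_\Gamma$ we are done at once: by countable stability of Hausdorff dimension, $\HD(\pU)=\max\{\HD(\cG),\HD(\ncG)\}$, and since $\HD(\cG)=\delta_G<\delta_\Gamma=\HD(\pU)$ this forces $\HD(\ncG)=\HD(\pU)$. In particular the case of non-amenable $\Gamma/G$ is settled.

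So assume $\delta_G=\delta_\Gamma=:\delta$, which is the substantial case; then $\Gamma/G$ is amenable. Here $\HD(\cG)=\delta$ is not decisive, and (when the geodesic flow on $M=X/G$ is conservative) even the conformal densities of $G$ and of $\Gamma$ at exponent $\delta$ assign $\ncG$ zero mass, so one must exhibit a subset of $\ncG$ of dimension $\delta$ directly. A useful sufficient condition is: if the ``$\Gamma/G$-itinerary'' of the ray $[o,\xi)$ --- the sequence of cosets $\pi(h_t)\in\Gamma/G$ of the orbit points $h_t o\in\Gamma o$ that shadow $\gamma_\xi(t)$ --- leaves every finite subset of $\Gamma/G$ as $t\to\infty$, then $d(\gamma_\xi(t),Go)\to\infty$, i.e. $\xi\in\ncG$. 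Fix an infinite geodesic ray $e=q_0,q_1,q_2,\dots$ in the Cayley graph of the infinite group $\Gamma/G$, and build a ``tree of admissible geodesics'' in $X$ whose itineraries shadow this ray and progress slowly along it: at the $k$-th stage the geodesic runs for a large length $\ell_k$ inside the ``tube'' of $M$ lying over a fixed neighbourhood of $\{q_j\}$ --- supplying $\asymp e^{\delta\ell_k}$ branches from the exponential growth of the lattice orbit (growth rate $\delta$, divergence type) --- and then makes a transition of bounded cost advancing one step along the ray. The limiting endpoint $\xi$ then has $[o,\xi)$ projecting to a proper ray in $M$, whence $\xi\in\ncG$. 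The essential point is that, $\Gamma/G$ being amenable, confining geodesics to such a tube incurs no exponential loss --- a Brooks-type estimate, the same mechanism that forces $\delta_G=\delta_\Gamma$ --- so the branching count at stage $k$ is $e^{\delta\ell_k-o(\ell_k)}$. Taking $\ell_k\to\infty$, a Frostman / mass-distribution argument on the resulting Cantor set gives it Hausdorff dimension $\geq\delta-\varepsilon$ for every $\varepsilon>0$; therefore $\HD(\ncG)\geq\delta=\HD(\pU)$, and together with Case~1 this proves the theorem.

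The main obstacle is the branching estimate in the second case: one must show that geodesics confined to, and slowly progressing along, the tube over the ray in $\Gamma/G$ number $e^{\delta\ell-o(\ell)}$ of length $\ell$, and keep the metric distortion along the concatenations controlled well enough that this count survives the Hausdorff-dimension computation. This is exactly where amenability of $\Gamma/G$ (equivalently $\delta_G=\delta_\Gamma$) enters, via the paper's quantitative Patterson--Sullivan and shadow estimates for covers; the remaining bookkeeping --- concatenating long segments into a quasigeodesic, passing to the boundary point, and running the mass distribution --- is routine.
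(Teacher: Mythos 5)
Your reduction is correct and matches the paper: since $\Gamma$ is a lattice, $\Lambda\Gamma=\pU$; since $G$ is infinite normal, $\Lambda G=\Lambda\Gamma$; Bishop--Jones gives $\HD(\cG)=\e G$; and countable stability disposes of the case $\e G<\e\Gamma$. The substance is entirely in the case $\e G=\e\Gamma$, and there your argument has a genuine gap. The branching count you invoke does not need a ``Brooks-type estimate'' or amenability at all: inside a single sheet of the cover, loops at a basepoint of length $\le\ell$ correspond (by normality of $G$) to elements of $G$ of length $\le\ell$, so the count $\mathrm{e}^{\e G\,\ell+o(\ell)}$ is just the definition of $\e G$; amenability enters only through the identity $\e G=\e\Gamma$, which you have already assumed. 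What is \emph{not} routine, and what your proposal defers to ``bookkeeping,'' is turning $\mathrm{e}^{\e G\,\ell}$ raw orbit points (or loops based at a point) into $\mathrm{e}^{\e G\,\ell-o(\ell)}$ branches that are simultaneously (a) pairwise separated so their shadows are disjoint, (b) locally straight at every concatenation point so the resulting infinite words are uniform quasi-geodesics, and (c) anchored at controlled entry/exit points so the transitions between stages glue on. Concatenations of arbitrary loops at a basepoint backtrack massively and are not quasi-geodesic, so conditions (b)--(c) are exactly where the exponential count can be destroyed; this is the core difficulty of the theorem, not an afterthought.

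The paper resolves this differently: it fixes a single loxodromic $h_1\in G$, counts \emph{shortest arcs} from the closed geodesic $\gamma_1=\ax(h_1)/E(h_1)$ back to itself via the double-coset counting of Theorem~\ref{Thm:Growth-DoubleCosets} and Lemma~\ref{ShortestArcsonMfd} (which produces $\ge\mathrm{e}^{(\e G-\epsilon)t}$ arcs with endpoints on $\gamma_1$ and built-in bounded projections), then uses normality to translate $\gamma_1$ by deck transformations $g_n\in\Gamma$ into an escaping sequence $\gamma_n=g_n\gamma_1$ whose arc sets are \emph{isometric} copies of the original --- this uniformity is what lets $L_n$ be chosen independently of $n$ and the escaping condition $\widetilde L_n-L_n-\Delta_n\to\infty$ be arranged by passing to a subsequence. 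Separation (S0) and straightness (S1)--(S2) are then enforced by the sliding-endpoints/looping construction and the local-to-global Lemma~\ref{localtoglobal}, and the unbounded bridge lengths $B_n$ (your ``bounded transition cost'' is not what the paper uses) are absorbed by the repetitions $K_n$. To complete your Case~2 you would need to supply an analogue of this arc-counting-with-controlled-endpoints step; the ``quantitative Patterson--Sullivan and shadow estimates for covers'' you cite do not appear in the paper and would not by themselves produce the separated, concatenable branches.
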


The above follows from the next result which holds in a general setting (see Theorem~\ref{NormalSubgroupCase}). To state the result, let us introduce the   \textit{critical exponent} of a group $G$ as follows $$\e{G}=\limsup_{n\to\infty}\frac{\log \sharp \{go: d(o,go)\le n\}}{n} $$

Let $\e \Gamma, \e G$ denote the critical exponents of $\Gamma, G$ respectively.
The above result is of interest when  $\e \Gamma=\e G$. Recall that  $\Gamma/G$ is amenable if and only if $\e \Gamma=\e G$ (\cite[Theorem 1.1]{CDST}).

\begin{thm}
Suppose $\Gamma$ is a discrete    group acting on a Gromov hyperbolic space $X$. If $G$ is an infinite normal subgroup of infinite index, then 
$\HD (\Lambda^{nc}G) \ge \e G/\epsilon$. 
\end{thm}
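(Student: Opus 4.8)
The strategy is to build a "Cantor-like" subset of the non-conical limit set that carries a measure with controlled mass decay, then apply the mass distribution principle (Frostman). The key structural fact I would exploit is that since $\Gamma/G$ is infinite, one can choose a sequence of coset representatives $(\gamma_k)$ in $\Gamma$ going to infinity; conjugating a Poincaré-type construction for $G$ by these $\gamma_k$ produces orbit points that escape every $G$-cocompact region, forcing the limiting directions to be non-conical for $G$.

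First I would fix $\epsilon' < \e G$ arbitrarily close to $\e G$ and, using the definition of the critical exponent, extract for each scale a large finite subset of $Go$ that is "$\epsilon'$-thick": roughly $e^{\epsilon' n}$ orbit points in an annulus of radius $n$, which are moreover $R$-separated for a fixed $R$ (after passing to a suitable sub-collection, using that $X$ is Gromov hyperbolic and the action is proper). Shadows of such points at the boundary have diameter comparable to $e^{-\epsilon n}$ in the visual metric, and the $R$-separation translates into a controlled overlap bound on these shadows. This is the standard input behind the inequality $\HD(\Lambda G) \ge \e G/\epsilon$; the new point is the non-conicality.

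Next I would interleave the scales with translates by the escaping sequence $(\gamma_k)$. Concretely, build a nested sequence of shadow-families $\mathcal{F}_1 \supset$-refined-by $\mathcal{F}_2 \supset \cdots$, where at stage $k$ each shadow of $\mathcal{F}_{k-1}$ is subdivided using the thick $R$-separated orbit set for $G$ pushed out to depth $n_k$ and then translated so that the relevant orbit points sit near $\gamma_k o$ with $d(o,\gamma_k o) \to \infty$. Choosing $n_k$ growing fast enough that the contribution of each stage to the Hausdorff sum is $\ge (\epsilon'/\epsilon - \delta_k)$ with $\delta_k \to 0$, the intersection $\bigcap_k \big(\bigcup \mathcal{F}_k\big)$ is a compact set $K \subseteq \pU$ supporting a natural probability measure $\mu$ (equidistributing mass among children at each stage) with $\mu(B(\xi,r)) \le C r^{\e G/\epsilon - \delta}$ for all small $r$. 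Frostman's lemma then gives $\HD(K) \ge \e G/\epsilon - \delta$, and letting $\delta \to 0$ finishes the dimension bound once we know $K \subseteq \Lambda^{nc}G$.

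The crux — and the step I expect to be the main obstacle — is verifying $K \subseteq \Lambda^{nc}G$, i.e. that every point constructed this way is genuinely non-conical for $G$ (not merely for $\Gamma$). A point $\xi \in K$ is a limit of orbit points that, infinitely often, pass within bounded distance of $\gamma_k o$; since $(\gamma_k)$ are representatives of distinct $G$-cosets escaping to infinity, a geodesic ray to $\xi$ must repeatedly leave every set of the form $\{g o : d(o,go) \le C\}$ translated appropriately — the careful bookkeeping here is to show that no single $G$-orbit can shadow the ray to $\xi$ along a subsequence, which is where infinite index (not just infiniteness of $G$) is essential. I would phrase this via the contrapositive: if $\xi$ were conical for $G$, some geodesic to $\xi$ would stay in a $C$-neighborhood of $\{g_m o\}$ with $g_m \in G$, and comparing with the construction (which forces visits near $\gamma_k o$ at stages $k$) would yield $d(\gamma_k o, G o) \le C'$ for infinitely many $k$, contradicting the escape of the cosets $G\gamma_k$. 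Once this incompatibility is established cleanly, the rest is routine Gromov-hyperbolic geometry (shadow lemmas, the mass distribution principle) and standard rescaling.
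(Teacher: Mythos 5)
Your overall framework (a Cantor-type tree of shadows carrying a Frostman measure, with escaping coset representatives interleaved to force non-conicality) is the same as the paper's, and your dimension bookkeeping is essentially Lemma~\ref{HDLargeTree}. The genuine gap is exactly where you locate it, and the contrapositive you sketch does not close it. Conicality of $\xi$ for $G$ only requires infinitely many points of $Go$ within bounded distance of a ray to $\xi$; it does not require the ray to stay near $Go$. Your construction subdivides at stage $k$ by ``pushing out to depth $n_k$'' along translated $G$-orbit points near $w\gamma_k o$. Since $G$ is normal, each such child has the form $w\gamma_k g o = g''\, (w\gamma_k o)$ with $g''\in G$, so the \emph{nodes} do remain at distance $d(w\gamma_k o, Go)$ from $Go$; but the connecting segment is $w\gamma_k[o,go]$, a translate of a geodesic with both endpoints in $Go$ and length about $n_k$. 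Nothing in your construction prevents such a segment from re-entering a bounded neighborhood of $Go$: in the quotient $X/G$ it is a based loop of length $n_k$ starting at distance $d(\gamma_k o, Go)$ from the basepoint, and for $n_k$ large it can return to any fixed compact set. The ray can therefore accumulate $G$-orbit points along these excursions and be conical after all, so your conclusion ``$d(\gamma_k o, Go)\le C'$ for infinitely many $k$'' does not follow. Worse, there is a built-in tension: the Frostman estimate wants $n_k$ large relative to the bridge lengths, while escaping wants the excursion depth small relative to $d(\gamma_k o, Go)$; a single deep excursion per stage cannot satisfy both.

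The paper resolves this with two devices your proposal is missing. First, instead of orbit annuli it uses an escaping sequence of conjugate loxodromics $h_k=g_kh_1g_k^{-1}\in G$ (Lemma~\ref{EscapingLoxoInNormalSubgroup}) and counts \emph{shortest arcs from the closed geodesic $\ax(h_k)/E(h_k)$ to itself}, i.e.\ double cosets rather than orbit points, with exponential rate $\e G$ (Theorem~\ref{Thm:Growth-DoubleCosets}); each such loop stays within distance $L_k+\Delta$ of the escaping geodesic, so the \emph{entire} excursion, not just its endpoints, remains far from $Go$ (Lemma~\ref{EscapingRayCriterion}). Second, it keeps $L_k$ moderate and concatenates $K_k$ such loops before crossing the bridge; this repetition parameter is what lets the stage-$k$ contribution dominate the bridge term in the Poincar\'e series (condition~(\ref{ChoiceKnEq2})) while the backtracking depth stays bounded by $L_k+O(\Delta)$. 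To salvage your version you would have to replace the single depth-$n_k$ orbit excursion by such bounded-depth, many-times-repeated loops and supply the double-coset counting input.
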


\noindent \emph{Hyperbolic 3-manifolds:} In dimension 3, we prove the following result using the
model manifold technology of Minsky \cite{minsky-elc1} and Brock-Canary-Minsky \cite{minsky-elc2} as adapted by the first author in \cite{mahan-ibdd,mahan-split} (see Theorem~\ref{thm-kl-nonconical} and Corollary~\ref{cor--kl-nonconical}).

\begin{thm}\label{thm-kl-nonconical-intro}
Let $G<\isom(\mathbb H^3)$ be a finitely generated geometrically infinite Kleinian group. Then 
$$\HD (\Lambda^{nc}G) =2.$$  
\end{thm}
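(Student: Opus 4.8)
The upper bound is immediate: $\ncG\subseteq\partial\Hyp^3=S^2$, and since $\Hyp^3$ is CAT$(-1)$ the visual metric has parameter $\epsilon=1$, so $\HD(\ncG)\le\HD(S^2)=2$. The plan is therefore to exhibit, inside $\ncG$, subsets of Hausdorff dimension arbitrarily close to $2$. First I would invoke the tameness theorem of Agol and Calegari--Gabai, so that $M=\Hyp^3/G$ is homeomorphic to the interior of a compact $3$-manifold and hence has finitely many ends, at least one of which -- call it $E$ -- is geometrically infinite since $G$ is. I would also record at the outset that $\HD(\cG)=\e G=2$ (Bishop--Jones, using that $M$ is tame with a geometrically infinite end), so that the target dimension coincides with the critical exponent; this is what lets a Patterson--Sullivan/shadow bookkeeping actually reach $2$. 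When $G$ happens to be a normal subgroup of a lattice -- e.g.\ when $M$ fibres over the circle and $G$ is the fibre group -- the lower bound is already covered by Theorem~\ref{NormalSubgroupCase}; the issue here is a \emph{general} geometrically infinite $G$, which need not embed as a normal subgroup, and this is where the model manifold is needed.

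Next I would bring in the model manifold of Minsky \cite{minsky-elc1} and Brock--Canary--Minsky \cite{minsky-elc2} in the \emph{split geometry} form of \cite{mahan-ibdd,mahan-split}: the end $E$ carries an exhaustion by \emph{split level surfaces} $\Sigma_1,\Sigma_2,\dots$ exiting $E$ (bounded-area simplicial hyperbolic or pleated surfaces), the region between consecutive ones decomposing into \emph{split blocks} of uniformly bounded geometry glued to Margulis tubes of controlled meridian geometry, the whole stacking indexed by a hierarchy path converging to the ending lamination of $E$. Two features matter: the split level surfaces \emph{sweep out} $E$ (because $E$ is geometrically infinite), and passing from $\Sigma_n$ to $\Sigma_{n+1}$ costs a uniformly bounded amount of ambient distance away from the tubes, the tubes themselves being handled by the drilling technology.

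I would then organize a Moran/Cantor construction on $S^2$. Any geodesic ray in $M$ that eventually enters $E$ and crosses the $\Sigma_n$ in monotone order is escaping, so its endpoint on $S^2$ lies in $\ncG$. Fixing a lift $\til\Sigma_1\subset\Hyp^3$ and a basepoint $o$ on it, I would record for each such ray the sequence of combinatorial cells it crosses -- cells of the lifted split-level triangulations, and homotopy classes of arcs through lifted tubes -- arranging the family of such rays into a rooted tree whose boundary is a closed set $\Lambda_E\subseteq\ncG$, the piece attached to a node being the shadow from $o$ of the corresponding cell. By the standard shadow estimate in CAT$(-1)$ geometry the shadow of a cell at distance $r$ from $o$ has diameter $\asymp e^{-r}$, and (once the blocks are drilled) shadows of distinct siblings are separated at the scale of their diameters.

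The step I expect to be the main obstacle is the dimension lower bound for $\Lambda_E$. Here I would argue that, because the logarithmically distorted split level surfaces carry vastly more than $e^{2D}$ cells within any fixed ambient scale $D$ -- this distortion is precisely the Cannon--Thurston phenomenon -- and because these surfaces fill up the shadow of a given cell as they exit the end, one can, at a uniformly bounded ambient cost $D$, descend to $\gtrsim e^{2D}$ cells with pairwise-separated shadows each a factor $\asymp e^{-D}$ smaller; any short Margulis tube along $E$ only adds further separated ``winding'' children. Distributing a Frostman measure equally among the $n$-th level nodes and bounding its lower local dimension then gives $\HD(\Lambda_E)\ge 2-\eta$ for every $\eta>0$ (equivalently: transfer the $2$-conformal Patterson--Sullivan mass of $G$ from near-conical to escaping rays along the $\Z$-like stacking of $E$, using $\HD(\cG)=\e G=2$), and letting $\eta\to0$ finishes. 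The genuinely delicate points, and what makes the split-geometry bookkeeping of \cite{mahan-ibdd,mahan-split} indispensable, are making ``bounded ambient cost per combinatorial step'' and ``shadows of distinct cells quantitatively separated'' hold \emph{simultaneously and uniformly} along $E$ in the presence of arbitrarily short geodesics, and checking that the rays so produced genuinely escape -- monotone exit, with no backtracking that would restore conicality -- which again rests on consecutive split level surfaces separating $E$.
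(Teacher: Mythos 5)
Your overall architecture --- a rooted tree of escaping rays crossing an exiting sequence of surfaces, shadows of nodes, a Frostman measure --- is the same as the paper's quasi-radial-tree scheme (Lemmas~\ref{QuasiRadialTree} and~\ref{HDLargeTree}), and your upper bound and escaping-implies-non-conical observations are fine. But the step you flag as ``the main obstacle'' is exactly where the proposal has a genuine gap. You assert that within ambient distance $D$ of a point deep in the end one finds $\gtrsim e^{2D}$ cells with pairwise separated shadows, justified by ``the split level surfaces are logarithmically distorted --- this is the Cannon--Thurston phenomenon.'' This is not a proof, and it is essentially equivalent to the hard part of the theorem: it is a \emph{localized, uniform-in-the-end} version of ``$\e G=2$.'' Exponential distortion of an individual split level surface is an averaged statement as the surface exits the end; at a fixed location the number of cells of $\Sigma_n$ within ambient radius $D$ is controlled by the intrinsic growth of $\pi_1(\Sigma)$ and the local distortion, and nothing forces the exponent $2$ there. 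The paper obtains the count by a different mechanism: it bases at points $x_n$ exiting the end, extracts a geometric limit $N$ which is shown to be doubly degenerate with limit set all of $S^2$ (Theorems~\ref{thm-kl-nonconical} and~\ref{thm-maintechnicalgeolt}), so that $\omega_N=2$ by Bishop--Jones \emph{in the limit}; it then counts shortest arcs from a single closed geodesic of $N$ to itself via the double-coset counting Theorem~\ref{Thm:Growth-DoubleCosets} / Lemma~\ref{ShortestArcsonMfd} (which also supplies the straightness and separation you need for disjoint shadows), and pulls these arcs back to $M$ by the bi-Lipschitz maps of geometric convergence (Theorem~\ref{NonConicalFromGeometricLimit}). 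Your outline contains no substitute for this counting input.

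A second, related problem is your claim that ``passing from $\Sigma_n$ to $\Sigma_{n+1}$ costs a uniformly bounded amount of ambient distance away from the tubes,'' uniformly along $E$. For a general geometrically infinite end with unbounded geometry this is false for split level surfaces of the full surface $S$: between consecutive surfaces the hierarchy may contain arbitrarily long tight geodesics on proper subsurfaces, and there is no uniform bound. The paper circumvents this by locating a minimal complexity $\zeta_0$ and a fixed essential subsurface $\Sigma$ supporting unboundedly long tight geodesics while all strictly lower-complexity non-annular projections stay bounded; only the resulting generalized i-bounded geometry \emph{sub-models} over $\Sigma$ (and their geometric limits, Proposition~\ref{prop-geoltofibddgibdd}) have the uniform block structure your argument needs. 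Without this reduction, both the ``bounded cost per step'' and the ``uniform shadow separation in the presence of arbitrarily short geodesics'' claims are unsubstantiated. So the proposal identifies the right difficulties but does not resolve them; the geometric-limit-plus-arc-counting detour is not an optional packaging of your argument but the missing content.
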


There are some precursors to Theorem~\ref{thm-kl-nonconical-intro} in the literature, all for bounded geometry manifolds. We say that $M=\Hyp^3/G$ has \emph{bounded geometry} if there exists $\ep >0$ such that any closed geodesic in $M$ has length bounded below
by $\ep$. In \cite{BJ97bddgeo}, Bishop and Jones proved that $\HD (\Lambda^{nc}G) =2$ provided that $G$ is a finitely generated geometrically infinite Kleinian group, $M=\Hyp^3/G$ has {bounded geometry}
and $\Lambda G \neq S^2$. This was sharpened by G\"{o}nye \cite{gonye}.
Kapovich and Liu \cite[Theorem 1.6]{KL20} proved that $\HD (\Lambda^{nc}G) >0 $  provided that $G$ is a finitely generated, non-free, torsion-free geometrically infinite Kleinian group
and  $M=\Hyp^3/G$ has {bounded geometry}. In \cite[Remark 1.8]{KL20}, the authors comment,
`It is very likely that the conclusion of this theorem can be strengthened
to $\HD (\Lambda^{nc}G) =2$, but proving this would require considerably more work.'

We deduce a number of consequences by combining Theorem~\ref{thm-kl-nonconical-intro} with existing theorems in the literature.
By work of Bishop and Jones \cite{BJ97}, if $G$ is geometrically infinite, then $\e G=2$.  Sullivan's formula  implies  that the  bottom of the spectrum for the Laplacian satisfies $\lambda_0(\mathbb H^3/G)=0$. Hence  the Cheeger constant $h(\mathbb H^3/G)$ of $\mathbb H^3/G$ is $0$ by the Cheeger-Buser inequality.  Hence by Theorem~\ref{thm-kl-nonconical-intro} we have the following.
\begin{cor}\label{cor-cheeger3-intro}
Let $M$ be a  complete hyperbolic 3-manifold with finitely generated fundamental group. Then the Cheeger constant $h(M)$ is equal to 0 if and only if $\HD (\Lambda^{nc}G) =2.$
\end{cor}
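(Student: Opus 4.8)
The plan is to deduce Corollary~\ref{cor-cheeger3-intro} from Theorem~\ref{thm-kl-nonconical-intro} together with the dictionary between the Cheeger constant, the bottom of the $L^2$-spectrum of the Laplacian, and the critical exponent. Observe first that for a complete hyperbolic $3$-manifold $M = \Hyp^3/G$ with $G$ finitely generated, there is a dichotomy (by the Tameness Theorem of Agol and Calegari--Gabai, or classically by Thurston--Bonahon when $G$ is not free, and directly from covering space considerations): either $G$ is geometrically finite or $G$ is geometrically infinite. I would handle the two directions of the ``if and only if'' separately.

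\emph{Direction 1: $h(M)=0 \implies \HD(\Lambda^{nc}G)=2$.} I would argue the contrapositive is not needed; instead I show $h(M)=0$ forces geometric infiniteness and then apply Theorem~\ref{thm-kl-nonconical-intro}. If $G$ were geometrically finite (and nonelementary), then by a theorem of Sullivan (and its extensions, e.g.\ work of Sullivan and of Canary), the bottom of the spectrum satisfies $\lambda_0(M) = \e G (2-\e G)$ when $\e G \ge 1$, and is positive precisely because $\e G < 2$ in the geometrically finite case (assuming $\Lambda G \ne S^2$, i.e.\ infinite covolume; the finite-covolume case has $h(M)>0$ trivially by Cheeger's inequality on a closed or finite-volume manifold with a spectral gap statement, or one notes such $M$ is closed/finite volume and $\lambda_0 > 0$). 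By the Cheeger--Buser inequalities, $\lambda_0(M) > 0$ iff $h(M) > 0$. Hence $h(M) = 0$ forces $G$ geometrically infinite, and then $\HD(\Lambda^{nc}G) = 2$ by Theorem~\ref{thm-kl-nonconical-intro}. One must be slightly careful with the degenerate cases where $G$ is elementary or $\Lambda G$ is finite; there $\Lambda^{nc} G$ is small but also $h(M) > 0$, so the equivalence still holds vacuously on both sides.

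\emph{Direction 2: $\HD(\Lambda^{nc}G)=2 \implies h(M)=0$.} Here $\HD(\Lambda^{nc}G) = 2$ together with $\Lambda^{nc}G \subseteq \Lambda G$ forces $\HD(\Lambda G) = 2$, hence (by Bishop--Jones \cite{BJ97}, who identify $\HD(\Lambda G)$ with $\e G$ for the conical limit set, and since the non-conical set has measure zero for the Patterson--Sullivan measure so cannot carry full dimension unless $\Lambda G$ does) we get $\e G = 2$. Then Sullivan's formula $\lambda_0(M) = \e G(2 - \e G) = 0$, and Cheeger--Buser gives $h(M) = 0$. Alternatively, and more in the spirit of the paper, if $G$ is geometrically finite with infinite covolume then $\HD(\Lambda^{nc}G) = 0$ (the non-conical set of a geometrically finite group consists only of bounded parabolic points, a countable set), contradicting the hypothesis; so $G$ is geometrically infinite, hence $\e G = 2$ by Bishop--Jones, hence $h(M) = 0$ as above.

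The main obstacle is bookkeeping the boundary cases rather than any substantive new argument: one must confirm that the chain ``geometrically infinite $\Rightarrow \e G = 2 \Rightarrow \lambda_0 = 0 \Rightarrow h(M) = 0$'' is an \emph{equivalence} at the level of ``$h(M) = 0$ iff geometrically infinite,'' which requires knowing that geometrically finite groups of the second kind have $\e G < 2$ strictly and hence $\lambda_0 > 0$ (Sullivan), and that finite-volume or closed $M$ have $h(M) > 0$. Once Theorem~\ref{thm-kl-nonconical-intro} is in hand, the corollary is essentially a concatenation of: Bishop--Jones ($\e G = 2$ in the geometrically infinite case), Sullivan's spectral formula, and the Cheeger--Buser inequalities, exactly as indicated in the paragraph preceding the statement.
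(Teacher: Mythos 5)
Your proposal is correct and follows essentially the same route as the paper: the equivalence is pivoted on geometric infiniteness, using Theorem~\ref{thm-kl-nonconical-intro} for the non-conical dimension in the geometrically infinite case, Bishop--Jones to get $\e G=2$ there, Sullivan's spectral formula to pass to $\lambda_0=0$, and the Cheeger--Buser inequalities to translate to $h(M)=0$ (with the geometrically finite case handled by countability of the non-conical set and strict positivity of $\lambda_0$). The only caveat is that your first version of Direction~2 (inferring $\e G=2$ from $\HD(\Lambda G)=2$ via a measure-theoretic aside) is shakier than your ``alternative'' argument via geometric infiniteness, which is the clean and intended one.
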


We set up some notation. Let $G$ be a Kleinian group and $M =\Hyp^3/G$.
%Let $CH(\Lambda)$ denote the convex hull of the limit set $\Lambda G$.
%Let $CC(M) = CH(\Lambda)/G$ be its quotient by $G$. We refer to $CC(M)$ as the \emph{convex core} of $M$
As a consequence of Corollary~\ref{cor-cheeger3-intro}, we obtain the following trichotomy on geodesic flows on 3-dimensional hyperbolic manifolds.  
\begin{cor}\label{cor-trich-intro}
Let $M$ be a complete 3-dimensional hyperbolic manifold with finitely generated fundamental group. Then exactly one of the following statements hold  
\begin{enumerate}
    \item 
    $M$ has finite volume and there are only countably many escaping geodesic rays from any fixed point.
    \item 
    $M$ has infinite volume with $\Lambda G \subsetneq S^2$, and the set of   escaping geodesic rays has full Lebesgue measure.
    \item 
    $M$ has infinite volume, $\Lambda G = S^2$, and the set of   escaping geodesic rays has Hausdorff dimension 2 with null Lebesgue measure.
\end{enumerate}
\end{cor}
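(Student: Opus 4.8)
The plan is to reduce the three dichotomous conditions to statements about the single subset $S^2\setminus\cG$ of the boundary sphere $S^2=\partial\Hyp^3$. Each geodesic ray issuing from a fixed $o\in\Hyp^3$ is determined by its endpoint $\xi\in S^2$, and, by the definition of a conical limit point, its projection to $M=\Hyp^3/G$ is recurrent precisely when $\xi\in\cG$. Hence the set of escaping geodesic rays from $o$, viewed inside $S^2$, equals $S^2\setminus\cG=\ncG\sqcup\Omega$, where $\Omega=S^2\setminus\Lambda G$ is the domain of discontinuity; in particular it does not depend on $o$, and its cardinality, Lebesgue measure, and Hausdorff dimension are well defined. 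We now run through the dichotomy finite volume versus infinite volume, and, in the infinite-volume case, whether $\Lambda G=S^2$; these alternatives are plainly mutually exclusive and exhaustive, so it suffices to match each to the stated conclusion.

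If $M$ has finite volume then $G$ is a lattice, hence geometrically finite with $\Lambda G=S^2$, so $\Omega=\emptyset$ and the escaping rays correspond to $\ncG$. By the Beardon--Maskit description of limit sets of geometrically finite groups, $\Lambda G$ is the disjoint union of $\cG$ with the set of bounded parabolic points, so $\ncG$ is exactly the set of parabolic fixed points, i.e.\ the $G$-orbit of the finitely many cusps of $M$ --- a countable set. (One can also argue directly that an escaping ray in a finite-volume manifold is eventually confined to a single cusp, so its endpoint is a parabolic fixed point, and conversely.) This is case~(1).

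Suppose next that $M$ has infinite volume and $\Lambda G\subsetneq S^2$. By the Tameness Theorem (Agol; Calegari--Gabai) and Canary's resulting solution of the Ahlfors measure conjecture, $\Lambda G$ has Lebesgue measure zero, so $\Omega$ has full Lebesgue measure in $S^2$; since $\Omega\subseteq S^2\setminus\cG$, the set of escaping rays has full Lebesgue measure, which is case~(2). Finally, suppose $M$ has infinite volume and $\Lambda G=S^2$. A geometrically finite group with full limit set is a lattice, so $G$ is geometrically infinite, and $\Omega=\emptyset$, so the escaping rays correspond to $\ncG=S^2\setminus\cG$. Theorem~\ref{thm-kl-nonconical-intro} gives $\HD(\ncG)=2$. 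Moreover, since $G$ is topologically tame with $\Lambda G=S^2$, the geodesic flow on $T^1M$ is ergodic (Sullivan; Canary), which is equivalent to $\cG$ carrying full Lebesgue measure; hence $\ncG$ is Lebesgue-null, which is case~(3).

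The substance of the argument is imported: the Tameness Theorem together with its measure-theoretic consequences (the Ahlfors conjecture and ergodicity of the geodesic flow for groups with full limit set), and Theorem~\ref{thm-kl-nonconical-intro} for the Hausdorff dimension in case~(3). Consequently the only point that requires care in the assembly is the null-measure assertion of case~(3): one must know that the conical limit set of a finitely generated geometrically infinite Kleinian group carries full Lebesgue measure, and this is exactly where the tameness/ergodicity machinery (via the Hopf--Tsuji--Sullivan dichotomy, equivalently the fact that such groups are of divergence type) enters.
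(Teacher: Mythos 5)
Your proposal is correct and follows essentially the same route as the paper: case (1) via Beardon--Maskit and geometric finiteness, case (2) via the Ahlfors measure zero theorem (a consequence of tameness), and case (3) by combining Theorem~\ref{thm-kl-nonconical-intro} for the Hausdorff dimension with ergodicity of the geodesic flow (tameness plus Thurston--Bonahon--Canary, i.e.\ the Hopf--Tsuji--Sullivan dichotomy) for the null-measure assertion. The only cosmetic difference is that you make explicit the decomposition of the escaping set as $\ncG\sqcup\Omega$, which the paper leaves implicit.
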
 
Beardon-Maskit \cite{BeaMas} showed that	a complete hyperbolic 3-manifold $M$   is geometrically finite if and only if there are  countably many escaping and non-wondering geodesic rays starting from a fixed but arbitrary basepoint. This gives the first alternative. When $G$ is finitely generated and $\Lambda G \subsetneq S^2$, the Ahlfors measure zero theorem \cite{minsky-elc2} shows that $\Lambda G$ has zero Lebesgue measure. This 
gives the second alternative.
The main new content  in Corollary~\ref{cor-trich-intro} is  contained in item (3) which now follows from Corollary~\ref{cor-cheeger3-intro} and Theorem~\ref{thm-kl-nonconical-intro}. 

A word about the connection to the Hopf-Tsuji-Sullivan dichotomy on recurrent and escaping geodesics. This dichotomy says that generic  geodesic rays are either recurrent or escaping in the sense of the Bowen-Margulis-Sullivan measure on the geodesic flows.  These two possibilities correspond precisely to the  dichotomy of completely conservative/dissipative geodesic flows or equivalently to    the divergence/convergence of the   Poincar\'e series associated with the action of $G:=\pi_1(M)$ on $X:=\widetilde M$ 
$$
\forall s\ge 0,\; o\in  X:\quad \sum_{g\in G} \mathrm{e}^{-sd(o,go)} 
$$ at its {critical exponent}.

It follows from tameness of 3-manifolds \cite{gab-cal,agol-tameness} and earlier work of Thurston, Bonahon and Canary \cite{thurstonnotes,bonahon-bouts,canary} that the geodesic flow on $M$ is ergodic in the third case of Corollary~\ref{cor-trich-intro}. This is more generally true for the geodesic flow  restricted to the convex core of any $M$ with finitely generated fundamental group. Finally, there is an intimate connection between  ergodicity of the geodesic flow on $M$ and recurrence of Brownian motion on $M$ (see \cite{ls} for instance).\\

\noindent \emph{Hyperbolic 2-manifolds and trees:} 
This leads us to a similar trichotomy  for hyperbolic surfaces
that was proved by Fernandez-Melian \cite{FM01}.  The key result they proved was that if $\Sigma$ is a hyperbolic surface with recurrent Brownian motion and infinite area, then the Hausdorff dimension of non-conical points is 1. By \cite[Theorem 2.1]{HP97},  the Brownian motion  is recurrent on $\Sigma$ if and only if  $\pi_1(\Sigma)\act \widetilde \Sigma$ is of divergent type with critical exponent 1. The same conclusion holds for  rank-1 locally symmetric manifolds and trees.

We call a Riemannian manifold amenable if its Cheeger constant is 0. This is consistent with the terminology for amenable graphs;  equivalently  the graph admits a Folner sequence. 
Our methods prove the following,  improving  Fernandez-Melian's result, see Theorem~\ref{thm-cheeger-surface}.
\begin{thm}\label{thm-cheeger-surface-intro}
Let $\Sigma$ be a hyperbolic surface with possibly infinitely generated fundamental group. If $\Sigma$ is amenable, then the Hausdorff dimension of non-conical points is 1. 
\end{thm}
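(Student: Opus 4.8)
The plan is to establish $\HD(\ncG)\ge 1$; the opposite inequality is free since $\ncG\subseteq S^1=\partial\Hyp^2$. Write $\Sigma=\Hyp^2/G$; the amenability hypothesis has content only when $\Sigma$ has infinite area, which I assume. The construction rests on extracting from $h(\Sigma)=0$ a sequence of convex cocompact subsurfaces whose critical exponents tend to $1$ and which escape to infinity. Precisely, a standard exhaustion argument (of the kind used for amenable coverings) shows that $h(\Sigma)=0$ together with $\mathrm{Area}(\Sigma)=\infty$ produces pairwise disjoint compact subsurfaces $F_1,F_2,\dots$ with geodesic boundary, contained in the convex core of $\Sigma$, such that every compact subset of $\Sigma$ meets only finitely many $F_k$ and such that $\eta_k:=\ell(\partial F_k)/\mathrm{Area}(F_k)\to 0$. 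Let $H_k=\pi_1(F_k)<G$, a convex cocompact free Fuchsian group with convex core $F_k$. Using $F_k$ as a test domain in $\Hyp^2/H_k$ bounds its Cheeger constant by $\eta_k$, whence $\lambda_0(\Hyp^2/H_k)\le C\eta_k$ by a Buser-type inequality; Sullivan's formula $\lambda_0=\e{H_k}(1-\e{H_k})$ (which forces $\e{H_k}\ge 1/2$ once $\eta_k$ is small) then gives $\e{H_k}\ge 1-C'\eta_k\to 1$. In passing, $\e G=1$ as well.

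Fix $\epsilon>0$. I would then build a rooted tree $\mathcal T$ of piecewise-geodesic paths in $\Sigma$ threading successively through $F_1,F_2,\dots$. Join the basepoint to a point $p_1$ in the core of $F_1$; inductively, at a leaf ending at a point $p$ in the core of $F_k$, choose a large parameter $L_k$ (large compared with the diameter of $F_k$, the earlier connecting paths, and $\epsilon^{-1}$) and take a maximal $10$-separated subset of the $H_k$-orbit of $p$ inside $B(p,L_k)\cap F_k$: since $\e{H_k}\ge 1-\epsilon/2$ there are at least $e^{(1-\epsilon)L_k}$ of these, each joined to $p$ by a geodesic segment of length $\le L_k$, and by mixing of the geodesic flow on $\Hyp^2/H_k$ their directions at $p$ are spread out. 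To each such endpoint attach a fixed path leading into the core of $F_{k+1}$, with all joining angles bounded away from $\pi$. Every infinite branch of $\mathcal T$ is then a quasigeodesic ray whose projection to $\Sigma$ visits $F_1,F_2,\dots$ in order and hence leaves every compact set, so the geodesic ray asymptotic to it projects to an escaping geodesic and its endpoint lies in $\ncG$. Let $C_\epsilon\subseteq\ncG$ be the resulting Cantor set and $\mu_\epsilon$ the measure splitting mass equally among the $\ge e^{(1-\epsilon)L_k}$ children at each node. The $10$-separation, together with the spreading of directions, makes the level-$k$ cylinders disjoint shadows of comparable diameter $\asymp e^{-(L_1+\dots+L_k+(\text{connecting lengths}))}$, and since the $L_k$ dominate the connecting lengths the mass distribution principle yields $\mu_\epsilon(B(\xi,r))\lesssim r^{1-3\epsilon}$, so $\HD(C_\epsilon)\ge 1-3\epsilon$. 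Letting $\epsilon\to 0$ gives $\HD(\ncG)\ge 1$.

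The step I expect to be the real obstacle is geometric rather than combinatorial: one must know that an amenable infinite-area hyperbolic surface admits an exhaustion-type family of \emph{escaping} F{\o}lner subsurfaces --- pairwise disjoint compact subsurfaces $F_k$ with geodesic boundary, marching off to infinity, and with $\ell(\partial F_k)/\mathrm{Area}(F_k)\to 0$ (equivalently $\e{H_k}\to 1$), arranged so that the connecting paths exist with controlled length. This is exactly where ``amenable'' is used for more than ``$\e G=1$'': in a surface with positive Cheeger constant --- a non-amenable cover of a closed surface, or the cuspidal region of a finite-area surface --- every subsurface far out has $\ell(\partial)/\mathrm{Area}$ bounded below, hence critical exponent bounded away from $1$, and the above branching rate cannot be realized along escaping geodesics (compare: only polynomially many geodesics escape up a cusp). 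Granting these subsurfaces, the remaining ingredients --- orbit counting in a convex cocompact group, spreading of directions by mixing, and the mass-distribution bookkeeping --- are routine. (When $G$ is of convergence type one can instead observe that the Patterson--Sullivan measure gives full mass to $\ncG$, but the construction above is uniform in the two types.)
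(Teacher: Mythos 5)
Your skeleton is the same as the paper's (escaping F{\o}lner subsurfaces with geodesic boundary, Cheeger--Buser plus the Elstrodt--Patterson--Sullivan formula to push the critical exponents $\e{H_k}$ of their fundamental groups to $1$, a branching tree of based arcs threaded through them, and a mass-distribution estimate), but the step you yourself flag as ``the real obstacle'' is a genuine hole, not a routine exhaustion argument: producing \emph{escaping, convex, geodesic-boundary} subsurfaces with isoperimetric ratio tending to $0$ is exactly what the paper's Claim inside Theorem~\ref{thm-cheeger-surface} proves, by first making the F{\o}lner subsurfaces essential, passing to the covers $\tilde S_n$ associated with $\pi_1(S_n)$, invoking the Adams--Morgan classification of isoperimetric minimizers on geometrically finite surfaces (Theorem~\ref{IsopermeterMinimizer}) to replace $S_n$ by a convex subsurface $S_n^\star$ with small Cheeger constant, and then excising to make the sequence escape. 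Asserting this existence leaves the proof incomplete at its geometric core.

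The more serious gap is quantitative and would make your tree fail as written: you give each subsurface $F_k$ a \emph{single} branching segment of length at most $L_k$, followed by a bridge of length $B_k$ to $F_{k+1}$; in the paper's notation you have set the repetition parameter $K_n=1$. The mass-distribution principle then needs two things at once: (i) the accumulated bridge lengths must be negligible against the accumulated branching lengths, so that the exponent is close to $1$; and (ii) the scale drop from one level to the next, which includes $B_k$, must be at most a bounded multiple of the distance already travelled, since a ball whose radius is comparable to the separation of the children but much smaller than the parent cylinder can carry essentially the parent's mass (this is precisely condition~(\ref{ChoiceKnEq2}) and Step~2 of Lemma~\ref{HDLargeTree}; without it the $\liminf$ in the dimension formula for such Cantor sets collapses). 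Together, (i) and (ii) force $\sum_{j\le k}L_j\gtrsim B_{k+1}$ while $L_{k+1}\lesssim\sum_{j\le k}(L_j+B_j)$, so the partial sums $\sum_{j\le k}L_j$ can grow at most factorially in $k$; but amenability puts no upper bound on the $B_k$ --- one can separate consecutive F{\o}lner subsurfaces by necks whose lengths grow like a tower --- and then no choice of $L_k$ satisfies both constraints. Taking $L_k$ ``huge'' does not help, because it violates (ii); and your requirement that $L_k$ dominate the \emph{earlier} connecting paths says nothing about the bridge you are about to traverse. This is exactly why the paper loops $K_n$ times through the arcs $A_n$ based on a closed geodesic $\gamma_n$ (with the sliding-endpoints/orientation trick of \S\ref{sub-constr-mfld} guaranteeing quasi-geodesicity of the concatenations, rather than your only-sketched mixing/angle argument): the path then lingers in $F_n$ for length about $K_nL_n$ at entropy about $\omega_n$ per unit length, and $K_n$ can be chosen large enough to absorb the forthcoming bridge, i.e.\ to satisfy (\ref{ChoiceKnEq}) and (\ref{ChoiceKnEq2}). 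Without this repetition mechanism your construction only yields the theorem under an additional, unjustified assumption that the bridges grow slowly.
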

It is not hard to construct an amenable hyperbolic surface with transient Brownian motion. For instance, we cut out half of a cyclic cover of a closed surface and then glue a funnel along the resulted boundary. It is clearly amenable by computing $h=0$, and the existence of the funnel makes the Poincar\'e series  convergent at $1$. 

An analog for groups acting on trees seems not be recorded in literature, see Theorem~\ref{thm-amenable-graph}.
\begin{thm}\label{thm-amenable-graph-intro}
Let $G$ be a discrete group acting on a $d$-regular tree $X$ with $d\ge 3$ so that the quotient graph is amenable. Then the Hausdorff dimension of non-conical  points for $G$ is $\log(d-1)$.   
\end{thm}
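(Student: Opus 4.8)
The upper bound $\HD(\ncG)\le\log(d-1)$ is immediate, since $\ncG\subseteq\pU$ and the boundary of a $d$-regular tree, equipped with its visual metric, is Ahlfors $\log(d-1)$-regular (the shadow of a vertex at distance $n$ from a fixed basepoint $o$ has diameter $\asymp e^{-n}$ and natural mass $\asymp(d-1)^{-n}$). So the content is the reverse inequality, and the plan is to construct, for every $\delta>0$, a Cantor subset $C_\delta\subseteq\ncG$ with $\HD(C_\delta)\ge\log(d-1)-\delta$.

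I would first set up a dictionary. Since $G$ is discrete its vertex stabilizers are finite, and a routine reduction lets me assume $G$ acts freely; then $p\colon X\to\bar X:=X/G$ is the universal covering of a $d$-regular graph and induces a bijection between geodesic rays from $o$ and non-backtracking rays in $\bar X$ from $\bar o:=p(o)$. The two facts I would use are: (i) $d_X(x,Go)=d_{\bar X}(p(x),\bar o)$, so $\xi\in\pU$ is non-conical precisely when the projected ray $p([o,\xi))$ leaves every ball $B(\bar o,R)$, i.e.\ is \emph{escaping} in $\bar X$; and (ii) amenability of $\bar X$ forbids an infinite ``free'' (tree-like) branch hanging off a bridge, which is exactly the configuration that could keep a half-tree of $X$ away from $Go$, so in fact $\Lambda G=\pU$. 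Thus the task reduces to producing a full-dimensional family of escaping non-backtracking rays in $\bar X$.

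The crucial analytic input — where amenability is genuinely used — is: \emph{for every finite $K\subset\bar X$ and every vertex $\bar v$ in an infinite component $Z$ of $\bar X\setminus K$, the number of length-$n$ non-backtracking paths issuing from $\bar v$ and remaining in $Z$ grows like $(d-1)^{n(1-o(1))}$.} Indeed $\bar X\setminus K$ is again amenable (Følner sets far from $K$ survive) and is $d$-regular outside a finite set, so $\|A_Z\|=d$ (build approximate eigenvectors from Følner sets supported away from $K$), and the standard relation between the adjacency operator and the non-backtracking/Hashimoto operator on a $d$-regular graph then pins the non-backtracking spectral radius at $d-1$; the finite defect near the root does not change the exponential rate. (Equivalently, one may invoke that amenability of $X/G$ is equivalent to $\e G=\log(d-1)$ and run the same estimate on $\bar X\setminus K$.) Granting this, I would build inductively a nested family $\{\Pi_k\}$ of non-backtracking paths from $\bar o$. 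Fix radii $R_k\uparrow\infty$, tolerances $\delta_k\downarrow0$, and lengths $T_k\uparrow\infty$ with $\ell_k:=T_k-T_{k-1}$ chosen, stage by stage, enormous relative to $T_{k-1}$. Given $\Pi_{k-1}$, each path ending in an infinite component of $\bar X\setminus B(\bar o,R_{k-1})$, extend every $\bar\gamma\in\Pi_{k-1}$ first by a bounded detour (of length $\le L_k$, depending only on $R_k$) into an infinite component $Z$ of $\bar X\setminus B(\bar o,R_k)$, and then by all non-backtracking continuations of length $\ell_k-L_k$ inside $Z$ that do not revisit the (finite) defect region of $Z$ after a further $O(R_k)$ steps. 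By the key input these continuations number $(d-1)^{\ell_k(1-o(1))}$ and, being eventually in the $d$-regular part of $Z$, they branch with full rate $\log(d-1)$ at every intermediate length; this yields $\Pi_k$ with $|\Pi_k|=(d-1)^{T_k(1-o(1))}$, every path avoiding $B(\bar o,R_{k-1})$ throughout block $k$. Consequently every $\xi$ whose projected ray has all its length-$T_k$ prefixes in $\Pi_k$ satisfies $d_{\bar X}(p([o,\xi))_t,\bar o)>R_{k-1}$ for $t\ge T_{k-1}$, hence $p([o,\xi))$ escapes and $\xi$ is non-conical.

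Finally, set $C_\delta=\bigcap_k\bigcup_{\bar\gamma\in\Pi_k}\mathrm{Sh}(v_{\bar\gamma})\subseteq\ncG$, where $v_{\bar\gamma}\in X$ is the level-$T_k$ vertex on the lift of $\bar\gamma\in\Pi_k$ and $\mathrm{Sh}(\cdot)$ denotes its shadow, and let $\mu$ be the measure distributing equal mass to the shadows at each level; because the construction branches with rate $\log(d-1)$ at essentially every scale (up to the $O(R_k)=o(T_k)$ ``mess'' at each block start), one gets $\mu(B(\xi,r))\le C_\delta\, r^{\log(d-1)-\delta}$, so $\HD(C_\delta)\ge\log(d-1)-\delta$ by the mass distribution principle; letting $\delta\to0$ gives $\HD(\ncG)\ge\log(d-1)$, and with the upper bound the theorem follows. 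I expect the main obstacle to be exactly the analytic input of the third paragraph — that deleting a finite set from $\bar X$ does not lower its non-backtracking entropy below $\log(d-1)$, which fails (and only yields $\HD(\ncG)\ge\log(\text{non-backtracking spectral radius})<\log(d-1)$) as soon as $\bar X$ is non-amenable — together with the bookkeeping needed to guarantee that the Cantor set is genuinely $\log(d-1)$-dimensional at all scales rather than only at the block endpoints.
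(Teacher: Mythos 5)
There is a genuine gap, and it sits exactly where you lean on your claim (ii). By the paper's definition, $\ncG$ consists of \emph{limit points} of $G$ that are not conical, so for each boundary point you construct you must verify two things: that the projected ray escapes (which your construction does ensure) \emph{and} that the point lies in $\Lambda G$. You dispose of the second requirement by asserting that amenability of $\bar X$ forbids hanging tree-like branches, so that $\Lambda G=\partial X$. That assertion is false. For $d=3$, take the bi-infinite ladder $\mathbb Z\times K_2$ (which is $3$-regular and amenable), delete one rung, and attach to each of the two resulting degree-$2$ vertices a rooted tree in which the root has two children and every other vertex has one parent and two children. The result is a connected, $3$-regular, amenable graph (F\o lner boxes far out in the ladder survive), hence a legitimate quotient $X/G$ with $G=\pi_1$ acting freely on $T_3$; yet the preimage of a hanging tree consists of subtrees attached by a single edge containing no orbit points, so the boundary directions entering them are not in $\Lambda G$. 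Amenability only forces F\o lner sets to avoid such branches; it does not forbid them. Consequently your inductive extension step, which takes \emph{all} non-backtracking continuations inside an infinite component $Z$ of $\bar X\setminus B(\bar o,R_k)$, can (and in examples like this will) pour most of its mass into directions that are not limit points at all, and nothing in the construction certifies that any of the surviving ends of your Cantor set lie in $\Lambda G$.

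This is not a repairable bookkeeping issue within your scheme, because the quantity you propose to control is the wrong one: in \emph{any} $d$-regular graph the number of non-backtracking walks of length $n$ from a vertex is exactly $d(d-1)^{n-1}$ (each directed edge has $d-1$ admissible continuations), so your ``key analytic input'' is essentially automatic and cannot be where amenability enters; relatedly, an individual component $Z$ containing your current endpoint need not inherit F\o lner sets at all. What must be counted is the family of escaping rays that keep \emph{tracking the orbit}, i.e.\ prefixes extendable to non-backtracking returns to $\bar o$ (equivalently, rays woven out of immersed loops and arcs in the core). This is precisely how the paper proceeds: it takes escaping connected F\o lner pieces $F_n$, completes them to $d$-regular graphs $\widetilde F_n$, uses Mohar's inequality plus Grigorchuk's cogrowth formula to show the subgroups $H_n=\pi_1(\widetilde F_n)\le G$ have critical exponent tending to $\log(d-1)$, and then builds the quasi-radial tree from immersed loops $\gamma_n\subset F_n$ (axes of hyperbolic elements of $H_n$) and the exponentially many shortest arcs between them supplied by the double-coset counting lemma. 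Because the constructed rays repeatedly fellow-travel translates of axes of hyperbolic elements of $G$, their endpoints are accumulated by loxodromic fixed points and hence lie in $\Lambda G$, which is the step your argument is missing. (A smaller caveat: your opening reduction to a free action is also not routine for infinitely generated $G$, but it is minor compared with the point above.)
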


%\ywy{What is the non-conical points in Floyd boundary? the following should be careful.
%\begin{cor}
%Suppose $G$ is a finitely generated  group with non-trivial Floyd boundary. If $\Gamma$ is an infinite normal subgroup, then 
%$\HD (\Lambda^{nc}\Gamma) =\e G$.
%\end{cor}}

\subsection{Statement of results: Myrberg geodesics}
We now turn to the Myrberg limit set.
Our first general result is as follows. Let $\epsilon$ be the parameter for the visual metric on the Gromov boundary of a hyperbolic space $X$, see Theorem~\ref{MyrbergHdimHypThm}. 
\begin{thm}\label{MyrbergHdimThm-intro}
Let $X$ be  a Gromov hyperbolic space equipped with a  proper and non-elementary action of $G$.  Then, 
$$
\HD (\cG) = \HD(\uG)=\HD(\mG)=\e G/\epsilon.
$$
\end{thm}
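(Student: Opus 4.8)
The plan is to split the claimed equalities into an upper bound for $\cG$ and lower bounds for $\uG$ and $\mG$. Since $\mG\subseteq\cG$ and $\uG\subseteq\cG$, once we have $\HD(\cG)\le\e G/\epsilon$ together with $\HD(\uG)\ge\e G/\epsilon$ and $\HD(\mG)\ge\e G/\epsilon$, chaining these with the inclusions forces all three Hausdorff dimensions to equal $\e G/\epsilon$. The upper bound is the classical shadow-covering estimate: a conical limit point lies, for arbitrarily large $T$, in a shadow of fixed aperture cast by an orbit point at distance $\ge T$ from $o$, such a shadow has visual diameter $\asymp\mathrm e^{-\epsilon\,d(o,go)}$, and the Poincar\'e series $\sum_{g\in G}\mathrm e^{-s\,d(o,go)}$ converges for every $s>\e G$; covering $\cG$ by the shadows of orbit points at distance $\ge T$ and sending $T\to\infty$ gives $\mathcal H^{s/\epsilon}(\cG)=0$, hence $\HD(\cG)\le s/\epsilon$ for all $s>\e G$.

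The content is in the lower bounds, which I would obtain by a Bishop--Jones / Patterson--Sullivan type Cantor construction; its technical core — and the main obstacle — is a uniform ``Schottky extraction''. Fix $\delta>0$. One shows that for a sequence of scales $R\to\infty$ realising $\#\{g\in G:d(o,go)\le R\}\ge\mathrm e^{(\e G-\delta)R}$ there is a finite subset $F=F_R\subseteq G$, with all constants below independent of $R$, such that: (i) $R/2\le d(o,go)\le R$ for $g\in F$ and $|F|\ge\mathrm e^{(\e G-2\delta)R}$; (ii) the shadows $\{\Pi_{D_0}(go):g\in F\}$ are pairwise disjoint, each of visual diameter $\asymp\mathrm e^{-\epsilon R}$; and (iii) for every $(g_n)_{n\ge1}\in F^{\mathbb N}$ the broken geodesic $o,g_1o,g_1g_2o,\dots$ fellow-travels a ray $\gamma(g_1,g_2,\dots)$ with uniform quasigeodesic constants, distinct sequences having distinct endpoints. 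Properness, hyperbolicity, and non-elementarity all enter here: one thins the $\gtrsim\mathrm e^{(\e G-\delta)R}$ orbit points near radius $R$, via the thin-triangles inequality and a pigeonhole along well-chosen scales, to a sub-collection whose geodesics from $o$ pairwise diverge before radius $\asymp R$, and then prunes by a bounded amount against a fixed loxodromic element of $G$ to install the ping-pong behaviour in (iii).

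Granting the extraction, the bound $\HD(\uG)\ge\e G/\epsilon$ follows cleanly. Fix one large $R$ and let $C=\{\gamma(g_1,g_2,\dots):g_i\in F_R\}$. Each point of $C$ is conical, and since its defining ray stays in a bounded neighbourhood of the orbit $Go$, its image in $X/G$ has compact closure; hence $C\subseteq\uG$. Equip $C$ with the self-similar probability measure $\mu$ giving mass $|F|^{-k}$ to each cylinder of depth $k$. Such a cylinder has visual diameter $\asymp\mathrm e^{-\epsilon kR}$ and $\mu$-mass $\le\mathrm e^{-(\e G-2\delta)kR}$, and by the separation in (ii) a ball of radius $r\asymp\mathrm e^{-\epsilon kR}$ meets only $O(1)$ cylinders of depth $k$; thus $\mu(B(\xi,r))\lesssim r^{(\e G-2\delta)/\epsilon}$ and the mass distribution principle gives $\HD(\uG)\ge\HD(C)\ge(\e G-2\delta)/\epsilon$. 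Letting $\delta\to0$ yields $\HD(\uG)\ge\e G/\epsilon$, and in particular $\HD(\cG)=\e G/\epsilon$.

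For $\HD(\mG)\ge\e G/\epsilon$ I would run the same construction along a rapidly increasing sequence of scales $R_k\to\infty$, interleaving between consecutive Schottky blocks a fixed ``instruction word'' $w_k\in G$ whose orbit path closely shadows the $k$-th term of a prescribed countable list of finite geodesic segments that exhausts, up to $G$-translation and with precision and length increasing in $k$, all configurations of ordered pairs of distinct points in $\dG$. If $R_k$ grows fast enough that $\ell(w_k)$ is negligible against $\sum_{j\le k}R_j$, the resulting rays are still conical (the Schottky blocks place orbit points along them out to infinity) but no longer uniformly conical, they are Myrberg by construction, and the dimension estimate — sensitive only to the asymptotic ratio of $-\log\mu(\text{cylinder})$ to $\epsilon$ times length along the ray — is unaffected, again giving $\HD(\mG)\ge(\e G-2\delta)/\epsilon$; let $\delta\to0$. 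The principal obstacle remains the uniform Schottky extraction of the second step; the secondary one is the bookkeeping certifying the Myrberg property, since it constrains the $G$-translates of \emph{both} endpoints of the ray, not just its forward endpoint. A detailed implementation is carried out in Theorem~\ref{MyrbergHdimHypThm}.
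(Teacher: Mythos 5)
Your overall architecture coincides with the paper's: the classical shadow-covering upper bound for $\cG$, a quasi-radial tree built from exponentially large annular sets at growing scales, interleaved words that force the Myrberg property, and a mass-distribution estimate on shadows of tree vertices (this is exactly Lemmas \ref{QuasiRadialTree} and \ref{HDLargeTree} combined with the construction in Theorem \ref{MyrbergHdimHypThm}). Your secondary worry, that the Myrberg property constrains $G$-translates of both endpoints, is resolved in the paper by a one-sided criterion: Lemma \ref{lem:CharMyrberg} characterizes $\xi$ as Myrberg precisely when, for every loxodromic $h$, the ray $[x,\xi]$ has arbitrarily long intersections with translates $g_n\ax(h)$; together with density of loxodromic fixed-point pairs in $\pG\times\pG$ this reduces the bookkeeping to making the ray repeatedly shadow the axis of every element of a countable list $\mathcal B$ of loxodromics (including all their powers), which is exactly what the bridges $b_n$ accomplish. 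Your ``instruction words'' play the same role.

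The one step that does not work as written is item (iii) of your uniform Schottky extraction. Thinning the annular set so that shadows are disjoint costs only a bounded factor (condition (\ref{Separation}) in the paper), but no pruning of the set $F_R$ itself, and in particular no pruning ``against a fixed loxodromic element,'' can guarantee that arbitrary concatenations $g_1g_2\cdots$ with $g_i\in F_R$ fellow-travel a geodesic: whether $g_1[o,g_2o]$ backtracks along $[o,g_1o]$ is a property of the \emph{pair} $(g_1,g_2)$, not of either element separately, and a single auxiliary loxodromic $h$ cannot repair all pairs since some $[o,go]$ may itself project onto $\ax(h)$ with large diameter. The paper's fix is to fix three pairwise independent loxodromics $F=\{f_1,f_2,f_3\}$: by Lemma \ref{BddProjLem}, for any two elements at least one $f\in F$ has $\tau$-bounded projections from both of the corresponding orbit geodesics, so inserting this (pigeonholed) connector makes the concatenation an $L$-local quasi-geodesic (Lemmas \ref{DCExtension} and \ref{ChooseAi}); keeping the third of the annular set compatible with a common $f$ does not affect the growth rate. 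With that substitution in place of your item (iii), your argument goes through and is essentially the paper's proof.
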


By definition, the uniformly conical point set $\uG$ is disjoint from the Myrberg limit set $\mG$ unless the action $G\act X$ is convex-cocompact. The equality $\HD(\uG)=\HD(\mG)$ was conjectured  by Falk-Matsuzaki \cite[Conjecture 2]{FM20}, where they confirmed it for Kleinian groups with finite Bowen-Margulis-Sullivan (BMS) measure on the geodesic flow.   Their proof relies on a conjecture of  Sullivan \cite[after Corollary 19]{Sul} about generic sublinear limit sets. The conjecture is known to be true when the BMS measure is finite (\cite[Corollary 19]{Sul}). If Sullivan's conjecture is true for any divergent action, then Theorem \ref{MyrbergHdimThm-intro} would  follow from it in this case. Thus the above result could be thought of as positive  evidence for  Sullivan's conjecture.   

Combining Theorems \ref{MyrbergHdimThm-intro} and \ref{thm-kl-nonconical-intro}, we obtain the following fact about the limit set of Kleinian groups.
\begin{cor}\label{cor-geominf-limitset-intro}
Let $G$ be a  finitely generated geometrically infinite Kleinian group. Then the uniformly conical limit set, the Myrberg limit set and the non-conical limit set ($\uG$, $\mG$ and $\ncG$ respectively) are mutually disjoint, and have  the same Hausdorff dimension 2.
\end{cor}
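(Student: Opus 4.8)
The plan is to deduce the corollary by combining the two principal theorems of the paper with the Bishop--Jones computation of the critical exponent, after which only elementary bookkeeping about the three limit sets remains. Since $G$ is geometrically infinite, work of Bishop and Jones \cite{BJ97} gives $\e G = 2$. The space $\Hyp^3$ is CAT(-1), so its Gromov boundary $\pU = S^2$ carries the canonical visual metric with parameter $\epsilon = 1$. Moreover $G$ is discrete, so $G \act \Hyp^3$ is proper, and a geometrically infinite Kleinian group is not virtually cyclic, hence the action is non-elementary. Therefore Theorem~\ref{MyrbergHdimThm-intro} applies and yields
\[
\HD(\uG) \;=\; \HD(\mG) \;=\; \e G/\epsilon \;=\; 2,
\]
while Theorem~\ref{thm-kl-nonconical-intro} gives $\HD(\ncG) = 2$ directly. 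This proves the dimension assertion.

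For mutual disjointness, note first that uniformly conical points and Myrberg points are both conical, i.e.\ $\uG \subseteq \cG$ and $\mG \subseteq \cG$, whereas $\ncG = \Lambda G \setminus \cG$ by definition; hence $\ncG$ is disjoint from each of $\uG$ and $\mG$. It remains to show $\uG \cap \mG = \emptyset$. This is exactly the observation recorded just after Theorem~\ref{MyrbergHdimThm-intro}: $\uG$ and $\mG$ are disjoint unless the action is convex-cocompact. Concretely, a point of $\uG$ is the endpoint of a non-wandering ray whose projection to $M = \Hyp^3/G$ has compact closure, while a point of $\mG$ is the endpoint of a ray projecting to a geodesic dense in the unit tangent bundle of $M$; the two behaviors are incompatible unless $M$ is compact, equivalently unless $G$ is convex-cocompact. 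Since $G$ is geometrically infinite it is not even geometrically finite, a fortiori not convex-cocompact, so $\uG \cap \mG = \emptyset$. Combining, $\uG$, $\mG$ and $\ncG$ are mutually disjoint with common Hausdorff dimension $2$.

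Since all the substantive input is imported from Theorems~\ref{MyrbergHdimThm-intro} and~\ref{thm-kl-nonconical-intro}, there is no real obstacle in the argument above; the only points demanding (minor) care are the verification of the hypotheses of Theorem~\ref{MyrbergHdimThm-intro} --- properness and non-elementarity of $G \act \Hyp^3$ --- and of non-convex-cocompactness for the equality $\uG \cap \mG = \emptyset$, both of which follow immediately from $G$ being a finitely generated geometrically infinite Kleinian group.
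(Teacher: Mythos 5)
Your proposal is correct and takes essentially the same route as the paper: the corollary is obtained there precisely by combining Theorem~\ref{MyrbergHdimThm-intro} (with Bishop--Jones' $\e G = 2$ and $\epsilon=1$ for the CAT($-1$) visual metric) with Theorem~\ref{thm-kl-nonconical-intro}, plus the paper's own remark that $\uG$ and $\mG$ are disjoint unless the action is convex-cocompact. One small imprecision in your disjointness paragraph: convex-cocompactness is equivalent to compactness of the \emph{convex core} of $M$, not of $M$ itself, but since you invoke the correct condition (non-convex-cocompactness of a geometrically infinite group) the argument is unaffected.
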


Myrberg limit points could be defined more generally for a convergence group action  on a compact metric space (Definition \ref{MyrbergDefn}). 
Our method in proving the above theorem is rather  general  and in particular allows us to compute the Hausdorff dimension of the Myrberg limit set in the Floyd boundary. 

In \cite{Floyd}, Floyd introduced a way of compactifying any infinite locally finite graph $\Gamma$. Fixing a parameter $0<\lambda<1$ and a basepoint $o\in \Gamma$, one assigns each edge $e$ a new length $\lambda^n$ with $n=d(o,e)$. The induced length metric $\rho_\lambda$ on $\Gamma$ is called the \emph{Floyd metric}. It  is incomplete. We take the Cauchy completion $\overline \Gamma$. Then $\partial_\lambda \Gamma:=\overline\Gamma\setminus \Gamma$ is called 
the \emph{Floyd boundary}. The Floyd boundary $\partial_\lambda \Gamma$ can be equipped with a natural \emph{Floyd metric} as well.
We say that $\partial_\lambda \Gamma$ is non-trivial if $|\partial_\lambda \Gamma|>3$. If $\Gamma$ is Gromov hyperbolic, then the visual metric on $\partial \Gamma$ is bi-Lipschitz homeomorphic to $\partial_\lambda \Gamma$ equipped the Floyd metric with $\epsilon=-\log\lambda$. If $\Gamma$ is the Cayley graph of a finitely generated group $G$ and $|\partial_\lambda \Gamma|>3$,  Karlsson \cite{Ka} proved that the action of $G$ on its Floyd boundary $\partial_\lambda \Gamma$ is a convergence group action. We have the following (see Theorem~\ref{MyrbergHdimFloyd}).  
\begin{thm}\label{MyrbergHdimFloyd-intro}
Let $\Gamma$ be the Cayley graph of a group $G$ with a finite generating set $S$. Let $\e G$ be the critical exponent of the action of $G$ on $\Gamma$. Assume that $|\partial_\lambda \Gamma|>3$  for parameter $\lambda$. Then the Hausdorff dimension of the Myrberg limit set has full dimension $\frac{\e G}{(-\log\lambda)}$.    
\end{thm}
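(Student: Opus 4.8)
The plan is to re-run the argument behind Theorem~\ref{MyrbergHdimThm-intro} with the Gromov boundary and its visual metric replaced by $\partial_\lambda\Gamma$ and the Floyd metric $\rho_\lambda$, the parameter $\epsilon$ being played by $-\log\lambda$. Since $G$ acts cocompactly on its Cayley graph $\Gamma$, we have $\Lambda G=\partial_\lambda\Gamma$, and $\mG\subseteq\partial_\lambda\Gamma$, so it suffices to prove $\HD(\partial_\lambda\Gamma)\le\frac{\e G}{-\log\lambda}$ and $\HD(\mG)\ge\frac{\e G}{-\log\lambda}$. For the upper bound, for each $n$ the shadows from $o$ of the elements of word length $n$ cover $\partial_\lambda\Gamma$ (any boundary point is a limit of points of $\Gamma$ and, along a subsequence, of points lying in one fixed such shadow), there are at most $e^{(\e G+\delta)n}$ of them for every $\delta>0$ and all large $n$ by the definition of $\e G$, and each has $\rho_\lambda$-diameter $\lesssim\lambda^{n}$ by the basic visibility property of the Floyd metric; thus $\sum(\text{diam})^{s}\lesssim e^{(\e G+\delta)n}\lambda^{sn}\to0$ for $s>\frac{\e G+\delta}{-\log\lambda}$, and letting $\delta\to0$ gives the bound.

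For the lower bound I would first invoke Karlsson's theorem: $|\partial_\lambda\Gamma|>3$ makes $G\curvearrowright\partial_\lambda\Gamma$ a non-elementary convergence action, so there are loxodromic elements, their fixed points are dense in $\partial_\lambda\Gamma$, and ping-pong is available. Fix $\delta>0$ and pick $R$, along a subsequence realising the $\limsup$ defining $\e G$, with $\#\{g:|g|=R\}\ge e^{(\e G-\delta)R}$. After deleting the exponentially small fraction of these elements beginning or ending with a long power of a fixed loxodromic $\gamma_0$, and replacing each remaining $g$ by $sgs$ with $s=\gamma_0^{N_0}$ a suitably large power serving as a \emph{barrier}, one obtains a set $\mathcal F\subset G$ with $\#\mathcal F\ge e^{(\e G-2\delta)R}$ and all elements of word length $R'\asymp R$ (in fact $R'=(1+O(\delta))R$), such that every infinite concatenation of members of $\mathcal F$ is a quasigeodesic word, hence converges to a point $\pi(i_1,i_2,\dots)\in\partial_\lambda\Gamma$, with $\pi$ injective. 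The level-$n$ cylinders $C(i_1,\dots,i_n)=\pi\{\text{extensions of }(i_1,\dots,i_n)\}$ lie in $\operatorname{Shad}_o(f_{i_1}\cdots f_{i_n})$ and so have $\rho_\lambda$-diameter $\lesssim\lambda^{nR'}$, while two level-$n$ cylinders branching at level $j\le n$ lie in the shadows of two elements whose geodesics to $o$ diverge --- robustly, because of the $\gamma_0$-barriers --- at depth $\le nR'$; a reverse-visibility estimate for the Floyd metric in such Schottky configurations then keeps the two shadows $\gtrsim\lambda^{nR'}$ apart. With diameters and separations comparable at scale $\lambda^{nR'}$, the uniform Bernoulli measure on $\pi(\mathcal F^{\mathbb N})$ satisfies a Frostman bound of exponent $\frac{\log\#\mathcal F}{R'(-\log\lambda)}$, whence $\HD(\pi(\mathcal F^{\mathbb N}))\ge\frac{\log\#\mathcal F}{R'(-\log\lambda)}\ge\frac{\e G-2\delta}{-\log\lambda}-o_R(1)$.

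To place this Cantor set inside $\mG$ I would modify the coding in the by-now-standard way. Fix a dense sequence $(p_k,q_k)$ of pairs of distinct loxodromic fixed points in $(\partial_\lambda\Gamma)^{2}$ and $\epsilon_k\downarrow0$, and interpose, at a very rapidly increasing sequence of levels $n_1<n_2<\cdots$ (no two consecutive), a deterministic excursion word $W_k=c_ka_kb_k$: $c_k$ a large power of $\gamma_0$ that pushes the branch-dependent point $g_{n_k-1}^{-1}o$ --- already placed by the preceding $\mathcal F$-block within a fixed distance of $\gamma_0^-$ --- into the $\epsilon_k$-ball about $\gamma_0^-$; $a_k$ chosen so that $a_k^{-1}$ is a large power of a loxodromic whose attracting fixed point is within $\epsilon_k$ of $p_k$ and whose repelling point avoids $\gamma_0^-$; and $b_k$ a length-$L_k$ prefix of the axis of a loxodromic with attracting point $q_k$. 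Then for $g=(g_{n_k-1}c_ka_k)^{-1}$, the identity $a_k^{-1}c_k^{-1}\cdot c_ka_k=1$ together with north--south dynamics gives $\rho_\lambda(go,p_k)<\epsilon_k$ and $\rho_\lambda(g\xi,q_k)<\epsilon_k$ for \emph{every} $\xi$ in the modified Cantor set $C^{\ast}$, independently of the branch; hence $\{(go,g\xi):g\in G\}$ is dense in $(\partial_\lambda\Gamma)^{2}\setminus\Delta$, i.e.\ $C^{\ast}\subseteq\mG$. Choosing the $n_k$ sparse enough that $\sum_{k:\,n_k\le N}|W_k|=o(N)$, the inserted words distort the cylinder scales only by a sub-exponential factor, so the Frostman bound survives and $\HD(C^{\ast})\ge\frac{\e G-2\delta}{-\log\lambda}-o_R(1)$. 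Letting $R\to\infty$ and then $\delta\to0$ gives $\HD(\mG)\ge\frac{\e G}{-\log\lambda}$, and with the upper bound, $\HD(\mG)=\frac{\e G}{-\log\lambda}=\HD(\partial_\lambda\Gamma)$.

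The main obstacle is the Schottky step, and in particular the reverse-visibility estimate that makes cylinder diameters and separations comparable: unlike the hyperbolic case there is no local-to-global stability of quasigeodesics to appeal to, so this must be extracted from the convergence dynamics on $\partial_\lambda\Gamma$ together with the fine geometry of the Floyd metric (as developed by Karlsson and by Gerasimov--Potyagailo, and exploited via contracting-element techniques). A secondary difficulty is checking that the Myrberg excursions genuinely force density of $\{(go,g\xi)\}$ simultaneously along every branch while remaining sparse enough to leave the Hausdorff dimension intact.
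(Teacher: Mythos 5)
Your architecture is essentially the paper's: a Cantor set in the orbit built from large annular sets glued by loxodromic connectors, a Frostman measure supported on shadows, a shadow--ball comparison in the Floyd metric, and a coding that forces Myrberg-ness; the upper bound via shadows of spheres is the Bourdon argument the paper cites. But the step you yourself flag as the main obstacle --- the ``reverse-visibility'' estimate making cylinder separations comparable to cylinder diameters --- is left unproved, and it is genuinely the technical heart of the Floyd case. The paper fills it as follows: (i) local-to-global for the concatenations is supplied not by hyperbolicity but by the theory of admissible paths through contracting sets (Proposition~\ref{admisProp}), which also guarantees that the geodesic $[o,Wo]$ passes within $R_0$ of every transition point; (ii) the two-sided comparison between the shadow $\Pi_1(v,R)$ and the Floyd ball of radius $\lambda^{d(1,v)}$ (Lemma~\ref{lemma3.16PY}) holds only at points $v$ with $\rho_\lambda^v(1,\xi)\ge\kappa$, and the construction \emph{arranges} that every vertex of the tree is such a point, because each vertex is an endpoint of a contracting segment labelled by an element of the fixed finite set $F$ (Lemma~\ref{LargeFloydDist}, Proposition~\ref{qrtreeinFloyd}). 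Without this uniform lower bound on the Floyd distance along every family path, the Frostman computation does not close. A second, smaller issue: your single-barrier Schottky step (powers of one $\gamma_0$ on both sides, after discarding ``bad'' elements) requires you to prove that the discarded fraction is exponentially small; the paper sidesteps this entirely by taking three pairwise independent loxodromics and pigeonholing (Lemma~\ref{ChooseAi}, Extension Lemma~\ref{extend3}), so that a third of any annular set admits a common connector with bounded projections. You also need to pass to a separated subset of the sphere before claiming disjointness of sibling shadows.

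Your Myrberg coding is genuinely different from the paper's and is the one place where you take a distinct route. You interpose sparse excursion words targeting a dense sequence of pairs $(p_k,q_k)$ and verify Definition~\ref{MyrbergDefn} directly; this forces you to control $g_{n_k-1}^{-1}o$ uniformly over all branches via north--south dynamics, the ``secondary difficulty'' you mention. The paper instead enumerates \emph{all} loxodromic elements together with all their powers as the bridges $b_n$ (the same $b_n$ in every branch at level $n$) and invokes the characterization of Myrberg points by fellow-travelling of translated axes (Lemma~\ref{CharMyrberg-general}): since every $h^i$ occurs as a subword of every infinite branch and admissible paths pass within $R_0$ of the endpoints of each bridge segment, every endpoint is Myrberg with no uniformity to check and no dimension bookkeeping for the excursions (the bridge lengths $B_n$ are absorbed by Lemma~\ref{HDLargeTree} exactly as in the hyperbolic case). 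Your coding would work, but it buys nothing here and costs an extra equicontinuity argument; if you pursue it, you should prove a uniform version of the attraction of $a_k^{-1}$ on the complement of a neighbourhood of its repelling point, and verify that the sparseness condition $\sum_{n_k\le N}|W_k|=o(N)$ is compatible with making each $|W_k|$ large enough for the $\epsilon_k$-approximation.
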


Gerasimov proved that the Floyd boundary of any non-elementary relatively hyperbolic group is nontrivial (\cite{Ge2}). In \cite{PYANG}, Potyagailo and the second author showed that for these groups, the Hausdorff dimension of $\partial_\lambda\Gamma$ is $\frac{\e G}{(-\log\lambda)}$.    Indeed, in \cite{PYANG}, the  dimension is computed precisely for uniformly conical points. Thus, Theorem \ref{MyrbergHdimFloyd-intro} is complementary to the main results of \cite{PYANG}. 

We give a brief history about the problem dealing with Hausdorff dimensions of limit sets of Kleinian groups. In 1971, Beardon \cite{Beardon} proved that the critical exponent gives an upper bound on the Hausdorff dimension for any finitely generated Fuchsian group. The lower bound was later established by  Patterson \cite{Patt} in 1976. In this work, Patterson introduced  what we  now call the Patterson-Sullivan (PS) measures in the critical dimension on the limit set. He identified PS measures with Hausdorff measures when  the Fuchsian group has no parabolic elements.  Subsequently, Sullivan \cite{Sul} generalized this to geometrically finite Kleinian groups. In \cite{BJ97}, Bishop and Jones proved that the Hausdorff dimension of the conical limit set equals the critical exponent for any finitely generated Kleinian group. This  generalized Patterson and Sullivan's works, where the groups  were geometrically finite, and hence   contain only countably many non-conical (parabolic) points.  Bishop-Jones' techniques are very general and were developed by many authors later on \cite{Paulin2,FSU}, to prove Hausdorff dimension results   for uniformly conical points.  The corresponding result for  Myrberg limit sets, i.e.\ for non-uniformly conical limit sets, remained open. Theorem \ref{MyrbergHdimThm-intro} completes the picture for non-uniformly conical limit sets. This is new even for Kleinian groups \cite[Conjecture 2]{FM20}.

\subsection{Proof ingredients: quasi-radial trees, amenability, and geometric limits}
To address Question \ref{qn-max} on the maximal Hausdorff dimension, we focus on finding a lower bound. 
Curiously, though non-conical  and Myrberg limit sets  are complementary, the strategy in getting the correct lower bound  is similar.  A key tool is the following notion of 
a \emph{quasi-radial tree}.
\begin{defn}\label{def-qrtree-intro}
A rooted metric tree $(\TT, v_0, d_\TT)$  is said to be  \emph{quasi-radially embedded} in a geodesic metric space $(X,d)$ via $\Phi$, if $\Phi: \TT \to X$ is injective and satisfies  the following. 
There exists $c \geq 1$ such that $\Phi|_{[v_0,v]}$ is a $c-$quasi-geodesic for every vertex $v$ of $\TT$.
We refer to the image of $\Phi$ as a \emph{quasi-radial tree}.
\end{defn}
Let $X$ be a Gromov hyperbolic space. The Gromov boundary of  $\TT$ is  a Cantor set $\partial \TT$. Let $T=\Phi(\TT)$.
We shall provide criteria such that $\Phi: \TT \to T \subset X$
extends continuously to give an embedding of $\partial \TT$
 in $\pU$. Further, we shall obtain 
  a lower bound on $\HD(\partial T)$. Towards this,  we construct $T$ from the following prescribed data: 
\begin{enumerate}
    \item a sequence of integers $K_n$ called \emph{repetitions}, and  a divergent sequence of real numbers $L_n>0$.
    \item   A sequence of finite sets $A_n$   with  $|A_n|\ge \mathrm{e}^{L_n \omega_n}$, where $\omega_n\to \HD(\Lambda)$. 
    \item 
     A sequence $b_n$ of arcs in $X$ called \emph{bridges}. Let $B_n>0$ be the length of $B_n$.
\end{enumerate}
The quasi-radial tree is constructed inductively  in two stages
(see Figure~\ref{fig:loop-bridge-intro}).    
\begin{enumerate}
    \item[\textbf{Step 1}] For each set $A_n$, we choose  $K_n$ elements $(a^{(1)},\cdots, a^{(K_n)})$ in $A_n$ and concatenate them in order. 

\item[\textbf{Step 2}] We  append the bridge $b_n$ to the resulting word in Step (1), and then repeat Step 1 for $A_{n+1}$. 
\end{enumerate}
More precisely, we consider the set $\mathcal W$ of words of the form  $$W=\prod_{i=1}^{K_1}a_1^{(i)} b_1 \prod_{i=1}^{K_2}a_2^{(i)}  b_2 \cdots \prod_{i=1}^{K_m}a_m^{(i)}  b_m,$$ where
$a_j^{(i)} \in A_2$ and each $b_j$ is a bridge.
Let $v_0$ denote the empty word. We construct naturally a  tree $\mathcal T$  rooted at $v_0$  with $\mathcal W$ as its vertex set. Endow   $\mathcal T$ with a metric $d_\TT$ so that the edges corresponding to $a\in A_n$ are assigned length $L_n$ and the edge  $b_n$ is assigned length $B_n$.  

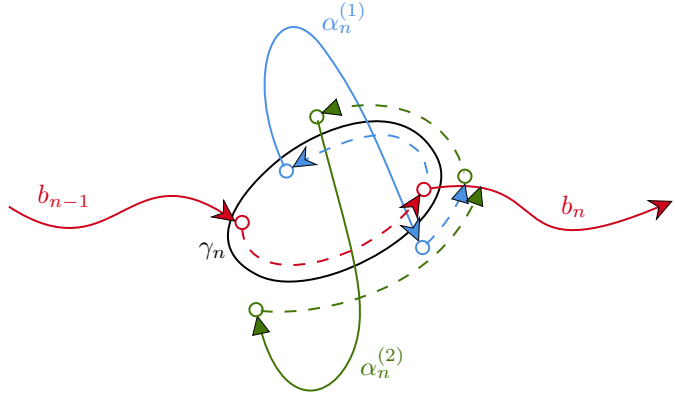
\begin{figure}
    \centering

\tikzset{every picture/.style={line width=0.75pt}} %set default line width to 0.75pt        

\begin{tikzpicture}[x=0.75pt,y=0.75pt,yscale=-1,xscale=1]
%uncomment if require: \path (0,449); %set diagram left start at 0, and has height of 449

%Shape: Polygon Curved [id:ds2990066519864669] 
\draw  [line width=0.75]  (229.67,114.67) .. controls (248.67,99.33) and (291.67,81.33) .. (308.67,113.33) .. controls (325.67,145.33) and (249.67,188.67) .. (218.67,173.33) .. controls (187.67,158) and (210.67,130) .. (229.67,114.67) -- cycle ;
%Curve Lines [id:da6967565014046122] 
\draw [color={rgb, 255:red, 74; green, 144; blue, 226 }  ,draw opacity=1 ]   (231.95,118.51) .. controls (209.58,71.32) and (228.67,27.79) .. (251,58) .. controls (273.21,88.05) and (281.34,113.62) .. (298.6,153.54) ;
\draw [shift={(299.67,156)}, rotate = 246.47] [fill={rgb, 255:red, 74; green, 144; blue, 226 }  ,fill opacity=1 ][line width=0.08]  [draw opacity=0] (10.72,-5.15) -- (0,0) -- (10.72,5.15) -- (7.12,0) -- cycle    ;
\draw [shift={(233,120.67)}, rotate = 243.43] [color={rgb, 255:red, 74; green, 144; blue, 226 }  ,draw opacity=1 ][line width=0.75]      (0, 0) circle [x radius= 3.35, y radius= 3.35]   ;
%Curve Lines [id:da3870015375763124] 
\draw [color={rgb, 255:red, 208; green, 2; blue, 27 }  ,draw opacity=1 ]   (305.51,129.49) .. controls (372.84,119.52) and (337.44,174.93) .. (424.65,137.17) ;
\draw [shift={(427.33,136)}, rotate = 156.15] [fill={rgb, 255:red, 208; green, 2; blue, 27 }  ,fill opacity=1 ][line width=0.08]  [draw opacity=0] (10.72,-5.15) -- (0,0) -- (10.72,5.15) -- (7.12,0) -- cycle    ;
\draw [shift={(302.33,130)}, rotate = 350.32] [color={rgb, 255:red, 208; green, 2; blue, 27 }  ,draw opacity=1 ][line width=0.75]      (0, 0) circle [x radius= 3.35, y radius= 3.35]   ;
%Curve Lines [id:da4076804906858591] 
\draw [color={rgb, 255:red, 65; green, 117; blue, 5 }  ,draw opacity=1 ]   (219.14,197.53) .. controls (230.01,242.14) and (255.93,238.02) .. (266.33,212.67) .. controls (276.89,186.93) and (263.28,153.35) .. (248.77,95.11) ;
\draw [shift={(248.33,93.33)}, rotate = 256.12] [color={rgb, 255:red, 65; green, 117; blue, 5 }  ,draw opacity=1 ][line width=0.75]      (0, 0) circle [x radius= 3.35, y radius= 3.35]   ;
\draw [shift={(218.33,194)}, rotate = 77.93] [fill={rgb, 255:red, 65; green, 117; blue, 5 }  ,fill opacity=1 ][line width=0.08]  [draw opacity=0] (8.93,-4.29) -- (0,0) -- (8.93,4.29) -- cycle    ;
%Curve Lines [id:da8717170045942799] 
\draw [color={rgb, 255:red, 208; green, 2; blue, 27 }  ,draw opacity=1 ]   (93,138.67) .. controls (147.18,173.8) and (152.84,108.67) .. (204.6,144.3) ;
\draw [shift={(207,146)}, rotate = 216.07] [fill={rgb, 255:red, 208; green, 2; blue, 27 }  ,fill opacity=1 ][line width=0.08]  [draw opacity=0] (10.72,-5.15) -- (0,0) -- (10.72,5.15) -- (7.12,0) -- cycle    ;
%Curve Lines [id:da7565320035364775] 
\draw [color={rgb, 255:red, 74; green, 144; blue, 226 }  ,draw opacity=1 ] [dash pattern={on 4.5pt off 4.5pt}]  (239,117.07) .. controls (256.3,106.98) and (280.93,97.73) .. (292.33,105.33) .. controls (304.33,113.33) and (301,112) .. (305.67,127.33) ;
\draw [shift={(236.33,118.67)}, rotate = 328.39] [fill={rgb, 255:red, 74; green, 144; blue, 226 }  ,fill opacity=1 ][line width=0.08]  [draw opacity=0] (10.72,-5.15) -- (0,0) -- (10.72,5.15) -- (7.12,0) -- cycle    ;
%Curve Lines [id:da9458501980861602] 
\draw [color={rgb, 255:red, 74; green, 144; blue, 226 }  ,draw opacity=1 ] [dash pattern={on 4.5pt off 4.5pt}]  (303.47,157.64) .. controls (310.07,151.31) and (318.1,141.68) .. (322.15,130) ;
\draw [shift={(323,127.33)}, rotate = 105.95] [fill={rgb, 255:red, 74; green, 144; blue, 226 }  ,fill opacity=1 ][line width=0.08]  [draw opacity=0] (8.93,-4.29) -- (0,0) -- (8.93,4.29) -- cycle    ;
\draw [shift={(301.67,159.33)}, rotate = 317.73] [color={rgb, 255:red, 74; green, 144; blue, 226 }  ,draw opacity=1 ][line width=0.75]      (0, 0) circle [x radius= 3.35, y radius= 3.35]   ;
%Curve Lines [id:da02208758607778183] 
\draw [color={rgb, 255:red, 65; green, 117; blue, 5 }  ,draw opacity=1 ] [dash pattern={on 4.5pt off 4.5pt}]  (254.17,90.35) .. controls (282.81,82.14) and (314.62,90.81) .. (322.55,121.43) ;
\draw [shift={(323,123.33)}, rotate = 77.59] [color={rgb, 255:red, 65; green, 117; blue, 5 }  ,draw opacity=1 ][line width=0.75]      (0, 0) circle [x radius= 3.35, y radius= 3.35]   ;
\draw [shift={(251,91.33)}, rotate = 341.57] [fill={rgb, 255:red, 65; green, 117; blue, 5 }  ,fill opacity=1 ][line width=0.08]  [draw opacity=0] (8.93,-4.29) -- (0,0) -- (8.93,4.29) -- cycle    ;
%Curve Lines [id:da9530770875907442] 
\draw [color={rgb, 255:red, 65; green, 117; blue, 5 }  ,draw opacity=1 ] [dash pattern={on 4.5pt off 4.5pt}]  (220.08,191.21) .. controls (236.93,193.85) and (306.81,188.68) .. (329.97,131.32) ;
\draw [shift={(331,128.67)}, rotate = 110.22] [fill={rgb, 255:red, 65; green, 117; blue, 5 }  ,fill opacity=1 ][line width=0.08]  [draw opacity=0] (8.93,-4.29) -- (0,0) -- (8.93,4.29) -- cycle    ;
\draw [shift={(217.67,190.67)}, rotate = 18.43] [color={rgb, 255:red, 65; green, 117; blue, 5 }  ,draw opacity=1 ][line width=0.75]      (0, 0) circle [x radius= 3.35, y radius= 3.35]   ;
%Curve Lines [id:da619886349275765] 
\draw [color={rgb, 255:red, 208; green, 2; blue, 27 }  ,draw opacity=1 ] [dash pattern={on 4.5pt off 4.5pt}]  (210.67,149.41) .. controls (212.42,182.93) and (276.22,167.3) .. (298.69,135.17) ;
\draw [shift={(300.33,132.67)}, rotate = 121.61] [fill={rgb, 255:red, 208; green, 2; blue, 27 }  ,fill opacity=1 ][line width=0.08]  [draw opacity=0] (10.72,-5.15) -- (0,0) -- (10.72,5.15) -- (7.12,0) -- cycle    ;
\draw [shift={(210.67,146.67)}, rotate = 92.51] [color={rgb, 255:red, 208; green, 2; blue, 27 }  ,draw opacity=1 ][line width=0.75]      (0, 0) circle [x radius= 3.35, y radius= 3.35]   ;

% Text Node
\draw (106,126.07) node [anchor=north west][inner sep=0.75pt]    {$\textcolor[rgb]{0.82,0.01,0.11}{b_{n-1}}$};
% Text Node
\draw (370,131.4) node [anchor=north west][inner sep=0.75pt]    {$\textcolor[rgb]{0.82,0.01,0.11}{b_{n}}$};
% Text Node
\draw (187.33,155.4) node [anchor=north west][inner sep=0.75pt]    {$\gamma _{n}$};
% Text Node
\draw (249.33,34.07) node [anchor=north west][inner sep=0.75pt]    {$\textcolor[rgb]{0.29,0.56,0.89}{\alpha _{n}^{( 1)}}$};
% Text Node
\draw (268.67,208.07) node [anchor=north west][inner sep=0.75pt]  [color={rgb, 255:red, 65; green, 117; blue, 5 }  ,opacity=1 ]  {$\alpha _{n}^{( 2)}$};

\end{tikzpicture}    
    \caption{Looping  with $K_n=2$ and bridging. We slide the endpoints of shortest arcs $\alpha_n^{(i)}$ on $\gamma_n$, and the terminal point of $b_{n-1}$ to $(b_n)_-$. }
    \label{fig:loop-bridge-intro}
\end{figure}
Depending on the specific setup, the proof will proceed by finding a sequence $A_n$ and  quasi-radially embedding $\mathcal T$ into $X$. The idea of constructing quasi-radially embedded trees (in our sense) first appeared in work of Bishop and Jones \cite{BJ97} to give a lower bound on the Hausdorff dimension of uniformly conical points for Kleinian groups. It was later adapted by Fern\'{a}ndez and Meli\'{a}n \cite{FM01} to study non-conical points in Fuchsian groups with recurrent Brownian motion (cf. Theorem \ref{thm-cheeger-surface-intro}). Our work is particularly inspired by the construction in \cite{FM01,MRT19} and generalizes its key aspects to a broader setup.  \\

\noindent \emph{Non-conical points}. 
Let $M$ be a regular cover as in Theorem \ref{nonconical-normal-covering-intro} or let $M=\mathbb H^3/G$ be a geometrically infinite 3-manifold as in Theorem \ref{thm-kl-nonconical-intro}.  We shall  find a sequence of \emph{oriented} escaping closed geodesics $\gamma_n$ on $M$, and construct $A_n$ from these. Further,
the bridge $b_n$ will be a  shortest arc from $\gamma_n$ to $\gamma_{n+1}$. Choose the set $A_n$ of \emph{oriented} shortest arcs from $\gamma_n$ to itself. We may slide the starting and terminal points of each arc in $A_n$ (respecting the orientation on $\gamma_n$) to the starting point of bridge $b_n$. A \emph{$K_n$-looping}  in $A_n$  means a concatenation of $K_n$ arcs in $A_n$ following   their orientation.    The construction of $\mathcal T$ is best  carried out in $M$ itself: we take a $K_n$-looping  in $A_n$, and then pass though $b_n$ to  $\gamma_{n+1}$ where we do the next $K_{n+1}$-looping. In the end,   lifting of all so-produced paths gives the  desired quasi-radial tree $\mathcal T$.  See Figure \ref{fig:loop-bridge-intro} for illustrating the construction.

In the setup of Theorem \ref{nonconical-normal-covering-intro}, finding  $\gamma_n$ and  corresponding shortest arcs $A_n$ with length about $L_n$ is relatively straightforward. We simply use the deck transformations of $\Gamma/G$ acting on $M$. We deduce 
the cardinality lower bound $|A_n|\ge \mathrm{e}^{L_n \omega_n}$  from the following counting result that may be of independent interest, see Lemma~\ref{ShortestArcsonMfd}. 
\begin{lem}\label{ShortestArcsonMfd-intro}
Let $M$ be a complete Riemannian manifold with pinched negative  curvature. Let $\e G$ be the critical exponent for the action of $G:=\pi_1(M)$
on $\widetilde M$. Let $\gamma$ be a closed geodesic on $M$.  Then there exist  $c, \Delta>0$ depending on $\gamma$ so that the following holds. Let  $\mathrm{Arc}(\gamma,t,\Delta)$ denote the
collection of shortest arcs
 from $\gamma$ to $\gamma$  with length in  $[t-\Delta,t+\Delta]$. Then for any $\epsilon>0$, and for all large $t>0$, 
$$
|\mathrm{Arc}(\gamma,t,\Delta)|\ge c\mathrm{e}^{(\e G-\epsilon) t}.
$$    
\end{lem}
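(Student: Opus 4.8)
The plan is to recast the arc count as an orbit‑counting problem in $\widetilde M$ and feed in the definition of $\e G$. Fix a lift $\widetilde\gamma\subset\widetilde M$ of $\gamma$ and a point $o\in\widetilde\gamma$, and let $H=\mathrm{Stab}_G(\widetilde\gamma)$; since $\gamma$ is a closed geodesic, $H$ is virtually infinite cyclic, with a finite‑index subgroup generated by the hyperbolic isometry $h$ translating along $\widetilde\gamma$ by $\ell=\ell(\gamma)$. A shortest arc from $\gamma$ to $\gamma$ lifts to the common perpendicular between $\widetilde\gamma$ and a translate $g\widetilde\gamma$ with $\widetilde\gamma,g\widetilde\gamma$ disjoint and non‑asymptotic; since an actual positive‑length shortest arc forces exactly this configuration, one gets a bounded‑to‑one correspondence between the shortest arcs and non‑trivial double cosets $HgH\in H\backslash G/H$, under which the arc length equals $w(g):=d(\widetilde\gamma,g\widetilde\gamma)$, a function of the double coset alone. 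So it suffices to count, for all large $t$, the double cosets with $w(g)\in[t-\Delta,t+\Delta]$.

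Next I would compare $w(g)$ with the orbit displacement $d(o,go)$. Given a double coset $D$ of finite width $w_D$, slide, using powers of $h$ on the left and right, the two feet of the common perpendicular to within $\ell$ of $o$ on $\widetilde\gamma$ and of $g'o$ on $g'\widetilde\gamma$; pinching gives an upper curvature bound, so $\widetilde M$ is $\delta$‑hyperbolic, and a thin‑triangle estimate shows the resulting representative $g'$ satisfies $w_D\le d(o,g'o)\le w_D+C_1$ with $C_1=C_1(\gamma)$. Conversely, $g'\widetilde\gamma\ne\widetilde\gamma$ forces ${g'}^{-1}Hg'\cap H$ to be finite, so the elements $h^m g' h^n$ are pairwise distinct and have $d(o,h^m g' h^n o)\le(|m|+|n|)\ell+w_D+C_2$; hence the number of elements of $D$ lying in the ball $B(o,t)$ is $\asymp(t-w_D+1)^2$ when $w_D\le t$ and $0$ otherwise. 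Summing over double cosets gives, with constants depending only on $\gamma$,
\[
N(t):=\#\{g\in G:\ d(o,go)\le t\}\ \asymp\ \sum_{D:\,w_D\le t}(t-w_D+1)^2 .
\]

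Write $Q(s):=\#\{D:\ w_D\le s\}$. The definition of $\e G$ gives $N(s)\le\mathrm{e}^{(\e G+\epsilon/2)s}$ for all large $s$, and via the injection $D\mapsto g'$ this yields $Q(s)\le\mathrm{e}^{(\e G+\epsilon/2)(s+C_1)}$; and $N(t)\ge\mathrm{e}^{(\e G-\epsilon/2)t}$ holds once one knows that $G$'s orbit‑counting function is genuinely of exponential growth rate $\e G$. Splitting the displayed sum at $w_D=t-T$ gives
\[
N(t)\ \le\ C(t+1)^2\,Q(t-T)+C(T+1)^2\bigl(Q(t)-Q(t-T)\bigr),
\]
and taking $T=\eta t$ with $\eta=\eta(\epsilon,\e G)$ small but fixed makes the first term at most $\tfrac12\mathrm{e}^{(\e G-\epsilon/2)t}$, so $Q(t)-Q(t-\eta t)\ge\bigl(2C(\eta t+1)^2\bigr)^{-1}\mathrm{e}^{(\e G-\epsilon/2)t}$: there are at least $\mathrm{e}^{(\e G-\epsilon)t}$ double cosets of width in $(t-\eta t,t]$. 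Pigeonholing that interval into windows of length $2\Delta$ then produces a $t'$ comparable to $t$ with $\#\mathrm{Arc}(\gamma,t',\Delta)\ge\mathrm{e}^{(\e G-2\epsilon)t'}$.

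The step I expect to be the main obstacle is making the conclusion hold for \emph{all} large $t$ with a \emph{fixed} window width $\Delta$, rather than merely along a sequence and with a $t$‑proportional window. Two inputs are needed: a genuine lower bound $N(t)\ge\mathrm{e}^{(\e G-\epsilon/2)t}$ valid for all large $t$ — which, in the setting where the lemma is used (Theorem~\ref{nonconical-normal-covering-intro}, $G$ an infinite normal subgroup of a lattice $\Gamma$ with $\Gamma/G$ amenable), should follow from divergence of $\Gamma$, giving $N_\Gamma(t)\asymp\mathrm{e}^{\e\Gamma\,t}$, together with $\e G=\e\Gamma$ — and a quantitative form of the width/displacement comparison pinning down an admissible $\Delta=\Delta(\gamma)$ uniformly. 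Once the orbit count is controlled for every large $t$, the splitting‑and‑pigeonhole argument above runs verbatim. Everything else — the arc/double‑coset dictionary and the elementary summations — is routine.
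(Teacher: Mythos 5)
Your opening move coincides with the paper's: both proofs fix a lift $\tilde\gamma$ with stabilizer $H$ and identify shortest arcs from $\gamma$ to itself with double cosets $HgH$, with the arc length equal to $d(\tilde\gamma,g\tilde\gamma)=d(Ho,gHo)$ up to a bounded error. The divergence comes afterwards, and it is where your argument has a genuine gap that you yourself flag. You recover the double-coset count by inverting the cumulative relation $N(t)\asymp\sum_{D:\,w_D\le t}(t-w_D+1)^2$ and then splitting and pigeonholing. This can only ever produce, for each large $t$, \emph{some} radius $t'\in((1-\eta)t,t]$ at which a $2\Delta$-window is well populated; it does not give the estimate for every large $t$, which is what the lemma asserts and what the applications require (the quasi-radial tree construction needs sets $A_n$ of arcs in prescribed length windows $[L_n-\Delta,L_n+\Delta]$ for freely chosen $L_n$). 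Your proposed repair — assume $G$ is of divergence type with $N(t)\asymp\mathrm{e}^{\e G t}$ — does not cover the generality of the statement: the lemma is invoked for geometrically infinite Kleinian groups via geometric limits, for the completed surfaces $\overline S_n$ and graphs $\widetilde F_n$, and for confined/normal subgroups, in none of which divergence is assumed. A pointwise-in-$t$ lower bound simply cannot be extracted from the cumulative count $N(t)$ without extra regularity.

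The paper's mechanism is different and avoids the inversion altogether: Theorem~\ref{Thm:Growth-DoubleCosets} constructs a direct map $g\mapsto Hf_1gf_2H$ from the annular set $A(o,t,\Delta)$ into double cosets, where $f_1,f_2$ are drawn from a fixed finite set $F$ of pairwise independent loxodromics chosen (Lemma~\ref{BddProjLem}, Lemma~\ref{DCExtension}) so that the word $(a,f_1,g,f_2,b)$ labels a uniform quasi-geodesic; the Morse lemma then pins the width of $Hf_1gf_2H$ to within a bounded additive constant $\Delta_0$ of $d(o,go)$, and Lemma~\ref{lem-finite-one} shows the map is at most $M$-to-one. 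This transfers $|A(o,t,\Delta)|$ to $|\mathrm{Arc}(\gamma,t,\Delta')|$ for the \emph{same} $t$ and a \emph{fixed} $\Delta'$, reducing the lemma to the annular lower bound $|A(o,t,\Delta)|\ge\mathrm{e}^{(\e G-\epsilon)t}$ for all large $t$ (which both arguments need; it follows from supermultiplicativity of annuli via the same extension lemma, not literally from the limsup definition). If you want to salvage your route, the missing idea is precisely this "padding by auxiliary loxodromics" injection; without it, your conclusion holds only along an unspecified sparse sequence of lengths.
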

We remark that, when $M$ is geometrically finite, a precise counting of shortest arcs has been well-studied in literature (see survey \cite{PP16}) and the above one follows from it in this case. In our applications, however, we need to consider geometrically infinite manifolds.  

In Theorem \ref{thm-kl-nonconical-intro}, if  $M=\mathbb H^3/G$ is a  general geometrically infinite hyperbolic 3-manifold, locating the desired sets $A_n$ of shortest arcs directly is quite 
subtle. We will use an indirect  approach based on the
model manifold technology of Minsky \cite{minsky-elc1} and Brock-Canary-Minsky \cite{minsky-elc2} as adapted by the first author in \cite{mahan-ibdd,mahan-split}. We prove  the following result, see Theorem \ref{thm-kl-nonconical}. We refer to a complete hyperbolic manifold $M^h$ minus a small neighborhood of its cusps as the truncation $M$ of $M^h$.
\begin{thm} 
	Let $\Gamma$  denote a finitely generated geometrically infinite Kleinian group, $M^h = \Hyp^3/\Gamma$, and  $M$ denote the associated truncated 3-manifold. Then there exists an unbounded sequence of points $x_n \in M$, such that $(M,x_n)$ converges geometrically to  a geometrically infinite 
	truncated hyperbolic 3-manifold $N$.  Further, if $\Gamma_\infty$ is the associated Kleinian group, then the limit set of $\Gamma_\infty$ is the entire 2-sphere.
\end{thm}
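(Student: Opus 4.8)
The plan is to push the basepoint deep into a degenerate end of $M^h$, extract a geometric limit by compactness, and then use the model-manifold machinery to show that the limit is doubly degenerate, which forces the limit set to be all of $S^2$.

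First, since $\Gamma$ is finitely generated, tameness \cite{agol-tameness,gab-cal} together with the work of Thurston, Bonahon and Canary \cite{thurstonnotes,bonahon-bouts,canary} shows that $M^h$ is homeomorphic to the interior of a compact manifold, has finitely many ends, and --- being geometrically infinite --- has at least one simply degenerate end $E$, with base surface $S$ and filling ending lamination $\lambda$. I would take a sequence of simple closed geodesics $\gamma_n \subset M$ exiting $E$, with free homotopy classes converging to $\lambda$ in $\mathcal{PML}(S)$, and put $x_n \in \gamma_n$; this is an unbounded sequence, and after re-indexing by ``depth in $E$'' I can also arrange $d_M(x_n,\partial\mathcal C(M))\to\infty$ (using that the convex core $\mathcal C(M)$ exits the degenerate end while $\partial\mathcal C(M)$ stays near a compact core), so that $x_n$ is far from the compact core and from every geometrically finite end. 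By precompactness of pointed complete hyperbolic $3$-manifolds in the geometric topology, after passing to a subsequence $(M,x_n)$ converges geometrically to some $(N,x_\infty)=(\Hyp^3/\Gamma_\infty,x_\infty)$, with $\Gamma_\infty$ discrete (and non-elementary, since it carries the topology near $x_\infty$).

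Next I would identify $N$. By the Minsky and Brock--Canary--Minsky model-manifold theorem \cite{minsky-elc1,minsky-elc2}, in the form adapted by the first author \cite{mahan-ibdd,mahan-split}, the truncated end $E$ is uniformly bi-Lipschitz to a model built as a union of ``blocks'' along a combinatorial skeleton carrying the hierarchy of $\lambda$. Re-centring the skeleton at the block containing $x_n$ and passing to a further subsequence (a diagonal argument over finite windows of blocks --- legitimate since, up to collapsing short curves to cusps, the geometry of a window of blocks varies in a controlled way), the re-centred models converge to a bi-infinite model $\mathcal M_\infty$. Because $\lambda$ is filling, the combinatorial data keeps ``filling'' in both directions, so $\mathcal M_\infty$ is the model of a \emph{doubly degenerate} structure (possibly with some extra cusps): the geometrically finite ends of $M$ have receded to infinity by the depth condition on $x_n$, while both ends of $\mathcal M_\infty$ are simply degenerate. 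Transferring through the uniform bi-Lipschitz maps, $N$ is a tame, finitely generated hyperbolic $3$-manifold all of whose ends are degenerate, hence geometrically infinite. (When $E$ is incompressible one may instead pass to the cover $\Hyp^3/\pi_1 S$, into which $E$ embeds by Canary's covering theorem, so that $N$ is literally a doubly degenerate surface group; the compressible case is handled by the split-geometry model of \cite{mahan-split}.) Finally, since $\Gamma_\infty$ is finitely generated, the Ahlfors finiteness theorem gives that $\Omega(\Gamma_\infty)/\Gamma_\infty$ is a finite-type Riemann surface, and by tameness together with the Bonahon--Canary description of ends each of its components corresponds to a geometrically finite end of $N$; as there are none, $\Omega(\Gamma_\infty)=\emptyset$, i.e.\ $\Lambda\Gamma_\infty=S^2$.

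The main obstacle is the identification of $N$, and specifically showing that the end of $N$ lying ``behind'' $x_n$ --- the direction toward the thick part and the geometrically finite ends of $M$ --- is a \emph{degenerate} end rather than a geometrically finite one or some other boundary component. This is exactly where the model-manifold technology is needed: it lets one read the geometry deep inside $E$ off the filling lamination $\lambda$, show (after passing to a subsequence along which the hierarchy data converges) that this geometry is sufficiently ``recurrent'' to force the backward limit end to be degenerate, and keep the geometric limit under control via uniform bi-Lipschitz bounds. Handling the compressible case, where one does not have a surface-group cover at hand, is a further technical point that the split-geometry model of \cite{mahan-split} is designed to address.
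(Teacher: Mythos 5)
Your overall strategy---push the basepoint deep into a degenerate end, extract a geometric limit, and use the model-manifold machinery to show the limit is degenerate in both directions---is the same as the paper's. But the execution has a genuine gap at exactly the step you flag as "the main obstacle," and your proposed resolution of it is not correct as stated. You assert that, because the ending lamination $\lambda$ is filling, the re-centred models converge to a doubly degenerate model on the full surface $S$ (or a surface group after passing to a cover), so that $N$ is a tame, \emph{finitely generated} manifold with only degenerate ends. In the unbounded-geometry case this is false: the geometric limit is in general supported only on a proper essential subsurface $\Sigma\subsetneq S$, and it typically acquires infinitely many rank-two cusps, so $\Gamma_\infty$ is \emph{infinitely generated}. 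The paper's proof isolates precisely this phenomenon: one takes the minimal complexity $\zeta_0$ for which the hierarchy $\HH(\tau,\LL)$ contains tight geodesics $g_{W_n}$ of unbounded length supported on subsurfaces $W_n$ with $\zeta(W_n)=\zeta_0$ (all geodesics on strictly smaller complexity being uniformly bounded), passes to a fixed topological type $\Sigma$, and places $x_n$ in the thick part of the middle block of the i-bounded geometry sub-model $\Phi(E_m(W_n))\cong\Sigma\times[0,\ell(g_{W_n})]$. The limit is then a drilled product of $\Sigma$ with $\R$ of generalized i-bounded geometry. Nothing in your sketch produces the subsurface $\Sigma$ or explains why the "backward" end of the limit is degenerate; the phrase "the combinatorial data keeps filling in both directions" is doing all the work and is not justified.

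This gap propagates to your final step: you invoke the Ahlfors finiteness theorem to conclude $\Omega(\Gamma_\infty)=\emptyset$, but Ahlfors finiteness requires $\Gamma_\infty$ to be finitely generated, which fails here. The paper instead argues directly that $N$ contains closed geodesics exiting both ends of the drilled product, so $N^h$ equals its own convex core and hence $\Lambda\Gamma_\infty=S^2$. A further, more minor, issue: you place $x_n$ on the exiting closed geodesics $\gamma_n$ and then invoke precompactness in the geometric topology, but precompactness requires a uniform lower bound on the injectivity radius at $x_n$, and the $\gamma_n$ may be arbitrarily short when the geometry is unbounded; the paper avoids this by choosing $x_n$ in the thick part of the model blocks.
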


Indeed, fixing a closed geodesic $\tilde \gamma$ on $N$ we apply 
Theorem \ref{ShortestArcsonMfd-intro} to
obtain adequate shortest arcs $\tilde A$ in $N$ with end-points on $\tilde \gamma$. Finally, using the fact that $N$ is a  geometric limit, we pull back $\tilde A$ to a sequence of shortest arcs $A_n$ on $M$. This furnishes the estimates on $|A_n|$ as we wanted. We summarize this geometric limit argument in a general Theorem \ref{NonConicalFromGeometricLimit}.\\

\noindent \emph{Completion of the proofs of Theorem \ref{nonconical-normal-covering-intro} and Theorem \ref{thm-kl-nonconical-intro}}:\\
Let $\Lambda=\Lambda\Gamma$ or $\Lambda=\partial \mathbb H^3=S^2$.
Recall we use the parameters $(L_n,B_n,K_n)$ to construct the quasi-radial tree $\Phi: \mathcal T\to X$ and $\omega_n\to \HD(\Lambda)$. The repetitions $K_n$ are of primary importance.
In practice, the bridge length $B_n$ is typically not  fixed  at
the outset. In  the course of the construction,  we will have to choose $K_n$ large enough to compensate the effect 
of $B_n$ on the critical exponent of $T$. The technical  Lemma \ref{QuasiRadialTree} and Lemma \ref{HDLargeTree} show that $\HD(\partial T)=\HD(\Lambda)$. Moreover, one can verify that each infinite radial ray in $T$ projects to an escaping geodesic in $M$. Thus $\partial T$ consists of non-conical points. This completes the proof of Theorem \ref{nonconical-normal-covering-intro} or Theorem \ref{thm-kl-nonconical-intro}. See  Corollary \ref{cor-normalsubgp} and Theorem \ref{NonConicalFromGeometricLimit} for details.\\

\noindent \emph{About the proofs of Theorem \ref{thm-cheeger-surface-intro} and Theorem \ref{thm-amenable-graph-intro}}:\\
Now, let $M$ denote a hyperbolic surface or a $d$-regular graph $X/G$. Amenability of  $M$ enters  the proofs at the stage where $A_n$'s are constructed. The Folner sequence characterization of 
amenability ensures that $M$  contains a sequence $S_n$ of compact subsurfaces or subgraphs with $vol(\partial S_n)/vol(S_n)\to 0$. The 2-dimensional or 1-dimensional geometry of $M$ allows  us to complete $S_n$ to obtain a \textit{geometrically finite}  surface or a $d$-regular graph $\tilde S_n$ with \textit{finite core} respectively. 

The inequality $vol(\partial S_n)/vol(S_n)\to 0$ in conjunction with the Patterson formula (\ref{EPSC}) or the Grigorchuk co-growth formula (\ref{Grigorchuk}) implies that the critical exponent of $\tilde S_n$ tends to $1$ or $\log(d-1)$ respectively. Finally, we construct $A_n$  in $\tilde S_n$ with the desired estimates using Lemma \ref{ShortestArcsonMfd-intro} or the analog Lemma \ref{ShortestArcsonGraphs} in graphs. The rest of the proof  is completed exactly as  above. See Theorems \ref{thm-cheeger-surface} and \ref{thm-amenable-graph} for details.

In Example \ref{eg-2dsurf} we construct   an infinite type surface $\Sigma$  with zero Cheeger constant, so that Theorem \ref{thm-cheeger-surface-intro} applies. However, a geometric limit argument as  in Theorem \ref{thm-kl-nonconical-intro}  fails: any geometric limit $(\Sigma,x_n)$ with unbounded $x_n\in \Sigma$ is the hyperbolic plane.
\\

\noindent \emph{Myrberg limit set.} Let $X$ be the Gromov hyperbolic space in Theorem  \ref{MyrbergHdimThm-intro}. We perform a similar construction of a quasi-radial tree $\Phi: \TT\to X$. But the scenario is much simpler.

Here, $A_n$ is given by the annular set $A(n,\Delta,o):=\{go: |d(o,go)-L_n|\le \Delta\}$. The estimates $|A_n|\ge \mathrm{e}^{L_n\omega_n}$ follow immediately from the definition of $\omega_G$. The bridges $b_n$ are given by the set of \emph{all} loxodromic elements in $G$ in some order. We do not need to repeat the looping, i.e.\ $K_n=1$
for all $n$. So the quasi-radial tree  $\TT$ is constructed from the set $\mathcal W$ of words of the form  $$W=a_1  b_1 a_2   b_2 \cdots a_m   b_m.$$
By the characterization of Myrberg limit points (Lemma \ref{lem:CharMyrberg}), each radial ray in $\TT$ labeled by $W$ will terminate at a Myrberg point.
This is because $W$ contains every loxodromic element as 
a subword. This proves that  the  quasi-radial tree $T$ accumulates to Myrberg points in $\partial X$. Lemma \ref{QuasiRadialTree} and Lemma \ref{HDLargeTree} concludes the proof of Theorem  \ref{MyrbergHdimThm-intro}; see   Theorem \ref{MyrbergHdimHypThm} for details.  

It turns out that the above sketch   works for  groups with   contracting elements. This class of groups
includes relatively hyperbolic groups, groups with rank-1 elements and mapping class groups. Hence,  Theorem \ref{MyrbergHdimFloyd-intro} on  the Myrberg limit set in the Floyd boundary is proved along the same lines with somewhat different ingredients; see Theorem \ref{MyrbergHdimFloyd} for details. 

In Section~\ref{sec-contracting} we 
carry out   the above construction for Myrberg limit sets for actions on general metric spaces with contracting elements; see Theorem \ref{thm-qrtree-Myrberg-general}.  To end the introduction, let us mention a sample application to mapping class  groups, see Theorem~\ref{MyrberginMCG}.

\begin{thm}\label{MyrberginMCG-intro}
Let $G=\mcg$ denote the mapping class group of a closed orientable surface $\Sigma_g$ with $g\ge 2$. Consider the proper action of $G$ on the Teichm\"uller space $\T_g$. Fix a point $o\in \T_g$. Then there exists a quasi-radial tree $T$ rooted at $o$ with vertices contained in $Go$ so that $\omega_T=(6g-6)$ and all accumulation points of $T$ in the Thurston boundary  consists of Myrberg limit points.     
\end{thm}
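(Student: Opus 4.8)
The plan is to specialize the general construction of Theorem~\ref{thm-qrtree-Myrberg-general} to the mapping class group $G = \mcg$ acting on Teichm\"uller space $\T_g$, using the fact that this action has contracting elements in the sense developed in Section~\ref{sec-contracting}. The three inputs needed from the general machinery are: (i) a class of contracting elements whose axes witness Myrberg-type density in the boundary; (ii) a critical exponent equal to the target dimension $6g-6$; and (iii) enough of these elements to achieve the cardinality bounds $|A_n| \ge \mathrm{e}^{L_n\omega_n}$ with $\omega_n \to 6g-6$. For (i), I would take the pseudo-Anosov elements of $G$: by Minsky's work on the contraction property of thick Teichm\"uller geodesics (or the axes of pseudo-Anosovs), each pseudo-Anosov is a contracting element for the $G$-action on $(\T_g, d_{\T})$, and the set of all such elements plays the role of the ``bridges'' $b_n$ enumerated in some order. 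For (ii) and (iii), I would invoke the fact that $\T_g$ with the Teichm\"uller metric has Hausdorff dimension $6g-6$ for the Thurston boundary in the appropriate sense, and, more to the point, that the critical exponent $\omega_G$ of $\mcg \curvearrowright \T_g$ equals $6g-6$ --- this is a theorem of Athreya--Bufetov--Eskin--Mirzakhani (volume growth of balls in Teichm\"uller space). The annular sets $A(n,\Delta,o) = \{go : |d_{\T}(o,go) - L_n| \le \Delta\}$ then satisfy $|A_n| \ge \mathrm{e}^{L_n \omega_n}$ with $\omega_n \to 6g-6$ directly from the definition of $\omega_G$.

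With these inputs in place, I would build the word set $\mathcal W$ of words $W = a_1 b_1 a_2 b_2 \cdots a_m b_m$ exactly as in the Myrberg sketch: $a_j \in A_j$ (elements of near-prescribed translation length $L_j$), and $b_j$ running through an enumeration of all pseudo-Anosov elements, with $K_n = 1$ throughout. The metric tree $\TT$ has edges of length $L_n$ for the $a$-edges and lengths $B_n = d_{\T}(o, b_n o)$ for the bridge edges. Applying Lemma~\ref{QuasiRadialTree}, the orbit map sends $\TT$ to a quasi-radial tree $T \subset \T_g$ with vertices in $Go$, where the bridges' lengths $B_n$ are absorbed by choosing $L_n$ growing fast enough so that $\sum B_n / \sum L_n \to 0$; this guarantees $\omega_T = 6g-6$ via Lemma~\ref{HDLargeTree}. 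The contracting property of the chosen elements is what makes the concatenations $a_1 b_1 \cdots a_m b_m$ into uniform quasi-geodesics, so that $\Phi: \TT \to T$ extends to an embedding $\partial \TT \hookrightarrow \partial_{\mathrm{Th}}\T_g$ of the Cantor boundary into the Thurston boundary.

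Finally, to see that every accumulation point of $T$ is a Myrberg limit point, I would use the characterization of Myrberg points for convergence-type actions (Lemma~\ref{lem:CharMyrberg}): a boundary point $\xi$ reached by an infinite radial ray in $T$ labeled by the infinite word $W = a_1 b_1 a_2 b_2 \cdots$ is Myrberg provided $W$ contains every loxodromic (here: pseudo-Anosov) element as a subword infinitely often, which holds by construction since the $b_n$ enumerate all such elements. There is a subtlety specific to Teichm\"uller space: the relevant boundary in the statement is the \emph{Thurston} boundary $\partial_{\mathrm{Th}}\T_g = \pmf$, not the Gromov boundary (Teichm\"uller space is not hyperbolic), so one must know that (a) the contracting quasi-geodesics in $T$ converge to well-defined points in $\pmf$ --- which follows from the North-South dynamics / uniqueness of limit points for contracting rays --- and (b) the Myrberg characterization transfers to this setting. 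I expect step (a), namely checking that the general ``contracting element'' formalism of Section~\ref{sec-contracting} applies cleanly to the Teichm\"uller metric and produces the desired convergence to $\pmf$ with the correct density statement, to be the main technical obstacle; once the framework of Theorem~\ref{thm-qrtree-Myrberg-general} is confirmed to apply, the cardinality estimates and the tree dimension computation are routine given $\omega_G = 6g - 6$.
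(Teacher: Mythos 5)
Your proposal follows essentially the same route as the paper: specialize the general quasi-radial-tree construction for groups with contracting elements (Theorem~\ref{thm-qrtree-Myrberg-general}) to $\mcg\act\T_g$, using pseudo-Anosov elements as the contracting elements/bridges, the annular sets from $\e G=6g-6$, $K_n=1$, and the Myrberg characterization via recurrence to translated axes. The one step you flag as the ``main technical obstacle''---that the contracting-element formalism and the Myrberg characterization apply to the Thurston boundary---is exactly how the paper closes the argument, by citing that $\pmf$ with the Kaimanovich--Masur partition satisfies the convergence-boundary axioms (A)(B)(C) and noting, via Masur's criterion, that the resulting Myrberg points are uniquely ergodic so their $[\cdot]$-classes are singletons.
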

%The theorem is non-trivial if $\mcg/\Gamma$ is amenable; equivalently $\e \Gamma=(6g-6)$.

\noindent\textbf{Organization of the paper}. The paper is organized as follows. Section \ref{sec-prel} introduces the basics of  Gromov hyperbolic spaces, and discusses various classes of limit points  with their relation to geodesic rays. In Section \ref{sec-hausd} we develop  general procedures  to build a quasi-radial tree  from group actions (\textsection\ref{subsec-qtreegroup}) and from prescribed patterns (\textsection\ref{subsec-qtreepattern}). Section \ref{sec-arcs} provides another ingredient on counting shortest arcs between geodesics.  Sections \ref{sec-nonconical} and \ref{sec-nonconical-kl}  are the bulk of the paper. In \textsection\ref{sec-nonconical}, we explain the concrete realization of constructions given in Section \ref{sec-hausd} on Riemannian manifolds and Gromov hyperbolic spaces:  Theorem \ref{nonconical-normal-covering-intro} for normal coverings, Theorem \ref{thm-cheeger-surface-intro} for surfaces and Theorem \ref{thm-amenable-graph-intro} for graphs are proved.  Section \ref{sec-nonconical-kl} is devoted to the proof of Theorem \ref{thm-kl-nonconical-intro} in Kleinian groups. In last two sections, the Hausdorff dimension of Myrberg limit sets are computed  on the Gromov boundary of hyperbolic spaces (Theorem \ref{MyrbergHdimThm-intro}), and  on the Floyd boundary of finitely generated groups (Theorem \ref{MyrbergHdimFloyd-intro}). The latter contained in Section \ref{sec-contracting}  is proved by generalizing Section \ref{sec-hausd} to groups with  contracting elements, which also have applications to mapping class groups in Theorem \ref{MyrberginMCG-intro}.  

\subsection*{Acknowledgments}
We are grateful for helpful discussions with Xiaolong Han, Beibei Liu and Tianyi Zheng. MM is partly supported by a DST JC Bose Fellowship,  the Department of Atomic Energy, Government of India, under project no.12-R\&D-TFR-5.01-0500, and by an endowment of the Infosys Foundation. MM also acknowledges support of the Institut Henri Poincare (UAR 839 CNRS-Sorbonne University) and LabEx CARMIN, grant number ANR-10-LABX-59-01.
WY is partially supported by National Key R \& D Program of China (SQ2020YFA070059) and  National Natural Science Foundation of China (No. 12131009 and No. 12326601).

\section{Preliminaries}\label{sec-prel}
Let $(X,d)$ be a metric space.  A geodesic segment in $X$ is an isometrically embedded closed interval. Geodesic rays and  bi-infinite geodesics are isometrically embedded  copies of $[0,\infty)$ and $(-\infty, \infty)$ respectively. The space $X$ is geodesic if every pair of points in $X$ can be joined by a possibly non-unique geodesic segment. For $x, y \in X$, $[x, y]$ will denote a geodesic segment between  $x$ and $y$.   

\begin{defn}\label{def-hyp}
A geodesic metric space $X$ is (Gromov) \textit{hyperbolic}  if there exists  $\delta\ge 0$ so that for  $x, y, z \in X$, $[x, y] \subseteq N_\delta ([y,z] \cup [z,x])$.    
\end{defn}

\begin{defn}\label{def-qgeo}
Given  $c\geq 1$, a map between two metric spaces $f: (X,d_X)\rightarrow (Y,d_Y)$ is called a \textit{$c$-quasi-isometric embedding} if the following  holds
\begin{equation*}
\frac{d_X(x,x')}{c}-c\leq d_Y(f(x),f(x'))\leq cd_X(x,x')+c,
\end{equation*}
for all $x,x'\in X$. Furthermore, if there exists $R>0$ such that $Y\subset N_R(f(X))$, then $f$ is called a \textit{$c$-quasi-isometry}.

More generally, given $K \geq 1, \ep \geq 0$, $f: (X,d_X)\rightarrow (Y,d_Y)$ is  a \textit{$(K,\ep)$-quasi-isometric embedding} if 
\begin{equation*}
	\frac{d_X(x,x')}{K}-\ep\leq d_Y(f(x),f(x'))\leq Kd_X(x,x')+\ep.
\end{equation*}
\end{defn}

A $c$-quasi-isometric embedding $\gamma: I \subseteq (-\infty,+\infty)\to X$ of an interval $I$ into $X$ shall be called \textit{$c$-quasi-geodesic}. Similarly, a $(K,\ep)$-quasi-isometric embedding $\gamma: I \subseteq (-\infty,+\infty)\to X$  shall be called \textit{$(K,\ep)$-quasi-geodesic}.
 Since $\gamma$ is not necessarily continuous, we actually work with a continuous version of quasi-geodesics. A path $\gamma$ is  a (continuous) \textit{$c$-quasi-geodesic} for some $c\ge 1$ if any finite subpath $\beta$ is rectifiable and $\ell(\beta)\le cd(\beta_-,\beta_+)+c$. If $\gamma: I\to X$ is given by arc-length parametrization, then it is a $c$-quasi-isometric embedding. Conversely, one could construct a continuous quasi-geodesic from the image $\gamma(I)$ of a $c$-quasi-isometric embedding in a finite neighborhood. In what follows,  the two notions are used  interchangeably without explicit mention.     Recall \cite[Ch. III.H]{bridson-haefliger} that hyperbolicity for geodesic metric spaces is invariant under quasi-isometry.

\begin{lem}
Suppose $X$ is  $\delta$-hyperbolic. Then, given $c \geq 1$ there exists $D=D(\delta,c)$ such that  any two $c$-quasi-geodesics  with the same endpoints are contained in a $D$-neighborhood of each other.    
\end{lem}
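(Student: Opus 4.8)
This is the Morse lemma (stability of quasi-geodesics), and the plan is to prove the sharper statement that a single $c$-quasi-geodesic and a geodesic sharing its endpoints lie at bounded Hausdorff distance, and then to compare two $c$-quasi-geodesics through the common geodesic. First I would reduce to \emph{continuous} $c$-quasi-geodesics: the image of any $c$-quasi-isometric embedding of an interval lies within bounded Hausdorff distance of a continuous $c'$-quasi-geodesic with the same endpoints (as already noted in the text, the two notions are used interchangeably). I would isolate one auxiliary fact, a \emph{logarithm estimate}: if $\sigma$ is a rectifiable path from $a$ to $b$ in a $\delta$-hyperbolic space and $w\in[a,b]$, then $d(w,\sigma)\le \delta\log_2\ell(\sigma)+C_0$ for a universal constant $C_0$; this follows by dyadically subdividing $\sigma$ into about $\log_2\ell(\sigma)$ levels of subarcs of length $\le 1$ and invoking the thin-triangle inequality once per level.

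\emph{Step 1: $[p,q]\subseteq N_{R_0}(\gamma)$ with $R_0=R_0(\delta,c)$.} Let $D_0:=\max_{x\in[p,q]}d(x,\gamma)$, realized at $x_0\in[p,q]$, and pick $y,z\in[p,q]$ on opposite sides of $x_0$ with $d(y,x_0)=d(z,x_0)=2D_0$ (if $x_0$ is within $2D_0$ of an endpoint of $[p,q]$, use that endpoint instead, which only helps since it already lies on $\gamma$). Choose $y',z'$ on $\gamma$ with $d(y,y')\le D_0$, $d(z,z')\le D_0$, and let $P$ be the path from $y$ to $z$ formed by $[y,y']$, then the subarc of $\gamma$ from $y'$ to $z'$, then $[z',z]$. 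Its length is linear in $D_0$: the middle piece has length $\le c\,d(y',z')+c$ and $d(y',z')\le 6D_0$. Applying the logarithm estimate to $x_0\in[y,z]$ and $P$ gives $w\in P$ with $d(x_0,w)\le \delta\log_2\ell(P)+C_0$. The key point is that $w$ cannot lie on the detour segments: since $d(x_0,y)=d(x_0,z)=2D_0$ while each detour has length $\le D_0$, any $w$ on $[y,y']$ or $[z',z]$ satisfies $d(x_0,w)\ge D_0$; and if $w$ lies on the subarc of $\gamma$, then $d(x_0,w)\ge d(x_0,\gamma)=D_0$. In every case $D_0\le \delta\log_2(\text{linear in }D_0)+C_0$, and since the left side grows linearly this forces $D_0\le R_0(\delta,c)$.

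\emph{Step 2: $\gamma\subseteq N_{R_1}([p,q])$ with $R_1=R_1(\delta,c)$.} Consider a maximal subarc $\gamma|_{[t_-,t_+]}$ lying outside $N_{R_0}([p,q])$, with endpoints projecting to $p_-,p_+\in[p,q]$ at distance $\le R_0$. For each $w$ on the sub-geodesic $[p_-,p_+]$, its nearest point on $\gamma$ has parameter outside $(t_-,t_+)$ (otherwise $w$ would be within $R_0$ of $[p,q]$, contradicting maximality). Hence $[p_-,p_+]=A\cup B$, where $A$ (resp.\ $B$) is the closed set of $w$ within $R_0$ of $\gamma$ before $t_-$ (resp.\ after $t_+$); since $p_-\in A$, $p_+\in B$ and $[p_-,p_+]$ is connected, $A\cap B\ne\emptyset$, yielding parameters $s_1\le t_-\le t_+\le s_2$ with $d(\gamma(s_1),\gamma(s_2))\le 2R_0$. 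The quasi-geodesic inequality then bounds $|s_1-s_2|$, hence $\ell(\gamma|_{[t_-,t_+]})$, by a constant $R_1(\delta,c)$, so every point of $\gamma$ is within $R_0+R_1$ of $[p,q]$. Combining Steps 1 and 2, the Hausdorff distance between $\gamma$ and $[p,q]$ is at most $R:=R_0+R_1$; and if $\gamma_1,\gamma_2$ are $c$-quasi-geodesics with the same endpoints, both lie within $R$ of the common geodesic, hence within $D:=2R(\delta,c)$ of each other.

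I expect the main obstacle to be Step 1 — precisely the calibration of the auxiliary points $y,z$ at distance exactly $2D_0$ from the extremal point $x_0$, which is what makes the detour segments too short to reach $x_0$ and thereby upgrades the logarithm estimate into a genuine bound on $D_0$; without this the estimate collapses to a tautology. The reduction to continuous quasi-geodesics and the connectedness argument in Step 2 are routine by comparison.
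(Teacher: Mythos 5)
Your argument is correct and complete: it is the standard proof of the Morse lemma (stability of quasi-geodesics), with the exponential-divergence/logarithm estimate, the calibration of $y,z$ at distance $2D_0$ from the extremal point, and the connectedness argument for the reverse inclusion all in order. The paper itself offers no proof of this lemma — it is quoted as a known fact from the standard references (e.g.\ Bridson--Haefliger, Ch.~III.H, Theorem~1.7) — and your write-up reproduces exactly that textbook argument, so there is nothing to reconcile.
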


A path is called an \textit{$L$-local $c$-quasi-geodesic} if any subpath of length $L$ is a  $c$-quasi-geodesic.  
\begin{lem}\cite[Ch. III.H, Thm 1.13]{bridson-haefliger}\label{localtoglobal}
For any $\tau\ge 1$ there exist $L=L(\tau,\delta)$ and $c=c(\tau,\delta)$  so that any $L$-local $\tau$-quasi-geodesic is a $c$-quasi-geodesic.     
\end{lem}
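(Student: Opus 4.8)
This is the classical local-to-global principle for quasi-geodesics in a hyperbolic space, and I would prove it along the lines of \cite[Ch.~III.H, Thm~1.13]{bridson-haefliger}; the only external ingredient is the stability lemma stated just above. Using that lemma, fix $c_0=c_0(\tau,\delta)\ge 1$ such that every continuous $\tau$-quasi-geodesic segment lies within Hausdorff distance $c_0$ of any geodesic joining its endpoints. It suffices to treat a continuous $L$-local $\tau$-quasi-geodesic $\gamma\colon[0,T]\to X$ with $T<\infty$ and arc-length parametrisation (so $\ell(\gamma|_{[a,b]})=b-a$); the ray and bi-infinite cases follow by exhausting with finite subsegments. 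I will take the threshold $L=L(\tau,\delta)$ to be a sufficiently large multiple of $c_0+\delta+\tau+1$, the precise requirement emerging below. Write $\sigma=[\gamma(0),\gamma(T)]$ for a geodesic with the same endpoints.

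\textbf{Step 1 (fellow-traveling).} The heart of the argument is to produce $R=R(\tau,\delta)$ with $\mathrm{Im}(\gamma)\subseteq N_R(\sigma)$. Let $D=\max_t d(\gamma(t),\sigma)$, attained at $\gamma(t_0)$ ($D$ is finite since $[0,T]$ is compact). A short separate argument (inducting by shortening $T$ by $L/2$) disposes of the case that $t_0$ lies within $L/2$ of an endpoint, so assume $t_0\in[L/2,\,T-L/2]$. Then $\gamma|_{[t_0-L/2,\,t_0+L/2]}$ is a $\tau$-quasi-geodesic of length $L$, hence stays within $c_0$ of the geodesic joining its endpoints $\gamma(t_0\pm L/2)$, which themselves are at distance $\ge L/(2\tau)-\tau$ from $\gamma(t_0)$. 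Comparing that geodesic with $\sigma$, and with geodesics from $\gamma(t_0\pm L/2)$ to their nearest-point projections onto $\sigma$, by repeated use of the $\delta$-thin-triangle condition produces a self-improving inequality $D\le A(\tau,\delta)+\theta(L)\,D$ in which $\theta(L)\to 0$ as $L\to\infty$. Choosing $L$ large enough then forces $D\le R$ for a constant $R=R(\tau,\delta)$.

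\textbf{Step 2 (no backtracking, and conclusion).} Fix a nearest-point projection $\pi\colon\mathrm{Im}(\gamma)\to\sigma$ and subdivide $[0,T]$ at $0=u_0<u_1<\cdots<u_m=T$ with $u_i-u_{i-1}=L/2$ for $i<m$. For each $i$, the subpath $\gamma|_{[u_{i-1},u_{i+1}]}$ is a $\tau$-quasi-geodesic of length $\ge L/2$, so $d(\gamma(u_{i-1}),\gamma(u_{i+1}))\ge L/(2\tau)-\tau$; once $L$ is large enough that this exceeds $4R+4\delta$, $\delta$-thinness forces $\pi(\gamma(u_i))$ to lie, up to an additive $O(R+\delta)$ error, monotonically between $\pi(\gamma(u_{i-1}))$ and $\pi(\gamma(u_{i+1}))$ along $\sigma$, and to advance by a definite amount $\eta=\eta(\tau,\delta)>0$ at each step — a genuine backtrack would yield a $\tau$-quasi-geodesic of length $\ge L/2$ whose endpoints are $O(R+\delta)$-close, which is impossible for $L$ large. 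Therefore $m\,\eta\le \ell(\sigma)+O(R+\delta)=d(\gamma(0),\gamma(T))+O(R+\delta)$, while $\ell(\gamma)=T\le (m+1)L/2$, and combining the two gives $\ell(\gamma)\le c\,d(\gamma(0),\gamma(T))+c$ for a suitable $c=c(\tau,\delta)$. Applying this to every finite subpath $\gamma|_{[a,b]}$, which is again an $L$-local $\tau$-quasi-geodesic, yields $\ell(\gamma|_{[a,b]})\le c\,d(\gamma(a),\gamma(b))+c$; that is, $\gamma$ is a global $c$-quasi-geodesic.

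The genuinely hard part is Step~1: making the excursion estimate quantitative and verifying that the coefficient $\theta(L)$ of $D$ really tends to $0$, which is exactly what dictates the dependence of $L$ on both $\tau$ and $\delta$. Once $\gamma$ is known to fellow-travel the geodesic $\sigma$, Step~2 and the passage to all subpaths are bookkeeping.
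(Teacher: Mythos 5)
The paper does not prove this lemma at all: it is quoted verbatim as a known result from Bridson--Haefliger (Ch.~III.H, Thm~1.13), so there is no in-paper argument to compare yours against. Your sketch is the standard local-to-global argument from that source (stability of quasi-geodesic segments, fellow-traveling of the local quasi-geodesic with the geodesic joining its endpoints, then monotone progress of projections), and it is essentially sound. Note only that the cited theorem in Bridson--Haefliger is literally stated for $k$-local \emph{geodesics}; the quasi-geodesic version the paper uses is exactly what you prove by first invoking stability on length-$L$ subsegments, so your route is the right one.

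One small correction to the mechanism in Step~1: the thin-quadrilateral argument does not naturally produce a contraction $D\le A(\tau,\delta)+\theta(L)\,D$ with $\theta(L)\to 0$. What it gives is a dichotomy. With $x=\gamma(t_0)$ at maximal distance $D$ from $\sigma$ and $y=\gamma(t_0-L/2)$, $z=\gamma(t_0+L/2)$, the point $p\in[y,z]$ with $d(x,p)\le c_0$ is $2\delta$-close either to the subsegment of $\sigma$ between the projections $y',z'$ (whence $D\le c_0+2\delta$), or to a point $q$ on, say, $[y,y']$. In the latter case $d(q,y)\ge d(x,y)-c_0-2\delta\ge L/(2\tau)-\tau-c_0-2\delta$, so $d(q,y')\le D-\bigl(L/(2\tau)-\tau-c_0-2\delta\bigr)$, and then $D=d(x,\sigma)\le d(x,q)+d(q,y')\le D+2c_0+4\delta+\tau-L/(2\tau)$, which is impossible once $L>2\tau(2c_0+4\delta+\tau)$. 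So the bad case is simply excluded for $L$ large, and $D\le c_0+2\delta$ outright; no iteration or limiting coefficient is needed (the case $t_0$ within $L/2$ of an endpoint is even easier, since then $y$ or $z$ lies on $\sigma$). With that adjustment, Step~2 is routine bookkeeping as you say, and the proposal is a complete and correct proof strategy.
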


For any $x,y,z\in X$, the  \textit{Gromov product} $\langle x,z\rangle_y$   is given by the following.
\begin{equation*}
\langle x,z\rangle_y=\frac{d(x,y)+d(y,z)-d(x,z)}{2}.
\end{equation*}

Two geodesic rays $r_1, r_2 : [0,+\infty) \to X$ are said to be \textit{asymptotic} if
$$\sup_{t\in [0,+\infty)} d(r_1(t), r_2(t)) < \infty$$
The \emph{Gromov boundary} $\partial X$ of $X$ consists of all asymptotic classes of geodesic rays. 
It is endowed with the topology induced by the topology of uniform convergence on  compact subsets of $[0,+\infty)$. The group $Isom(X)$ acts on $\pU$ by homeomorphisms.
If $X$ is a proper metric space, then $\partial X$ with the above topology is compact. Moreover, it is 
a visibility space: any two distinct points $\xi,\eta$ in $\pU$ are connected by a bi-infinite geodesic denoted by $[\xi,\eta]$, i.e.\ $[\xi,\eta]$ is the union of two geodesic half rays asymptotic to $\xi,\eta$.

We now endow Gromov boundary with a family of visual metrics $\rho^o_\epsilon$
\cite[p. 434-6]{bridson-haefliger}. The visual metrics $\rho^o_\epsilon$
depend on a basepoint $o\in X$ and a (small) parameter $\epsilon$.
\begin{lem}\label{VisualMetric} Given  $\delta \geq 0$
there exists $\epsilon_0>0$ such that for all  $\epsilon\in (0,\epsilon_0)$, there exists a visual metric $\rho_o$ on $\pU$ satisfying the following:  for all $\xi\ne\eta\in\pU$, 
$$\rho_\epsilon^{o}(z, w) \asymp \mathrm{e}^{-\epsilon L}$$ where $L = d(o, [\xi,\eta])$ and the implicit constant depends only on $\delta$.    
\end{lem}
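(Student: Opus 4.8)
The plan is to run the classical Gromov-product construction of visual metrics; the only point requiring care is the passage from the natural ``quasi-metric'' $\mathrm{e}^{-\epsilon\langle\cdot,\cdot\rangle_o}$ to a genuine metric via a chain construction.

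First I would extend the Gromov product to the boundary: for $\xi,\eta\in\pU$ set
$$\langle\xi,\eta\rangle_o=\sup\Big\{\liminf_{i,j\to\infty}\langle x_i,y_j\rangle_o\ :\ x_i\to\xi,\ y_j\to\eta\Big\}.$$
Using $\delta$-hyperbolicity (the four-point inequality for the Gromov product) one checks that this quantity is independent of the approximating sequences up to an additive error $O(\delta)$. Then, combining the elementary fact that for $x,y\in X$ one has $\langle x,y\rangle_o=d(o,[x,y])+O(\delta)$ with the visibility of $\pU$ and stability of quasi-geodesics sharing endpoints at infinity (so that geodesics $[x_i,y_j]$ converge on compacta to a geodesic $[\xi,\eta]$), one obtains
$$\langle\xi,\eta\rangle_o = d(o,[\xi,\eta]) + O(\delta).$$
Hence it suffices to produce a metric $\rho_\epsilon^o$ on $\pU$ with $\rho_\epsilon^o(\xi,\eta)\asymp \mathrm{e}^{-\epsilon\langle\xi,\eta\rangle_o}$, the implicit constant depending only on $\delta$.

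Second, the extended Gromov product still satisfies a four-point inequality $\langle\xi,\zeta\rangle_o\ge\min\{\langle\xi,\eta\rangle_o,\langle\eta,\zeta\rangle_o\}-\kappa\delta$ for a universal constant $\kappa$, so the symmetric function $\tilde\rho_\epsilon(\xi,\eta):=\mathrm{e}^{-\epsilon\langle\xi,\eta\rangle_o}$ (positive off the diagonal, zero on it) obeys the relaxed triangle inequality $\tilde\rho_\epsilon(\xi,\zeta)\le \mathrm{e}^{\epsilon\kappa\delta}\max\{\tilde\rho_\epsilon(\xi,\eta),\tilde\rho_\epsilon(\eta,\zeta)\}$. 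Choosing $\epsilon_0=\epsilon_0(\delta)>0$ small enough that $\mathrm{e}^{\epsilon\kappa\delta}<2$ for all $\epsilon<\epsilon_0$ places $\tilde\rho_\epsilon$ in the range of Frink's metrization lemma. I then define
$$\rho_\epsilon^o(\xi,\eta)=\inf\Big\{\sum_{k=0}^{n-1}\tilde\rho_\epsilon(\xi_k,\xi_{k+1})\ :\ \xi=\xi_0,\dots,\xi_n=\eta\Big\}$$
over all finite chains. The triangle inequality and symmetry are immediate, and the upper bound $\rho_\epsilon^o\le\tilde\rho_\epsilon$ comes from the trivial chain; Frink's lemma supplies a constant $c=c(\mathrm{e}^{\epsilon\kappa\delta})\in(0,1]$, bounded below as long as $\mathrm{e}^{\epsilon\kappa\delta}$ stays below $2$, with $c\,\tilde\rho_\epsilon\le\rho_\epsilon^o$. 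Combining with the first step gives $\rho_\epsilon^o(\xi,\eta)\asymp \mathrm{e}^{-\epsilon\langle\xi,\eta\rangle_o}\asymp \mathrm{e}^{-\epsilon d(o,[\xi,\eta])}$ with multiplicative constants depending only on $\delta$ (through $\kappa$ and $\epsilon_0(\delta)$). This same two-sided comparison shows $\rho_\epsilon^o$ induces exactly the topology on $\pU$ described earlier (asymptotic classes of rays, uniform convergence on compacta), so $\rho_\epsilon^o$ is a bona fide visual metric.

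The main obstacle is the Frink step: verifying that passing to the chain-infimum does not shrink $\tilde\rho_\epsilon$ by more than a bounded factor. This is precisely where the smallness of $\epsilon$ is forced — the induction on chain length that rules out any chain beating $\tilde\rho_\epsilon$ by more than a factor $2$ breaks down once $\mathrm{e}^{\epsilon\kappa\delta}\ge 2$ — and it is the only place genuine work is needed; everything else is bookkeeping with the $\delta$-inequalities for the (extended) Gromov product.
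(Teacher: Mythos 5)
The paper gives no proof of this lemma; it is quoted from \cite[Ch.~III.H, p.~434--436]{bridson-haefliger}, and your argument is precisely the standard construction given there (extend the Gromov product to $\pU$, identify it with $d(o,[\xi,\eta])$ up to $O(\delta)$, then metrize the quasi-metric $\mathrm{e}^{-\epsilon\langle\cdot,\cdot\rangle_o}$ by the chain construction). The only caveat is quantitative: in the usual proof of the chain lemma the relaxed-ultrametric constant must satisfy $\mathrm{e}^{\epsilon\kappa\delta}\le\sqrt{2}$ (not merely $<2$) for the induction bounding chains from below to close, but this only changes the value of $\epsilon_0(\delta)$ and not the statement.
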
  Visual metrics remain in the same Holder class under changing the parameter $\epsilon$  and the basepoint $o$. We often write $\rho_\epsilon$ if the basepoint is understood.   

A large class of Gromov hyperbolic spaces is provided  by  CAT$(-1)$-spaces. 
In the definition below, \emph{triangle} refers to an embedded 2-simplex.
\begin{defn}\label{def-cat-1} Let $X$ be a geodesic metric space.
Let $\mathbb H^2$ be the real hyperbolic plane (of constant curvature $-1$). Given a triangle $\Delta$ in $X$ with  geodesic edges, a  comparison triangle $\bar \Delta$  is a  geodesic triangle in  $\mathbb H^2$ with edges isometric to the corresponding edges  of $\Delta$. Then $X$ is a \textit{CAT$(-1)$-space} if  every geodesic triangle in $X$ is thinner than the comparison triangle,\ i.e. the edge identification map $\Delta\to  \bar \Delta$ 
sending edges isometrically to edges is  (globally) $1$-Lipschitz.    
\end{defn}

Thanks to the Alexandrov comparison theorem, any simply connected complete Riemannian manifold of sectional curvature   $\le -1$ is a CAT$(-1)$-space.

The   \emph{$r-$shadow} of $y$ seen from $x$ is given by  
$$\Pi_{x}(y, r) := \{\xi\in \pU: \exists [x,\xi]\cap B(y,r)\ne\emptyset\}$$

\begin{lem}\label{ShadowApproxBalls}\cite[Section 6]{coornaert}
	There exist constants $r, C>0$ so that the following holds.
Let $\gamma$ be a geodesic ray starting at $o$ and ending at $\xi\in \pU$.  For any $x\in \gamma$, one has
$$B_{\rho_\epsilon}(\xi, C^{-1} \cdot \mathrm{e}^{-\epsilon d(o,x)}) \subset \Pi_o(x,r) \subset  B_{\rho_\epsilon}(\xi, C \cdot \mathrm{e}^{-\epsilon d(o,x)}),$$
where $\rho_\ep$ is a visual metric with basepoint $o$.    
\end{lem}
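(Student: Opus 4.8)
The plan is to reduce the statement to two elementary facts in a $\delta$-hyperbolic space: first, a reformulation of Lemma~\ref{VisualMetric}, namely that for $\eta\neq\zeta\in\pU$ one has $\rho_\epsilon^o(\eta,\zeta)\asymp \mathrm e^{-\epsilon\langle\eta,\zeta\rangle_o}$, where $\langle\eta,\zeta\rangle_o$ is the extended Gromov product (this follows because $d(o,[\eta,\zeta])$ and $\langle\eta,\zeta\rangle_o$ agree up to an additive $O(\delta)$); and second, that for $x\in[o,\xi]$, belonging to the shadow $\Pi_o(x,r)$ is essentially equivalent, up to changing $r$ by $O(\delta)$, to $\langle\xi,\eta\rangle_o\ge d(o,x)-O(\delta)$. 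Combining these two reformulations with the identity $\langle x,\xi\rangle_o=d(o,x)$ (valid since $x$ lies on the geodesic $[o,\xi]$) will give both inclusions.

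For the right-hand inclusion, suppose $\eta\in\Pi_o(x,r)$, so some geodesic ray $[o,\eta]$ meets $B(x,r)$ at a point $y$ with $d(x,y)\le r$ and hence $d(o,y)\ge d(o,x)-r$. Since $y\in[o,\eta]$ we have $\langle y,\eta\rangle_o=d(o,y)$, and a direct Gromov-product computation gives $\langle x,y\rangle_o\ge d(o,x)-r$; the $\delta$-inequality $\langle x,\eta\rangle_o\ge\min\{\langle x,y\rangle_o,\langle y,\eta\rangle_o\}-\delta$ then yields $\langle x,\eta\rangle_o\ge d(o,x)-r-\delta$. Applying the $\delta$-inequality once more to the triple $(\xi,x,\eta)$ together with $\langle x,\xi\rangle_o=d(o,x)$ gives $\langle\xi,\eta\rangle_o\ge d(o,x)-r-O(\delta)$, whence $\rho_\epsilon(\xi,\eta)\le C\,\mathrm e^{-\epsilon d(o,x)}$ for a constant $C$ depending only on $\delta,\epsilon,r$. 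For the left-hand inclusion, fix $r=r_0$, an explicit multiple of $\delta$, and choose $C$ large enough that $\rho_\epsilon(\xi,\eta)<C^{-1}\mathrm e^{-\epsilon d(o,x)}$ forces $\langle\xi,\eta\rangle_o\ge d(o,x)+1$ via the reformulation of Lemma~\ref{VisualMetric}. Then $\langle x,\eta\rangle_o\ge\min\{\langle x,\xi\rangle_o,\langle\xi,\eta\rangle_o\}-\delta\ge d(o,x)-\delta$, and the standard thin-triangle estimate $d(x,[o,\eta])\le d(o,x)-\langle x,\eta\rangle_o+O(\delta)$ shows $d(x,[o,\eta])\le r_0$, i.e.\ any geodesic $[o,\eta]$ meets $B(x,r_0)$, so $\eta\in\Pi_o(x,r_0)$. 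Taking $r$ and $C$ to be the larger of the two values produced finishes the proof.

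I do not expect a genuine obstacle here; the content is entirely within the toolkit of Lemma~\ref{VisualMetric} and hyperbolic thin-triangle inequalities. The two points requiring care are bookkeeping of constants: (i) justifying the passage between $d(o,[\eta,\zeta])$ and $\langle\eta,\zeta\rangle_o$ for ideal endpoints, which uses the standard extension of the Gromov product to $\pU$ with a $2\delta$ ambiguity and the fact that $[\eta,\zeta]$ passes within $O(\delta)$ of the ``center'' of the ideal triangle $o\eta\zeta$; and (ii) the non-uniqueness of the geodesic $[o,\eta]$, which is harmless because any two such rays lie in a $D(\delta)$-neighborhood of one another, so the defining condition of the shadow is stable under enlarging $r$ by $O(\delta)$. (In fact this is precisely Coornaert's argument in \cite[Section 6]{coornaert}, and the above merely records it in the notation of this paper.)
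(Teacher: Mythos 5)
The paper does not prove this lemma at all — it is stated as a citation to \cite[Section 6]{coornaert} — so there is no in-paper argument to compare against. Your proof is correct and is the standard one: both inclusions reduce, via the comparison $\rho_\epsilon(\xi,\eta)\asymp \mathrm e^{-\epsilon\langle\xi,\eta\rangle_o}$ and the identity $\langle x,\xi\rangle_o\doteq d(o,x)$ for $x\in[o,\xi]$, to two applications of the $\delta$-inequality for Gromov products plus the tree-like estimate $d(x,[o,\eta])\doteq d(o,x)-\langle x,\eta\rangle_o$, and you correctly flag the only two places where care is needed (the $O(\delta)$ ambiguity in extending the Gromov product to ideal points, and the stability of the shadow condition under the choice of geodesic $[o,\eta]$).
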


We now study the action of  a discrete group   on Gromov boundary and introduce various classes of conical limit points,  which are the key objects studied in this paper.

Assume that $G$ acts properly on a hyperbolic space $X$. The \textit{limit set} $\pG$    consists of accumulation points of $Go$ in the Gromov boundary $\partial X$ of $X$  for some (or any) $o\in X$. Alternatively, the limit set $\pG$ is the same as the set of  accumulation points of all  orbits in $\partial X$.  We say the action is \textit{non-elementary} if $\Lambda G$ contains at least three points. 

The action of $G$ on $\partial X$  by homeomorphism is a \textit{convergence group action} in the following sense. Any infinite set of elements  $\{g_n\in G:n\ge 1\}$ has a \textit{collapsing} sequence $\{g_{n_i}\}$ with a pair of (possibly same) points $a,b\in \pG$: the sequence of maps $g_{n_i}$ converges to the constant map $\delta_a$ locally uniformly on $\pG\setminus b$. Here   $\delta_a$ sends everything to $a$. Moreover, the defining properties of $\{g_{n_i}\}$ and $a, b$ are such that $g_no\to a$ and $g_n^{-1}o\to b$ for some $o\in X$.

The limit set satisfies the following  \textit{duality condition} due to Chen-Eberlein.
\begin{lem}\label{dualitycondi}
Assume that $|\pG|\ge 2$. Then for any distinct pair $(\xi,\eta)$ in $\pG$, there exists a sequence of elements $g_n\in G$ so that $g_no\to \xi$ and $g_n^{-1}o\to \eta$ for some or any $o\in X$.     
\end{lem}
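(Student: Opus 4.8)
The plan is to derive the statement from the north--south dynamics of loxodromic elements together with the fact that their fixed-point pairs are abundant in $\pG$.

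I would first dispose of the trivial points. The clause ``for some or any $o$'' is free: if $g_no\to\xi\in\pU$ for one basepoint $o$, then for any other $o'\in X$ the points $g_no$ and $g_no'$ lie at the fixed distance $d(o,o')$ from each other, so they converge to the same boundary point; the same reasoning applies to $g_n^{-1}$. So I fix $o\in X$ once and for all. Since the $G$-action on $\pU$ is a convergence action, $\pG$ is $G$-invariant, and I split into the cases $|\pG|=2$ and $|\pG|\ge 3$ (i.e.\ the action is non-elementary).

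The core of the argument rests on two standard facts about convergence groups. First, in both cases $G$ contains a loxodromic element; such an element $h$ has two distinct fixed points $h^\pm\in\pG$ and acts on $X\cup\pU$ with north--south dynamics, namely $h^ko\to h^+$ and $h^{-k}o\to h^-$ as $k\to+\infty$. Second, when $|\pG|=2$ one has $\{h^+,h^-\}=\pG$, whereas when $\pG$ is infinite the action $G\act\pG$ is minimal and the set of ordered pairs $\{(k^+,k^-):k\in G\text{ loxodromic}\}$ is dense in $\pG\times\pG$ off the diagonal. Granting these, given the prescribed distinct pair $(\xi,\eta)$ in $\pG$, I would choose loxodromics $h_n\in G$ with $(h_n^+,h_n^-)\to(\xi,\eta)$ in $\pU\times\pU$: when $|\pG|=2$ simply take $h_n$ to be a fixed loxodromic or its inverse, so that $(h_n^+,h_n^-)=(\xi,\eta)$ already; when $\pG$ is infinite, invoke the density statement. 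Then I fix a metric $\bar\rho$ on the compact metrizable space $X\cup\pU$ and, using the north--south dynamics of each $h_n$, pick exponents $k_n\ge 1$ with $\bar\rho(h_n^{k_n}o,h_n^+)<1/n$ and $\bar\rho(h_n^{-k_n}o,h_n^-)<1/n$. Setting $g_n:=h_n^{k_n}$, the triangle inequality yields $\bar\rho(g_no,\xi)\to 0$ and $\bar\rho(g_n^{-1}o,\eta)=\bar\rho(h_n^{-k_n}o,\eta)\to 0$, which is exactly the assertion.

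The step I expect to require the most care is the density of loxodromic fixed-point pairs (equivalently: existence of a loxodromic element plus minimality of $G\act\pG$ in the non-elementary case). One option is to cite the convergence-group literature. If one prefers to work directly from the convergence property recorded above, the route is: extract from an arbitrary infinite subset of $G$ a collapsing sequence with distinct attracting and repelling points (this is where non-elementarity enters); run a ping--pong argument using the convergence property to upgrade it to a loxodromic element whose fixed points lie near those two points; and finally transport this fixed-point pair by group elements, using minimality on the perfect set $\pG$ and the north--south dynamics, to spread it densely. Everything else is the bookkeeping diagonalization described above.
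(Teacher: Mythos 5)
Your argument is correct, but it is a genuinely different route from the paper's. The paper proves the lemma in a few lines directly from the convergence property: pick $h_n o\to\xi$ and $k_n o\to\eta$, pass to subsequences so that $h_n^{-1}o\to a$ and $k_n^{-1}o\to b$, arrange $a\ne b$ by replacing $k_n$ with $k_nf$ for a suitable $f\in G$ if necessary, and set $g_n:=h_nk_n^{-1}$; then $g_no=h_n(k_n^{-1}o)\to\xi$ because $k_n^{-1}o\to b\ne a$ and $h_n\to\delta_\xi$ locally uniformly off $a$, and symmetrically $g_n^{-1}o\to\eta$. You instead route everything through loxodromic elements: north--south dynamics plus density of the ordered fixed-point pairs $(k^+,k^-)$ in $\pG\times\pG$ off the diagonal, followed by a diagonal extraction. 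Both work, and the density fact you need is indeed invoked elsewhere in the paper (in the proof of Lemma~\ref{lem:CharMyrberg}), but your approach buys nothing here and costs more: existence of loxodromics and density of their fixed-point pairs are strictly heavier inputs than the bare convergence property. One caveat you should make explicit: in much of the literature (Chen--Eberlein, and the usual treatment for convergence groups) the density of loxodromic fixed-point pairs is itself \emph{deduced from} the duality condition you are trying to prove, so if you cite it you must cite a proof that does not pass through this lemma. Your sketched fallback (extract a collapsing sequence with distinct attracting/repelling points, ping-pong to produce a loxodromic, then spread the pair around) is the right way to avoid that circularity, but carried out in full it is considerably longer than the paper's two-line composition trick. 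Your basepoint-independence remark and your treatment of the $|\pG|=2$ case are fine.
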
\begin{proof}
If $|\pG|= 2$, then $G$ is virtually cyclic and the conclusion follows immediately in this case. Let us now assume $|\pG|> 2$ and thus $\pG$ is uncountable. In particular, $G$ has no global fixed point in $\pG$. By definition, let us take $h_no\to \xi$  and $k_no\to \eta$. Up to taking subsequence, assume $h_n^{-1}o\to a$ and $k_n^{-1}o\to b$. We may assume $a\ne b$;  otherwise if $a=b$, we find $f\in G$ so that $fa\ne a$ and then replace $k_n$ with $k_nf$: $(k_nf)^{-1}o=f^{-1}k_n^{-1}o \to fb\ne a$.   Then $g_n:=h_nk_n^{-1}$ is the desired sequence: $h_nk_n^{-1} o \to \xi$ and $k_nh_n^{-1} o \to \eta$.    
\end{proof}

\begin{defn}\label{ConicalDefn}
A limit point $\xi$ in $\pG$ is called \textit{conical} if there exists a sequence of elements $g_n\in G$  so that $g_no\to\xi$  and  $g_no$ lies within an $R$-neighborhood of a geodesic ray  $[o,\xi)$ for some number $R>0$. If, in addition, $\sup_{n\ge 1}\{d(g_no,g_{n+1}o)\}<\infty$, then $\xi$ is called \textit{uniformly conical}.    
\end{defn} 
\begin{rem}
It is useful to give an equivalent formulation of conical points using only boundary actions. Namely, $\xi\in \pG$ is conical if and only if there exist a sequence $g_n\in G$ and a pair of distinct points $a\ne b\in \pG$ so that for any $\eta\ne \xi$, we have $g_n^{-1}(\xi,\eta)\to (a,b)$. This definition works in any convergence group action.    
\end{rem} 

Except for uniformly conical points, several other classes of conical points have been studied in literature.  The following class of points was introduced by P. Myrberg \cite{Myr31} in 1931 in his approximation theorem for Fuchsian groups. The    geodesic ray ending at Myrberg point was called there ``quasi-ergodic".

\begin{defn}\label{MyrbergDefn}
A  limit point $\xi$ is  called a \textit{Myrberg point}  if  for any distinct pair $a\ne b\in \pG$, there exist a sequence of elements $g_n\in G$ so that $g_n(o,\xi)\to (a,b)$ for some (or any) $o\in X$.   
\end{defn}

\begin{rem}  
By the convergence group action, one could equally replace the basepoint $o$ with any point in $X\cup \pU\setminus \xi$.     Indeed, we could take a sequence  of points $x_n\in X \to x\ne\pU\setminus \xi$, for which the statement works, and thus conclude that $g_nx\to \zeta$ and $g_n\xi\to \eta$.
%In literature, the density  is usually asked in the distinct pairs of $\pG\times \pG$, which is equivalent to the above one.  
\end{rem}

\begin{lem}\label{lem:CharMyrberg}
A limit point $\xi\in \pG$ is a Myrberg point if and only if the following holds.

There exist a universal constant $r>0$ depending on hyperbolicity constant. For any  loxodromic element  $h\in G$ there is a sequence of   distinct axis $g_n\ax(h)$ with $g_n\in G$ so that for any $x\in X$  the intersection $[x,\xi]\cap N_r(g_n\ax(h))$ has diameter tending to $\infty$ as $n\to\infty$. 
\end{lem}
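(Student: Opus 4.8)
The plan is to prove the two implications separately, and in both cases to reduce everything to the single reference ray $[o,\xi]$: any two rays $[x,\xi]$ and $[x',\xi]$ lie in the $2\delta$-neighbourhood of one another outside a bounded initial segment, so a lower bound on $\operatorname{diam}\big([o,\xi]\cap N_r(A)\big)$ transfers to $[x,\xi]$ at the cost of replacing $r$ by $r+2\delta$ and discarding finitely many terms. Likewise, replacing the (quasi-)axis $\ax(h)$ by the genuine geodesic $[h^-,h^+]$ (which exists by visibility) changes $r$ only by a $\delta$-constant. Thus the ``universal $r$'' will be such a constant, and it suffices to argue for $[o,\xi]$ and for $[h^-,h^+]$.

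\textbf{($\Rightarrow$)} Assume $\xi$ is Myrberg and fix a loxodromic $h$ with $h^-\ne h^+$. Applying the Myrberg property to the pair $(h^-,h^+)$ yields $g_n\in G$ with $g_n^{-1}o\to h^-$ and $g_n^{-1}\xi\to h^+$. In a visibility space the geodesics $[g_n^{-1}o,g_n^{-1}\xi]$ converge uniformly on compacta to $[h^-,h^+]$, so for every $L$ and all large $n$ their intersection with $N_r([h^-,h^+])$ contains a segment of diameter $\ge L$; translating by $g_n$ and diagonalising over $L$ produces a sequence with $\operatorname{diam}\big([o,\xi]\cap N_r(g_n\ax(h))\big)\to\infty$. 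It remains to arrange that the axes $g_n\ax(h)$ are pairwise distinct. If some translate $A$ occurred infinitely often, then $[o,\xi]$ would meet $N_r(A)$ in segments of unbounded diameter; since $[o,\xi]\cap N_{r'}(A)$ is, up to an error depending only on $\delta$, a single sub-arc of the ray, this sub-arc would be an infinite sub-ray, forcing $\xi$ to be an endpoint of $A$, i.e.\ a fixed point of a conjugate of $h$. But a Myrberg ray is dense (transitive) in $X/G$, whereas the ray to a loxodromic fixed point is asymptotic to a closed geodesic, a contradiction. Hence each translate has finite multiplicity, and after passing to a subsequence the $g_n\ax(h)$ are distinct.

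\textbf{($\Leftarrow$)} Assume the axis condition holds for every loxodromic $h$. It suffices to show that $\{(go,g\xi):g\in G\}$ accumulates on $(h^-,h^+)$ for every loxodromic $h$: the pairs of fixed points of loxodromics are dense in the set of distinct ordered pairs of $\pG$ (standard north--south dynamics for non-elementary convergence actions), and a limit of limits is a limit, so this yields density, i.e.\ that $\xi$ is Myrberg. Fix $h$ and take distinct axes $g_n\ax(h)$ with $\operatorname{diam}\big([o,\xi]\cap N_r(g_n\ax(h))\big)=D_n\to\infty$. Applying $g_n^{-1}$, the geodesic $[g_n^{-1}o,g_n^{-1}\xi]$ meets $N_r(\ax(h))$ in a coarsely connected segment whose nearest-point projection to $\ax(h)$ is an interval $I_n$ of length $D_n\pm O(\delta)$. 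Parametrise $\ax(h)$ by arclength with $\ax(h)(\pm\infty)=h^\pm$, choose $m_n\in\Z$ so that $h^{m_n}$ carries the midpoint of $I_n$ to within one translation length of $\ax(h)(0)=:o'$, and set $f_n:=h^{m_n}g_n^{-1}$. Then $[f_no,f_n\xi]$ passes within $r+O(\delta)$ of $o'$ and fellow-travels $\ax(h)$ from $\approx\ax(h)(-D_n/2)$ to $\approx\ax(h)(D_n/2)$, with $f_no$ beyond the $h^-$-end and $f_n\xi$ beyond the $h^+$-end; comparing $[o',f_no]$ with $[o',h^-]$ and $[o',f_n\xi]$ with $[o',h^+]$ gives $\langle f_no,h^-\rangle_{o'}\ge D_n/2-O(\delta)$ and $\langle f_n\xi,h^+\rangle_{o'}\ge D_n/2-O(\delta)$, so $f_no\to h^-$ and $f_n\xi\to h^+$, as required. (If $\xi$ were a loxodromic fixed point the axis condition would already fail for a primitive loxodromic in general position, so this case does not occur; replacing $h$ by $h^{-1}$ covers the opposite labelling of $h^\pm$.)

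\textbf{Expected main obstacle.} The two non-formal steps are, in $(\Rightarrow)$, upgrading the fellow-travelled axes to \emph{distinct} ones — which is what forces the exclusion of loxodromic fixed points and uses transitivity of Myrberg rays — and, in $(\Leftarrow)$, the recentring: turning an arbitrarily placed long fellow-travelling segment into one symmetric about a fixed basepoint, so that the endpoints of the translated ray are pinned to $h^\mp$. Both rest on the standard hyperbolic fact that a geodesic meets a bounded neighbourhood of a geodesic in what is, up to a $\delta$-bounded error, a single sub-arc.
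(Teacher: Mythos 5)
Your overall route coincides with the paper's (which disposes of the lemma in a few lines), and your forward direction is complete and in fact more careful than the paper's: the reduction to a single ray $[o,\xi]$, the convergence of $[g_n^{-1}o,g_n^{-1}\xi]$ to $[h^-,h^+]$ coming from the Myrberg property applied to $(h^-,h^+)$, and the argument that a repeated axis would force $\xi$ to be a fixed point of a conjugate of $h$ (hence non-Myrberg) are all correct; the last point, needed for the word ``distinct'' in the statement, is one the paper does not address. Your identification of the recentring by powers of $h$ as the crux of the converse also goes beyond the paper's one-line ``the argument is reversible''.

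The converse nevertheless has a genuine gap at the orientation step, and your proposed repair does not close it. The hypothesis involves only the unoriented sets $g_n\ax(h)=g_n\ax(h^{-1})$, so after recentring you can only conclude that $(f_no,f_n\xi)$ converges to $(h^-,h^+)$ \emph{or} to $(h^+,h^-)$, according to the direction in which the ray happens to traverse the overlap. ``Replacing $h$ by $h^{-1}$'' runs the identical argument on the identical overlaps and returns the identical limit pair, merely relabelled: it verifies the required approximation for the ordered pair $((h^{-1})^-,(h^{-1})^+)=(h^+,h^-)$ but still not for $(h^-,h^+)$. Since the Myrberg condition concerns ordered pairs, and a closed $G$-invariant subset of the set of distinct ordered pairs need not be flip-invariant (the orbit closure of $(h^+,h^-)$ generally omits $(h^-,h^+)$, as $g h^\pm\to h^\mp$ would force $g$ to reverse the orientation of $\ax(h)$), obtaining ``one of the two orientations for each $h$'' does not yield density. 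A working fix is to apply the hypothesis not to $h$ itself but to auxiliary loxodromics such as $w_N=h^{N}uh^{-N}v$ for suitable independent $u,v$ (loxodromic for large $N$ by ping-pong): a full period of $\ax(w_N)$ contains, in \emph{either} direction of traversal, a subsegment of length comparable to $N$ fellow-travelling a translate of $\ax(h)$ towards its $h^+$-end and another towards its $h^-$-end; taking overlaps longer than one period and letting $N\to\infty$, your recentring then produces $(f_no,f_n\xi)\to(h^-,h^+)$ as required. (The paper's own proof is silent on this point as well.)
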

\begin{proof}
As two geodesic rays ending at the same point are eventually contained in the universal neighborhood of the other, we only need to very the conclusion for some $[x,\xi]$ with $x\in X$.

$\Leftarrow$: We apply the definition of Myrberg limit point to the pair of fixed points $(h^-,h^+)$ of $h$. We thus have a sequence of elements $g_n^{-1}\in G$ so that $g_n^{-1}x\to h^-$ and $g_n^{-1}\xi\to h^+$. From the visual topology, we know that $g_n[x,\xi]$ projects to the axis $\ax(h)$ as a subset with  diameter tending to $\infty$. The axis $\ax(h)$ is a $c$-quasi-geodesic for a universal constant $c$, so we obtain a constant $r$ depending on $c$ and hyperbolicity constant  that the intersection $g_n[x,\xi]\cap N_r(\ax(h))$ tends to $\infty$. This concludes the proof of this direction.

$\Rightarrow$: the above argument is reversible: $g_n^{-1}(x,\xi)$ tends the fixed points $(h^-,h^+)$ of the loxodromic element $h$. The proof is then finished by the fact that the fixed point pairs of all loxodromic elements are dense in $\pG\times \pG$.  
\end{proof}
\begin{rem}
Myrberg limit points could be defined in a much larger context with  contracting elements, in class of convergence boundary (\textsection \ref{sub-conv-bdry}) which includes visual boundary of CAT(0) spaces and Teichm\"uller spaces, and horofunction boundary of any metric space with contracting elements.  See \textsection \ref{sub-Myrberg} for the details and  a characterization of Myrberg limit points (Lemma \ref{CharMyrberg-general}) in this context.     
\end{rem}

Following \cite{AHM}, we say that a point $\xi\in \pU$ is a \textit{controlled concentration point} if it has a neighborhood $U$ so that for any neighborhood $V$ of $p$ there exists $g\in G$ so that $\xi\in gU\subset V$.  \cite[Theorem 2.3]{AHM}  characterizes the endpoint of a Poincar\'e-recurrent ray (defined below)  as a controlled concentration point. Moreover, Myrberg limit points are controlled concentration points, but the converse is not true.

These notions of limit points are closely related to the asymptotic behaviors of geodesic rays on the quotient manifold. To be concrete, we assume that $X$ is the universal covering of a complete negatively pinched Riemannian manifold $M$  and $G=\pi_1(M)$ acts by deck transformation on $X$. 

Consider the geodesic flow $\mathfrak  g^t: T^1(M)\to T^1(M)$ with $t\in \mathbb R$ on the unit tangent bundle $T^1(M)$. Fix a basepoint $p\in M$. A vector $v\in T_p^1(M)$ is called \textit{wandering} if there exists an open neighborhood $U$ of $p$ so that $\mathfrak g^t(U)\cap U= \emptyset$ for all sufficiently large   time $|t|> 0$. Otherwise, it is called \textit{non-wandering}: for any open neighborhood $U$ of $p$, there exists a sequence of times $t_n\to \infty$  so that $\mathfrak g^{t_n}(U)\cap U= \emptyset$. The non-wandering set thus forms a closed subset of $T^1(M)$. Thanks to the duality property of limit points (Lemma \ref{dualitycondi}),   the trajectory $\{\mathfrak g^t(v):t\in \mathbb R\}$ lifts  to a  bi-infinite geodesic with  endpoints in the limit set $\pG$. The non-wandering set is thus a subset of the unit tangent bundle to the quotient of the convex hull of the limit set. It corresponds to vectors $v$ for which $\{\mathfrak g^t(v):t\in \mathbb R\}$ lies in the quotient of the convex hull of the limit set. 

A vector $v\in T_p^1(M)$ is called \textit{Poincar\'e-recurrent}   if there exists a sequence of times $t_n\to \infty$ so that $\mathfrak  g^{t_n}(v)\to v$. It is called \textit{transitive} if the semi-infinite trajectory $\{\mathfrak g^t(v):t\ge 0\}$ is dense in the non-wandering set of $T^1(M)$. Equivalently, $v$ is transitive if and only if the oriented geodesic with tangent vector $v$ lifts to an oriented geodesic ending at a Myrberg point. By definition, a transitive geodesic ray is recurrent, but the converse is false: a periodic geodesic is recurrent but  of course not transitive. In general, the set of Myrberg points is disjoint from the set of uniformly conical points unless $M$ is convex-compact.

\begin{table}[!h]
        \centering
        
\begin{tabular}{|p{0.40\textwidth}|p{0.40\textwidth}|}
%\hline 
%\multicolumn{2}{|c|}{Correspondence between geodesic rays and limit points} \\
\hline 
 Non-wandering geodesics & Limit points \\
\hline 

Recurrent geodesics   & Conical  points \\
\hline  
\quad Bounded geodesics   & \quad Uniformly conical  points \\

\quad Transitive geodesics   & \quad Myrberg  points \\
 
\quad  Poincar\'e-recurrent geodesics & \quad Controlled concentration  points  \\
\hline 
 Non-wandering escaping geodesics & Non-conical  points \\
\hline
\end{tabular}\\
\hfill
\caption{Correspondence between geodesic rays and limit points}   
\label{tbl-geodesic-limitpoints}
        \end{table}

In the sequel, $\cG$, $\uG$, $\mG$, $\ncG$ denote respectively  the conical limit set,  the uniform conical limit set, the Myrberg limit set and  the non-conical limit set.

\begin{comment}

\noindent {\bf Notation:}
We set up some notation for use throughout the paper.
\begin{itemize}
	\item $X$ will denote a hyperbolic space, $G$ will be a group acting discretely by isometries on $X$, $H$ will be a subgroup of $G$. 
	\item $E(h)$ denotes the maximal elementary subgroup containing $h$, for $h \in G$.
	\item $\ax(h)$ denotes the orbit  of the base point $o$ under $E(h)$. This
 depends on the choice of the base point $o$. We refer to $\ax(h)$ as a
 \emph{quasi-axis}. 
	
	\item 
	$\Ax(h)$ denotes the union of the $G$--translates of $\ax(h)$:  $\Ax(h)=\cup \{g\ax(h): g\in G\}$. Also, $[\Ax(h)]_H=\{f\ax(h): f\in H\}$ denotes the sub-collection of the $H$--translates of $\ax(h)$.
	\item For $H, K$ subgroups of $G$,
	$\Dc(H,K)$ is the set of double cosets over $H,K$ in a group $G$.
	\item 
	$\Ar(\Ax(h),\Ax(k))$ is the set of  shortest geodesics in $X$ from $\Ax(h)$ to $\Ax(k)$. We refer to these shortest geodesics as \emph{arcs}.
	\item For  $A,B\subset X$,  $d(A,B):=\inf_{a\in A, b\in B}d(a,b)$. 
	\item  $\cG$, $\uG$, $\mG$, $\ncG$ denote respectively  the conical limit set,  the uniform conical limit set, the Myrberg limit set and  the non-conical limit set.
\end{itemize}
    
\end{comment}

\section{Hausdorff dimension of ends of large trees}\label{sec-hausd}
We start by recalling the notion of Hausdorff measures in a metric space.
\begin{defn}
	Let $W$ be a subset in a metric space $(Y, d)$. Given 
	$\epsilon, s \ge 0$, define
	$$
	\mathcal H^s_{\epsilon}(W) = \inf\left\{ \sum \diam{U_i}^s: W \subset
	\bigcup_{i=1}^{\infty} U_i, U_i \subset Y, \diam{U_i} \le \epsilon \right\}.
	$$
	Define 
	the \textit{$s$-dimensional Hausdorff measure} of $W$
	to be $\mathcal H^s(W) = \lim\limits_{\epsilon \to 0} \mathcal H^s_{\epsilon}(W)$. The
	\textit{Hausdorff dimension} of $W$ is given by
	$$\HD_{d}(W) =
	\inf\{s\ge 0: \mathcal H^s(W) =0 \} = \sup\{s\ge 0: \mathcal H^s(W) = \infty \}. $$ 
\end{defn}
By
convention, set $\inf\emptyset = \sup\{s \in \mathbb R_{\ge 0}\}
= \infty$. Thus, $\HD_d{W} \in [0, \infty]$. Note that $\mathcal H^s(W)$ may
be zero for $s=\HD_d{W}$. 

For the purposes of this paper, the space $Y$ will be the Gromov boundary endowed  with visual metric of a geodesic hyperbolic metric space $(X,d)$. To give a lower bound on Hausdorff dimension,  we need the notion of a \emph{quasi-radial tree}:
\begin{defn}\label{def-qrtree}
A rooted metric tree $(\TT, v_0, d_\TT)$  is said to be  \emph{quasi-radially embedded} in a geodesic metric space $(X,d)$ via $\Phi$, if $\Phi: \TT \to X$ is injective and satisfies  the following. 
There exists $c \geq 1$ such that $\Phi|_{[v_0,v]}$ is a $c-$quasigeodesic for every vertex $v$ of $\TT$.
We refer to the image of $\Phi$ as a \emph{quasi-radial tree}.
\end{defn}

In this section, we explain a general procedure to build  large quasi-radial trees in the sense that their growth  is exponential with a large exponent. This will turn out to be intimately related to the Hausdorff dimension of their boundary.

\begin{rem}
Note that a quasi-radial tree is not necessarily  quasi-isometrically embedded globally. Only ``radial" geodesics in $\TT$ starting at the root
$v_0$ are required to be uniformly quasi-isometrically embedded.
\end{rem} 
\subsection{Construction of quasi-radial trees from group actions}\label{subsec-qtreegroup}
We start by introducing the data we need to build a quasi-radial tree. Recall that $G$ acts isometrically and properly discontinuously on a geodesic hyperbolic space $X$. Fix a basepoint $o \in X$.
\begin{defn}\label{def-annularset}
$A (L,\Delta,o) \subset G$ will denote the \emph{annular set with parameters
	$L,\Delta$} given by $$A (L,\Delta,o) := \{ g \in G: |d(o,go)-L|\le \Delta\}.$$
\end{defn}

\noindent {\bf Conditions on a sequence of annular sets:}\\
We will need a constant $R>0$, and a sequence $\{A_n\subseteq A (L_n,\Delta_n,o), \; n\ge 1\}$  of annular sets, parameters $L_n,\Delta_n$ and a sequence of non-negative real numbers 
$\omega_n$ such that
\begin{align}
\label{LargeGrowth1}\tag{L1} A_n & \subseteq A (L_n,\Delta_n,o)\\
\label{LargeGrowth2}\tag{L2}   |A_n|& \ge  \mathrm{e}^{L_n\omega_n}\\
%\label{LargeGrowth3}\tag{L3} & \omega_n\to \e G\\
\label{Separation}\tag{S0}  \forall a\ne a'\in A_n:\;\;& d(ao,a'o)>2\Delta_n+2R
\end{align}
In what follows, $\Delta_n, L_n$ may tend to $\infty$;  however, $L_n$ will be large relative to $\Delta_n$. The constant $R$ shall be a uniform constant furnished by Lemma~\ref{localtoglobal}, and depending  on $\tau$ introduced below. 

Condition (S0) ensures that $a\ne a'\in A_n$ are \emph{well-separated}. The letter S here connotes large separation.\\

\noindent {\bf Conditions on auxiliary elements and straightness:}\\
 Let  $\{b_n\in G:n\ge 1\}$ be a sequence of auxiliary elements. Let $B_n:=d(o,b_no)$. 
 \begin{defn}\label{def-straight}
 We say that a sequence of annular sets $A_n$ and auxiliary elements 
 $\{b_n\}$ satisfies a \textit{local $\tau$-straight} condition for some $\tau>0$, if  for each $n\ge 1$, 
\begin{align}
	\label{LocalStraight1}\tag{S1} \forall a, a'\in A_n:\;\; & d(o,[a^{-1}o, a'o])\le \tau\\
	\label{LocalStraight2}\tag{S2} \forall a\in A_n, a'\in A_{n+1}:\;\; & d(o,[a^{-1}o, b_no]), d(o,[b_n^{-1}o, a'o])\le \tau 
\end{align}
 \end{defn} 
\begin{rem}
The letter S in  conditions (S1) and (S2) connotes local straightness. They guarantee that the concatenations
 $[a^{-1}o,o]\cup[o, a'o]$ and $[a^{-1}o,o]\cup[o,b_no]$  are $(1+2\tau)$-quasi-geodesics (in the sense of Definition~\ref{def-qgeo}). 
 Equivalently, $[o,ao]\cup[ao, aa'o]$ and $[o,ao]\cup[ao,ab_no]$  are $(1+2\tau)$-quasi-geodesics. 
\end{rem}

 Let $\mathcal K=\{K_n: n\ge 1\}$
be a sequence of positive integers. We shall refer to   $\mathcal K=\{K_n: n\ge 1\}$ as a sequence of \emph{repetitions} (the reason for this terminology will become clear below). 

For a set $A \subset G$,
 $A^{K}$ will denote the set of  $K$-tuples $a=(a^{(1)},\cdots,a^{(K)})$  in $A$. Under evaluation as an element of $G$, a $K$-tuple $a=(a^{(1)},\cdots,a^{(K)})$ will be written as a product $\prod_{i=1}^{K}a^{(i)}$. \\

\noindent {\bf Admissible words and tree-representation:}\\
Let $m\ge 0$ be an integer. Then 
 $\prod_{n=1}^m A_n^{K_n} b_n$ denotes the set of  words of the  form $$W=\prod_{i=1}^{K_1}a_1^{(i)} b_1 \prod_{i=1}^{K_2}a_2^{(i)}  b_2 \cdots \prod_{i=1}^{K_m}a_m^{(i)}  b_m.$$  Words such as $W$ are referred to as \textit{admissible words}. Thus, an admissible word is
  a concatenation (with $n$ ranging from 1 to $m$) of $K_n$ elements $a_n^{(i)}$ ($1\le i\le K_n$) of $A_n$, and the letter $b_n$ in the natural order. The last letter $b_m$ could  be absent. We allow $W$ to be the empty word when $m=0$.
Let $\mathcal W$ be the set of all such admissible words, that is, $$\mathcal W=\bigcup_{m\ge 0} \left(\prod_{n=1}^m A_n^{K_n} b_n\right)$$
The \emph{length} of $W$ as above is defined to be
\begin{enumerate}
\item $\sum_{n=1}^m (K_n+1)$  when $b_m$ is non-trivial,
\item $\sum_{n=1}^m (K_n+1)-1$, otherwise.
\end{enumerate}  
Let $\mathcal W_m$  denote the set of admissible words of length  $m\ge 0$.
We can write $\mathcal W=\cup_{m\ge 0}^{\infty} \mathcal W_m$.

It will be helpful to represent $\mathcal W$ as the vertices of a rooted tree   $\mathcal T$ with the root vertex  given by the empty word denoted as $W_0$. The vertex set $\mathcal W$ is partitioned    according to \textit{generations} (length of admissible words): $$\mathcal W = \bigcup_{m=0} \mathcal W_m.$$ 
%Thus, $\mathcal W_m:=\{W\in \mathcal T: d_{\mathcal T}(W,W_0)=m\}$ consists of exactly admissible words of length $m$.  
In this tree-representation,  
$\mathcal W_m$ will be referred to as the \textit{$n$-th generation}. For each vertex $W\in \mathcal T$, let
\begin{itemize}
\item $\hat W$ denote the unique parent of $W$,
\item $[\check W]$ denote  the set of children of $W$, and
\item $[\overset\leftrightarrow W]$ denote the set of siblings of $W$.
\end{itemize}   
Instead of the simplicial metric, we equip $\mathcal T$ with a different metric $d_{\mathcal T}$ as follows. Each edge $[W,W']$ is assigned  length $L_n+\Delta_n$ (resp. $B_n$) when $W'$ is obtained from $W$ by adding $a\in A_n$ (resp. $a=b_n$). For example, the vertex corresponding to the above word $W$  has distance to the root $W_0$ given by 
$$d_{\mathcal T}(W_0,W) = \sum_{n=1}^{m-1} (K_n(L_n+\Delta_n)+B_n)$$

Any admissible  word $W$ furnishes a sequence of  points in $Go$, given by the vertices of the geodesic in $\TT$ from $W_0$ to $W$. These  vertices correspond to sub-words $A_n^{K_n}b_n$, $1\le n\le m$  of the following form:
$$
\begin{aligned}
\left(\prod_{i=1}^{n-1} A_i^{K_i} b_i\right) \cdot \left(\overbrace{\underbrace{o,\;  a_n^{(1)}o, \; a_n^{(2)}o,\;  \cdots, \;  a_n^{(K_n)}o}_{K_n+1},\; \left(\prod_{i=1}^{K_n}a_n^{(i)}\right)  b_n o}^{K_n+2}\right)
\end{aligned}
$$  
The path  obtained by connecting consecutive points is said to be  \textit{labeled} by $W$ and is denoted as $p(W)$.  This defines a map as follows.
$$
\begin{aligned}
\Psi:\quad &\mathcal W \longrightarrow X\\
&W\longmapsto Wo     
\end{aligned}
$$
The image  $\Psi(\mathcal W)$  will then have the structure of a tree  induced from $\mathcal T$ and give a quasi-radial tree as in Definition \ref{def-qrtree}, provided we can prove  that $\Psi$ is injective (this is established in Lemma~\ref{QuasiRadialTree} below). 
We shall  use lowercase notation to denote  points $v, w\in \Psi(\mathcal W)$. Further, $[\check v]$ and $[\overset\leftrightarrow v]$ are 
then defined  as before.  A sequence of $v_l\in \Psi(\mathcal W)$ ($l\ge 0$) shall be refereed to as a \textit{family path} if $v_l=\hat v_{l+1}$ is the parent of $v_{l+1}$ and $v_0$ is the basepoint $o$. In this terms, $p(W)$ is exactly given by a family path by connecting consecutive points.

We now record  the main consequence of Conditions (\ref{LocalStraight1},\ref{LocalStraight2}) in Definition \ref{def-straight}.

\begin{lem}\label{lem:localqginhyp}
For any $\tau>0$, there exist $c, L, R_0>0$ with the following property. If $L_n\ge L$,  every path $p(W)$ labeled by $W\in\mathcal W$ is a $c$-quasi-geodesic and $[o,Wo]$ passes through the $R_0$-neighborhood of each $W_n o$, where    $W_n$ is the prefix  of $W$ of length $n$.    
\end{lem}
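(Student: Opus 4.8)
\textbf{Proof plan for Lemma~\ref{lem:localqginhyp}.}

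The plan is to deduce the statement from the local-to-global principle for quasi-geodesics in hyperbolic spaces (Lemma~\ref{localtoglobal}), using the straightness conditions (S1)--(S2) and the separation condition (S0) to control the local geometry of $p(W)$. First I would observe that $p(W)$ is the concatenation of geodesic segments of two types: segments $[vo, vao]$ with $a \in A_n$ (of length in $[L_n-\Delta_n, L_n+\Delta_n]$), and segments $[vo, vb_no]$ (of length $B_n$); the consecutive segments meet at vertices of the family path. By the remark following Definition~\ref{def-straight}, conditions (S1) and (S2) say precisely that at each such junction the incoming and outgoing segments, after translating back to $o$, form a $(1+2\tau)$-quasi-geodesic; equivalently, reading two consecutive segments of $p(W)$ — of the form $[vo,vao]\cup[vao, vaa'o]$ or $[vo,vao]\cup[vao, vab_no]$ — always gives a $(1+2\tau)$-quasi-geodesic by $G$-equivariance. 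So $p(W)$ is locally, on any window spanning at most two consecutive segments, a $(1+2\tau)$-quasi-geodesic.

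The second step is to promote ``locally quasi-geodesic on two-segment windows'' to ``$L$-local $\tau'$-quasi-geodesic'' for a uniform scale $L$. Here is where I need $L_n$ large: if the shortest segment length is at least some $L_0$, then any subpath of $p(W)$ of length $L := 3L_0$ (say) is contained in the union of at most some bounded number of consecutive segments — in fact, once $L_0$ dominates the $B_n$ as well, in at most three consecutive segments. A subpath spanning three consecutive geodesic pieces whose every two-consecutive-piece sub-concatenation is a $(1+2\tau)$-quasi-geodesic is itself a $\tau'$-quasi-geodesic for $\tau' = \tau'(\tau,\delta)$: this is a fixed-size stability argument (the two overlapping two-piece quasi-geodesics fellow-travel, so their union cannot backtrack much). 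I would need to be slightly careful that the relation between $L_n$ and $B_n$ is as the text promises (``$L_n$ large relative to $\Delta_n$'' and, implicitly, relative to the $B_n$), but this is exactly the kind of hypothesis the construction is designed to arrange. Then Lemma~\ref{localtoglobal} applied with $\tau'$ yields $L = L(\tau',\delta)$ and $c = c(\tau',\delta)$ so that any $L$-local $\tau'$-quasi-geodesic is a (global) $c$-quasi-geodesic; choosing $L$ to be the larger of this threshold and $3L_0$, and requiring $L_n \ge L$, makes $p(W)$ a $c$-quasi-geodesic.

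For the last assertion — that $[o, Wo]$ passes through a uniform neighborhood of each prefix-vertex $W_n o$ — I would use that $p(W)$ and any genuine geodesic $[o,Wo]$ have the same endpoints and hence, by the stability of quasi-geodesics in $\delta$-hyperbolic spaces, lie within Hausdorff distance $D = D(c,\delta)$ of each other (the lemma just before Lemma~\ref{localtoglobal}); since each $W_n o$ lies on $p(W)$, it lies within $D$ of $[o,Wo]$, so we may take $R_0 = D$. The main obstacle I anticipate is the bookkeeping in the second step: pinning down exactly how large $L_n$ must be relative to $\Delta_n$ and $B_n$ so that a length-$L$ window meets a controlled number of segments, and checking that overlapping two-piece quasi-geodesics can be glued with only a $\delta$-dependent loss in the quasi-geodesic constant. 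This is routine hyperbolic-geometry estimation but needs to be done with some care about which constants depend on which.
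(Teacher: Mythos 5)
Your proposal is correct and follows essentially the same route as the paper: conditions (S1)--(S2) make $p(W)$ an $L$-local $(1+2\tau)$-quasi-geodesic, Lemma~\ref{localtoglobal} upgrades this to a global $c$-quasi-geodesic, and the Morse Lemma gives the $R_0$-neighborhood statement. The only difference is that the paper implicitly treats every segment (bridges included) as having length at least $L$, so a length-$L$ window meets at most two consecutive pieces and your three-segment gluing step is not needed.
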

\begin{proof}
Every path labeled by $W\in\mathcal W$ is an $L$-local  $(1+2\tau)$-quasi-geodesic $p(W)$. By Lemma \ref{localtoglobal}, there exists $L\gg 0$ and $c \geq 1$ so that whenever $L_n\ge L$ for all $n\ge 1$,  $p(W)$ is  a $c$-quasi-geodesic (in the sense of Definition~\ref{def-qgeo}). By the Morse Lemma, there exists $R_0=R_0(c)$ so that the corresponding geodesic $[o,Wo]$ passes through the $R_0$-neighborhood of each $W_n o$, where    $W_n$ is the prefix  of $W$ of length $n$.      
\end{proof}

Let $T(\mathcal W)$ denote the graph obtained as the union of all paths $p(W)$ labeled by words $W$ in $\mathcal W$. 
Let $\partial X$ denote the Gromov boundary of $X$.
Let  $\epsilon, C>0$ be given by Lemma \ref{VisualMetric} and we endow $\partial X$ with the visual metric $\rho_\epsilon$.
With the above notation and setup in place we can now begin to establish a number of properties. 
\begin{lem}\label{QuasiRadialTree}
For any $\tau>0$, there exist $c, L, R>0$ with the following property. If  $L_n\ge L$ for all $n\ge 1$, then  the    map $\Psi$ is injective and  {each $p(W)$ labels a $c$-quasi-geodesic for $W\in\mathcal W$}. Further, 
\begin{enumerate}
    \item 
    the shadows $\Pi_{v_0}(v,R)$ with $v\in T(\mathcal W)$ are either disjoint or nested; the latter happens exactly when one is a descendant of the other.
    \item 
    If $w\ne  w'$ are children of $v$ associated with the set $A_m$ in $T(\mathcal W)$, $\Pi_{v_0}(w,R)$ and $\Pi_{v_0}(w',R)$ are at $\rho_\epsilon$-distance greater than $C\mathrm{e}^{-\epsilon \widetilde L}$ where $\widetilde L=d(v_0,v)+L_m+\Delta_m$.
\end{enumerate} 
In particular, the image $T(\mathcal W)$ is a quasi-radial tree in Definition \ref{def-qrtree}.
\end{lem}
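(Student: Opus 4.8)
The statement packages together several assertions: (a) $\Psi$ is injective, (b) each $p(W)$ is a $c$-quasi-geodesic, (c) the nesting/disjointness dichotomy for shadows, (d) the quantitative separation of sibling shadows, and (e) the conclusion that $T(\mathcal W)$ is a quasi-radial tree. The plan is to fix $\tau>0$, obtain $c, L, R_0$ from Lemma~\ref{lem:localqginhyp}, and then choose the shadow radius $R$ at least $R_0$ plus the Morse/fellow-travelling constant $D=D(\delta,c)$ from the quasi-geodesic stability lemma; I will also insist $L \geq $ some threshold large relative to $R$ and $\Delta_n$, using the running hypothesis that $L_n$ is large compared to $\Delta_n$. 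Item (b) is immediate from Lemma~\ref{lem:localqginhyp} since Conditions (S1),(S2) make $p(W)$ an $L$-local $(1+2\tau)$-quasi-geodesic, hence a global $c$-quasi-geodesic once $L_n\geq L$.

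\textbf{Injectivity and the shadow dichotomy.} For (a) and (c) I would argue simultaneously. Given two admissible words $W, W'$, look at their longest common prefix $U$; then $W = U\,s\,\cdots$ and $W' = U\,s'\,\cdots$ where $s\neq s'$ are either two distinct elements of some $A_m$ (or a $b_n$) appended to $U$. Using Lemma~\ref{lem:localqginhyp}, $[o,Wo]$ passes within $R_0$ of $Uo$, and beyond $Uo$ the two geodesics diverge: the key point is that the sub-arcs from $Uo$ onward are themselves quasi-geodesics that, because of the separation Condition (S0) $d(so,s'o) > 2\Delta_m + 2R$ together with local straightness, start heading in genuinely different directions. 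Concretely, the Gromov product $\langle Wo, W'o\rangle_o$ is, up to bounded error, $d(o,Uo)$ (not significantly larger), because a geodesic from $Wo$ to $W'o$ must come back near $Uo$ — this follows from thin triangles plus the fact that $U s o$ and $U s' o$ are separated by more than $2R$ after accounting for the annular width $\Delta_m$. This bounded-Gromov-product estimate gives $Wo\neq W'o$, hence injectivity of $\Psi$, and via Lemma~\ref{ShadowApproxBalls} it also gives that the shadows $\Pi_{v_0}(Wo,R)$ and $\Pi_{v_0}(W'o,R)$ are disjoint when neither $U=W$ nor $U=W'$. When $U = W$ (so $W'$ is a descendant of $W$), one instead checks nestedness: any geodesic ray through $B(W'o,R)$ passes within $R_0+R$ of $Wo$, so by enlarging $R$ appropriately $\Pi_{v_0}(W'o,R)\subseteq \Pi_{v_0}(Wo,R)$. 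This is the standard "shadows along a quasi-geodesic tree are nested or disjoint" mechanism, and the arithmetic is routine once the separation constant $R$ is chosen $\geq R_0 + D + (\text{a few }\delta\text{'s})$.

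\textbf{Quantitative sibling separation.} For (d), let $w\neq w'$ be children of $v$ via distinct $a, a'\in A_m$, so $w = v a$, $w' = v a'$ (evaluated in $G$) with $d(vo, wo), d(vo, w'o) \in [L_m - \Delta_m, L_m + \Delta_m]$ — wait, the edge length in $\mathcal T$ is $L_m+\Delta_m$, and I set $\widetilde L = d(v_0,v) + L_m + \Delta_m$. By Condition (S0), $d(wo, w'o) = d(v a o, v a' o) = d(ao, a'o) > 2\Delta_m + 2R$. A geodesic $[wo, w'o]$ must pass near $vo$ by the local-straightness quasi-geodesic property (the concatenation $[wo, vo]\cup[vo, w'o]$ is a $(1+2\tau)$-quasi-geodesic over scale $L$), so $d(o, [wo,w'o])$ differs from $d(o, vo) = d(v_0, v)$ by a bounded amount; more precisely one gets $d(o,[wo,w'o]) \leq d(v_0,v) + O(\delta+\tau)$, which after adjusting constants is $\leq \widetilde L - (\text{const})$. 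Then Lemma~\ref{VisualMetric} gives $\rho_\epsilon(\xi,\eta)\gtrsim \mathrm{e}^{-\epsilon d(o,[\xi,\eta])}$ for boundary points $\xi\in\Pi_{v_0}(w,R)$, $\eta\in\Pi_{v_0}(w',R)$ lying beyond $wo$, $w'o$; since these boundary geodesics still pass near $vo$ but then diverge past $wo, w'o$, the relevant $d(o,[\xi,\eta])$ is bounded above by $d(v_0,v) + L_m+\Delta_m + O(1) = \widetilde L + O(1)$, giving $\rho_\epsilon(\xi,\eta)\geq C\mathrm{e}^{-\epsilon\widetilde L}$ after absorbing the $O(1)$ into $C$ (possibly shrinking $C$, consistent with "$C>0$ given by Lemma~\ref{VisualMetric}" up to a harmless constant adjustment). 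Finally (e) is a formality: injectivity of $\Psi$ makes $T(\mathcal W)$ a tree isomorphic to $\mathcal T$, and Lemma~\ref{lem:localqginhyp} says each radial path $[v_0, v]$ maps to a $c$-quasi-geodesic, which is exactly Definition~\ref{def-qrtree}.

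\textbf{Main obstacle.} The technical heart — and the step I expect to require the most care — is controlling how the annular \emph{width} $\Delta_n$ interacts with the separation constant $R$ and the Morse constant $D$ when one composes many blocks. A single $a\in A_n$ only guarantees $d(o,ao)\in[L_n-\Delta_n, L_n+\Delta_n]$, so the "height" $d(v_0, v)$ of a vertex is only pinned down to within a cumulative error $\sum \Delta_i$; one must make sure this drift never swamps the separation $2\Delta_n+2R$ at the relevant scale, which is exactly why Condition (S0) is phrased with the $2\Delta_n$ term and why $L_n$ must dominate $\Delta_n$. Getting the quasi-geodesic concatenation estimates (the passage from $L$-local to global via Lemma~\ref{localtoglobal}) to interact cleanly with the width requires choosing the threshold $L$ \emph{after} $R_0$, $D$, and then verifying the shadow-nesting inequalities with the enlarged $R$; I would organize this as: fix $\tau$, get $(c,L_0,R_0)$ from Lemma~\ref{lem:localqginhyp}, get $D=D(\delta,c)$, set $R := R_0 + D + 10\delta$, possibly enlarge $L$ beyond $L_0$ so that $L > 100(R + \sup_n\Delta_n/\!\!\sim)$ in the regime of interest, and only then run the Gromov-product computations. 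Everything else is bookkeeping with thin triangles and Lemmas~\ref{VisualMetric} and~\ref{ShadowApproxBalls}.
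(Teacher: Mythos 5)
Your proposal is correct and follows essentially the same route as the paper: Lemma~\ref{lem:localqginhyp} gives the global quasi-geodesic property and the $R_0$-closeness of prefix points to $[o,Wo]$, the separation condition (\ref{Separation}) then forces injectivity and shadow disjointness via the ``two points within $R_0$ of a common geodesic at radii differing by at most $2\Delta_n$ must be within $4R_0+2\Delta_n$ of each other'' computation, and the quantitative sibling separation is the same Gromov-product bound $\langle\xi,\eta\rangle_{v_0}\le \widetilde L+O(1)$ that the paper obtains by contradiction via the projection point $z\in[\xi,\xi']$. One cosmetic caveat: your plan's requirement that $L$ be ``large relative to $\Delta_n$'' is neither available (the $\Delta_n$ may be unbounded) nor needed — the $2\Delta_n$ term built into (\ref{Separation}) is exactly what absorbs the annular width, and none of your actual estimates invoke $L\gg\Delta_n$.
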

\begin{rem}[on further generalizations in \textsection \ref{sec-contracting}]
The injectivity of $\Phi$ uses only Lemma \ref{lem:localqginhyp} which follows from Gromov's hyperbolicity. The same property holds for  admissible paths (Definition \ref{AdmDef}) in general metric space with strongly contracting elements (\textsection \ref{sec-contracting}).  In (2), the visual metric separation  between shadows $\Pi_{v_0}(w,R)$'s  uses the estimates  in Lemma \ref{ShadowApproxBalls}, which hold for Floyd metrics along certain $w$ as stated in Lemma \ref{lemma3.16PY}. This lemma shall be used in the proof of Theorem \ref{thm-qrtree-Myrberg-general} and \ref{MyrbergHdimFloyd}.% on Hausdorff dimension in Floyd metric.    
\end{rem}
\begin{proof}
Let $c, L, R_0>0$ be given by Lemma \ref{lem:localqginhyp}. Then $p(W)$ is  a $c$-quasi-geodesic and $d(W_no,[o,WO])\le R_0$, where    $W_n$ is the prefix  of $W$ of length $n$.  

{We first prove that $\Psi$ is injective. Indeed, if not, assume that $Wo=W'o$ but $W\ne W'\in\mathcal W$. As $b_n$ is uniquely chosen, the first  place where $W, W'$ differ are in $A_n$ for some $n$. Assume therefore that    $a_n\ne a_n'\in A_n$ occurring in $W,W'$ are different. By the above discussion, if $\gamma$ denotes the geodesic between $o$ and $Wo=W'o$, we have $d(a_no, \gamma)\le R_0$ and $d(a_n'o, \gamma)\le R_0$. Choose $x, y\in \gamma$ so that $d(a_no,o)=d(x,o)$ and $d(a_n'o,o)=d(y,o)$. Then $d(a_no,x)\le 2R_0$ and $d(a_n'o,y)\le 2R_0$. As $|d(o,a_no)-d(o,a_n'o)|\le 2\Delta_n$, we see that $d(x,y)\le 2\Delta_n$ and $d(a_no,a_n'o)\le 4R_0+2\Delta_n$. Setting $R>2R_0$, this contradicts (\ref{Separation}) for $a_n\ne a_n'\in A_n$, completing the proof for  the injectivity of $\Psi$. }

Next, we prove that if $W\ne W'$ have the same parent $V\in\mathcal W$,  the shadows $\Pi_{v_0}(w,R_0)$ and $\Pi_{v_0}(w',R_0)$ at $w:=Wo$ and $w':=W'o$ are disjoint. 
Indeed, if not, let us choose $\xi\in \Pi_{v_0}(w,R_0) \cap \Pi_{v_0}(w',R_0)$, so that we have $d(w,[v_0,\xi])\le R_0$ and $d(w',[v_0,\xi])\le R_0$. At $v:=Vo$,   the two uniform quasi-geodesics from $v_0$ to $w$ and $w'$ branch off from each other. Hence, $v$ lies in the $R_0$-neighborhood of the two geodesics starting at $v$ and ending at $w$ and $w'$. Up to increasing $R_0$ by a uniformly bounded amount, since $d(w,[v_0,\xi])\le R_0$, we have that $[v_0,w]$ is contained in the $R_0$-neighborhood of $[v_0,\xi])$. Thus $d(v,[v_0,\xi])\le R_0$. A similar reasoning as in the second paragraph of this proof proves $d(w,w')\le 6R_0+2\Delta_n$. This  contradicts (\ref{Separation}) again  when $R>3R_0$. The statement (1) thus follows.

We now prove statement (2). See Figure~\ref{fig:quasitree}.  Assume that $w\ne w'$ are children of $v\in T(\mathcal W)$ and are associated with  elements in $A_{n}$ for some $n$. Then, by the triangle inequality, $\widetilde L_n:=d(v_0,v)+L_{n}+\Delta_{n}$ gives an upper bound on $d(v_0, w)$ for any child $w\in [\check v]$. So the $\rho_\epsilon$-diameter of  $\Pi_{v_0}(w,R)$ is at most $C\mathrm{e}^{-\epsilon\widetilde L_n}$ for some universal $C$ as per Lemma \ref{VisualMetric}. 

On the contrary, if  statement (2) fails,  let us choose $\xi\in \Pi_{v_0}(w,R_0), \xi'\in \Pi_{v_0}(w',R_0)$ so that $\rho_\epsilon(\xi,\xi')\le C\mathrm{e}^{-\epsilon\widetilde L_n}$. 
\begin{figure}
    \centering

\tikzset{every picture/.style={line width=0.75pt}} %set default line width to 0.75pt        

\begin{tikzpicture}[x=0.75pt,y=0.75pt,yscale=-1,xscale=1]
%uncomment if require: \path (0,300); %set diagram left start at 0, and has height of 300

%Straight Lines [id:da7567399855521759] 
\draw    (75,141) -- (214,140) ;
%Straight Lines [id:da5011978646884292] 
\draw    (214,140) -- (276.5,124.5) ;
\draw [shift={(276.5,124.5)}, rotate = 346.07] [color={rgb, 255:red, 0; green, 0; blue, 0 }  ][fill={rgb, 255:red, 0; green, 0; blue, 0 }  ][line width=0.75]      (0, 0) circle [x radius= 3.35, y radius= 3.35]   ;
%Straight Lines [id:da781405466073708] 
\draw    (214,140) -- (278.5,160.5) ;
\draw [shift={(278.5,160.5)}, rotate = 17.63] [color={rgb, 255:red, 0; green, 0; blue, 0 }  ][fill={rgb, 255:red, 0; green, 0; blue, 0 }  ][line width=0.75]      (0, 0) circle [x radius= 3.35, y radius= 3.35]   ;
%Curve Lines [id:da7166874634966065] 
\draw    (75,141) .. controls (206,132.5) and (314,108) .. (357,84) ;
%Curve Lines [id:da39143866719701026] 
\draw    (75,141) .. controls (142,143.5) and (317,167) .. (355.5,194) ;
%Curve Lines [id:da8735958035466778] 
\draw    (355.5,194) .. controls (320,157.5) and (276,145.5) .. (357,84) ;
%Straight Lines [id:da04046900126011965] 
\draw    (311.5,140.5) -- (300.63,107.9) ;
\draw [shift={(300,106)}, rotate = 71.57] [color={rgb, 255:red, 0; green, 0; blue, 0 }  ][line width=0.75]    (10.93,-3.29) .. controls (6.95,-1.4) and (3.31,-0.3) .. (0,0) .. controls (3.31,0.3) and (6.95,1.4) .. (10.93,3.29)   ;
\draw [shift={(311.5,140.5)}, rotate = 251.57] [color={rgb, 255:red, 0; green, 0; blue, 0 }  ][fill={rgb, 255:red, 0; green, 0; blue, 0 }  ][line width=0.75]      (0, 0) circle [x radius= 3.35, y radius= 3.35]   ;
%Straight Lines [id:da7346729350840115] 
\draw    (311.5,140.5) -- (303.03,171.57) ;
\draw [shift={(302.5,173.5)}, rotate = 285.26] [color={rgb, 255:red, 0; green, 0; blue, 0 }  ][line width=0.75]    (10.93,-3.29) .. controls (6.95,-1.4) and (3.31,-0.3) .. (0,0) .. controls (3.31,0.3) and (6.95,1.4) .. (10.93,3.29)   ;
%Shape: Circle [id:dp24097157846728257] 
\draw   (262.26,121.95) .. controls (263.14,113.98) and (270.31,108.23) .. (278.29,109.11) .. controls (286.26,109.98) and (292.01,117.16) .. (291.13,125.13) .. controls (290.25,133.11) and (283.08,138.86) .. (275.1,137.98) .. controls (267.13,137.1) and (261.38,129.92) .. (262.26,121.95) -- cycle ;
%Shape: Circle [id:dp18196739309043508] 
\draw   (265.51,161.47) .. controls (265.4,153.86) and (271.48,147.6) .. (279.09,147.48) .. controls (286.7,147.37) and (292.96,153.45) .. (293.07,161.06) .. controls (293.18,168.68) and (287.1,174.94) .. (279.49,175.05) .. controls (271.88,175.16) and (265.62,169.08) .. (265.51,161.47) -- cycle ;
%Shape: Brace [id:dp5360503800826066] 
\draw  [line width=0.75]  (270.33,89.67) .. controls (269.55,85.07) and (266.86,83.16) .. (262.26,83.94) -- (176.52,98.57) .. controls (169.95,99.69) and (166.27,97.95) .. (165.49,93.35) .. controls (166.27,97.95) and (163.37,100.81) .. (156.8,101.93)(159.76,101.43) -- (80.72,114.91) .. controls (76.12,115.7) and (74.21,118.39) .. (75,122.99) ;

% Text Node
\draw (69.5,141.9) node [anchor=north west][inner sep=0.75pt]    {$v_{0}$};
% Text Node
\draw (208.5,140.4) node [anchor=north west][inner sep=0.75pt]    {$v$};
% Text Node
\draw (242,92.9) node [anchor=north west][inner sep=0.75pt]    {$B( w,R)$};
% Text Node
\draw (241,176.9) node [anchor=north west][inner sep=0.75pt]    {$B( w',R)$};
% Text Node
\draw (319.5,129.9) node [anchor=north west][inner sep=0.75pt]    {$z$};
% Text Node
\draw (345.5,70.9) node [anchor=north west][inner sep=0.75pt]    {$\xi \in \Pi _{v_{0}}( w,R)$};
% Text Node
\draw (346.5,171.9) node [anchor=north west][inner sep=0.75pt]    {$\xi '\in \Pi _{v_{0}}( w',R)$};
% Text Node
\draw (111.33,66.07) node [anchor=north west][inner sep=0.75pt]    {$d( v_{0} ,w) \leq \ \widetilde L_{n}$};

\end{tikzpicture}
    \caption{Lemma \ref{QuasiRadialTree}}
    \label{fig:quasitree}
\end{figure}
Let $z\in [\xi,\xi']$ be a nearest point projection point of $v_0$ to $[\xi,\xi']$.
On account of the inequality $C^{-1}\mathrm{e}^{-\epsilon d(v_0,[\xi,\xi'])}\le \rho_\epsilon(\xi,\xi')$ (Lemma \ref{VisualMetric}),   we have $d(v_0,z)\ge \widetilde L_n-\log (C^2/\epsilon)$. Moreover, by  the thin-triangle property for the triangle with vertices $(v_0,\xi,\xi')$, the point  $z$ lies within distance $C$ of the  two sides $[v_0,\xi]$ and $[v_0,\xi']$ (up to increasing $C$ by a constant depending only on $\delta$). See Fig. \ref{fig:quasitree}. Recall that $v$ is  within distance $R_0$ of $[v_0,w]$ and $w$ is within distance $R_0$ of $[v_0,\xi]$. Up to increasing $C$ again depending also on $R_0$, and noting that $$d(v_0, w),d(v_0,w')\le \widetilde L_n\le d(v_0,z)+\log (C^2/\epsilon)$$ we have $d(v,[v_0,z])\le C$. The thin-triangle property again shows that $ d(w,[v_0,z]),d(w',[v_0,z])\le C$. That is, $v, w,w'$ are contained in a $C$-neighborhood of the same geodesic $[v_0,z]$. Since $|d(v,w)-d(v,w')|\le 2\Delta_n$,   a similar argument as  in the proof of injectivity of $\Psi$ yields  $d(w,w')\le 2\Delta_n+4C$. This contradicts (\ref{Separation}) for $R\gg 4C$.  The proof of (2) is complete.
\end{proof}

%\textcolor{red}{This above is a key lemma. It needs to be written more carefully. I think what one needs to say is that a concatenation of any two segments of length $L$ or more in the image tree is a uniform quasigeodesic and hence \ref{localtoglobal} applies to give a qi embedding of the tree. Further, the choice of $R$ large enough is not clear at the beginning. It only becomes clear at the end. Also, $b_n$ does not figure in the proof as it is written now. It should.}

%\ywy{Danger! Danger!  Danger! If such $R$ exists, then it means that $\xi_W$ is a conical point! Presumably, we are constructing non-conical points. But we are counting non-orbit points, then the Borel-Cantelli lemma via Poincare series  would not applicable.}

We fix the local straight constant $\tau>0$ in Definition \ref{def-straight}. Let  $L, R>0$ be given by Lemma \ref{QuasiRadialTree} for this $\tau$.  We write $T=T(\mathcal W)$ in the sequel. Denote $\omega:=\liminf_{n\to \infty}\omega_n$.

\begin{lem}\label{LargeTreeGrpVersion}
If $K_n$ is chosen so that the parameters $(L_n,\Delta_n,B_n)$  satisfy
\begin{align}\label{ChoiceKnEq}
\frac{\Delta_n}{L_n}+\frac{B_n}{K_nL_n}\to 0    
\end{align} then the growth rate $\e {\mathcal W}$ of  $\Psi(\mathcal W)$ is greater than or equal $\omega$.  %Moreover, the Hausdorff dimension of the ends of  $\Psi(\mathcal W)$ is $\frac{\e G}{-\log \lambda}$.    
\end{lem}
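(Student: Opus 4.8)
The plan is to compute a lower bound for the growth rate $\e{\mathcal W}$ directly from the definitions by counting admissible words whose corresponding orbit points lie in a ball of a given radius. Recall that, since $\Psi$ is injective by Lemma~\ref{QuasiRadialTree}, distinct admissible words give distinct points of $T$. The growth rate $\e{\mathcal W}$ of $\Psi(\mathcal W)$ is $\limsup_{r\to\infty}\frac{1}{r}\log\sharp\{W\in\mathcal W : d(v_0,Wo)\le r\}$, so it suffices to produce, for a sequence of radii $r_m\to\infty$, a lower bound on this count of the form $\mathrm{e}^{(\omega-o(1))r_m}$.

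First I would fix $m$ and take $r_m := d_{\mathcal T}(W_0, \text{a generation-}m\text{ word ending in }b_m)$; since every path $p(W)$ labelled by $W\in\mathcal W$ is a uniform $c$-quasi-geodesic by Lemma~\ref{QuasiRadialTree}, the actual distance $d(v_0,Wo)$ differs from $d_{\mathcal T}(W_0,W)$ only by a multiplicative and additive constant, uniformly in $W$. Concretely, for a word $W$ of generation $m$ built from the blocks $A_1^{K_1}b_1\cdots A_m^{K_m}b_m$, one has
\begin{equation*}
d_{\mathcal T}(W_0,W) \le \sum_{n=1}^m \bigl(K_n(L_n+\Delta_n)+B_n\bigr) =: \Sigma_m,
\end{equation*}
and the number of such words is exactly $\prod_{n=1}^m |A_n|^{K_n} \ge \prod_{n=1}^m \mathrm{e}^{K_n L_n \omega_n}$ by condition~\eqref{LargeGrowth2}. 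Hence
\begin{equation*}
\frac{\log \sharp\{W : d_{\mathcal T}(W_0,W)\le \Sigma_m\}}{\Sigma_m} \ge \frac{\sum_{n=1}^m K_n L_n \omega_n}{\sum_{n=1}^m K_n(L_n+\Delta_n)+B_n}.
\end{equation*}
Then I would pass from $d_{\mathcal T}$ to $d(v_0,\cdot)$, absorbing the quasi-geodesic constants into a factor that does not affect the limit: choosing $r_m = c\Sigma_m + c$ (an upper bound for $d(v_0,Wo)$ over all generation-$m$ words $W$) changes the denominator by the bounded factor $c$ plus lower-order terms, so the $\limsup$ is unaffected up to dividing by $c$ — and in fact, arguing with the lower quasi-geodesic bound $d(v_0,Wo)\ge \Sigma_m/c - c$ as well pins the exponent down without the stray $c$. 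To keep the exposition clean I would state the inequality in terms of $d_{\mathcal T}$ first and then invoke Lemma~\ref{QuasiRadialTree} once to transfer it.

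The remaining step is purely arithmetic: show that hypothesis~\eqref{ChoiceKnEq}, namely $\frac{\Delta_n}{L_n}+\frac{B_n}{K_nL_n}\to 0$, forces $\frac{\sum K_nL_n\omega_n}{\sum K_n(L_n+\Delta_n)+B_n}\to\omega = \liminf_n \omega_n$. Dividing numerator and denominator of the $n$-th summand by $K_nL_n$, the denominator contributes $K_nL_n\bigl(1+\tfrac{\Delta_n}{L_n}+\tfrac{B_n}{K_nL_n}\bigr) = K_nL_n(1+\varepsilon_n)$ with $\varepsilon_n\to 0$, while the numerator is $K_nL_n\omega_n$. A Stolz–Cesàro / weighted-average argument then shows the ratio of the partial sums tends to $\liminf\frac{\omega_n}{1+\varepsilon_n}=\omega$; since $L_n\to\infty$ and the $K_n$ are positive, the weights $K_nL_n$ are positive and their partial sums diverge, which is exactly what is needed for the weighted-average limit to be governed by $\liminf\omega_n$. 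The only mild subtlety — and the place I would be most careful — is that we only get a $\liminf$ (not a genuine limit) of the $\omega_n$, so I would choose the subsequence of radii $r_m$ running over those $m$ with $\omega_m$ close to $\omega$, or simply note that the weighted average of a sequence bounded below by $\omega-\eta$ eventually (for each $\eta>0$) is itself eventually $\ge \omega-2\eta$, which suffices since $\eta$ is arbitrary. This is the main obstacle only in a bookkeeping sense; conceptually there is nothing deep once Lemma~\ref{QuasiRadialTree}'s injectivity and quasi-geodesic conclusions are in hand.
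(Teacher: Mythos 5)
Your argument is correct in substance, but it reaches the conclusion by a slightly different route than the paper. The paper works with the Poincar\'e series $\sum_{W\in\mathcal W}\mathrm{e}^{-sd(o,Wo)}$: for each fixed $s<\omega$ it groups the series by generations and shows, via the per-$n$ inequality $\bigl(\sum_{a\in A_n}\mathrm{e}^{-s(d(o,ao)+\Delta_n)}\bigr)^{K_n}>\mathrm{e}^{sB_n}$ (which, after inserting (\ref{LargeGrowth2}), is exactly the condition $\omega_n\ge s\bigl(1+\tfrac{2\Delta_n}{L_n}+\tfrac{B_n}{K_nL_n}\bigr)$ guaranteed eventually by (\ref{ChoiceKnEq})), that the series diverges, hence $\e{\mathcal W}\ge s$. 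You instead count generation-$m$ words inside a ball of radius $\Sigma_m=\sum_{n\le m}\bigl(K_n(L_n+\Delta_n)+B_n\bigr)$ and take a weighted-average (Stolz--Ces\`aro) limit; the arithmetic is the partial-sum version of the paper's per-$n$ inequality, so the two proofs are close cousins. What the paper's packaging buys is reuse: the per-$n$ inequality (\ref{LowerBDonAi}) is invoked again verbatim (with $s\epsilon$ in place of $s$) in the proof of Lemma~\ref{HDLargeTree}; what yours buys is that it works directly from the counting definition of the growth rate, with only injectivity of $\Psi$ as input, and your handling of $\omega=\liminf\omega_n$ via the $\eta$-argument is fine since $K_nL_n\ge L$ makes the weights' partial sums diverge.

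One step of your write-up is mis-argued, though harmlessly so: the passage from $d_{\mathcal T}$ to $d(v_0,\cdot)$. Taking $r_m=c\Sigma_m+c$ really does cost you a factor, giving only $\e{\mathcal W}\ge\omega/c$, and the proposed repair via the \emph{lower} quasi-geodesic bound $d(v_0,Wo)\ge\Sigma_m/c-c$ does not help: a lower bound on distances can only shrink the count of orbit points inside a ball, so it cannot restore the exponent for a growth-rate \emph{lower} bound. The correct observation is that no quasi-geodesic input is needed here at all: since $d(o,ao)\le L_n+\Delta_n$ for $a\in A_n$ and $d(o,b_no)=B_n$, the triangle inequality gives $d(v_0,Wo)\le d_{\mathcal T}(W_0,W)\le\Sigma_m$ outright (this is exactly the first displayed inequality in the paper's proof), so you may take $r_m=\Sigma_m$ and the stray $c$ never appears. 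The quasi-geodesic conclusion of Lemma~\ref{QuasiRadialTree} is only needed for injectivity of $\Psi$ (so distinct words give distinct orbit points), which you do invoke.
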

\begin{proof}
We may assume $\omega_n\to \omega$ in what follows.
 For each $W=a_1  b_1\cdots a_m  b_m\in\mathcal W$ with $K_n$-tuples $a_n=(a_n^{(i)})\in A_n^{K_n}$, we have 
$$
d(o, Wo) \le \sum_{n=1}^m \left(\left(\sum_{i=1}^{K_n} d(o,a_n^{(i)}o) \right)+K_n\Delta_n+B_n\right)
$$ by the triangle inequality.
We estimate the Poincar\'e series associated to $\mathcal W$. Note first that 
$$
\begin{aligned}
\sum_{W\in\mathcal W} \mathrm{e}^{-s d(o,Wo)} 
\ge \sum_{m\ge 1}^\infty \left(\prod_{n=1}^m    \mathrm{e}^{-s B_n} \left(\sum_{a\in A_n}\mathrm{e}^{-s (d(o,ao)+\Delta_n)}\right)^{K_n}\right)
\end{aligned}
$$
where the lower bound follows by injectivity of $\Phi$.
Fix any $0<s<\omega$.  We claim that there exists $n_0>1$    so that for $n> n_0$,
\begin{align}\label{LowerBDonAi}
\left(\sum_{a\in A_n} \mathrm{e}^{-s  (d(o,ao)+\Delta_n)}\right)^{K_n} >   \mathrm{e}^{s B_n}      
\end{align}

We conclude the proof assuming ~(\ref{LowerBDonAi}). Set $q=\prod_{n=1}^{n_0} (\sum_{a\in A_n} \mathrm{e}^{-s (d(o,ao)+\Delta_n)})^{K_n}\cdot  \mathrm{e}^{-s B_n}>1$.  Then $$\sum_{W\in\mathcal W} \mathrm{e}^{-s d(o,Wo)}\ge \sum_{m\ge n_0+1} q=\infty.$$  It follows that $\e {\mathcal W}\ge s$. As this holds for any $s<\omega$, we have $\e {\mathcal W}\ge \omega$.

We now establish (\ref{LowerBDonAi}).  The conditions (\ref{LargeGrowth1}) and (\ref{LargeGrowth2}) give us the following:
$$
\sum_{a\in A_n} \mathrm{e}^{-s d(o,ao)-s \Delta_n} \ge   \mathrm{e}^{\omega_n L_n } \mathrm{e}^{-s (L_n+2\Delta_n)}.
$$ 
In order to prove (\ref{LowerBDonAi}), it suffices  to show that
$$\begin{aligned}
\mathrm{e}^{\omega_n K_nL_n } \mathrm{e}^{-s K_n (L_n+2\Delta_n)} \ge   \mathrm{e}^{s B_n}.   
\end{aligned}$$
Equivalently, it suffices  to show that $\omega_n K_nL_n \ge s(K_nL_n+2K_n\Delta_n+B_n)$. that is, 
\begin{align}\label{LowerBDonwi}
\omega_n \ge s  \left(1+\frac{2\Delta_n}{L_n}+\frac{B_n}{K_nL_n}\right)
\end{align}
By assumption, $\omega_n\to \omega$ and $\frac{\Delta_n}{L_n}+\frac{B_n}{K_nL_n}\to 0$. As   $s$ is a fixed number less than $\omega$, there exists $n_0$ so that the inequality (\ref{LowerBDonwi}) is satisfied  for all large $n\ge n_0$. Hence, (\ref{LowerBDonAi}) holds as desired, and the proof of the lemma is complete. 
\end{proof}

\begin{lem}\label{HDLargeTree} Let $\epsilon$ be the parameter for the visual metric in Lemma \ref{VisualMetric}. 
We continue with Condition~(\ref{ChoiceKnEq}) of Lemma \ref{LargeTreeGrpVersion}, and assume further that $K_n$ is chosen  to  satisfy the condition
\begin{align}\label{ChoiceKnEq2}
\frac{L_{m+1}+\Delta_{m+1}}{\sum_{n=1}^m K_n (L_n+\Delta_n)+B_n} \to 0, \text{ as } m\to \infty.
\end{align} Then the Hausdorff dimension of the boundary $\partial T(\mathcal W)$ of the quasi-radial tree $T(\mathcal W)$ is greater than or equal to $\frac{\omega}{ \epsilon}$.
\end{lem}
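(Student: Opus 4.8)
The plan is to apply the standard mass-distribution (or Frostman) principle to the Cantor set $\partial T(\mathcal W)$, using the combinatorial structure of $\mathcal T$ together with the shadow estimates from Lemma \ref{QuasiRadialTree}. First I would set up the measure: for each admissible word $W$ of generation $m$, distribute mass among its children so that each child associated to the annular set $A_n$ (where $n$ is the relevant index at that generation) receives mass proportional to $|A_n|^{-1} \ge \mathrm{e}^{-L_n\omega_n}$, and the single child via $b_n$ receives all the mass when $W$ ends a block. Concretely, if $\mu(\Pi_{v_0}(v,R))$ is already defined for a vertex $v$, split it equally among the $|A_n|$ children obtained by appending an element of $A_n$. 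By Lemma \ref{QuasiRadialTree}(1) the shadows $\Pi_{v_0}(v,R)$ over vertices $v\in T(\mathcal W)$ are either nested (along descendant chains) or disjoint, so this consistently defines a Borel measure $\mu$ supported on $\partial T(\mathcal W)$ with total mass $1$, and $\mu$ is non-atomic because $K_n\to\infty$ forces the generations to keep splitting.

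Next I would estimate, for a vertex $v$ at ``$T$-distance'' $D_v := d_{\mathcal T}(W_0,W)$ from the root (where $W$ labels $v$), the ratio $\log \mu(\Pi_{v_0}(v,R)) / \log(\mathrm{diam}\, \Pi_{v_0}(v,R))$. By Lemma \ref{ShadowApproxBalls} (or Lemma \ref{VisualMetric} applied along the quasigeodesic $[v_0,v]$, using that $[v_0,Wo]$ passes $R_0$-close to each prefix by Lemma \ref{lem:localqginhyp}), the diameter of $\Pi_{v_0}(v,R)$ is comparable to $\mathrm{e}^{-\epsilon d(v_0, v)}$, and $d(v_0,v)$ is within a bounded additive error of $D_v$ since $p(W)$ is a $c$-quasigeodesic. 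On the other hand, tracking the splitting, $-\log\mu(\Pi_{v_0}(v,R)) = \sum (\text{number of } A_n\text{-steps used}) \cdot \log|A_n| \ge \sum K_n L_n \omega_n$ over the completed blocks, i.e. essentially $\omega \cdot D_v (1-o(1))$ using $\omega_n\to\omega$ and Condition~(\ref{ChoiceKnEq}) to absorb the $\Delta_n$ and $B_n$ contributions to $D_v$. Hence for vertices $v$,
$$\frac{-\log\mu(\Pi_{v_0}(v,R))}{-\log \mathrm{diam}\,\Pi_{v_0}(v,R)} \ge \frac{\omega}{\epsilon}(1-o(1)).$$
The role of Condition~(\ref{ChoiceKnEq2}) is to guarantee that this ratio, evaluated at the \emph{end} of the $m$-th block, is not spoiled by the jump to the much larger $(m+1)$-st scale: it ensures the last block's length $L_{m+1}+\Delta_{m+1}$ is negligible compared to the accumulated distance $\sum_{n\le m} K_n(L_n+\Delta_n)+B_n$, so the lower bound on the ratio holds uniformly as the scale $\to 0$, not merely along a subsequence.

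To conclude via the mass-distribution principle I need the estimate for \emph{arbitrary} small balls $B_{\rho_\epsilon}(\xi,r)$, $\xi\in\partial T(\mathcal W)$, not just for the special shadows. Given $r$, pick the deepest vertex $v$ on the ray to $\xi$ with $\mathrm{diam}\,\Pi_{v_0}(v,R)\ge r$; then $B(\xi,r)$ meets only shadows of $v$'s children (and descendants thereof), and by Lemma \ref{QuasiRadialTree}(2) distinct children's shadows are $\rho_\epsilon$-separated by at least a definite fraction of their own diameter, so $B(\xi,r)$ can meet only a bounded number $O(r/(\text{child diameter}))$ of sibling shadows at the first level where children are smaller than $r$ — combined with the uniform mass-ratio bound on those children this yields $\mu(B(\xi,r)) \le C\, r^{\,\omega/\epsilon - o(1)}$ for every $s < \omega/\epsilon$, where $C=C(s)$. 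Then $\mathcal H^s(\partial T(\mathcal W)) \ge \mu(\partial T(\mathcal W))/C > 0$, giving $\HD(\partial T(\mathcal W)) \ge s$ for all $s<\omega/\epsilon$, hence $\HD(\partial T(\mathcal W)) \ge \omega/\epsilon$.

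The main obstacle I anticipate is the last step: passing cleanly from the shadow estimates to a Frostman bound for \emph{all} balls. This requires controlling how many sibling shadows an arbitrary ball of radius $r$ can intersect at the critical scale — precisely what part (2) of Lemma \ref{QuasiRadialTree} is designed to supply — and carefully bookkeeping the two different edge-lengths ($L_n+\Delta_n$ versus $B_n$) so that the $o(1)$ error terms in the exponent genuinely vanish uniformly in $r$, which is where Conditions~(\ref{ChoiceKnEq}) and~(\ref{ChoiceKnEq2}) both get used. Everything else is a routine, if slightly tedious, application of hyperbolic geometry (Morse lemma, thin triangles, Lemma \ref{VisualMetric} and Lemma \ref{ShadowApproxBalls}) that has already been packaged in the earlier lemmas.
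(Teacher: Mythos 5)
Your overall strategy coincides with the paper's: construct a measure on $\partial T(\mathcal W)$ supported on shadows, prove a Frostman-type upper bound $\mu(\Pi_{v_0}(v,R))\lesssim \mathrm{e}^{-s\epsilon d(v_0,v)}$ using (\ref{ChoiceKnEq}), and then handle arbitrary balls using the separation of sibling shadows from Lemma~\ref{QuasiRadialTree}(2) together with (\ref{ChoiceKnEq2}), exactly as in the paper's Step 1 and Step 2. Your choice of equal splitting among the children in $A_n$, rather than the paper's weights proportional to $\mathrm{e}^{-s\epsilon d(v_0,w)}$, is immaterial: siblings in $A_n$ have $d(v_0,\cdot)$ within $2\Delta_n$ of one another, and $\sum K_n\Delta_n/\sum K_nL_n\to 0$ by (\ref{ChoiceKnEq}), so the two measures differ only by factors absorbed into the $o(1)$ in the exponent.

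The one step that does not survive as written is your count of ``$O(r/(\text{child diameter}))$ sibling shadows'' met by $B_{\rho_\epsilon}(\xi,r)$. In a general visual boundary there is no doubling property, so pairwise $\delta$-separation of the children's shadows does not bound the number of them meeting a ball of radius $r\ge \delta$; that count could a priori be unbounded, and summing child masses over it would destroy the Frostman bound. The paper avoids the count entirely: it bounds $\nu(B_{\rho_\epsilon}(\xi,t))$ by the mass of the \emph{single parent} shadow $\Pi_{v_0}(v_l,R)$ (the deepest shadow containing the ball), and uses the fact that the ball meets at least \emph{two} children's shadows only to deduce the lower bound $2t> C\mathrm{e}^{-\epsilon\widetilde L_m}$ on the radius, after which (\ref{ChoiceKnEq2}) converts $\mathrm{e}^{-s\epsilon d(v_0,v_l)}$ into $t^{s_0}$. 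Since you already invoke the separation estimate and the parent--child structure, this is a local repair rather than a missing idea, but as stated your concluding inequality $\mu(B(\xi,r))\le C\,r^{s_0}$ is not justified.
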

\begin{proof}
Write $T=\cup  V_n$ for $n\ge 0$ with $V_n:=\Psi(\mathcal W_n)$ and $E_n=\cup_{v\in  V_n} \Pi_{v_0}(v,R)$. Then $\partial T=\cap_{n\ge 0} E_n$.   

Fix $0<s<\omega/\epsilon$.  We shall define a probability measure $\nu$ (depending on $s$) on $E_0=\Pi_{v_0}(v_0,R)$ that is supported on $\partial T$. 

Set $\nu(E_0)=\nu(\Pi_{v_0}(v_0,R))=1$. For $v\in T$, define  
\begin{equation}\label{nuShadowEq}
\nu(\Pi_{v_0}(v,R))=\frac{\mathrm{e}^{-s\epsilon d(v_0,v)}}{\sum_{w\in [\overset\leftrightarrow v]}\mathrm{e}^{-s\epsilon d(v_0,w)}}\nu(\Pi_{v_0}(\hat v,R))    
\end{equation}
Recall that $\rho_\epsilon$ denotes the visual metric.
Let $B_{\rho_\epsilon}(\xi,t) \subset \partial T$ denote the  
$\rho_\epsilon-$ball  centered at $\xi$ of radius $t$. We define $$\nu(B_{\rho_\epsilon}(\xi,t)) =\inf_{\mathcal U} \sum_{U\in\mathcal U} \nu(U)$$ where the infimum is taken over  covers  $\mathcal U$ of  $B_{\rho_\epsilon}(\xi,t)$ by a collection of shadows $\Pi_{v_0}(v,R)$ at $v\in T$.

\textbf{Step 1}. We first prove that  $\nu(\Pi_{v_0}(v,R))\le \mathrm{e}^{-s \epsilon d(v_0,v)}$ for any $v\in T$. 
A path $v_0, v_1, \cdots, v_l:=v$ in $ T$ for some $l\ge 1$, with $\hat v_i= v_{i-1}$ the parent of $v_i$ for $1\le i <l$ will be referred to as a
 \textit{family path}. Consider such a family path $v_0, v_1, \cdots, v_l:=v$.  Then
$$
\nu(\Pi_{v_0}(v_l,R)) = \frac{\nu(\Pi_{v_0}(v_l,R))}{\nu(\Pi_{v_{0}}(v_{l-1},R))}\frac{\nu(\Pi_{v_0}(v_{l-1},R))}{\nu(\Pi_{v_0}(v_{l-2},R))} \cdots \frac{\nu(\Pi_{v_0}(v_1,R))}{\nu(\Pi_{v_{0}}(v_0,R))} \nu(\Pi_{v_{0}}(v_0,R)).
$$
Unraveling the definition in  (\ref{nuShadowEq}), it thus suffices to  prove the following: 
\begin{align}\label{blancedproduct}
\prod_{i=0}^l\frac{\mathrm{e}^{-s\epsilon d(v_0,v_{l-i})}}{\sum_{w\in [\overset\longleftrightarrow {v_{l-i}}]} \mathrm{e}^{-s\epsilon d(v_0,w)}} \le \mathrm{e}^{-s\epsilon d(v_0,v_{l})}    
\end{align}
Condition~(\ref{blancedproduct}) is in turn equivalent to the following 
condition by canceling $ \mathrm{e}^{-s\epsilon d(v_0,v_{l})}$ from   the two sides:
\begin{align}\label{blancedproduct2}
\prod_{i=0}^l{\mathrm{e}^{-s\epsilon d(v_0,v_{l-i-1})}} \le \prod_{i=0}^l{\sum_{w\in [\overset\longleftrightarrow {v_{l-i}}]} \mathrm{e}^{-s\epsilon d(v_0,w)}}    
\end{align}

By triangle inequality, $d(v_0,w)\le d(v_0, v_{l-i-1})+d(v_{l-i-1},w)$ for any sibling $w$ of ${v_{l-i}}$.  Let  $A$ denote the set of children of $v_{l-i-1}$, i.e. $A$ is the
 set of siblings   $[\overset\longleftrightarrow {v_{l-i}}]$ of $v_{l-i}$. Then  
\begin{align}\label{ASeriesEq}
\mathrm{e}^{-s\epsilon d(v_0, v_{l-i-1})} \sum_{a\in A} \mathrm{e}^{-s\epsilon d(o, ao)} \le \sum_{w\in [\overset\longleftrightarrow {v_{l-i}}]} \mathrm{e}^{-s \epsilon d(v_0,w)}\end{align}
By the nature of the construction, $A$ is either the set $A_{n}$  or $\{b_n\}$ for some $n$. 

By the choice of $K_n$ in  (\ref{LowerBDonAi}), we have $$\left(\sum_{a\in A_n} \mathrm{e}^{-s\epsilon  (d(o,ao))}\right)^{K_n} >   \mathrm{e}^{s\epsilon B_n}$$
with the constraint $s<\omega$  replaced with $s\epsilon<\omega$ in the RHS of (\ref{LowerBDonwi}). Consequently, for any $m\ge 1$, \begin{align}\label{blancedproduct3}
1\le \prod_{n=1}^m \left(   \sum_{a\in A_n} \mathrm{e}^{-s\epsilon d(o, ao)}\right)^{K_n} \left( \mathrm{e}^{-s\epsilon B_n}\right)   
\end{align}
For concreteness, assume that $\sum_{n=1}^m (K_n+1)\le  l<\sum_{n=1}^{m+1} (K_n+1)$. We deal with the case $l=\sum_{n=1}^m (K_n+1)$; the other case  follows from this.  Now, if we take the product  of the two sides of (\ref{ASeriesEq}) over $0\le i\le l$:
$$
\prod_{i=0}^l{\mathrm{e}^{-s\epsilon d(v_0,v_{l-i-1})}} \prod_{n=1}^m  \left(  \sum_{a\in A_n} \mathrm{e}^{-s\epsilon d(o, ao)}\right)^{K_n} \left(\mathrm{e}^{-s\epsilon B_n}\right) \le \prod_{i=0}^l{\sum_{w\in [\overset\longleftrightarrow {v_{l-i}}]} \mathrm{e}^{-s\epsilon d(v_0,w)}} 
$$
then the condition (\ref{blancedproduct}) follows from  (\ref{blancedproduct3}).
Thus, $\nu(\Pi_{v_0}(v_n,R))\le \mathrm{e}^{-s \epsilon d(v_0,v_n)}$ is proved. 

\textbf{Step 2}. Fix any $0<s_0<s$.  Let $C$ be given by Lemma \ref{ShadowApproxBalls}.
We are   going to prove that $\nu(B_{\rho_\epsilon}(\xi,t)) \le (2t/C)^{s_0}$ for all $\xi\in \partial T$ and for all small $t>0$.    

Let $\Pi_{v_0}(v_l,R)$ be the shadow of a \textit{lowest generation} $v_l\in \Phi(\mathcal W_l)$ containing $B_{\rho_\epsilon}(\xi,t)$ for some $l\ge 1$ (i.e. $l$ is minimal). For definiteness, assume that the children of $v_l$ are given by the set $A_{m}$ for some $m$. Denoting $v_0=\Phi(W_0)$ and $v_l=\Phi(W_l)$ for words $W_0,W_l\in \mathcal W$, we have $$d_{\mathcal T}(W_0,W_l) = (L_m+\Delta_m)(l-\sum_{n=1}^{m-1}(K_n+1)) +\sum_{n=1}^{m-1} (K_n(L_n+\Delta_n)+B_n).$$  Then, $\widetilde L_m:=d(v_0,v_l)+L_{m}+\Delta_{m}$ gives an upper bound on $d(v_0, v)$ for any child $v\in [\check v_l]$. If $v\ne v'$ are siblings we have $\Pi_{v_0}(v,R)$ are at  $\rho_\epsilon$-distance at least $C\mathrm{e}^{-\epsilon\widetilde L_m}$ by Lemma \ref{QuasiRadialTree}.  Since $B_{\rho_\epsilon}(\xi,t)$ is not contained in the shadow $\Pi_{v_0}(v,R)$ of any descendant $v$ of $v_n$, $B_{\rho_\epsilon}(\xi,t)$ intersects at least two $\Pi_{v_0}(v,R)$ with $v\in [\check v_l]$. Hence $C\mathrm{e}^{-\epsilon\widetilde L_m} < 2t$.

Note that the map $\Phi$ sends each geodesic ray in $\mathcal T$ issuing at the root to a  $c$-quasi-geodesic at $v_0$ (Lemma \ref{QuasiRadialTree}). Thus, we have $$cd(v_0,v_l)+c\ge d_{\mathcal T}(W_0,W_l)\ge \sum_{n=1}^{m-1} (K_n(L_n+\Delta_n)+B_n).$$ Hence the assumption on $K_n$ in (\ref{ChoiceKnEq2}) implies  that    
$({L_{m}+\Delta_{m}})/{\widetilde L_m}\to 0$ and   
$$
\frac{d(v_0,v_l)}{d(v_0,v_l)+L_{m}+\Delta_{m}}\ge \frac{s_0}{s}
$$
for $d(v_0,v_l)\gg 0$.
Thus, by Step (1), for all $\xi\in \partial T$ and for all small $t>0$, $$\nu(B_{\rho_\epsilon}(\xi,t)) \le \nu(\Pi_{v_0}(v_l,r))\le \mathrm{e}^{-s\epsilon d(v_0,v_l)}  \le \left(\frac{2}{C}\right)^{s_0}t^{s_0}.$$
This proves $\HD(\partial T)\ge s_0$. As $s_0<s<\omega/\epsilon$ is arbitrary, it follows that $\HD(\partial T)\ge \omega/\epsilon$. 
\end{proof}
\subsection{Construction of quasi-radial trees from a  pattern}\label{subsec-qtreepattern}
In this subsection, we recast, in a form that will be relevant to us,  some of the material in \cite{FM01, MRT19} in terms of Poincar\'e series. This could be thought of as a purely geometric (not group theoretic) version of the previous section. This formulation shall be used  to estimate Hausdorff dimension of boundaries of trees. 

The following definition is an analog to the set of conditions (\ref{LargeGrowth1},\ref{LargeGrowth2},\ref{Separation},\ref{LocalStraight1}).  
\begin{defn}\label{defn-patteronset}
Let $v,\hat v\in X$ be a pair of points.   We say that  a finite set $A$ of points $x\in  X$ for  $(\hat v, v)$ has \textit{pattern with parameters} $(L, \Delta,\omega,R,\tau)$ if   the following conditions hold  
\begin{align}
\label{LargeGrowth1'}\tag{L1'} & |d(v,x)-L|\le \Delta\\
\label{LargeGrowth2'}\tag{L2'} & |A| \ge \mathrm{e}^{\omega L}\\
\label{Separation'}\tag{S0'} & d(a,a')>2\Delta+2R,\;\forall a\ne a'\in A\\
\label{LocalStraight1'}\tag{S1'} & d(v,[\hat v,x])\le \tau
\end{align}
Note that if $v=\hat v$, the last condition $d(v,[\hat v,x])=0$ is vacuous.
    
\end{defn}

\subsubsection*{\textbf{Quasi-radial tree from a pattern}} Fix a sequence of parameters $(L_n, \Delta_n, \omega_n,R,\tau)$, a sequence of repetitions $K_n$, and a sequence of bridge lengths $B_n$. 
We shall build a quasi-radial tree $T$    in $X$ by choosing  a sequence of subsets $A_n$ repeated  $K_n$-times followed with a bridge $b_n$ with length $B_n$ to the next $A_{n+1}$. This is similar to the construction of admissible words $\mathcal W$. However, since there are no group actions, we      inductively       build the quasi-radial tree by appropriately choosing points in $X$. We now explain the construction  subject to these parameters in the following way. 

We construct inductively a sequence of   finite subsets $V_l$ in $X$  for $l\ge 0$. Set $V_0=\{o\}$. Given $V_l$, we construct $V_{l+1}$.
\begin{enumerate}
    \item 
    Let $n\ge 1$ be the minimal integer with $$l\le \sum_{m=1}^{n} (K_m+1)-1$$ 
    
    For each element $v$ in $V_l$, we construct a finite set of children for the pair $(\hat v,v)$, denoted by $[\check v]$,  that has a pattern with parameters $(L_n, \Delta_n,\omega_n,R,\tau)$. Here we set $v=\hat v$ if $l=0$.
    
    Inductively, set $V_{l+1}=\cup_{v\in V_l} [\check v]$  at most $K_n$ times until $l+1=\sum_{m=0}^{n} (K_m+1)$.   
    \item 
    For each point $w\in [\check v]$, pick a point $\check w\in X$ satisfying  
    \begin{align}
    \nonumber &d(w,\check w)=B_n\\
    \label{LocalStraight2'}\tag{S2'} &d(w, [v,\check w]) \le \tau
    \end{align} 
    The resulting set of points denoted by  $V_{l+1}$  has the same cardinality as $V_l$ by construction.
    \item 
    We repeat the above steps (1) and (2). 
    
\end{enumerate} 
A sequence of points $v_l\in V_l$ ($l\ge 0$) with $v_{l}=\hat v_{l+1}$  the parent of $v_{l+1}$ is referred to as a \textit{family path}. Let $\mathcal T$ denote the  underlying tree structure of the sets $V_l$ ($l\ge 0$) induced by the parent-child relation. 
The resulting set $T=\cup_{l\ge 0} V_l$  will be a quasi-radial tree, once we establish that the    map 
$$
\begin{aligned}
\Psi:\;& \mathcal T \longrightarrow  X\\
&v\longmapsto v     
\end{aligned}
$$
is injective.

The same argument as in the proofs of Lemma \ref{QuasiRadialTree}, Lemma \ref{LargeTreeGrpVersion}, Lemma \ref{HDLargeTree}  proves the following. Set $\omega:=\liminf_{n\to \infty}\omega_n$.  
\begin{lem}\label{LargeTreePtsVersion}
For any $\tau$, there exist $L,R>0$ with the following property.  Let $\Psi$ be the map  constructed as above with parameters $(L_n,\Delta_n,\omega_n,K_n,B_n)$. If $L_n\ge L$ the map $\Psi$ is injective and every family path obtained by joining consecutive vertices by geodesic segments  is  a $c$-quasi-geodesic  in $X$. In particular, $T$ is a quasi-radial tree in the sense of Definition \ref{def-qrtree}.

Moreover, if $K_n>0$ is a sequence satisfying  conditions (\ref{ChoiceKnEq}) and (\ref{ChoiceKnEq2}), then the growth rate of $ T$ equals $\omega$ and the Hausdorff dimension of $\partial T$ is $\frac{\omega}{\epsilon}$.  
\end{lem}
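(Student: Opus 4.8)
The plan is to transport the three-part argument of Lemmas~\ref{QuasiRadialTree}, \ref{LargeTreeGrpVersion} and \ref{HDLargeTree} from the group-equivariant setting to the present purely metric one, the prescribed patterns $A_n$ and the inductively chosen vertex sets $V_l$ now playing the role of the annular sets $A(L_n,\Delta_n,o)$ and the orbit points $Wo$. The lower bounds (injectivity, $c$-quasi-geodesy, growth rate $\ge\omega$, and $\HD(\partial T)\ge\omega/\epsilon$) are a line-by-line transcription; the matching upper bounds are the only genuinely new ingredient, since Lemmas~\ref{LargeTreeGrpVersion} and \ref{HDLargeTree} are one-sided.

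\emph{Step 1 (injectivity and quasi-geodesy).} Conditions \eqref{LocalStraight1'} and \eqref{LocalStraight2'} say precisely that along a family path $v_0,v_1,\dots$ each $v_i$ is $\tau$-close to $[v_{i-1},v_{i+1}]$, so any two consecutive edges concatenate to a $(1+2\tau)$-quasi-geodesic; as every edge has length $\ge L$, the whole family path is an $L$-local $(1+2\tau)$-quasi-geodesic, hence (Lemma~\ref{localtoglobal}) a global $c$-quasi-geodesic for suitable $c=c(\tau,\delta)$ once $L=L(\tau,\delta)$ is large, and by the Morse lemma $d(v_j,[v_0,v_l])\le R_0$ for all intermediate vertices --- the analogue of Lemma~\ref{lem:localqginhyp}. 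Injectivity of $\Psi$ then follows exactly as in Lemma~\ref{QuasiRadialTree}: if $\Psi(v)=\Psi(v')$ for distinct vertices of $\mathcal T$, pass to their first point of divergence; since each vertex has a single bridge-child $\check w$, this divergence occurs at two distinct elements $a\neq a'$ of a common pattern $A_n$, both of which lie in the $R_0$-neighbourhood of the geodesic $[v_0,v]$ at distances from $v_0$ differing by at most $2\Delta_n+O(R_0)$ (by \eqref{LargeGrowth1'} and quasi-geodesy), whence $d(a,a')\le 2\Delta_n+O(R_0)$ contradicts \eqref{Separation'} once $R>2R_0$. The same comparison, applied at the branch vertex, gives parts (1)--(2) of Lemma~\ref{QuasiRadialTree}: the shadows $\Pi_{v_0}(v,R)$ are pairwise disjoint or nested, and sibling shadows from $A_m$ are $\rho_\epsilon$-separated by at least $Ce^{-\epsilon\widetilde L}$ with $\widetilde L=d(v_0,v)+L_m+\Delta_m$. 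In particular $T$ is a quasi-radial tree in the sense of Definition~\ref{def-qrtree}.

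\emph{Step 2 (growth rate and Hausdorff-dimension lower bound).} Each vertex in an $A_n$-block has $|A_n|\ge e^{\omega_n L_n}$ children at distance within $\Delta_n$ of $L_n$, and each bridge vertex one child at distance $B_n$: this is the exact combinatorial input of Lemma~\ref{LargeTreeGrpVersion}. Hence, for $K_n$ satisfying \eqref{ChoiceKnEq}, the inequality $\big(\sum_{a\in A_n}e^{-sd(v,a)}\big)^{K_n}\ge e^{sB_n}$ holds for all large $n$ and every $s<\omega$, so telescoping over generations makes the Poincar\'e series of $T$ diverge at $s$ and the growth rate of $T$ is $\ge\omega$. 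Plugging this into Lemma~\ref{HDLargeTree} --- defining $\nu$ on shadows by \eqref{nuShadowEq}, checking $\nu(\Pi_{v_0}(v,R))\le e^{-s\epsilon d(v_0,v)}$ via \eqref{blancedproduct3}, and using \eqref{ChoiceKnEq2} to convert a small $\rho_\epsilon$-ball into a shadow of a lowest-generation vertex --- the mass distribution principle yields $\HD(\partial T)\ge s_0$ for every $s_0<\omega/\epsilon$, hence $\HD(\partial T)\ge\omega/\epsilon$.

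\emph{Step 3 (matching upper bounds, and the main obstacle).} In a construction \emph{from a pattern} we may choose the $A_n$ as small as \eqref{LargeGrowth2'} allows, say $|A_n|\le e^{\omega_n L_n}+1$, and (as in all applications in this paper) arrange $\omega_n\to\omega$. For $s>\omega/\epsilon$ one then covers $\partial T$ by the shadows $\Pi_{v_0}(v,R)$ of the vertices $v$ that are first on their family path to reach distance $\ge D$ from $v_0$: by Lemma~\ref{ShadowApproxBalls} each has $\rho_\epsilon$-diameter $\le 2Ce^{-\epsilon d(v_0,v)}\le 2Ce^{-\epsilon D}$, while the number of such $v$ is at most $\prod_n(e^{\omega_n L_n}+1)^{K_n}$ over the blocks consumed to reach depth $D$, so $\sum_v\diam{\Pi_{v_0}(v,R)}^s$ is bounded by $\exp\big(\sum_n K_nL_n(\omega_n-s\epsilon)+o(\sum_n K_nL_n)\big)$; since $\omega_n\to\omega<s\epsilon$ and $\sum_n K_nL_n=\infty$ this tends to $0$ as $D\to\infty$, giving $\mathcal H^s(\partial T)=0$ and hence $\HD(\partial T)\le\omega/\epsilon$, and likewise growth rate $\le\omega$. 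I expect this upper bound to be the main obstacle: one must ensure the multiplicative quasi-geodesic constant does not enter the exponent, for which one uses that with the $L_n$ large the local straightness \eqref{LocalStraight1'}--\eqref{LocalStraight2'} upgrades the family paths to $(1+o(1))$-quasi-geodesics on the relevant scales, together with \eqref{ChoiceKnEq2} to make the final longest block negligible against the depth already accumulated.
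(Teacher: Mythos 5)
Your Steps 1 and 2 are exactly what the paper does: its entire proof of this lemma is the single sentence that ``the same argument as in the proofs of Lemma~\ref{QuasiRadialTree}, Lemma~\ref{LargeTreeGrpVersion}, Lemma~\ref{HDLargeTree} proves the following,'' i.e.\ a transcription of the injectivity/quasi-geodesy argument and of the two lower bounds to the pattern setting, so that part of your proposal is a faithful (and more explicit) rendering of the intended proof. Where you genuinely depart from the paper is Step~3. The three cited lemmas are one-sided, so the paper never actually establishes the ``equals''/``is'' in the statement; and indeed, as stated the equality cannot follow from the hypotheses, since Definition~\ref{defn-patteronset} only bounds $|A_n|$ from \emph{below}, so nothing prevents $\e T>\omega$. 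Your fix --- imposing $|A_n|\le \mathrm{e}^{\omega_n L_n}+1$ with $\omega_n\to\omega$ and then covering $\partial T$ by shadows of the first vertices at depth $D$ --- is the right way to get the matching upper bound, and your worry about the multiplicative quasi-geodesic constant is the correct one to have: you need that an $L$-local $(1+2\tau)$-quasi-geodesic built from segments of length $\ge L_n\to\infty$ satisfies $d(v_0,v_l)\ge(1-o(1))\,d_{\mathcal T}(v_0,v_l)$, which holds because the additive loss is $O(\tau,\delta)$ per segment, but this is not contained in Lemma~\ref{localtoglobal} as stated and would need to be proved. In practice none of this is needed: every application in the paper (Propositions~\ref{CriterionHDNonConical}, \ref{CriterionHDNonConical2}, Theorems~\ref{thm-amenable-graph}, \ref{thm-cheeger-surface}) invokes this lemma only for the lower bound, the matching upper bound coming from the ambient boundary ($\HD(\partial X)$ or Bishop--Jones). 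So your extra step repairs an overstatement in the lemma rather than filling a gap in the paper's intended argument.
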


%Let $N_n$ and $M_n$ be increasing sequences of positive integers. Say that $T$ is \textit{$\{M_n\}$-regular relative to $\{N_n\}$} if for every $n \ge 1$ every vertex $x$ at depth $N_n$ has exactly $M_{n+1}$ descendant vertices at depth $N_{n+1}$.  Denote by $\rho_\alpha$  the visual metric of parameter $\alpha\in (0,1)$.
%\begin{lem}\cite[Lemma 2]{LS00}\label{RTreeHdimLem}
%Let $T$ be an infinite rooted tree with space of ends $\partial_{\mathcal E} T$ equipped with the visual metric of parameter $\alpha\in (0,1)$. If there is an increasing sequence of integers $N_n$ satisfying $\lim_{n\to\infty} \frac{ N_{n+1}}{N_n} = 1$ such that, for
%some sequence $\{M_n\}$ of positive integers, $T$ is $\{M_n\}$-regular with respect to $\{N_n\}$, then the Hausdorff dimension of $(\partial_{\mathcal E} T,\rho_\alpha)$ is given by
%$$
%\HD_{\rho_\alpha} (\partial_{\mathcal E} T ) = \liminf_{n\to\infty} \frac{\log |V_{N_n}|}{-N_n\log \alpha}
%= \liminf_{n\to\infty}\frac{\log  \prod_{j=1}^n M_j}{-N_n\log \alpha}.
%$$
%\end{lem}

\section{Counting geodesic arcs between two closed geodesics}\label{sec-arcs}

The goal of  this section is to present counting results about  shortest arcs between two closed geodesics in Riemannian manifolds and graphs. This follows from a more general result on the counting of double cosets in groups with contracting elements in \cite{HYZ}. Here we present the argument for the case in hyperbolic spaces, as it is relatively short and also facilitates the construction of an appropriate quasi-radial tree.

\subsection{Setup}
We recast the  setup in terms of group actions.
Assume that $G\act X$ is a proper isometric action on a proper hyperbolic geodesic  space. Let $\alpha,\beta$ be two quasi-geodesics in $X$. Let $H$ and $K$ be the stabilizers of $\alpha$ and $\beta$ respectively. Assume that $H$ and $K$ preserves $\alpha$ and $\beta$ co-compactly. 
In our applications, $\alpha,\beta$ are preserved by two loxodromic elements $h,k$ respectively  and $H=E(h), K=E(k)$ are the associated maximal elementary subgroups.

Denote the set of  $G$--translates of $\alpha,\beta$ as follows
$$
[\alpha]=\{g\alpha: g\in G\},\quad [\beta]=\{g\beta: g\in G\}.
$$
Thus $[\alpha]$ and $[\beta]$ could equivalently be thought of as the images of  two geodesics corresponding to  $\alpha$ and $\beta$ on the quotient space $X/G$.  

Since $G$ acts on $[\alpha]\times [\beta]$ by the diagonal action, the quotient denoted by $\Ar([\alpha],[\beta])=([\alpha]\times [\beta])/G$   records   the set of shortest arcs from $[\alpha]$ to $[\beta]$. To be precise, the elements in $\Ar([\alpha],[\beta])$ are of form $G(\alpha, g\beta)$ for $g\in G$.
These are  $G$-translates of the pair  $(\alpha, g\beta)$.  

%As the action $G\act X$ is proper, $[\alpha]$ and $[\beta]$  have  bounded intersection: there exists a constant $D>0$ such that for any two distinct $\gamma_1\ne \gamma_2\in [\alpha]\cup[\beta]$, we have $\pi_{\gamma_1}(\gamma_2)\le C$.

Let $\Dc(H,K):=\{HgK: g\in G\}$ be the collection of double cosets.  Then we have the following one-to-one correspondence: 
$$\begin{aligned}
\Phi: \Ar([\alpha],[\beta])/G&\longrightarrow \Dc(H,K) \\
G(\alpha, g\beta)& \longmapsto HgK
\end{aligned}$$  

Let $\Ar(n,[\alpha],[\beta])$ be the set of $G(\alpha, g\beta)$'s satisfying $d(\alpha,\beta)\le n$. 
We have $\Ar([\alpha],[\beta])=\cup_{n>0} \Ar(n,[\alpha],[\beta])$.
Similarly, let $\Ar(n,\Delta, [\alpha],[\beta]) =\{G(\alpha, g\beta): |d(\alpha, g\beta)-n|\le \Delta\}$.
Simplifying notation, we shall write $\Ar(n,\Delta)=\Ar(n,\Delta, [\alpha],[\beta])$, when $[\alpha],[\beta]$ are understood from the context.

%In what follows, $\alpha$ and $\beta$ are the axis of two loxodromic elements $h$ and $k$ in $G$, which act on them by translation. 

The quotient $\Ar(n,[\alpha],[\beta])/G$ is  the set of shortest arcs between $[\alpha]$ and $[\beta]$ so that $d([\alpha],[\beta])\le n$. The above correspondence allows us to estimate $\Ar(n,\Delta)$ from the cardinality of  $$\Dc(o, n,\Delta)=\{AgB: |d(Ao,gBo)-n|\le \Delta\}.$$

\begin{lem}\label{arc-db-cosets}
For any point $o\in X$, there exist constants $\Delta_0$ and  $c_1, c_2$ depending on $o$ so that for any large $\Delta>\Delta_0$ 
$$\begin{aligned}
 c_1|\Dc(o,n,\Delta-\Delta_0)|  \le |\Ar(n,\Delta)| \le c_2|\Dc(o, n, \Delta+\Delta_0)| 
\end{aligned}$$    
\end{lem}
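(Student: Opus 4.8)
The plan is to establish a bijection between shortest arcs and double cosets, track the error incurred by replacing the quasi-geodesics $\alpha, \beta$ with the single orbit points $Ho, Ko$, and then transfer the length interval $[n-\Delta, n+\Delta]$ up to a bounded additive loss.

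First I would recall that the map $\Phi: \Ar([\alpha],[\beta])/G \to \Dc(H,K)$ sending $G(\alpha, g\beta)$ to $HgK$ is a well-defined bijection, so it suffices to compare the \emph{lengths} $d(\alpha, g\beta)$ of the shortest arc representing a class with the quantity $d(Ho, gKo)$ that defines $\Dc(o,n,\Delta)$. Here $A = H$, $B = K$ in the notation of the lemma and $Ao = Ho$, $Bo = Ko$ denote the corresponding orbits. Since $H$ preserves $\alpha$ cocompactly, there is a constant $D_\alpha$ with $\alpha \subseteq N_{D_\alpha}(Ho)$ and $Ho \subseteq N_{D_\alpha}(\alpha)$, and similarly a constant $D_\beta$ for $\beta$ and $Ko$. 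Set $\Delta_0 := D_\alpha + D_\beta$ (or a slightly larger constant absorbing a hyperbolicity term, see below). Then for any $g \in G$, comparing the shortest arc between $\alpha$ and $g\beta$ with the shortest arc between $Ho$ and $gKo$ gives
$$
\big| d(\alpha, g\beta) - d(Ho, gKo) \big| \le \Delta_0,
$$
because a point of $\alpha$ realizing (approximately) the distance to $g\beta$ lies within $D_\alpha$ of a point of $Ho$, and symmetrically on the other side; the reverse inequality is the same argument with the roles swapped. This is the essential geometric input and is where cocompactness of the $H$- and $K$-actions is used. One subtlety: the nearest-point projection in a $\delta$-hyperbolic space is only coarsely well-defined, and $d(\alpha, g\beta)$ is the length of a \emph{shortest} arc, which is a minimum over the (coarsely convex) translates; I would either enlarge $\Delta_0$ by an $O(\delta)$ term to absorb this, or simply note that both quantities are minima of $G$-invariant functions that are uniformly close pointwise, hence their minima over a class are within $\Delta_0$ of each other.

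Next I would run the counting comparison. Suppose $G(\alpha, g\beta) \in \Ar(n,\Delta)$, i.e.\ $|d(\alpha, g\beta) - n| \le \Delta$. By the displayed inequality, $|d(Ho, gKo) - n| \le \Delta + \Delta_0$, so $HgK \in \Dc(o, n, \Delta + \Delta_0)$. Since $\Phi$ is a bijection this yields $|\Ar(n,\Delta)| \le |\Dc(o, n, \Delta + \Delta_0)|$, which is the upper bound with $c_2 = 1$ (one may keep a general constant $c_2$ to allow for the coarse multivaluedness of arcs, but in fact $1$ works once $\Phi$ is genuinely a bijection on $G$-orbits). For the lower bound, if $HgK \in \Dc(o, n, \Delta - \Delta_0)$ with $\Delta > \Delta_0$, then $|d(\alpha, g\beta) - n| \le (\Delta - \Delta_0) + \Delta_0 = \Delta$, so $G(\alpha, g\beta) \in \Ar(n,\Delta)$; again by bijectivity of $\Phi$ this gives $|\Dc(o, n, \Delta - \Delta_0)| \le |\Ar(n,\Delta)|$, i.e.\ $c_1 = 1$. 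So in fact one can take $c_1 = c_2 = 1$; the statement allows general constants presumably to accommodate slightly different normalizations (e.g.\ counting with multiplicity, or a version where $\Phi$ is only finite-to-one), and I would state it in the cleaner form and remark that constants can be inserted if a looser correspondence is used.

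The main obstacle, such as it is, is purely bookkeeping: making precise the comparison between ``distance between two quasi-geodesic axes'' and ``distance between two group orbits'' in a way that is uniform in $g$, and checking that the $\min$ over a double coset of each of these two quantities is what appears in $\Ar(n,\Delta)$ and $\Dc(o,n,\Delta)$ respectively. The potential pitfall is an off-by-$O(\delta)$ in choosing $\Delta_0$ coming from the fact that $\alpha$ and $Ho$ are only \emph{coarsely} the same set and nearest-point projections in hyperbolic spaces move by $O(\delta)$; I would handle this by defining $\Delta_0$ to include a $\delta$-dependent term and noting that all constants depend only on $o$ (and on the fixed data $\alpha, \beta, H, K$), exactly as the statement asserts. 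No deep new idea is needed beyond the cocompactness of the stabilizer actions and the triangle inequality.
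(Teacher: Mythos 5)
Your proposal is correct and follows essentially the same route as the paper: cocompactness of the $H$- and $K$-actions gives that $\alpha$ and $Ho$ (resp.\ $\beta$ and $Ko$) are within a bounded Hausdorff distance $R$ of each other, whence $|d(\alpha,g\beta)-d(Ho,gKo)|\le\Delta_0$ uniformly in $g$, and the counting inequalities then follow through the bijection $G(\alpha,g\beta)\mapsto HgK$ exactly as you carry out. Your worry about an $O(\delta)$ correction from coarse nearest-point projections is unnecessary (the comparison of distances between sets at bounded Hausdorff distance is a pure triangle-inequality fact), but including it does no harm.
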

\begin{proof}
As $H$ acts co-compactly on $\alpha$ and $Ho$, there exists a constant $R$ depending on $o$ so that $\alpha$ and $Ho$ are contained in an $R$-neighborhood of each other.  By the same reason, $\beta $ and $ Ko$ are contained in an $R$-neighborhood of each other.    
Let $c$ be a shortest arc from $\alpha$ to $\beta$
and $\ell(c)$ denote its length. As $Ho$ and $Ko$ are quasi-convex subsets,  there exists a constant $\Delta_0$ so that  $|d(Ho,Ko)-\ell(c)|\le \Delta_0$.  
\end{proof}

\subsection{Constructing shortest arcs}
Recall the annular set
$$A(n,\Delta,o)=\{go\in Go: |d(o,go)-n|\le \Delta\}$$

This section is devoted to the proof of the following Theorem. \begin{thm}\label{Thm:Growth-DoubleCosets}
Given $\Delta>0$, there exist $\Delta'=\Delta'(\Delta, o)$ and $c=c(\Delta, o)$ so that for any $n\gg 0$,
$$|\Ar( n, \Delta')|\ge c\cdot |A(o,n, \Delta)|.$$
\end{thm}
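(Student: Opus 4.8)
The plan is to pass from shortest arcs to double cosets: by Lemma~\ref{arc-db-cosets} it suffices to produce a constant $\Delta''=\Delta''(\Delta,o)$ and $c'=c'(\Delta,o)>0$ with $|\Dc(o,n,\Delta'')|\ge c'\,|A(o,n,\Delta)|$ for all $n\gg 0$; then $\Delta':=\Delta''+\Delta_0$ and $c:=c_1c'$ do the job. Since $A(o,n,\Delta)\subseteq A(o,n,\Delta+1)$ we may assume $\Delta$ is as large as convenient. Recall $H=E(h)$, $K=E(k)$ are virtually cyclic, so $Ho$ lies within bounded Hausdorff distance of the quasi-geodesic axis $\alpha$ and acts cocompactly on it, similarly for $K,\beta$; in particular $o\in Ho$, $go\in gKo$. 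For each $g\in A(o,n,\Delta)$ I record a \emph{reduced representative}: a decomposition $g=h_g\gamma_g k_g$ with $h_g\in H$, $k_g\in K$, where $h_go$ is chosen within bounded distance of the last point of $[o,go]$ lying in a fixed neighbourhood $N_{R_2}(Ho)$, and $gk_go$ within bounded distance of the first point of $[o,go]$ in $N_{R_2}(gKo)$. Using only $\delta$-hyperbolicity and quasi-convexity of $Ho,gKo$, the concatenation $[o,h_go]\cup h_g[o,\gamma_go]\cup h_g\gamma_g[o,k_go]$ does not backtrack (the pieces meet $\alpha$-like sets at definite angles), and $[o,\gamma_go]$ leaves $N_{R_2}(Ho)$ near $o$ and meets $N_{R_2}(\gamma_gKo)$ only near $\gamma_go$. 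Hence, for \emph{any} $h\in H,k\in K$,
\[
d(o,h\gamma_gk\,o)=d(o,ho)+d(o,\gamma_go)+d(o,ko)+O(1),\qquad \bigl|\,d(Ho,\gamma_gKo)-d(o,\gamma_go)\,\bigr|=O(1).
\]

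Call $g$ \emph{transverse} when $d(Ho,\gamma_gKo)\ge n-\Delta''$; by the displacement identities this is equivalent to $d(o,h_go)+d(o,k_go)\le T$ for the constant $T:=\Delta''+O(\Delta)$, and then $H\gamma_gK\in\Dc(o,n,\Delta'')$ (enlarging $\Delta''$ by $O(1)$ to absorb the trivial upper bound). A double coset has only $O(1)$ reduced representatives, since two of them differ by a pair $(h,k)$ with $d(o,ho)+d(o,ko)=O(1)$ by the displacement identity. If $g\in A(o,n,\Delta)$ is transverse with $H\gamma_gK=D$, then $g=h\gamma k$ for one of these $O(1)$ representatives $\gamma$ and a pair $(h,k)$ with $d(o,ho)+d(o,ko)=d(o,go)-d(o,\gamma o)+O(1)\le T$, of which there are $O(T^2)$; so each transverse $D$ absorbs at most $C_0=O(T^2)$ elements of $A(o,n,\Delta)$, whence $\#\{\text{transverse }g\}\le C_0\,|\Dc(o,n,\Delta'')|$.

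For the non-transverse ones, the map $g\mapsto(h_g,\gamma_g,k_g)$ is injective, $\gamma_g$ is reduced with $d(o,\gamma_go)=n-\bigl(d(o,h_go)+d(o,k_go)\bigr)+O(1)$, and $\#\{(h,k)\in H\times K:d(o,ho)+d(o,ko)\in[s,s+1)\}\asymp s$ by virtual cyclicity, so
\[
\#\{\text{non-transverse }g\}\ \le\ \sum_{s>T}\ \#\{(h,k):d(o,ho)+d(o,ko)\in[s,s+1)\}\cdot r_{n-s}\ \lesssim\ \sum_{s>T}(s+1)\,|A(o,n-s,1)|,
\]
where $r_\ell\lesssim|A(o,\ell,1)|$ counts reduced representatives of displacement $\approx\ell$ (these are orbit points). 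Now I use two facts about the proper non-elementary action, both standard in the contracting-element setting invoked in \cite{HYZ}: a ping-pong construction with two independent loxodromics gives $|A(o,m,\Delta_1)|\ge\lambda^{m}$ for all $m\ge m_0$ and fixed $\lambda>1,\Delta_1,m_0$; and an extension/concatenation lemma gives $|A(o,n,\Delta)|\ge c_*\,|A(o,n-s,1)|\,|A(o,s,\Delta_1)|$ once $\Delta$ is large. Combining, $|A(o,n-s,1)|\le c_*^{-1}\lambda^{-s}|A(o,n,\Delta)|$, so choosing $\Delta''$ (hence $T$) large enough makes $\sum_{s>T}(s+1)\lambda^{-s}$ small enough that $\#\{\text{non-transverse }g\}\le\tfrac12|A(o,n,\Delta)|$. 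Adding the two bounds, $|A(o,n,\Delta)|\le C_0|\Dc(o,n,\Delta'')|+\tfrac12|A(o,n,\Delta)|$, so $|\Dc(o,n,\Delta'')|\ge\tfrac1{2C_0}|A(o,n,\Delta)|$, and Lemma~\ref{arc-db-cosets} completes the proof.

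The main obstacle is exactly the non-transverse estimate: one must show that only a bounded proportion of the orbit points in $A(o,n,\Delta)$ have reduced part much shorter than $n$ (equivalently, that $gKo$ fellow-travels $\alpha$ for a long time only for few $g$). The reduction to double cosets, the reduced representatives, and the bounded multiplicity over transverse cosets are soft $\delta$-hyperbolic geometry; the genuine content is the quantitative \emph{uniform} (not merely $\limsup$) exponential lower bound on the fat-annulus counts together with their super-multiplicativity, which is precisely what the general double-coset counting result of \cite{HYZ} provides.
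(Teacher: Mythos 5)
Your argument is essentially correct, but it takes a genuinely different and substantially heavier route than the paper. You decompose each $g\in A(o,n,\Delta)$ as $g=h_g\gamma_g k_g$ by trimming the fellow-travelling with $Ho$ and $gKo$, split into transverse and non-transverse orbit points, and then kill the non-transverse contribution by a growth estimate. That last step is where all the real cost sits: you need a \emph{uniform} exponential lower bound $|A(o,m,\Delta_1)|\ge\lambda^m$ for all large $m$ together with super-multiplicativity of annuli via an extension lemma, neither of which is proved in this paper (they are true in the contracting setting, but they are genuine extra inputs, and the ping-pong bound alone gives elements of word length $k$ spread over displacements in an interval of width linear in $k$, so even the uniform annulus bound needs an extra filling-in argument). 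The paper's proof of Theorem~\ref{Thm:Growth-DoubleCosets} sidesteps the transverse/non-transverse dichotomy entirely: it fixes a finite set $F$ of three loxodromics independent of $h,k$ and of each other, and maps $g\mapsto Hf_1gf_2K$ where $f_1,f_2\in F$ are chosen (Lemma~\ref{BddProjLem}) so that $[o,go]$ has bounded projection to $\ax(f_1),\ax(f_2)$. The inserted separators force $(a,f_1,g,f_2,b)$ to label a uniform quasi-geodesic for \emph{every} $a\in H$, $b\in K$, which yields $|d(Ho,f_1gf_2Ko)-d(o,go)|\le\Delta_0$ (Lemma~\ref{DCExtension}) and bounded multiplicity of the map (Lemma~\ref{lem-finite-one}) by purely soft ping-pong/independence arguments, with no counting of annuli at all. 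In short: you analyze which $g$ already realize the double-coset distance and show the bad ones are few (at the price of quantitative growth input); the paper modifies every $g$ by a bounded amount so that all of them realize it. Both are valid; the paper's is shorter and needs no growth hypotheses beyond properness and non-elementarity.
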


%We use a similar strategy as in the proof of \cite[Theorem 2]{GitikRips20} to show Theorem \ref{Thm:Growth-DoubleCosets}.

%\begin{proof}[Proof of Theorem \ref{Thm:Growth-DoubleCosets}]
Let us note the following elementary fact.
\begin{lem}\label{BddProjLem}
Let  $F$ be a finite set of pairwise independent loxodromic elements in $G$. There exists some $\tau$ depending on $F$ with the following property:
$$\forall g\in G,\; \forall f_1\ne f_2\in F:\;\; \min\{\mathrm{diam}(\pi_{\mathrm{Ax}(f_1)}([o,go])),\mathrm{diam}(\pi_{\mathrm{Ax}(f_2)}([o,go]))\}\le \tau 
$$     
\end{lem}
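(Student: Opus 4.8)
The plan is to exploit the behavior of nearest-point projections to quasi-axes of independent loxodromic elements in a hyperbolic space. Recall the standard fact (a ``bounded projection'' or ``behrstock-type'' inequality for hyperbolic spaces): if $\gamma_1$ and $\gamma_2$ are quasi-geodesics whose nearest-point projections to each other have large diameter, then any geodesic segment $[o,go]$ cannot project with large diameter to both $\mathrm{Ax}(f_1)$ and $\mathrm{Ax}(f_2)$ simultaneously. More precisely, since $f_1$ and $f_2$ are independent, the quasi-axes $\mathrm{Ax}(f_1)$ and $\mathrm{Ax}(f_2)$ have finite-diameter coarse intersection, i.e.\ $\mathrm{diam}\big(\pi_{\mathrm{Ax}(f_1)}(\mathrm{Ax}(f_2))\big) < \infty$; and there is a uniform bound of this kind over the finitely many pairs $f_1 \ne f_2$ in $F$.

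First I would fix, for each ordered pair $(f_1,f_2)$ of distinct elements of $F$, the constant $D(f_1,f_2) := \mathrm{diam}\big(\pi_{\mathrm{Ax}(f_1)}(\mathrm{Ax}(f_2))\big)$, which is finite by independence (two independent loxodromics have distinct fixed-point pairs, hence their quasi-axes are not parallel and fellow-travel only along a bounded segment). Set $D := \max_{f_1 \ne f_2 \in F} D(f_1,f_2) < \infty$. The key geometric lemma I would invoke is the following consequence of $\delta$-thinness and the Morse lemma: there is a constant $\tau_0 = \tau_0(\delta, c, D)$, where $c$ is the quasi-geodesic constant of the quasi-axes, such that for \emph{any} geodesic $[x,y]$ in $X$ and any two quasi-geodesics $\gamma_1,\gamma_2$ with $\mathrm{diam}(\pi_{\gamma_1}(\gamma_2)) \le D$, one has $\min\{\mathrm{diam}(\pi_{\gamma_1}([x,y])), \mathrm{diam}(\pi_{\gamma_2}([x,y]))\} \le \tau_0$. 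This is a routine but slightly fiddly hyperbolic-geometry estimate: if both projections were large, one could build a path through $\gamma_1$, along $[x,y]$, through $\gamma_2$ that forces a long fellow-travelling of $\gamma_1$ and $\gamma_2$, contradicting $\mathrm{diam}(\pi_{\gamma_1}(\gamma_2)) \le D$.

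Then I would take $\tau := \tau_0$ and check the claimed statement. For $g \in G$ and $f_1 \ne f_2 \in F$: the quasi-axes $\mathrm{Ax}(f_1)$ and $\mathrm{Ax}(f_2)$ are $c$-quasi-geodesics with $c$ uniform over $F$ (finite set), and $\mathrm{diam}(\pi_{\mathrm{Ax}(f_1)}(\mathrm{Ax}(f_2))) \le D$; applying the geometric lemma with $[x,y] = [o,go]$ gives exactly $\min\{\mathrm{diam}(\pi_{\mathrm{Ax}(f_1)}([o,go])), \mathrm{diam}(\pi_{\mathrm{Ax}(f_2)}([o,go]))\} \le \tau$, as desired. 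Since $F$ is finite, one can in fact take $\tau$ uniform over all pairs, and it depends only on $F$ (through $D$ and $c$) and on $\delta$.

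The main obstacle, and the only genuinely non-formal step, is establishing the geometric lemma cleanly — specifically, extracting from ``$[o,go]$ projects with diameter $> \tau_0$ to both $\mathrm{Ax}(f_i)$'' a genuine long overlap of the two quasi-axes. One has to be careful that nearest-point projection to a quasi-geodesic in a hyperbolic space is only coarsely well-defined and coarsely Lipschitz, so all the estimates are up to additive errors controlled by $\delta$ and $c$; the cleanest route is probably to use the ``projection lemma'' form: if $d(o, \mathrm{Ax}(f_i)) $ and $d(go,\mathrm{Ax}(f_i))$ are both large compared to the projection diameters, then a geodesic from a point near $\pi_{\mathrm{Ax}(f_1)}(o)$ to a point near $\pi_{\mathrm{Ax}(f_2)}(go)$ coarsely passes near both projection sets, forcing $\mathrm{Ax}(f_1)$ and $\mathrm{Ax}(f_2)$ to fellow-travel on a segment of length roughly $\min$ of the two projection diameters minus a bounded error. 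Setting this minus the bounded error greater than $D$ gives the contradiction, and solving for the threshold yields $\tau_0$. Alternatively, one can simply cite the corresponding statement from \cite{HYZ} or the standard contracting-element literature, since the lemma is explicitly labelled ``elementary.''
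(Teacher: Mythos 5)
The paper states this lemma without proof (it is introduced as an ``elementary fact''), so there is no written argument to compare yours against; judged on its own, your proposal has a genuine gap at its central step. The ``key geometric lemma'' you invoke --- that there is $\tau_0=\tau_0(\delta,c,D)$ such that every geodesic $[x,y]$ has projection of diameter at most $\tau_0$ to at least one of any two quasi-geodesics $\gamma_1,\gamma_2$ with $\mathrm{diam}(\pi_{\gamma_1}(\gamma_2))\le D$ --- is false. In the Cayley tree of $F_2=\langle a,b\rangle$ take $\gamma_1=\{a^m\}$, $\gamma_2=\{a^Nb^m\}$ (the axes of $a$ and of $a^Nba^{-N}$) and $[x,y]=[1,a^Nb^N]$: then $\mathrm{diam}(\pi_{\gamma_1}(\gamma_2))=0$, yet both projections of $[x,y]$ have diameter $N$, which is unbounded. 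The heuristic you give --- that a large projection to both would force $\gamma_1$ and $\gamma_2$ to fellow-travel --- is exactly where the argument breaks: the two fellow-travelled subsegments of $[x,y]$ can be disjoint, with the geodesic running along $\gamma_1$ first and then along $\gamma_2$. (This configuration is not pathological; it is precisely what the paper's admissible paths are built to realize.) Your closing paragraph, which passes to a geodesic between points near $\pi_{\mathrm{Ax}(f_1)}(x)$ and $\pi_{\mathrm{Ax}(f_2)}(y)$, still ends by asserting a long overlap of the two axes, so it inherits the same error.

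The statement itself is true, but only because one endpoint of the segment is the \emph{fixed} basepoint $o$, and the resulting $\tau$ must depend on the position of $o$ relative to the axes (e.g.\ on $\max_{f\in F}d(o,\mathrm{Ax}(f))$ or on $d\bigl(\pi_{\mathrm{Ax}(f_1)}(o),\pi_{\mathrm{Ax}(f_1)}(\mathrm{Ax}(f_2))\bigr)$), not only on $D$, $\delta$ and $c$; your counterexample-immune constant $\tau_0(\delta,c,D)$ cannot exist. Two ways to repair the argument: (i) if both projections exceed $\tau$, the two fellow-travelled subsegments $\sigma_1,\sigma_2\subset[o,go]$ overlap in length at most a constant determined by $D$, so after relabelling $\sigma_1$ precedes $\sigma_2$; since $\sigma_2$ begins within bounded distance of $\pi_{\mathrm{Ax}(f_2)}(o)$, this forces $d(o,\mathrm{Ax}(f_2))\ge\tau-c_0$, which is absurd once $\tau>\max_{f\in F}d(o,\mathrm{Ax}(f))+c_0$. (ii) Apply the pointwise Behrstock inequality to the single point $go$: one of $\pi_{\mathrm{Ax}(f_1)}(go)$, $\pi_{\mathrm{Ax}(f_2)}(go)$ lies within bounded distance of the fixed bounded set $\pi_{\mathrm{Ax}(f_1)}(\mathrm{Ax}(f_2))$, resp.\ $\pi_{\mathrm{Ax}(f_2)}(\mathrm{Ax}(f_1))$; since $\pi_{\mathrm{Ax}(f_i)}([o,go])$ is coarsely the interval between the fixed point $\pi_{\mathrm{Ax}(f_i)}(o)$ and $\pi_{\mathrm{Ax}(f_i)}(go)$, the corresponding projection diameter is bounded by a constant depending on $F$ and $o$, as the lemma permits.
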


As $G$ contains infinitely many independent loxodromic elements, we may choose a set   $F$ of three loxodromic elements so that the union $\{h,k\}\cup F$ are pairwise independent. That is, the axes of any distinct pair of elements in the set
$\{h,k\}\cup F$ have $\tau$-bounded projections for some $\tau>0$:
\begin{align}
\label{BddProjEQ1}
\forall f_1\ne f_2\in \{h,k\}\cup F:\quad \mathrm{diam}(\pi_{\mathrm{Ax}(f_1)}(\mathrm{Ax}(f_2)))\le \tau.
\end{align} 
This further implies the following (up to increasing $\tau$ if necessary):
\begin{align}
\label{BddProjEQ2} \forall g\in G,\; \forall f_1\ne f_2\in F:\quad & \min\{\mathrm{diam}(\pi_{\mathrm{Ax}(f_1)}([o,go])),\mathrm{diam}(\pi_{\mathrm{Ax}(f_2)}([o,go]))\}\le \tau.
\end{align} 
In particular, $d(o,ao)\le \tau$ for any $a\in (H\cup K) \cap E(f)$ with $f\in F$. 

\begin{lem}\label{DCExtension} Let $F$ be as above.
There exist $n_0$ and $c,\Delta_0>1$ with the following property.
For any $g\in G$ with $d(o,go)>n_0$ and $a\in H, b\in K$, we have $f_1,f_2\in F^{n_0}$ so that the word $(a,f_1,g,f_2,b)$ labels a $c$-quasi-geodesic. Moreover, $|d(Ho,f_1 g f_2 Ko)-d(o,go)|\le \Delta_0$.
\end{lem}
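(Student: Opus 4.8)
\textbf{Proof plan for Lemma~\ref{DCExtension}.}

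The plan is to exploit the bounded-projection property~(\ref{BddProjEQ2}) to insert controlled ``connecting'' segments from $F$ on either side of $g$, forcing the resulting concatenation to be a local quasi-geodesic and hence, by Lemma~\ref{localtoglobal}, a global one. First I would recall that each $f\in F$ is loxodromic, so its powers $f^m$ give arbitrarily long subsegments of $\mathrm{Ax}(f)$: there is a constant $c_0\ge 1$ (uniform over the finite set $F$) such that $[o,f^m o]$ fellow-travels $\mathrm{Ax}(f)$ within bounded Hausdorff distance, and $d(o,f^m o)$ grows linearly in $m$, so I may pick $n_0$ large enough that for suitable choices $f_1,f_2\in F^{n_0}$ the segments $[o,f_1 o]$ and $[o, f_2 o]$ have length at least, say, $100(\tau+\delta+c_0)$ while lying in bounded neighborhoods of $\mathrm{Ax}(f_1)$ and $\mathrm{Ax}(f_2)$ respectively.

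The next step is the local-to-global argument. Consider the path $p$ obtained by concatenating geodesic representatives of the syllables $(a, f_1, g, f_2, b)$, where we translate appropriately so that consecutive syllables share an endpoint: $[o,ao]\cup[ao,af_1 o]\cup[af_1 o, af_1 g o]\cup \cdots \cup [af_1 g f_2 o, af_1 g f_2 b o]$. Since $a\in H$ and $H$ preserves $\alpha$ coarsely, $d(o,ao)\le$ (a constant once we normalize, using that $Ho$ and $\mathrm{Ax}(h)$ are within $R$ of each other — but more relevantly, the syllable $[ao, af_1 o]$ lies near $a\,\mathrm{Ax}(f_1)$), and similarly for the $b$-syllable. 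The key geometric input: at each concatenation point, the two adjoining segments lie near axes of distinct elements of $\{h,k\}\cup F$, which by~(\ref{BddProjEQ1}) have $\tau$-bounded mutual projections; combined with the fact that the $f_i$-syllables are long (length $\gg \tau,\delta$), a standard hyperbolic-geometry estimate (the same one used to prove concatenations with bounded ``backtracking'' are quasi-geodesics, cf.\ the straightness conditions in Definition~\ref{def-straight} and the Remark following it) shows that $p$ is an $L$-local $\tau'$-quasi-geodesic for appropriate $L$ depending on $n_0$ and $\tau'$ depending on $\tau,\delta$. I would need to verify the local condition at the two ``short-syllable boundaries'' where $g$ meets $f_1$ and $f_2$: here one uses that $[o,go]$ has $\tau$-bounded projection to $\mathrm{Ax}(f_1)$ (by~(\ref{BddProjEQ2}), since $g$ may be arbitrary) and likewise $g^{-1}$ has $\tau$-bounded projection to $\mathrm{Ax}(f_2)$, while the $f_i$-segments are long — so the overlap is small and the concatenation goes ``forward''. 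Applying Lemma~\ref{localtoglobal} then yields a uniform $c\ge 1$ with $p$ a $c$-quasi-geodesic, provided $n_0$ is chosen large enough (it governs both the length of the $f_i$-segments and the threshold $L$).

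Finally, the distance estimate $|d(Ho, f_1 g f_2 Ko) - d(o,go)|\le \Delta_0$ follows by combining the quasi-geodesic bound just obtained with the coarse equalities $d(Ho, f_1 g f_2 Ko) \asymp d(o, f_1 g f_2 o)$ (since $Ho$, $\mathrm{Ax}(h)$ and $Ko$, $\mathrm{Ax}(k)$ are each within $R$ of each other, and $f_1 g f_2$ conjugates things appropriately — more directly, $d(Ho, f_1 g f_2 Ko)$ is realized within $R$ of $d(h o, f_1 g f_2 k' o)$ for suitable $h\in H, k'\in K$, so up to an additive $2R$ it equals $d(o, f_1 g f_2 o)$ after absorbing the $H,K$ elements into the endpoints) and $d(o, f_1 g f_2 o) = d(o,go) + O(n_0)$ because $p$ is a $c$-quasi-geodesic of the stated syllable structure with the $f_i$ syllables of bounded-by-$n_0$ length; the error $\Delta_0$ then depends only on $c$, $n_0$, $R$, and $\delta$. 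The main obstacle I anticipate is the careful bookkeeping at the $g$–$f_i$ interfaces: one must be sure that the bound~(\ref{BddProjEQ2}), which controls projections of $[o,go]$ to the axes \emph{but says nothing a priori about which direction along $\mathrm{Ax}(f_i)$ one should travel}, can be upgraded — by choosing the sign of the exponent defining $f_1, f_2\in F^{n_0}$, i.e.\ replacing $f_i$ by $f_i^{-1}$ if necessary — so that the long $f_i$-segment genuinely extends the path rather than doubling back; this orientation choice is exactly where a bit of care is needed, and it is the analogue of the orientation-respecting sliding in the non-conical construction.
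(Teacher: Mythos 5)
Your plan matches the paper's proof: the bounded projections (\ref{BddProjEQ1}) and (\ref{BddProjEQ2}) make the concatenation an $L$-local $\tau'$-quasi-geodesic, Lemma~\ref{localtoglobal} upgrades it to a global $c$-quasi-geodesic, and the distance estimate follows by fellow-travelling; your worry about the sign of the exponent in $f_i\in F^{n_0}$ is moot, since a $\tau$-bounded projection of $[o,go]$ onto $\ax(f_i)$ already rules out backtracking in either direction along the axis. The one step to spell out is the lower bound in the ``moreover'' clause: $d(Ho,f_1gf_2Ko)$ is an infimum over coset representatives, and the paper bounds the minimizing $a\in H$, $b\in K$ by applying the Morse lemma to the shortest arc between the two cosets and invoking its minimality — this is the content your phrase ``absorbing the $H,K$ elements into the endpoints'' glosses over.
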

\begin{proof}
By the thin triangle property, we note that if two $c_0$-quasi-geodesics $\alpha, \beta$ with $\alpha_+=\beta_-$ have $\tau$-bounded projection, then $\alpha\beta$ is a $\tau'$-quasi-geodesic, for some $\tau'$ depending on $\tau,c_0$ and the hyperbolicity constant. Let $L, c\ge 1$ be  as in Lemma \ref{localtoglobal} so that an $L$-local $\tau'$-quasi-geodesic is a $c$-quasi-geodesic.  

By (\ref{BddProjEQ1}),    $[o,ao]a[o, f^no]$ is a $\tau'$-quasi-geodesic  with any $n\in \mathbb Z$ and $f\in F$.  Choose $n_0>L$ large enough so that $F^{n_0}$ consists of elements with length greater than $L$. For any $g\in G$ with $d(o,go)>n_0$, we may apply  (\ref{BddProjEQ2}) twice to choose $f_1, f_2\in F$ so that $\mathrm{diam}(\pi_{\mathrm{Ax}(f_1)}([o,go])),\mathrm{diam}(\pi_{\mathrm{Ax}(f_2)}([o,go]))\le \tau$. Then $[o,f_1o]f_1[o,go]$ and $[o,go]g[o,f_2o]$ are $\tau'$-quasi-geodesics. This implies that connecting   consecutive points in the sequence  $$(o, af_1o, af_1go, af_1gf_2o,af_1gf_2bo)$$ by geodesic segments one obtains an $L$-local $\tau'$-quasi-geodesic, and hence a $c$-quasi-geodesic.  This path is labeled by $(a,f_1,g,f_2,b)$. 
%\textcolor{red}{Why does $af_1,af_1g$ not have a cancellation/cusp? similarly,  $af_1g$ and $af_1gf_2$. I think the statement is true, but a finer argument is necessary to get no cancellations at the beginning and end of $g$.}\ywy{Thanks for pointing this out! I now assume $d(o,go)>n_0$ to avoid further involved discussion:)}

We now prove the ``moreover" statement. First, $d(Ho,f_1 g f_2 Ko)\le d(o,go)+2D$ where $D:=\max\{d(o,fo): f\in F\}$. For the other direction,  let $\alpha$ be a shortest arc  from $Ho$ to $f_1 g f_2 Ko$. We may assume that $\alpha$ starts at some point $a^{-1}o$ with $a\in H$ and ends at $f_1gf_2bo$ with $b\in K$.  As above, consider the $c$-quasi-geodesic $\gamma$ labeled by $(a,f_1,g,f_2,b)$, which has the same endpoints as $\alpha$.
By  the Morse Lemma,  $\alpha$ lies in the $R_0$-neighborhood of $\gamma$ for some $R_0$ depending on $c$. As $a^{-1}o\in Ho, bo\in Ko$, we obtain  $d(o,ao), d(o,bo)\le 2R_0$: indeed, if $d(o,a^{-1}o)>2R_0$, the fact $d(o,x)\le R_0$ for $x\in \alpha$ implies   $d(x,a^{-1}o)>R_0$,  contradicting that $\alpha$ is shortest arc. This implies $\ell(\alpha)\ge d(o,go)-2R_0-d(o,f_1o)-d(o,f_2o)\ge  d(o,go)-2R_0-2D$. Setting $\Delta_0=2R_0+2D$ completes the proof.   
\end{proof}
We now define a map as follows:
$$\begin{aligned}
\Phi:  A(o,n,\Delta) &\longrightarrow  \Dc(n,\Delta+\Delta_0)\\
g&\longmapsto  Hf_1gf_2K
\end{aligned}$$
where  the elements   $g\in G$, $(f_1,f_2)$ are chosen as per Lemma \ref{DCExtension}.

\begin{lem}\label{lem-finite-one}
There exists an integer $M$ such that for all $n$,  the above  map $\Phi$  is at most $M$-to-one. %uniformly finite to one: $\sharp \Phi^{-1}(Af_1gf_2B)\le N$ for some $N$ depending only on $A,B$ and $F$.    
\end{lem}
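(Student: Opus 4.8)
The plan is to show that the fibers of $\Phi$ are uniformly finite by reversing the construction in Lemma~\ref{DCExtension}. Fix a double coset $Hf_1gf_2K$ in the image and suppose $g, g' \in A(o,n,\Delta)$ both map to it, with chosen auxiliary pairs $(f_1,f_2) \in F^{n_0}\times F^{n_0}$ and $(f_1',f_2')\in F^{n_0}\times F^{n_0}$ respectively. Then $Hf_1gf_2K = Hf_1'g'f_2'K$, so there exist $a\in H$, $b\in K$ with $f_1'g'f_2' = a f_1 g f_2 b$, i.e.\ $g' = (f_1')^{-1} a f_1 \cdot g \cdot f_2 b (f_2')^{-1}$. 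The point is that $F$ is a \emph{finite} set, $n_0$ is fixed, and the ``junk'' elements $a\in H$, $b\in K$ that can occur are forced to be short: by~\eqref{BddProjEQ2} and the discussion after it, any element of $(H\cup K)\cap E(f)$ for $f\in F$ has orbit-displacement at most $\tau$, and more relevantly, the Morse-Lemma argument in the proof of Lemma~\ref{DCExtension} already shows $d(o,ao), d(o,bo) \le 2R_0$ for the $a,b$ realizing a shortest arc.

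First I would make precise that there are only boundedly many candidate elements $a\in H$ and $b\in K$ involved. Since $H$ acts properly on $X$ and $d(o,ao)\le 2R_0$ pins $a$ to a finite set $H_0\subset H$ (properness of the action), and similarly $b$ ranges over a finite set $K_0\subset K$; also $f_1,f_1',f_2,f_2'$ range over the finite set $F^{n_0}$. Then from $g' = (f_1')^{-1} a f_1 \cdot g \cdot f_2 b (f_2')^{-1}$ with all of $(f_1')^{-1} a f_1$ and $f_2 b (f_2')^{-1}$ drawn from a fixed finite set $\mathcal{F}\subset G$ (of cardinality depending only on $F, n_0, R_0$), we see $g'$ lies in $\mathcal{F}\cdot g\cdot \mathcal{F}$. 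Hence for a given $g$, the number of possible $g'$ mapping to the same double coset is at most $|\mathcal{F}|^2 =: M$, which is independent of $n$.

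The one technical wrinkle to address carefully: in Lemma~\ref{DCExtension} the pair $(f_1,f_2)$ is \emph{chosen} depending on $g$, not canonically determined, so two elements $g\ne g'$ with $g' = s g t$ for some $s,t\in\mathcal F$ might a priori be assigned unrelated auxiliary pairs. But that does not matter: I am bounding, for each fixed $g$, the total size of $\Phi^{-1}(\Phi(g))$, and the relation $\Phi(g')=\Phi(g)$ already forces $g'\in \mathcal F g \mathcal F$ regardless of which auxiliary pair was selected for $g'$, because the equation $f_1'g'f_2' = af_1gf_2 b$ holds for \emph{some} $a\in H, b\in K$, and the shortest-arc/Morse argument forces those particular $a,b$ to be short. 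So the fiber $\Phi^{-1}(Hf_1gf_2K)$ injects into $\mathcal F g \mathcal F$, giving the $M$-to-one bound. I expect the main obstacle to be stating the finiteness of the displacement-bounded sets $H_0, K_0$ with the correct quantifiers — one must invoke properness of the $G$-action on $X$ (so that $\{h\in H: d(o,ho)\le 2R_0\}$ is finite) and make sure $R_0$ and hence $M$ depend only on the hyperbolicity constant, $c$, $F$, and $n_0$, but not on $n$.

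\begin{proof}
Recall the constants $R_0, \Delta_0, n_0$ and the finite set $F$ from Lemma~\ref{DCExtension}. Since $G$ acts properly on $X$, the sets
$$
H_0 := \{a\in H: d(o,ao)\le 2R_0\},\qquad K_0 := \{b\in K: d(o,bo)\le 2R_0\}
$$
are finite. Put $\mathcal F := (F^{n_0})^{-1}\, H_0\, F^{n_0}\,\cup\, F^{n_0}\, K_0\, (F^{n_0})^{-1} \subset G$, a finite set whose cardinality depends only on $F$, $n_0$ and $R_0$, hence only on the data of the action. Set $M := |\mathcal F|^2$.

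Fix $g\in A(o,n,\Delta)$ with $d(o,go)>n_0$ and let $(f_1,f_2)\in F^{n_0}\times F^{n_0}$ be the pair chosen for $g$ as in Lemma~\ref{DCExtension}, so $\Phi(g)=Hf_1gf_2K$. Suppose $g'\in A(o,n,\Delta)$ satisfies $\Phi(g')=\Phi(g)$, with chosen pair $(f_1',f_2')\in F^{n_0}\times F^{n_0}$. Then $Hf_1'g'f_2'K = Hf_1gf_2K$, so there are $a\in H$, $b\in K$ with
$$
f_1' g' f_2' = a\, f_1\, g\, f_2\, b .
$$
Without loss of generality we may take $\alpha := [a^{-1}o,\, f_1gf_2bo]$ to be a shortest arc from $Ho$ to $f_1gf_2Ko$; as in the ``moreover'' part of the proof of Lemma~\ref{DCExtension}, comparing $\alpha$ with the $c$-quasi-geodesic labelled by $(a,f_1,g,f_2,b)$ and applying the Morse Lemma forces $d(o,ao)\le 2R_0$ and $d(o,bo)\le 2R_0$, i.e.\ $a\in H_0$ and $b\in K_0$. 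Therefore
$$
g' = \big((f_1')^{-1} a\, f_1\big)\, g\, \big(f_2\, b\, (f_2')^{-1}\big) \in \mathcal F \cdot g \cdot \mathcal F .
$$
Thus the fiber $\Phi^{-1}(Hf_1gf_2K)$ is contained in $\mathcal F g \mathcal F$, which has at most $|\mathcal F|^2 = M$ elements. Since $g$ was arbitrary, $\Phi$ is at most $M$-to-one, and $M$ is independent of $n$.
\end{proof}
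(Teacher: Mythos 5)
The key step of your argument --- that the elements $a\in H$, $b\in K$ appearing in $f_1'g'f_2'=af_1gf_2b$ satisfy $d(o,ao),d(o,bo)\le 2R_0$ --- is not justified, and the ``without loss of generality'' that is supposed to deliver it is false. Once $g,g',f_1,f_2,f_1',f_2'$ are fixed, the pair $(a,b)$ is determined up to the (typically trivial) subgroup $H\cap (f_1gf_2)K(f_1gf_2)^{-1}$; there is no freedom to replace it by the pair whose orbit points $a^{-1}o$ and $f_1gf_2bo$ realize the shortest arc from $Ho$ to $f_1gf_2Ko$. The Morse-Lemma computation in the ``moreover'' part of Lemma~\ref{DCExtension} bounds precisely those geometrically chosen elements, which in general have nothing to do with the algebraically forced $a,b$. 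The bound you want is in fact true, but for a different reason: one needs that the concatenation labelled by $(a,f_1,g,f_2,b)$ is \emph{additively} efficient, i.e.\ $d(o,af_1gf_2bo)\ge d(o,ao)+d(o,go)+d(o,bo)$ minus a uniform constant (via bounded Gromov products at the junctions); combined with the two annulus constraints $|d(o,go)-n|\le\Delta$ and $|d(o,g'o)-n|\le\Delta$ this squeezes $d(o,ao)+d(o,bo)$ down to a constant. The multiplicative quasi-geodesic inequality that Lemma~\ref{DCExtension} actually records only yields $d(o,ao)+d(o,bo)\lesssim (c-1)n$, which is useless. As written, the proof therefore has a genuine gap at its central step.

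For comparison, the paper avoids bounding $a$ and $b$ individually. It equates two generic representatives $af_1g_1f_2b=a'f_1'g_2f_2'b'$ and observes that the word $(b^{-1},f_2^{-1},g_1^{-1},f_1^{-1},a^{-1}a',f_1',g_2,f_2',b')$ would label a $c$-quasi-geodesic loop at $o$ (impossible, since its pieces are long) unless $f_1=f_1'$, $f_2=f_2'$, $a^{-1}a'\in E(f_1)$ and $b'b^{-1}\in E(f_2)$; the sets $H\cap E(f_1)$ and $K\cap E(f_2)$ are finite of size at most $N$ by (\ref{BddProjEQ1}), so $g_2$ is determined from $g_1$ up to $9N^2$ choices. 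If you wish to keep your strategy of bounding $a,b$ themselves, you must replace the shortest-arc reduction by the additive length estimate indicated above.
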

\begin{proof}
Assume that $ g_1\ne g_2$ with $ Hf_1gf_2K = Hf_1'g_2f_2'K$ for two   pairs $(f_1,f_2)\ne (f_1',f_2')$ in $F$. Write explicitly, for some $a,a'\in H$ and $b,b'\in K$,
$$a f_1g_1f_2b =a'f_1'g_2f_2'b'$$  
First of all, we must have $a\ne a'$ or $b\ne b'$. Otherwise, if $a=a'$ and $b=b'$ then $f_1g_1f_2=f_1'g_2f_2'$. As $g_1\ne g_2$ we have $f_1\ne f_1'$ or $f_2\ne f_2'$. Assume $f_1\ne f_1'$ for concreteness. By the choice of $F$, $f_1$ and $f_2$ are independent. Thus, the word $(f_2^{-1},g_1^{-1},f_1^{-1},f_1',g_2,f_2')$ labels a $c$-quasi-geodesic with the same endpoints (i.e. a loop at $o$). The length is at most $c$, but this contradicts the choice of $f$ satisfying $d(o,f_1o)>c$. 

Now, let us assume  that  $a\ne a'$ (the argument is symmetric for $b\ne b'$).  Then, either   $f_1\ne f_1'$ or $f_1=f_1'=:f$ with $a'^{-1}a\notin E(f)$.
In both cases   the  word $$\left(b^{-1},f_2^{-1},g_1^{-1},f_1^{-1},a^{-1}a', f_1',g_2',f_2',b'\right)$$ labels   a $c$-quasi-geodesic, which is a loop at the basepoint $o$. This gives a contradiction as above.  Hence,   $f_1=f_1'$ and   $a'^{-1}a\in E(f_1)$ and similarly, $f_2=f_2'$ and  $b'^{-1}b\in E(f_2)$. By (\ref{BddProjEQ1}), there are at most $N$   choices of $a'^{-1}a$ and $b'^{-1}b$ with $N$ depending on $\tau$.    Once $g_1$ is chosen, $g_2$ is determined   up to $N^2\times |F|^2$-possibilities, so the map is at most $(9N^2)$-to-one. Setting $M=9N^2$ we are done.
\end{proof}

Theorem \ref{Thm:Growth-DoubleCosets} now follows from Lemma \ref{lem-finite-one} and Lemma \ref{arc-db-cosets}. $\hfill \Box$\\  

\subsection{Applications}
We end this section with an application to counting shortest arcs in Riemannian manifolds and graphs.

Let $M$ be a negatively curved Riemannian manifold. Let $[\alpha]$ and $[\beta]$ be two closed geodesics on $M$. 
We consider an arc $\sigma$ whose end-points are in 
$[\alpha]$ and $[\beta]$. Next, consider the \emph{constrained homotopy class} of $\sigma$ where the endpoints  are allowed to move in 
$[\alpha]$ and $[\beta]$.
Each such constrained homotopy class  contains a unique shortest representative, which we shall refer to as a \textit{shortest arc}. We denote by $\Ar([\alpha],[\beta])$ the set of all shortest arcs between $[\alpha]$ and $[\beta]$.  

\begin{lem}\label{ShortestArcsonMfd}
Let $M$ be a complete Riemannian manifold with pinched negative  curvature. Let $\e G$ be the critical exponent for the action of $G:=\pi_1(M)$
on $ \widetilde M$. Let $\gamma$ be a closed geodesic on $M$.  Then there exist  $c, \Delta>0$ depending on $\gamma$ so that the following holds. Let  $\mathrm{Arc}(\gamma,t,\Delta)$ denote the
collection of shortest arcs
 from $\gamma$ to $\gamma$  with length in  $[t-\Delta,t+\Delta]$. Then for any $\epsilon>0$, and for any $t\gg 0$:
$$
|\mathrm{Arc}(\gamma,t,\Delta)|\ge c\mathrm{e}^{(\e G-\epsilon) t}.
$$    
\end{lem}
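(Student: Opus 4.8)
\emph{Plan.} The proof combines the double-coset counting of Theorem~\ref{Thm:Growth-DoubleCosets} with the dictionary between shortest arcs and double cosets (Lemma~\ref{arc-db-cosets}), reducing everything to an orbit-counting estimate in annuli. First I would pass to the group picture. Put $X=\widetilde M$; under the pinched negative curvature hypothesis this is a proper, Gromov hyperbolic (indeed CAT$(-a^2)$) geodesic space on which $G=\pi_1(M)$ acts freely and properly by isometries, and we may assume the action is non-elementary (the case $\e G=0$ being vacuous). Lift the closed geodesic $\gamma$ to the axis $\alpha=\ax(h)$ of a primitive loxodromic $h\in G$, and set $H=E(h)$, which preserves $\alpha$ cocompactly. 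Taking $\beta=\alpha$ and $K=H$ in the setup of Section~\ref{sec-arcs}, the orbit space $\Ar([\alpha],[\alpha])=([\alpha]\times[\alpha])/G$ is exactly the set of shortest arcs on $M$ from $\gamma$ to $\gamma$, in bijection with the double cosets $\Dc(H,H)$; moreover by Lemma~\ref{arc-db-cosets} the length of the arc corresponding to $HgH$ agrees with $d(o,go)$ up to an additive constant $\Delta_0$ depending only on $\gamma$ (and the fixed basepoint $o$). So $|\mathrm{Arc}(\gamma,t,\Delta)|$ is, for a suitable window, comparable to $|\Dc(o,t,\Delta')|$.

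Next I would apply Theorem~\ref{Thm:Growth-DoubleCosets} with $\alpha=\beta$: given a window $\Delta_1>0$ it produces $\Delta'=\Delta'(\Delta_1,o)$ and $c=c(\Delta_1,o)$, hence depending only on $\gamma$, with $|\Ar(t,\Delta')|\ge c\,|A(o,t,\Delta_1)|$ for all $t\gg 0$. Combined with the first step, this reduces the lemma (with $\Delta:=\Delta'+\Delta_0$) to the orbit-counting lower bound: for every $\epsilon>0$ there is $c'>0$ with $|A(o,t,\Delta_1)|\ge c'\mathrm{e}^{(\e G-\epsilon)t}$ for all large $t$. Since annuli of width $2\Delta_1$ cover the balls about $o$ with bounded multiplicity, one has $\sum_n |A(o,n,\Delta_1)|\mathrm{e}^{-sn}\asymp\sum_{g\in G}\mathrm{e}^{-s d(o,go)}$, and the right-hand Poincar\'e series diverges for $s<\e G$; it follows that $\limsup_n n^{-1}\log|A(o,n,\Delta_1)|=\e G$, which already yields the desired estimate along a divergent sequence of scales $t_i\to\infty$ (properness forces such scales to tend to infinity). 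This sequence version is all that is actually needed to feed the construction of Section~\ref{sec-hausd}, where one simply chooses the parameters $L_n$ from it. To get the estimate for \emph{all} large $t$ with a fixed window I would use the geometry more seriously: build a Patterson--Sullivan density $\mu_o$ on $\pG$ of dimension $\e G$, apply Sullivan's shadow lemma $\mu_o(\Pi_o(go,R))\asymp\mathrm{e}^{-\e G\,d(o,go)}$, and cover $\pG$ (up to a $\mu_o$-null set) by the shadows $\Pi_o(go,R)$ with $|d(o,go)-t|\le R$; this forces $\sum_{|d(o,go)-t|\le R}\mathrm{e}^{-\e G\,d(o,go)}\gtrsim 1$, whence $|A(o,t,R)|\gtrsim\mathrm{e}^{(\e G-\epsilon)t}$. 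One may instead quote the general double-coset growth theorem for groups with contracting elements from \cite{HYZ} in place of Theorem~\ref{Thm:Growth-DoubleCosets}.

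The first two paragraphs are routine given what precedes: the dictionary is Lemma~\ref{arc-db-cosets} and the counting is Theorem~\ref{Thm:Growth-DoubleCosets}. \textbf{The main obstacle is the last step}, namely the orbit-counting lower bound $|A(o,t,\Delta_1)|\ge c'\mathrm{e}^{(\e G-\epsilon)t}$ with a \emph{fixed} window and for \emph{all} large $t$. Along a subsequence of scales it is immediate from the $\limsup$ definition of $\e G$; for all $t$ it is genuinely a statement that the orbit growth of $G$ is purely exponential, and it is precisely here that pinched negative curvature (via Patterson--Sullivan theory and the covering of the radial limit set by bounded-radius orbit shadows) is used, and where one must be careful when the action of $G$ on $X$ is of convergence type.
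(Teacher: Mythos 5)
Your proposal follows essentially the same route as the paper's proof: lift $\gamma$ to an axis with infinite cyclic stabilizer $H$, identify shortest arcs from $\gamma$ to itself with double cosets $HaH$ whose lengths match $d(Ho,aHo)$ up to an additive constant (Lemma~\ref{arc-db-cosets}), and then invoke Theorem~\ref{Thm:Growth-DoubleCosets} to reduce the statement to a lower bound on the annular orbit count $|A(o,t,\Delta_1)|$. The only divergence is in that final step: the paper simply asserts $|A(t,\Delta,o)|\ge \mathrm{e}^{(\e G-\epsilon)t}$ for all sufficiently large $t$ ``by definition of the critical exponent,'' whereas you correctly point out that the $\limsup$ definition only yields this along a divergent sequence of scales --- which, as you also note, is all that the constructions of Sections~\ref{sec-nonconical} and \ref{sec-Myrberg} actually consume, since the $L_n$ there may be chosen from that sequence --- and you offer Patterson--Sullivan/shadow-lemma machinery (or the double-coset growth results of \cite{HYZ}) as a route to the all-$t$ statement. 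On this point your treatment is more careful than the paper's own.
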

\begin{proof}
Fix a lift $\tilde \gamma$ of $\gamma $ in $\widetilde M$ and denote by $H$ the stabilizer of $\tilde \gamma$ in $G$. Since $\widetilde M$ is a CAT(-1) space, $G$ has no nontrivial torsion and $H$ is an infinite cyclic group. We choose a basepoint $o$ on $\tilde \gamma$. Then there exists $R$ (depending on $o$) so that $Ho$ and $\tilde \gamma$ have Hausdorff distance at most  $R$. According to the discussion at the beginning of this section, a shortest arc $\alpha$ from $\gamma$ to itself lifts to a shortest arc $\tilde \alpha$ between $\tilde \gamma$ and $a\tilde \gamma$  for some $a\in G$. Further, the assignment $\alpha\mapsto HaH$ is bijective. It follows that $|d(Ho, aHo)-\ell(\tilde \alpha)|\le 2R$. By definition of critical exponent   $\e G$, for any $\epsilon>0$ we have $|A(t,\Delta,o)|\ge \mathrm{e}^{(\e G-\epsilon)t}$ holds for all sufficiently large $t\gg 0$. Thus the conclusion follows by Theorem \ref{Thm:Growth-DoubleCosets}.
\end{proof}

The following corollary for graphs will be useful in Theorem \ref{thm-amenable-graph}. In this setting, an immersed  (i.e. non back-tracking) path in a graph $\Gamma$ lifts to a geodesic in its universal cover $\widetilde \Gamma$. Conversely, any geodesic in $\widetilde \Gamma$ projects to an immersed path in $\Gamma$. A \textit{shortest arc} between two immersed loops $\alpha,\beta$ will refer to an immersed path $\gamma$ intersecting  $\alpha,\beta$ only at the endpoints. This terminology is justified by the fact that lifts of  $\gamma$ are shortest arcs between lifts of $\alpha,\beta$.  
\begin{lem}\label{ShortestArcsonGraphs}
Let $\Gamma$ be an infinite regular graph with degree $d$. Let $\e G$ be the critical exponent for the action of $G:=\pi_1(\Gamma)$
on $ \widetilde \Gamma$. Let $\gamma$ be  an immersed loop in $\Gamma$.  Then there exist  $c, \Delta>0$ depending on $\gamma$ so that the following holds. Let  $\mathrm{Arc}(\gamma,t,\Delta)$ denote the
collection of shortest arcs
 from $\gamma$ to $\gamma$  with length in  $[t-\Delta,t+\Delta]$. Then for any $\epsilon>0$, and for any $t\gg 0$:
$$
|\mathrm{Arc}(\gamma,t,\Delta)|\ge c\mathrm{e}^{(\e G-\epsilon) t}.
$$    
\end{lem}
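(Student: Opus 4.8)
The plan is to transcribe the proof of Lemma~\ref{ShortestArcsonMfd} to the present setting, with the CAT$(-1)$ universal cover $\widetilde M$ replaced by the universal cover $\widetilde\Gamma$, which is the $d$-regular tree and hence a proper geodesic ($0$-hyperbolic) space carrying a free, proper, isometric action of $G=\pi_1(\Gamma)$. The first step is to check that this action is within the scope of Theorem~\ref{Thm:Growth-DoubleCosets}, i.e.\ that $G$ contains infinitely many pairwise independent loxodromic elements: since $\Gamma$ is $d$-regular with $d\ge 3$ and carries the immersed loop $\gamma$, its fundamental group is a nonabelian free group, so the action on the tree $\widetilde\Gamma$ is non-elementary and the required independent loxodromic elements exist. (When $\Gamma$ is infinite, $\pi_1(\Gamma)$ is infinitely generated free, but nothing changes: the action remains free, proper and non-elementary.)

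Next, fix a lift $\tilde\gamma\subset\widetilde\Gamma$ of $\gamma$; it is a bi-infinite geodesic, namely the axis of the loxodromic element $h\in G$ obtained by lifting one traversal of $\gamma$. Let $H=\mathrm{Stab}_G(\tilde\gamma)$. Because the deck action on the tree is free, $H=\langle h\rangle$ is infinite cyclic and acts cocompactly on $\tilde\gamma$, so after choosing a vertex $o\in\tilde\gamma$ there is a constant $R=R(\gamma)$ with $Ho$ and $\tilde\gamma$ at Hausdorff distance $\le R$. Now apply the correspondence recorded at the start of Section~\ref{sec-arcs} with $K=H$ and with $[\alpha]=[\beta]$ the set of $G$-translates of $\tilde\gamma$: a shortest arc $\sigma$ from $\gamma$ to itself (an immersed path meeting $\gamma$ only at its endpoints) lifts to a shortest arc $\tilde\sigma$ between $\tilde\gamma$ and $a\tilde\gamma$ for some $a\in G$, the assignment $\sigma\mapsto HaH$ is a bijection onto $\Dc(H,H)$, and $\bigl|\,d(Ho,aHo)-\ell(\tilde\sigma)\,\bigr|\le 2R$. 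Hence, up to enlarging the width parameter by $2R$, shortest arcs of length in $[t-\Delta,t+\Delta]$ correspond to double cosets $HaH$ with $|d(Ho,aHo)-t|\le\Delta+2R$, and conversely.

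Finally, from the definition of the critical exponent, for every $\epsilon>0$ there is a width $\Delta$ so that $|A(t,\Delta,o)|\ge\mathrm{e}^{(\e G-\epsilon/2)t}$ for all (equivalently, for arbitrarily large) $t$ --- exactly the input used in Lemma~\ref{ShortestArcsonMfd}. Theorem~\ref{Thm:Growth-DoubleCosets} then produces $\Delta'=\Delta'(\Delta,o)$ and $c>0$ with $|\Ar(t,\Delta')|\ge c\,|A(t,\Delta,o)|\ge c\,\mathrm{e}^{(\e G-\epsilon)t}$, and reading this back through the bijection of the previous paragraph yields $|\mathrm{Arc}(\gamma,t,\Delta')|\ge c\,\mathrm{e}^{(\e G-\epsilon)t}$ for all large $t$, as required. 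I do not expect a serious obstacle here: the only point genuinely absent from the manifold case is the verification that $G\curvearrowright\widetilde\Gamma$ carries infinitely many independent loxodromic elements (needed to invoke Theorem~\ref{Thm:Growth-DoubleCosets}), and the only bookkeeping subtlety --- shared with Lemma~\ref{ShortestArcsonMfd} --- is that $\e G$ is a $\limsup$, so the ball-count lower bound is to be read along arbitrarily large scales $t$, which is all that the quasi-radial tree construction of Section~\ref{sec-hausd} ever uses.
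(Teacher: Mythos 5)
Your proposal is correct and is precisely the argument the paper intends: Lemma~\ref{ShortestArcsonGraphs} is stated without a separate proof as the graph analog of Lemma~\ref{ShortestArcsonMfd}, whose proof (lift $\gamma$ to an axis, identify shortest arcs with double cosets up to a bounded additive error, and invoke Theorem~\ref{Thm:Growth-DoubleCosets} together with the definition of the critical exponent) you have transcribed faithfully to the $d$-regular tree. The one caveat you flag --- that the action must be non-elementary so that Theorem~\ref{Thm:Growth-DoubleCosets} applies, which does not follow from $d$-regularity alone --- is an imprecision shared with the paper's own statement and is satisfied in the only place the lemma is used, namely Theorem~\ref{thm-amenable-graph}.
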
 

\section{Hausdorff dimension of non-conical points: graphs and surfaces}\label{sec-nonconical}

In this section, we describe two  constructions of  escaping geodesic  rays: one geometric for negatively curved manifolds, the other group theoretic for group actions on Gromov hyperbolic spaces. The resulting geodesic rays end at non-conical points. Subsequently, these are implemented in graphs and hyperbolic surfaces, leading to  proofs of Theorem \ref{thm-amenable-graph} and Theorem \ref{thm-cheeger-surface}.

\subsection{Escaping geodesics in negatively curved manifolds}
\label{sec-escape-mfld}

Let $M=X/G$ be a Riemannian manifold with  pinched  negative curvature. Let $\gamma_n$ $(n\ge 1)$ be a  sequence of closed geodesics  on $M$ that is \emph{escaping}, i.e.\ the sequence exits every compact set. If $M$ is geometrically infinite, such a sequence $\gamma_n$ must exist. In fact,
$M$ is  geometrically infinite if and only if there exists  an escaping  sequence of closed geodesics  by \cite{bonahon-bouts} and \cite[Theorem 1.5]{KL19}. 

We fix,   for each $n\ge 1$,   a shortest arc  $\mathfrak b_n$ from $\gamma_n$ to $\gamma_{n+1}$. We call such arcs
 \textit{bridges}.  Since $\gamma_n$'s are escaping, the sequence  $\mathfrak b_n$ is also escaping. Let $\widetilde L_n=d_M(o,\gamma_n)$. Then $\widetilde L_n$ tends to $\infty$
as $n\to \infty$.
 %Indeed,  we may extract a subsequence of $\gamma_n$ so that it converges to an end of $M$ and thus the corresponding bridge $b_n$ from $\gamma_n$ to $\gamma_{n+1}$ is escaping.

Let $\Delta_n=\ell(\gamma_n)$ be the length of $\gamma_n$ and let $B_n=\ell(\mathfrak b_n)$ be the length of $\mathfrak b_n$. It is useful to keep in mind the  following special case of Lemma \ref{localtoglobal} in the current setup.

\begin{lem}\label{CAT(-1)Case}
Let $X$ be a hyperbolic space. Then there exist $c,L>0$ with the following property. Let $\gamma=\alpha_1\alpha_2\cdots \alpha_n$ be a piecewise  geodesic path so that $\alpha_n$ is a shortest arc between  $\alpha_{n-1}$ and  $\alpha_{n+1}$. If the length of each $\alpha_i$ is greater than $L$ then $\gamma$ is a $c$-quasi-geodesic.
\end{lem}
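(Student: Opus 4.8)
The plan is to deduce this from the local-to-global principle for quasi-geodesics (Lemma~\ref{localtoglobal}): once $\gamma$ is known to be a \emph{local} quasi-geodesic with constants depending only on the hyperbolicity constant $\delta$ of $X$, the conclusion is immediate. I would first fix notation, writing the junction points of $\gamma=\alpha_1\cdots\alpha_n$ as $v_0,v_1,\dots,v_n$, so that $\alpha_i=[v_{i-1},v_i]$ is a geodesic segment. The hypothesis that $\alpha_i$ is a shortest arc between $\alpha_{i-1}$ and $\alpha_{i+1}$ says exactly that $\ell(\alpha_i)=d(v_{i-1},v_i)=d(\alpha_{i-1},\alpha_{i+1})$; since $v_{i-1}\in\alpha_{i-1}$, this forces $d(v_{i-1},y)\ge d(v_{i-1},v_i)$ for every $y\in\alpha_{i+1}$, i.e.\ $v_i$ is a nearest-point projection of $v_{i-1}$ onto $\alpha_{i+1}$ (and symmetrically $v_{i-1}$ is a nearest-point projection of $v_i$ onto $\alpha_{i-1}$).

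The key step is a uniform bound on the Gromov product at each junction. I would invoke the standard nearest-point projection estimate in $\delta$-hyperbolic spaces (see \cite[Ch.\ III.H]{bridson-haefliger}): there is $C_0=C_0(\delta)$ such that if $q$ realizes the distance from a point $p$ to a geodesic $\sigma$, then $\langle p,z\rangle_q\le C_0$ for all $z\in\sigma$, equivalently $d(p,z)\ge d(p,q)+d(q,z)-2C_0$. Applying this with $p=v_{i-1}$, $q=v_i$, $\sigma=\alpha_{i+1}$, and combining with the triangle inequality $d(v_{i-1},y)\le d(v_{i-1},x)+d(x,y)$ and the colinearity $d(v_{i-1},x)+d(x,v_i)=d(v_{i-1},v_i)$ for $x\in\alpha_i$, one obtains $d(x,y)\ge d(x,v_i)+d(v_i,y)-2C_0$, i.e.\ $\langle x,y\rangle_{v_i}\le C_0$ for all $x\in\alpha_i$, $y\in\alpha_{i+1}$; the first and last junctions are covered by the symmetric estimate, so only one of the two pieces adjacent to a junction needs to be constrained. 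Consequently any subpath of $\gamma$ meeting at most two consecutive pieces (hence crossing at most one junction) has length at most $d(\beta_-,\beta_+)+2C_0$, so it is a $c_1$-quasi-geodesic in the sense of Definition~\ref{def-qgeo} with $c_1:=\max\{1,2C_0\}$ depending only on $\delta$; subpaths lying inside a single $\alpha_i$ are geodesics.

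Finally I would apply Lemma~\ref{localtoglobal} with $\tau=c_1$, obtaining $L=L(c_1,\delta)$ and $c=c(c_1,\delta)$ such that every $L$-local $c_1$-quasi-geodesic is a $c$-quasi-geodesic. If each $\alpha_i$ has length greater than this $L$, then no subpath of $\gamma$ of length at most $L$ can contain an entire piece, so every such subpath crosses at most one junction and is a $c_1$-quasi-geodesic by the previous step; hence $\gamma$ is an $L$-local $c_1$-quasi-geodesic, and therefore a $c$-quasi-geodesic, with $c,L$ depending only on $\delta$. The main (and essentially the only) obstacle is the junction Gromov-product estimate; the rest is bookkeeping around Lemma~\ref{localtoglobal}, and the edge-junction case requires only the one-sided form of the projection estimate, so no separate argument is needed there.
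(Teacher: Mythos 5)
Your proof is correct and follows exactly the route the paper intends: the paper presents this lemma as ``a special case of Lemma~\ref{localtoglobal}'' without a written proof, and your argument supplies the one missing ingredient, namely the uniform Gromov-product bound $\langle x,y\rangle_{v_i}\le C_0(\delta)$ at each junction coming from the nearest-point-projection estimate, after which the local-to-global principle applies verbatim. Your observation that only one of the two pieces adjacent to a junction needs to satisfy the shortest-arc condition is also the right reading of the hypothesis, since in the paper's applications the pieces alternate between segments on (lifts of) closed geodesics and orthogonal connecting arcs.
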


Let $\tau$ be a constant so that the intersection point of two orthogonal geodesics $\alpha, \beta$ is $\tau$-close to the corresponding geodesic between $\alpha_-$ and $\beta_+$. Let $R, L$ be given by Lemma \ref{QuasiRadialTree} for this $\tau$. Assume  $L$ also satisfies Lemma \ref{CAT(-1)Case}.

\subsubsection{Construction}\label{sub-constr-mfld}
Let $A_n$ be a set of \textit{oriented} shortest arcs  from $\gamma_n$ to itself with length in $[L_n-\Delta_n,L_n+\Delta_n]$, where  $\Delta_n$ depends on $\gamma_n$ by Lemma \ref{ShortestArcsonMfd}. For any $\omega_n<\e G$,  and we may take very large $L_n>L$ so that $|A_n|\ge \mathrm{e}^{\omega_n L_n}$ and $\Delta_n/L_n\to 0$. 

We place the basepoint $o$ at the starting point of $\mathfrak b_1$ on $\gamma_1$.  By increasing $\Delta_n$
if necessary, we may assume that  $ 99\Delta_n>\max\{L,R\}$.  

We  choose a definite proportion, say $0<\theta<1$, of  $A_n$ (still denoted by $A_n$ for simplicity) so  that $A_n$ is \textit{well separated}: given a lift $\tilde \gamma_n$ of $\gamma_n$, any  two distinct arcs in $A_n$ when lifted to have starting points on $\tilde \gamma_n$ have  terminal points at least $2(\Delta_n+R)$-separated. The value of $\theta$  depends on $\Delta_n$ (and $R$), but in order to  keep $|A_n|\ge \mathrm{e}^{\omega_n L_n}$, we take   even larger $L_n$. Compare with the condition (\ref{Separation}).\\

\textbf{Sliding the endpoints.} We move the starting and terminal points  of each $\alpha\in A_n$ along $\gamma_n$ so that  the resulting arc 
denoted by $\tilde \alpha$ satisfies the following.
\begin{itemize}
\item It starts and ends at the starting point of $\mathfrak b_{n}$ on $\gamma_n$, and
\item wraps 
about    $\gamma_n$  $100$ times (respecting the given orientation).
\end{itemize} 
Thus $\tilde \alpha$ is a loop and has   length  between $L_n+198\Delta_n$ and $L_n+202\Delta_n$.  Any lift of $\tilde \alpha$ in $X$ is   a concatenation of  three geodesic segments:
\begin{itemize}
\item two of these are contained in two distinct lifts  of $\gamma_n$, and
\item the lift of $\alpha$ is  the  shortest arc between them and has 
length lying in the interval $[99\Delta_n,101\Delta_n]$.
\end{itemize}  
Thus,  any lift of $\tilde \alpha$  is a $c$-quasi-geodesic in $X$ by Lemma \ref{CAT(-1)Case}. We refer to the above operation that converts 
$\alpha\in A_n$ to $\tilde \alpha$ as 
\emph{sliding  endpoints.}\\

\textbf{Looping  many times.} Denote by $\widetilde A_n$ the  set of  oriented  loops obtained from the arcs in $A_n$ by sliding their endpoints.   We now pick up an arbitrary  (not necessarily distinct)
collection of $K_n$  loops $(\tilde\alpha^{(1)},\cdots,\tilde\alpha^{(K_n)})$ from $\widetilde A_n$. Recall that they all have the same endpoints.  Concatenating  them in order while respecting their orientation gives a piecewise geodesic path  $\alpha_n$. (Since orientations have been chosen consistently, there is no  cancellation even when  consecutive pieces $\tilde\alpha^{(i)}, \tilde\alpha^{(i+1)}$ coincide). Thus, the pieces of $\alpha_n$  satisfy the hypothesis of Lemma \ref{CAT(-1)Case}: note that these pieces are  arcs that are either lifts of $\gamma_n$ or of $\alpha^{(i)}$. Hence,  any lift of $\alpha_n$  is a $c$-quasi-geodesic in $X$. \\

\textbf{Escaping to infinity.} The looping construction above guarantees that any $\alpha_n$ constructed as above begins and ends  
at the starting point of $\mathfrak b_{n}$. 
We next go through the bridge $\mathfrak b_n$ to the next $\gamma_{n+1}$.  Note that the bridge  $\mathfrak b_{n}$ may end at a point  of $\gamma_{n+1}$ that is different from the starting point of $\mathfrak b_{n+1}$. However, the distance between the end-point of $\mathfrak b_n$ and the starting point of $\mathfrak b_{n+1}$
is at most $\Delta_{n+1}=\ell(\gamma_{n+1})$.
We move the endpoint of $\mathfrak b_n$ to the starting point of  $\mathfrak b_{n+1}$ by sliding it along a distance of at most $\Delta_{n+1}$ on $\gamma_{n+1}$.  We retain the same notation for the modified $\mathfrak b_n$. Again,   the concatenation   $\alpha_{n}\cdot  \mathfrak b_n$  lifts to a $c$-quasi-geodesic in $X$ by Lemma \ref{CAT(-1)Case}. \\

To summarize, we perform the following operation for each $n$: 
\begin{enumerate}
\item loop around $K_n$ shortest arcs in $A_n$ union  $\gamma_{n}$,
\item go through the bridge $\mathfrak b_n$,  and
\item loop around $K_{n+1}$ arcs  in $A_{n+1}$   union   $\gamma_{n+1}$.
\end{enumerate}  
 The resulting piecewise geodesic paths lift  to  a family  $T$  of $c$-quasi-geodesic rays in $\widetilde M$. By construction,  $T$ has a natural structure of a rooted tree. By Lemma \ref{QuasiRadialTree}, $T$ is a quasi-radial tree with  pattern parametrized by $(L_n,\Delta_n,\omega_n,K_n, B_n)$
 (see Definition~\ref{defn-patteronset}).  Let $K_n$ be given by Lemma \ref{LargeTreePtsVersion}.

Recall that $\widetilde L_n=d_M(o,\gamma_n)$. We shall say that a
semi-infinite path
$\sigma:[0, \infty)\to M$ is \emph{escaping} if, for every compact subset
$K$ of $M$, $\sigma^{-1}(K)$ is compact.

\begin{lem}\label{EscapingRayCriterion}
If $L_n\ge L$ and $\widetilde L_n-L_n-\Delta_n\to \infty$, then the concatenation $\cup_{n\ge 1}    \alpha_n \mathfrak b_{n}$ is an escaping path in $M$.
Further, any  lift of the concatenation $\cup_{n\ge 1}    \alpha_n \mathfrak b_{n}$ is a quasi-geodesic ray ending at a non-conical limit point. 
\end{lem}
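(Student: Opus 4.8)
The plan is to lift $\sigma:=\bigcup_{n\ge 1}\alpha_n\mathfrak b_n$, which is one radial ray of the tree $T$, to a quasi-geodesic ray in $X=\widetilde M$, to show separately that $\sigma$ escapes every compact subset of $M$, and then to combine the two facts. Since any two lifts of $\sigma$ differ by an element of $G$ and conicality is a $G$-invariant property of boundary points, it suffices to work with the lift $\tilde\sigma$ starting at a fixed lift $\tilde o$ of $o$. By construction (and the hypothesis $L_n\ge L$ together with $99\Delta_n>L$, which make all the geodesic pieces of $\sigma$ long enough for the local-to-global principle), $T$ is a quasi-radial tree in the sense of Lemma~\ref{QuasiRadialTree} --- equivalently its pattern version Lemma~\ref{LargeTreePtsVersion} --- so $\tilde\sigma$ is a $c$-quasi-geodesic ray with $c$ uniform. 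As $X$ is proper and hyperbolic, $\tilde\sigma$ converges to a point $\xi\in\partial X$, and by the Morse Lemma the geodesic ray $\tilde\gamma:=[\tilde o,\xi)$ lies in an $R_0$-neighbourhood of (the image of) $\tilde\sigma$, where $R_0=R_0(c,\delta)$ does not depend on $n$.

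Next I would verify that $\sigma$ is escaping, i.e. $d_M(o,\sigma(t))\to\infty$ as $t\to\infty$; since $M$ is complete, hence proper, this is exactly the escaping condition. A point of $\sigma$ lying in the loop block over $\gamma_n$ is within distance $\tfrac12(L_n+\Delta_n)\le L_n$ of $\gamma_n$ in $M$, because the wrappings stay on $\gamma_n$ and every slid arc has both endpoints on $\gamma_n$ and length at most $L_n+\Delta_n$; hence it is at distance $\ge\widetilde L_n-L_n\ge\widetilde L_n-L_n-\Delta_n$ from $o$, which $\to\infty$ by hypothesis. A point of $\sigma$ in a bridge block over $\mathfrak b_n$ either lies on $\mathfrak b_n$ itself, at distance $\ge d_M(o,\mathfrak b_n)\to\infty$ (the $\gamma_n$, and hence the bridges $\mathfrak b_n$, escape), or lies on the short arc along $\gamma_{n+1}$ used to reposition the terminal endpoint of the bridge, at distance $\ge d_M(o,\gamma_{n+1})=\widetilde L_{n+1}\to\infty$. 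Therefore the $n$-th block of $\sigma$ stays at distance at least $\delta_n$ from $o$ with $\delta_n\to\infty$, and so $d_M(o,\sigma(t))\to\infty$.

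To conclude that $\xi$ is a non-conical limit point, let $\pi\colon X\to M$ be the covering projection and put $\bar\gamma:=\pi\circ\tilde\gamma$, a geodesic ray from $o$ in $M$. Because $\tilde\gamma\subset N_{R_0}(\tilde\sigma)$ and $\pi$ is $1$-Lipschitz, $\bar\gamma\subset N_{R_0}(\sigma)$ in $M$; moreover, comparing arc length along the two quasi-geodesics $\tilde\gamma$ and $\tilde\sigma$ issuing from $\tilde o$ toward $\xi$, a point $\tilde\gamma(t)$ is $R_0$-close to some $\tilde\sigma(s)$ with $s\ge t-R_0$, hence lying in a block $n(s)$ with $n(s)\to\infty$ as $t\to\infty$. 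Combining this with the previous paragraph, $d_M(o,\bar\gamma(t))\ge d_M(o,\sigma(s))-R_0\to\infty$, so $\bar\gamma$ is an escaping geodesic ray. Since any two geodesic rays ending at $\xi$ are asymptotic, were $\xi$ conical, every such ray would project to a recurrent geodesic in $M$; as $\bar\gamma$ escapes, $\xi$ is non-conical. The same argument applies verbatim to every lift of $\sigma$, which proves the lemma.

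The step I expect to be the main obstacle is the last one: transferring the escaping estimate, which is transparent for the explicit piecewise-geodesic path $\sigma$, to the genuine geodesic ray $[\tilde o,\xi)$, whose projection is what actually detects conicality. This relies on the Morse constant $R_0$ being uniform in $n$ and on the synchronization, in arc length, of quasi-geodesics issuing from a common basepoint toward a common boundary point, so that being far along $\tilde\gamma$ forces being in a high block of $\sigma$. The escaping bound on the bridge blocks is the secondary delicate point, and it is precisely there that one uses both the hypothesis $\widetilde L_n-L_n-\Delta_n\to\infty$ and the escaping of the closed geodesics $\gamma_n$.
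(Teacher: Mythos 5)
Your argument follows the paper's proof essentially step for step: both establish that $\sigma$ escapes by bounding the backtracking inside each loop block in terms of $L_n+\Delta_n$ and invoking $\widetilde L_n - L_n - \Delta_n\to\infty$; both use Lemma~\ref{CAT(-1)Case} and the Morse Lemma to replace $\tilde\sigma$ by a genuine geodesic ray $\tilde\gamma$ whose projection to $M$ still escapes; and both conclude non-conicality from the fact that a conical endpoint would force the projected geodesic ray to be recurrent. Your synchronization remark (a point far along $\tilde\gamma$ is $R_0$-close to a point lying in a high block of $\tilde\sigma$) usefully makes explicit what the paper compresses into ``stays within the $R_0$-neighborhood \ldots as a parametrized path.''

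There is, however, one piece of content in the paper's proof that your proposal omits and that the statement requires: you show that the endpoint $\xi$ of $\tilde\gamma$ is not conical, but never that $\xi$ lies in the limit set $\Lambda G$. A ``non-conical limit point'' is a point of $\Lambda G$ that is not conical, and $\Lambda G$ may be a proper subset of $\partial X$; membership in $\Lambda G$ is exactly what the downstream application (Proposition~\ref{CriterionHDNonConical}, which bounds $\HD(\ncG)$ from below by $\HD(\partial T)$) uses. Note that $\tilde\sigma$ does not visit a single $G$-orbit infinitely often (the loop basepoints $(\mathfrak b_n)_-$ change with $n$), so $\xi\in\Lambda G$ is not automatic from your escaping estimate. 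The paper closes this by observing that $\tilde\sigma$ fellow-travels, for length about $100\Delta_n$, an escaping sequence of lifts of the closed geodesics $\gamma_n$; these lifts are axes of loxodromic elements of $G$, their ideal endpoints $\gamma_n^{\pm}$ lie in $\Lambda G$, and they converge to $\xi$, whence $\xi\in\Lambda G$ because the limit set is closed. You should add this step (or an equivalent one) to your write-up.
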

\begin{proof}
By construction, $\sigma=\cup_{n\ge 1} \alpha_n\mathfrak b_{n}$ contains an  escaping sequence  $\{\mathfrak b_n\}$ with $\widetilde L_n=d_M(o,\gamma_n)\to\infty$. Also, the length of the backtracking path due to $\alpha_n$ is at most  $L_n+\Delta_n$. By assumption, $\widetilde L_n-L_n-\Delta_n\to \infty$.
This implies that $\sigma$ is an escaping ray in $M$, i.e.\ it leaves every compact subset. 

By Lemma~\ref{CAT(-1)Case}, any lift $\tilde \sigma$  of $\sigma$ to $\tilde M$ is a $c$-quasi-geodesic ray.
By the Morse Lemma, $\tilde \sigma$ lies within a finite $R_0$-neighborhood of a geodesic ray $\tilde \gamma$, where $R_0=R_0(c)$
depends only on $c$. Thus, the projection $\gamma$ of the geodesic ray $\tilde \gamma$ to $M$  stays within the $R_0$-neighborhood of $\cup_{n\ge 1}   \alpha_n \mathfrak b_{n}$
as a parametrized path. Hence  $\gamma$  escapes every compact subset as well. 

By construction, $\tilde \sigma$ traces in turn an escaping sequence of lifts of $\gamma_n$ with length about $100\Delta_n$. Note that the endpoints $\gamma_n^\pm$ of lifts of $\gamma_n$ are fixed points of loxodromic isometries.
Further, $\gamma_n^\pm$  converge to the endpoint of $\tilde \gamma$. Hence, the endpoint of $\tilde \gamma$ is a non-conical limit point.       
\end{proof}

We summarize the above discussion as follows. Recall that $\widetilde L_n=d_M(o,\gamma_n)$.
\begin{prop}\label{CriterionHDNonConical}
Let $\gamma_n$ be an escaping sequence of closed geodesics  of length $\Delta_n$ on $M$. Let $A_n$ be a set  of shortest   arcs from $\gamma_n$ to itself with length in the interval 
$[L_n- \Delta_n, L_n + \Delta_n]$.  Assume that the cardinality 
$|A_n|$ satisfies $|A_n|\ge \mathrm{e}^{\omega_n L_n}$. Further, assume that $\Delta_n/L_n\to 0$ as $n\to\infty$.
Set $\omega=\liminf_{n\ge 1} \omega_n$. 

If $\widetilde L_n-L_n-\Delta_n\to \infty$, then  $\HD(\ncG)\ge \omega/\epsilon$, where $\epsilon$ is the parameter for the visual metric in Lemma \ref{VisualMetric}.     
\end{prop}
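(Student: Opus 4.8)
The plan is to assemble the proposition directly from the machinery already built in the excerpt. First I would fix the local straightness constant $\tau$ coming from orthogonal intersections (as specified just before \textsection\ref{sub-constr-mfld}), and let $L,R>0$ be the constants furnished by Lemma \ref{QuasiRadialTree} (equivalently Lemma \ref{LargeTreePtsVersion}) for this $\tau$, enlarged if necessary so that Lemma \ref{CAT(-1)Case} also applies. Given the escaping sequence $\gamma_n$ of length $\Delta_n$ and the sets $A_n$ of shortest arcs with $|A_n|\ge \mathrm e^{\omega_n L_n}$ and $\Delta_n/L_n\to 0$, I would first pass to the well-separated sub-collection of a definite proportion $\theta$ of $A_n$ exactly as in \textsection\ref{sub-constr-mfld}, absorbing the loss into a further enlargement of $L_n$ so that still $|A_n|\ge \mathrm e^{\omega_n L_n}$ (after relabeling $\omega_n$ by a quantity still converging to $\omega$) and condition (\ref{Separation'}) holds. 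Then I would perform the three operations of \textsection\ref{sub-constr-mfld}: sliding endpoints to the basepoint of $\mathfrak b_n$ on $\gamma_n$ and wrapping $100$ times, choosing $K_n$ loops from $\widetilde A_n$, and passing through the bridge $\mathfrak b_n$ (after sliding its terminal endpoint at most $\Delta_{n+1}$ along $\gamma_{n+1}$). The key point is that, by Lemma \ref{CAT(-1)Case}, every lift of such a concatenation is a $c$-quasi-geodesic, and, by Lemma \ref{QuasiRadialTree}, the resulting tree $T$ of lifted paths in $X=\widetilde M$ is a genuine quasi-radial tree with a pattern parametrized by $(L_n,\Delta_n,\omega_n,K_n,B_n)$ in the sense of Definition \ref{defn-patteronset}; the separation hypothesis (\ref{Separation'}) is exactly what guarantees injectivity of $\Psi$ and the nested-or-disjoint dichotomy for shadows.

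Next I would invoke Lemma \ref{LargeTreePtsVersion} to choose the repetitions $K_n$ satisfying the two growth conditions (\ref{ChoiceKnEq}) and (\ref{ChoiceKnEq2}); since $\Delta_n/L_n\to 0$ by hypothesis, the first condition only constrains $K_n$ through $B_n/(K_nL_n)\to 0$, which is achievable by taking $K_n$ large relative to $B_n$, and the second condition is then a matter of taking $K_n$ large enough that the cumulative tree-distance $\sum_{n\le m}(K_n(L_n+\Delta_n)+B_n)$ dominates $L_{m+1}+\Delta_{m+1}$. With these choices, Lemma \ref{LargeTreePtsVersion} gives directly that the growth rate of $T$ equals $\omega=\liminf_n\omega_n$ and that $\HD(\partial T)=\omega/\epsilon$, where $\epsilon$ is the visual-metric parameter of Lemma \ref{VisualMetric}. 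Here I must be slightly careful: the conclusion I need is the lower bound $\HD(\partial T)\ge \omega/\epsilon$, and since $\omega_n$ may be chosen arbitrarily close to $\e G$ from below, this is what survives the relabelings above.

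It then remains to identify $\partial T$ with a subset of $\ncG$. This is precisely the content of Lemma \ref{EscapingRayCriterion}: the hypothesis $\widetilde L_n - L_n - \Delta_n\to\infty$ ensures that the concatenation $\cup_{n\ge 1}\alpha_n\mathfrak b_n$ is an escaping path in $M$, so that every lift is a $c$-quasi-geodesic ray whose Morse-straightening projects to an escaping geodesic ray; and since this lift traces an escaping sequence of lifts of the $\gamma_n$, whose endpoint pairs $\gamma_n^\pm$ (fixed points of loxodromics) converge to the endpoint of the limiting geodesic ray, that endpoint is a non-conical limit point. Applying this to each infinite radial ray of $T$ embeds $\partial T$ into $\ncG$, hence
$$
\HD(\ncG)\ \ge\ \HD(\partial T)\ =\ \omega/\epsilon,
$$
which is the assertion of the proposition. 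I expect the main obstacle to be purely bookkeeping: verifying that the successive enlargements of $L_n$ needed for well-separation and for the quasi-geodesic constants are mutually compatible with the inequalities $|A_n|\ge\mathrm e^{\omega_n L_n}$, $\Delta_n/L_n\to 0$, and $\widetilde L_n-L_n-\Delta_n\to\infty$ simultaneously — but since $\widetilde L_n\to\infty$ can be assumed as fast as we like by passing to a subsequence of the $\gamma_n$, and $L_n$ is only required to be "large relative to $\Delta_n$" while "small relative to $\widetilde L_n$", there is enough room to make all choices consistently.
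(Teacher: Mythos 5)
Your proposal is correct and follows essentially the same route as the paper's proof: pass to a well-separated sub-collection of $A_n$ (absorbing the fixed fractional loss), run the sliding/looping/bridging construction of \textsection\ref{sub-constr-mfld} to obtain a quasi-radial tree via Lemmas \ref{CAT(-1)Case} and \ref{QuasiRadialTree}, choose $K_n$ so that (\ref{ChoiceKnEq}) and (\ref{ChoiceKnEq2}) hold, conclude non-conicality of the endpoints from Lemma \ref{EscapingRayCriterion} using $\widetilde L_n-L_n-\Delta_n\to\infty$, and finish with the Hausdorff-dimension lower bound of Lemma \ref{HDLargeTree} (your appeal to Lemma \ref{LargeTreePtsVersion} is the same estimate in its pattern form). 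The only superfluous remark is the suggestion to pass to a subsequence of the $\gamma_n$: here $\widetilde L_n-L_n-\Delta_n\to\infty$ is a hypothesis, not something to be arranged.
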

\begin{proof}
Let $R>R(c)$ be given as in the proof of Lemma \ref{QuasiRadialTree}. We may assume further that any two distinct arcs $\alpha,\alpha'\in A_n$ are  $R$-separated, i.e.\ their lifts starting at a common  point have endpoints at least $R$-apart. This only affects the cardinality of $|A_n|$ by a fixed fraction, depending only on $R$. For simplicity, we still assume $|A_n|\ge \mathrm{e}^{\omega_n L_n}$ up to modifying $L_n$ by a fixed amount.  

Choose a sequence of integers $K_n>0$ so that (\ref{ChoiceKnEq}) and (\ref{ChoiceKnEq2}) hold for the  parameters $(L_n,\Delta_n,B_n)$.

As mentioned above, we choose the basepoint $o$ to be the starting point of $b_1$ on $\gamma_1$. Let $\mathbb P: \til M \to M$ denote the covering projection. Let $\tilde  o$ be a point with $\mathbb P (\tilde o)=o$. We now lift each $\cup_{n\ge 1} \alpha_n \mathfrak b_{n}$ to $\til M$ to get a quasi-geodesic ray $\gamma$ starting at $\tilde o \in X$.  
The union $T$ of all such lifted quasi-geodesic rays $\gamma$ forms a quasi-radial tree, by Lemma \ref{QuasiRadialTree}.

By Lemma \ref{EscapingRayCriterion}, if $\widetilde L_n-L_n-\Delta_n\to \infty$, $\gamma$   ends at a non-conical limit point. The proof is then completed by Lemma \ref{HDLargeTree}.
\end{proof}
\begin{rem}\label{rem-Ln-Delta_n}
If $\Delta_n$ is uniformly bounded over $n$ (i.e. does not depend on $\gamma_n$), any divergent sequence of $L_n$ suffices to have $\Delta_n/L_n\to 0$. In general,  $\Delta_n$ may depend on $\gamma_n$ by Theorem \ref{ShortestArcsonMfd} (when $\gamma_n$ escapes to infinity). We have to take $L_n$ very large, but this will make the condition $\widetilde L_n-L_n-\Delta_n$ hard to be fulfilled. We are able to resolve this in surfaces (Theorem \ref{thm-cheeger-surface}) and graphs (Theorem \ref{thm-amenable-graph}).      
\end{rem}

As mentioned before, the existence of escaping sequence  $\gamma_n$ on $M$ is  very general  by Kapovich-Liu's result \cite{KL19}. However, the condition $\widetilde L_n - L_n - \Delta_n \to \infty$ presents a key challenge. Below, we give two approaches using geometric limits and amenability to satisfy this condition. 

%There is a more geometric way of thinking of geometric limits (see [Thu80] and [CEG87]). 
\begin{defn}\label{def-geolt}
A sequence of manifolds with basepoints $\{(M_i, x_i)\}$ \textit{converges geometrically} to a manifold with basepoint $(N,x_\infty)$ if for any $R>0$, there exists $i_0$, and  compact submanifolds $C_i \subseteq N_i$  $C \subseteq N$ such that the following hold:
\begin{enumerate}
\item $C_i, C$ contain the $R$-balls centered at $x_i$ and $x_\infty$ respectively,
\item there exists a $K_i$-bi-Lipschitz map $h_i: C_i \to C$ for any $i \ge i_0(R)$,
\item  $K_i \to 1$ as $i \to \infty$. 
\end{enumerate}

 A sequence of Kleinian groups $(G_n)$ \textit{converges geometrically} to $\Gamma$ if and only if for a fixed basepoint $x\in \mathbb H^3$ and its projections $x_n\in \mathbb H^3/G_n$ and $x_\infty\in\mathbb H^3/\Gamma$, the sequence $\{(\mathbb H^3/G_n, x_n)\}$ converges geometrically to $(\mathbb H^3/\Gamma, x_\infty)$.
\end{defn}

We are now ready to prove the following.
\begin{thm}\label{NonConicalFromGeometricLimit}
Let $M$ be a Riemannian manifold with pinched negative   curvature. Let $G=\pi_1(M)$.  Let $x_n$ be an unbounded sequence of points on $M$. Assume that the sequence of pointed manifolds $(M,x_n)$ converges geometrically to a pointed Riemannian manifold $(N,x_\infty)$. Assume that $\pi_1(N)$ is non-elementary. Then $\HD(\ncG)\ge \omega_N/\epsilon$, where $\omega_N$ is the critical exponent of $\pi_1(N)$, and $\epsilon$ is the parameter for the visual metric in Lemma \ref{VisualMetric}. 
\end{thm}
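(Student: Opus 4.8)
The plan is to reduce the statement to Proposition~\ref{CriterionHDNonConical}. Fix a basepoint $o\in M$. It then suffices to manufacture an escaping sequence $\gamma_n$ of closed geodesics in $M$, together with sets $A_n$ of shortest arcs from $\gamma_n$ to itself of length in $[L_n-\Delta_n,L_n+\Delta_n]$, such that $|A_n|\ge \mathrm{e}^{\omega_n L_n}$, $\Delta_n/L_n\to 0$, $\liminf_n\omega_n=\omega_N$, and $\widetilde L_n-L_n-\Delta_n\to\infty$ with $\widetilde L_n=d_M(o,\gamma_n)$. All of this data will be transported from the geometric limit $N$ by way of Lemma~\ref{ShortestArcsonMfd}.

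First, since $\pi_1(N)$ is non-elementary it contains a loxodromic isometry, whose axis projects to a closed geodesic $\tilde\gamma\subset N$; I fix one, with $\ell_0:=\ell(\tilde\gamma)$ and $d_0:=d_N(x_\infty,\tilde\gamma)$. The limit $N$ is again a complete manifold of pinched negative curvature (these properties persist under geometric limits of such manifolds), so Lemma~\ref{ShortestArcsonMfd} applies to $(N,\tilde\gamma)$: there are $c_0,\Delta_0>0$ so that for every $\epsilon'>0$ and all large $t$, $|\mathrm{Arc}(\tilde\gamma,t,\Delta_0)|\ge c_0\,\mathrm{e}^{(\omega_N-\epsilon')t}$, and each such arc lies in $B_N(x_\infty,d_0+t+\Delta_0)$. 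I then fix $\epsilon'_n\downarrow 0$, and for each $n$ choose a large $L_n\uparrow\infty$ (to be pinned down at the end) and set $R_n:=10(L_n+\Delta_0)+d_0+\ell_0$. By Definition~\ref{def-geolt} there are an index $i_n$ and a $(1+\delta_n)$-bi-Lipschitz map $h_n\colon C_n\to C$ with $B_M(x_{i_n},R_n)\subseteq C_n$, $B_N(x_\infty,R_n)\subseteq C$, and $\delta_n\to 0$; passing to a subsequence of $(x_i)$ I additionally require $d_M(o,x_{i_n})\ge L_n+n+R_n$, which is possible as $(x_i)$ is unbounded.

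Next I let $\gamma_n$ be the closed geodesic of $M$ in the free homotopy class of the loop $h_n^{-1}(\tilde\gamma)$ (of length $\le (1+\delta_n)\ell_0$). This class is essential and non-peripheral: in pinched negative curvature a nullhomotopic loop of length $L$ bounds a disc of diameter $O(L)$, so a hypothetical nullhomotopy --- or a homotopy into a cusp --- of $h_n^{-1}(\tilde\gamma)$ would stay inside $C_n$ and push forward under $h_n$ to a like homotopy of $\tilde\gamma$ in $C\subseteq N$, contradicting that $\tilde\gamma$ is a closed geodesic of $N$. The same estimate places $\gamma_n$ within $O(\ell_0)$ of $h_n^{-1}(\tilde\gamma)$, hence within $O(\ell_0)$ of $x_{i_n}$, so $\widetilde L_n:=d_M(o,\gamma_n)\ge d_M(o,x_{i_n})-O(\ell_0)\to\infty$. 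I define $A_n$ to be the set of shortest arcs from $\gamma_n$ to itself obtained by pulling each arc of $\mathrm{Arc}(\tilde\gamma,L_n,\Delta_0)$ back through $h_n^{-1}$, sliding its endpoints onto $\gamma_n$ along the (diameter-$O(\ell_0)$) free homotopy from $h_n^{-1}(\tilde\gamma)$ to $\gamma_n$, and taking the shortest representative of the resulting constrained homotopy class. Bi-Lipschitz distortion and the bounded slides put these lengths in $[L_n-\Delta_n,L_n+\Delta_n]$ with $\Delta_n:=\Delta_0+\delta_n(L_n+\Delta_0)+C_1\ell_0$ for a universal $C_1$ (the lower length bound again uses that a shorter $M$-representative, being of length $\ll R_n$, would stay in $C_n$ and push forward to beat the shortest arc in $N$). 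Crucially the assignment $\mathrm{Arc}(\tilde\gamma,L_n,\Delta_0)\to A_n$ is injective: two arcs mapping to the same shortest arc would have a nullhomotopic ``difference loop'' of length $O(L_n)\ll R_n$, bounding a disc inside $C_n$, whose push-forward by $h_n$ would identify the two distinct classes in $N$. Hence $|A_n|\ge c_0\,\mathrm{e}^{(\omega_N-\epsilon'_n)L_n}$.

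Finally I check the parameters and conclude. Setting $\omega_n:=\omega_N-\epsilon'_n+(\log c_0)/L_n$ gives $|A_n|\ge\mathrm{e}^{\omega_n L_n}$ and $\omega:=\liminf_n\omega_n=\omega_N$; since $\delta_n\to0$, $L_n\to\infty$ and $\ell_0$ is fixed, $\Delta_n/L_n\to0$ and $\Delta_n\ge\ell(\gamma_n)$ for large $n$. The sequence $\{\gamma_n\}$ is escaping because $\widetilde L_n\to\infty$, and since $L_n$ is chosen before $i_n$ and $x_{i_n}$ may be taken as far out as desired, I can in addition force $\widetilde L_n-L_n-\Delta_n\to\infty$ (e.g.\ $\widetilde L_n\ge d_M(o,x_{i_n})-O(\ell_0)\ge L_n+n$). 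Proposition~\ref{CriterionHDNonConical} then yields $\HD(\ncG)\ge\omega/\epsilon=\omega_N/\epsilon$. The hard part will be the topological bookkeeping in the transport step --- verifying that essentialness, non-peripherality, and the injectivity of the arc-counting map all survive passage from $N$ back to $M$ --- which rests on the fact that in pinched negative curvature a length-$L$ nullhomotopic loop bounds a disc of diameter $O(L)$, and on the discipline of taking $R_n\gg L_n$ so that every homotopy of the length-$O(L_n)$ objects involved remains inside the bi-Lipschitz regions $C_n$; granting this, the rest is routine manipulation of the slack in $L_n$.
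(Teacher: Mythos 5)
Your proposal is correct and follows essentially the same route as the paper: fix a closed geodesic in the geometric limit $N$, produce the arc sets there via Lemma~\ref{ShortestArcsonMfd}, pull them back to $M$ through the bi-Lipschitz maps of Definition~\ref{def-geolt}, pass to a subsequence of the $x_n$ so that $\widetilde L_n-L_n-\Delta_n\to\infty$, and invoke Proposition~\ref{CriterionHDNonConical}. The only difference is that you spell out the topological bookkeeping (essentialness of $\gamma_n$, injectivity of the arc-transport map) that the paper leaves implicit, which is a welcome but not structurally different elaboration.
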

\begin{proof}
To apply Proposition \ref{CriterionHDNonConical}, we need to specify the data $(A_n,L_n,\Delta_n,B_n)$ occurring in the hypotheses and explain how the  assumptions could be realized.

It is given that the geometric limit manifold $N$ is non-elementary. So $N$ contains infinitely many distinct closed geodesics.  Let us fix such a closed geodesic $\gamma$ and $\omega<\omega_N$.  Fix a sequence  $L_n\to\infty$.
By  Lemma \ref{ShortestArcsonMfd}, there exists for each $n\ge 1$ a set $\widetilde A_n$ of shortest arcs with length in $[L_n- \Delta, L_n+ \Delta]$ such that $|\tilde  A_n|\ge \mathrm{e}^{\omega L_n}$. The constant $ \Delta$ may depend on $\gamma$, but not on $L_n$. 

Next, $(M,x_n)$   converges geometrically to $N$, with
$\{x_n\}$ unbounded.
Geometric convergence (Definition~\ref{def-geolt}) implies the existence of an escaping sequence of closed geodesics $\gamma_n$ in $M$ such that
\begin{itemize}
\item each $\gamma_{n}$ is contained in a fixed $D-$neighborhood of $x_n$ for all $n\ge 1$, and 
\item  $\ell(\gamma_n) \leq 2\ell(\gamma)$.
\end{itemize}  

Moreover, we can choose a set $A_n$ of shortest arcs such that they 
\begin{itemize}
	\item are shortest arcs from $\gamma_n$ to itself,
\item have length   in $[L_n- \Delta, L_n+ \Delta]$,
\item have cardinality $|A_n|\ge \mathrm{e}^{\omega L_n}$.
\end{itemize}
Further, $L_n/ \Delta_n\to 0$. 
Indeed this is possible as $A_n$'s maybe chosen as pre-images under $(1+\ep)-$bi-Lipschitz maps sets of the family $\widetilde A_n$ of shortest arcs from $\gamma$ to itself in $N$
(see Definition~\ref{def-geolt}). Let $B_n=d_M(\gamma_n,\gamma_{n+1})$. Note that $B_n$  depends on $\gamma_n$.

As $x_n$ is unbounded, we see that $\widetilde L_n=d_M(o,\gamma_n)$ tends to infinity.  Since $L_n$ is fixed independent of $x_n$, we may extract a subsequence of $x_n$ and of $\gamma_n$ so that $\widetilde L_n-L_n-\Delta_n\to \infty$. Note that this may change the length $B_n$ of the bridge $\mathfrak b_n$ from $\gamma_n$ to $\gamma_{n+1}$ to larger values after passing to a subsequence.  We may then  choose a sufficiently large number $K_n$ of repetitions  of looping arcs in $A_n$ so that (\ref{ChoiceKnEq2}) is satisfied. This compensates for the effect of larger $B_n$.    Therefore,  $\HD(\ncG)\ge \omega/\epsilon$  by Proposition  \ref{CriterionHDNonConical}. As $\omega<\omega_N$ is arbitrary, the proof is complete.       
\end{proof}

\subsection{Escaping geodesics from group actions}\label{sec-escape-group}

 Assume that $G$ acts properly on a Gromov hyperbolic space $X$. 
 %Fix a basepoint $o\in X$. 

\begin{defn}\label{def-qa}
Let $g\in G$ be a loxodromic element. 
We  define the \emph{quasi-axis}  $\ax(g)$ to be   the convex hull of the two fixed points of $g$ in the Gromov boundary of $X$.
Thus, $\ax(g)$ is the union of all bi-infinite geodesics between $g^-$ and $g^+$.
\end{defn}

Let $E(g)<G$ denote the maximal elementary subgroup containing $\langle g\rangle$.
Let  $H<G$ be a subgroup. We denote $$\Ax_H(g)=H\cdot \ax(g)=\cup_{h\in H} h\ax(g).$$  If $H=G$, we write $\Ax(g)=\Ax_G(g)$ for simplicity.

\begin{defn}
Let $\{g_n\in G: n\in \mathbb N\}$ be a sequence of elements.   We say that $\{g_n\}$ \textit{escapes to infinity} if $d(o, \Ax(g_n))\to\infty$ as $n\to\infty$.     
\end{defn}
This is equivalent to saying that the sequence  $\{\Ax(g_n)\}$ 
regarded as essential loops on the quotient space $X/G$ 
is escaping (that is, the sequence  $\{\Ax(g_n)\}$ leaves every bounded subset). %This notion depends on the subgroup $H$.

%Fix a basepoint $o$. Then there exists a constant $D_n$ depending on $g_n$ so that $\ax(g_n)$ has at most $D_n$ distance to $A_no$. Note that $D_n$ tends to infinity, if $g_n$ is escaping. 

\subsubsection{Construction}\label{sub-constr-actions}
Assume that $G$ contains
an  infinite sequence of  escaping loxodromic elements $g_n$. Then $d(o,\ax(g_n))\ge d(o,\Ax(g_n))\to\infty$.
For each $n$, we fix a shortest arc $\mathfrak b_n$ from $\ax(g_n)$ to $\ax(g_{n+1})$. We may assume that $d(o,G\mathfrak b_n)\to\infty$ up to taking a subsequence of $\Ax(g_n)$. That is, the projection of $\mathfrak b_n$ to $X/G$ is escaping. To be in line with the  construction on manifolds,  denote $$\Delta_n=\diam{\ax(g_n)/E(g_n)}\text{ and } B_n=\ell(\mathfrak b_n)$$   
%We use bold face $\Ax(g_n)$ to indicate  
%Then there exists a universal constant $\tau$ depending only on hyperbolicity constant so that the projections of $b_n$ to $\ax(g_n)$ and $\ax(g_{n+1})$ have diameter both less than $\tau$.\\  

\noindent \textbf{Convention.}
Since each $\ax(g_n)$ is quasi-isometric to a real line, we could fix an orientation on $\ax(g_n)$ so that we can talk about a coarse left-right order. That is, for any point $x$ in $\ax(g_n)$, we can specify  a point $y\in \ax(g_n)$ with $d(x,y)>10\Delta_n$ to  the \textit{left} or \textit{right} of $x$.\\

\textbf{Sliding the endpoints.} 
Let $\alpha$ be any shortest arc  between $\ax(g_n)$ and $a \ax(g_n)$ for some $a\in G$. On $\ax(g_n)$, we may choose  some $h\in E(g_n)$  so that the starting point $h\alpha_-$ of $h\alpha$ is to the right of $(\mathfrak b_n)_-\in \ax(g_n)$ and  $99\Delta_n\le d(h\alpha_-,(\mathfrak b_n)_-)\le 100\Delta_n$. Now, $h\alpha$ is a shortest arc between   $\ax(g_n)$ and $ha \ax(g_n)$.

On $ha\ax(g_n)$, we choose  some $h'\in E(g_n)$  so that the terminal point $h\alpha_+$ of $h\alpha$ is to the left of $h'a (\mathfrak b_n)_-\in ha'\ax(g_n)$ and  $99\Delta_n\le d(h\alpha_+,h'a(\mathfrak b_n)_-)\le 100\Delta_n$.

In the end, the resulting new path, still denoted by $\alpha$,  is composed of two segments with the original $\alpha$ in between. By Lemma \ref{CAT(-1)Case}, $\alpha$ is a $c$-quasi-geodesic, with the   length  $\ell(\alpha)$ in $[L_n+198\Delta_n,L_n+202\Delta_n]$. 

By Theorem \ref{Thm:Growth-DoubleCosets} and by sliding the endpoints for shortest arcs on a given $\ax(g_n)$, we produce   a set $A_n$ of such $c$-quasi-geodesics $\alpha$ with the following properties:
\begin{itemize}
\item $A_n$ has cardinality at least $\mathrm{e}^{\omega_n L_n}$,
\item for each $\alpha$, there exists some $a\in G$ so that the  path $\alpha$ has initial point  $(\mathfrak b_n)_-$ and terminal point at $a(\mathfrak b_n)_-$ and $|d((\mathfrak b_n)_-,a(\mathfrak b_n)_-)-L_n|\le 100\Delta_n$.  
\end{itemize}
The translate $gA_n$ for $g\in G$ will be referred to as the set of shortest arcs from $\Ax(g_n)$ to itself  \textit{lifted} at $g(\mathfrak b_n)_-$.
%By hyperbolic geometry, the shortest projection implies $\langle \hat v, \check v \rangle_{v} \le C$ for some uniform $C$.\textcolor{red}{The last sentence is not clear.}

Fix a sequence of repetitions $K_n$.
We now give the formal construction of sets $V_n$ with pattern $(L_n,\Delta_n,\omega_n)$.

Let the root $V_0=\{(\mathfrak b_1)_-\}$ be  the starting point of $\mathfrak b_1$ on $\ax(g_1)$. Assume that the set $V_l$ with $l\ge 0$ is constructed.  We inductively construct sets $V_{l+1}$ as follows.  Let $n\ge 1$ be the minimal integer with \begin{equation}\label{generationEq}
l\le \sum_{m=1}^{n} (K_m+1) -1.   
\end{equation}
%This determines the set of lifted shortest arcs from $\Ax(g_n)$ to itself.\\

\textbf{Looping  many times.} Each point $v\in V_l$  on $a\ax(g_n)$  for some $a\in G$ is the starting point of $a\mathfrak b_n$. We consider the  set of  shortest arcs $aA_n$ on $\Ax(g_n)$ lifted at $v$. Then $[\check v]$
is the set of  terminal points of shortest arcs in $aA_n$. Moreover, $[\check v]$ has pattern with parameter  $(L_n,\Delta_n,\omega_n)$.  In this way, we inductively  define the next generation $V_{l+1}$: 
 $$V_{l+1}:=\bigcup_{v\in V_l}[\check v].$$  
at most $K_n$-times, until $l+1= (\sum_{m=1}^{n} K_m+1)$.\\

\textbf{Escaping to the infinity.} We now go from $\Ax(g_n)$ via the bridge $\mathfrak b_n$ to the next $\Ax(g_{n+1})$. Let $V_l$ be the last generation produced. Each $v\in V_{l}$ is the starting point $a(\mathfrak b_n)_-$ of $a\mathfrak b_n$ on some lift $a\ax(g_n)$ with $a\in G$. Note that  the bridge $a \mathfrak b_{n+1}$ from $a\ax(g_n)$ to $a\ax(g_{n+1})$  might not terminate at $a(\mathfrak b_{n+1})_-$. 
Hence we define $\check v$ to be the starting point $a(\mathfrak b_{n+1})_-$ of  $a\mathfrak b_{n+1}$ on $a\ax(g_{n+1})$.  
The set of such $\check v$ forms $V_{l+1}$. By construction 
$V_{l+1}$ has the same cardinality as $V_{l}$.\\

To summarize, we perform the following operation for each $n$: 
\begin{enumerate}
\item concatenate  $K_n$ times appropriately-translated shortest arcs in $A_n$ with $\Ax(g_n)$,
\item go through the corresponding translated bridge $\mathfrak b_n$,  and
\item concatenate similarly the next $K_{n+1}$ arcs  in $A_{n+1}$   with   $\Ax(g_{n+1})$.
\end{enumerate}  
The terminal points of translated shortest arcs in $A_n$ and of translated $\mathfrak b_n$ form the  generation $V_l$, where $n $ and $ l$ are  related  by Equation (\ref{generationEq}). That is, $V_l$ consists of $G$-translated copies of the initial or terminal points of $\mathfrak b_n$. Every family path  $v_l\in V_l$ ($l\ge 0$)  gives an $L$-local $c$-quasi-geodesic path $\gamma$ in $X$, and their union gives a quasi-radial tree $T$ by Lemma \ref{LargeTreePtsVersion}.

\begin{prop}\label{CriterionHDNonConical2}
Let $g_n$ be an escaping sequence of loxodromic elements in $G$. Let $A_n$ be a set  of shortest   arcs from $\ax(g_n)$ to itself with length in the interval 
$[L_n- \Delta_n, L_n + \Delta_n]$.  Assume that the cardinality 
$|A_n|$ satisfies $|A_n|\ge \mathrm{e}^{\omega_n L_n}$. Further, assume that $L_n/\Delta_n\to 0$ as $n\to\infty$.
Set $\omega=\liminf_{n\ge 1} \omega_n$. 

If $\widetilde L_n-L_n-\Delta_n\to \infty$, then  $\HD(\ncG)\ge \omega/\epsilon$, where $\epsilon$ is the parameter for the visual metric in Lemma \ref{VisualMetric}.   
\end{prop}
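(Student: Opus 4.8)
The plan is to run the looping--bridging construction of \textsection\ref{sub-constr-actions} verbatim on the given data $(A_n,L_n,\Delta_n,B_n)$ and then to feed the resulting tree into Lemma~\ref{LargeTreePtsVersion}, exactly as in the proof of Proposition~\ref{CriterionHDNonConical}. First I would fix the straightness constant $\tau$ coming from the orthogonality of shortest arcs to quasi-axes, and let $L,R>0$ be the constants of Lemma~\ref{LargeTreePtsVersion} for this $\tau$. Discarding a fixed proportion of each $A_n$ (which changes $|A_n|$ only by a bounded factor, absorbed by slightly enlarging $L_n$), I may assume the arcs in $A_n$ are $R$-separated --- lifts of distinct arcs starting at a common point of a fixed $\ax(g_n)$ have terminal points more than $2(\Delta_n+R)$ apart, the analog of~(\ref{Separation'}). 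Then I would choose the repetitions $K_n$ large enough that~(\ref{ChoiceKnEq}) and~(\ref{ChoiceKnEq2}) hold for $(L_n,\Delta_n,B_n)$; here $K_n$ must be taken large precisely to absorb the bridge lengths $B_n=\ell(\mathfrak b_n)$, which may be large after one passes to a subsequence of $g_n$. With the basepoint $o$ placed at $(\mathfrak b_1)_-\in\ax(g_1)$ and the construction of \textsection\ref{sub-constr-actions} carried out, the family paths assemble into a quasi-radial tree $T$ with $\Psi$ injective, and Lemma~\ref{LargeTreePtsVersion} gives $\HD(\partial T)=\omega/\epsilon$.

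It remains to prove $\partial T\subseteq\ncG$; this is the group-action analog of Lemma~\ref{EscapingRayCriterion} and is the heart of the argument. Fix a family path $\gamma$. By Lemma~\ref{LargeTreePtsVersion} it is a global $c$-quasi-geodesic ray; write $\xi=\gamma(+\infty)\in\partial X$ and let $r=[o,\xi)$ be a geodesic ray, which lies within $R_0=R_0(c)$ of $\gamma$ and conversely, by the Morse Lemma. Since $\xi$ is non-conical if and only if $r$ (equivalently $\gamma$) projects to an escaping geodesic in $X/G$, it suffices to show $d(Go,\gamma(t))\to\infty$. By construction the portion of $\gamma$ in its $n$-th stage is a concatenation of sub-arcs of $G$-translates of $\ax(g_n)$, of shortest arcs lying in $G$-translates of $A_n$ (each of length at most $L_n+c_0\Delta_n$ for a universal $c_0$, with both endpoints on the $G$-invariant set $\Ax(g_n)=G\cdot\ax(g_n)$), and of $G$-translates of the bridge $\mathfrak b_n$; hence every point of this portion lies within $L_n+c_0\Delta_n$ of $\Ax(g_n)$ or on $G\mathfrak b_n$. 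Because $\Ax(g_n)$ and $G\mathfrak b_n$ are $G$-invariant, $d(go,\Ax(g_n))=d(o,\Ax(g_n))=\widetilde L_n$ and $d(go,G\mathfrak b_n)=d(o,G\mathfrak b_n)$ for all $g\in G$, so for $t$ in the $n$-th stage
$$
d(Go,\gamma(t))\ \ge\ \min\bigl\{\,\widetilde L_n-L_n-c_0\Delta_n,\ d(o,G\mathfrak b_n)\,\bigr\}.
$$
The hypotheses of the proposition (in particular $\widetilde L_n-L_n-\Delta_n\to\infty$, with $\Delta_n$ small relative to $L_n$), together with $d(o,G\mathfrak b_n)\to\infty$ as arranged in \textsection\ref{sub-constr-actions}, force the right-hand side to tend to $\infty$. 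Hence $\gamma$, and so $r$, projects to an escaping geodesic and $\xi\in\ncG$. Equivalently, the fixed-point pairs of the conjugates of $g_n$ whose axes $\gamma$ successively traverses converge to $\xi$ while $\gamma$ leaves every bounded set, so $\xi$ cannot be conical.

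Finally, distinct family paths terminate at distinct boundary points, since $\Psi$ is injective and the shadows $\Pi_{v_0}(v,R)$ with $v\in T$ are pairwise disjoint unless nested (Lemma~\ref{QuasiRadialTree}, Lemma~\ref{LargeTreePtsVersion}); thus $\partial T$ embeds into $\ncG$ and $\HD(\ncG)\ge\HD(\partial T)=\omega/\epsilon$ by Lemma~\ref{LargeTreePtsVersion} (equivalently Lemma~\ref{HDLargeTree}). I expect the main obstacle to be the displayed estimate: one must check that every tail of a family path remains within an $O(L_n+\Delta_n)$-neighbourhood --- never $o(\widetilde L_n)$ --- of the escaping orbits $\Ax(g_n)\cup G\mathfrak b_n$, so that the backtracking introduced by the long looping arcs does not destroy the escaping property. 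This is exactly where the hypothesis $\widetilde L_n-L_n-\Delta_n\to\infty$ is used, and it is the reason one passes to a subsequence of the $g_n$ (enlarging $B_n$, which is then compensated by enlarging $K_n$ via~(\ref{ChoiceKnEq2})).
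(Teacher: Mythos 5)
Your proposal is correct and follows essentially the same route as the paper: run the looping--bridging construction of \textsection\ref{sub-constr-actions} on the data $(A_n,L_n,\Delta_n,B_n)$, choose $K_n$ so that (\ref{ChoiceKnEq}) and (\ref{ChoiceKnEq2}) hold, invoke Lemma~\ref{LargeTreePtsVersion} for the dimension bound, and check the escaping property from $\widetilde L_n-L_n-\Delta_n\to\infty$ together with the escaping bridges $d(o,G\mathfrak b_n)\to\infty$. Your write-up is in fact more detailed than the paper's (which compresses the escaping verification into two sentences), and your reading of the hypothesis as $\Delta_n/L_n\to 0$ is the intended one.
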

\begin{proof}
Construction  \ref{sub-constr-actions} outputs a quasi-radial tree $T$ with parameters $(L_n,\Delta_n, \omega_n, K_n,B_n)$. If we choose a sufficiently large number $K_n$ of repetitions  of looping arcs in $A_n$ so that (\ref{ChoiceKnEq2}) is satisfied, then 
the Hausdorff dimension of ends of $T$ is at least $\omega/\epsilon$ by Proposition \ref{LargeTreePtsVersion}.

It remains to see that each end of $T$ is a non-conical limit point.  
By construction, let $\gamma$ be a quasi-geodesic ray marked by a family path as before the Proposition. The projection of $\gamma$ travels close to the escaping bridge $\mathfrak b_n$ for any $n\to\infty$. And looping around shortest arcs on $\Ax(g_n)$ may trace back at most $L_n+\Delta_n$. Since $d(o,\Ax(g_n))-L_n-\Delta_n\to +\infty$,
$\gamma$ is escaping. Thus,  the endpoint of  $\gamma$ is non-conical.  The proof is  complete.
\end{proof}

\subsection{Hausdorff dimension of non-conical points for normal covering}
Here is a way to obtain an escaping sequence of loxodromic elements. 

\begin{lem}\label{EscapingLoxoInNormalSubgroup}
Suppose that  $X$ is   a   hyperbolic space equipped with a proper isometric and non-elementary action of  $\Gamma$. Let $G<\Gamma$ be an infinite normal subgroup of infinite index. Then $G$ has an infinite sequence of loxodromic elements $h_n$ that is escaping.   
In fact, $\Ax_G(h_n)$ can be chosen to be of the form $\Ax_G(h_n)=g_n\Ax_G(h_1)$ for some $g_n \in \Gamma$. %if $G/H$ contains an infinite order element, then the sequence of loxodromic elements $h_n$ escaping to infinity could be chosen with the following property: $d(o,[\ax(h_n)]_H)\to \infty$ and the distance $d([\ax(h_n)]_H,[\ax(h_{n+1})]_H)$ does not depend on $n$.
\end{lem}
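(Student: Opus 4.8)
The plan is to fix one loxodromic element of $G$, record that its $G$--invariant axis--orbit $\Ax_G(h_1)$ stays within a bounded neighbourhood of the orbit $Go$, and then push this set far from $o$ by conjugating with elements of $\Gamma$; infinite index will guarantee that such conjugators can be chosen to escape, and normality will guarantee that the conjugates stay in $G$ and that their axis--orbits are precisely the $\Gamma$--translates of $\Ax_G(h_1)$.

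\emph{Step 1: $G$ contains a loxodromic element.} Since $G\triangleleft\Gamma$ is infinite and the action is proper, the orbit $Go$ is unbounded, so $\Lambda G\neq\emptyset$; it is closed, and for $\gamma\in\Gamma$ one has $\gamma\Lambda G=\Lambda(\gamma G\gamma^{-1})=\Lambda G$. Thus $\Lambda G$ is a nonempty closed $\Gamma$--invariant subset of $\Lambda\Gamma$, and since the non-elementary action of $\Gamma$ on its (infinite) limit set is minimal, $\Lambda G=\Lambda\Gamma$. Hence the convergence action of $G$ is non-elementary and $G$ contains a loxodromic element $h_1$. Fix its quasi-axis $\ax(h_1)$ and set $\Ax_G(h_1)=G\cdot\ax(h_1)$. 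As $\langle h_1\rangle\le G$ acts cocompactly on $\ax(h_1)$, there is $R>0$ with $\ax(h_1)\subseteq N_R(\langle h_1\rangle o)$, and translating by $G$ gives $\Ax_G(h_1)\subseteq N_R(Go)$.

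\emph{Step 2: an escaping sequence of conjugators.} I claim there exist $\gamma_n\in\Gamma$ with $d(\gamma_n o,Go)\to\infty$. If not, there is $D>0$ with $\Gamma o\subseteq N_D(Go)$; then for each $\gamma\in\Gamma$ choose $g\in G$ with $d(go,\gamma o)\le D$, so that $g^{-1}\gamma$ lies in the set $F:=\{f\in\Gamma:d(o,fo)\le D\}$, which is finite by properness. Thus $\Gamma=\bigcup_{f\in F}Gf$, whence $[\Gamma:G]\le|F|<\infty$, contradicting the hypothesis. So such $\gamma_n$ exist, and we may pass to a subsequence along which $d(\gamma_n o,Go)$ is strictly increasing.

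\emph{Step 3: conclusion.} Put $h_n:=\gamma_n h_1\gamma_n^{-1}$. Normality gives $h_n\in G$; it is loxodromic with $\ax(h_n)=\gamma_n\ax(h_1)$, and using $G\gamma_n=\gamma_n G$,
\[
\Ax_G(h_n)=G\cdot\gamma_n\ax(h_1)=\gamma_n\cdot G\ax(h_1)=\gamma_n\Ax_G(h_1),
\]
so the last assertion holds with $g_n=\gamma_n$. Finally, $\Ax_G(h_n)=\gamma_n\Ax_G(h_1)\subseteq N_R(\gamma_n Go)=N_R(G\gamma_n o)$, hence
\[
d(o,\Ax_G(h_n))\ \ge\ d(o,G\gamma_n o)-R\ =\ d(Go,\gamma_n o)-R\ \longrightarrow\ \infty .
\]
So $\{h_n\}$ is an escaping sequence of loxodromic elements of $G$, and the strictly increasing distances make the $h_n$ pairwise distinct. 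Everything here is essentially bookkeeping; the two substantive points are the pigeonhole argument of Step 2 — the only place the infinite index hypothesis enters — and keeping normality straight via $G\gamma_n=\gamma_n G$, which is what lets $\Gamma$--translates of the $G$--invariant set $\Ax_G(h_1)$ be realised again as axis--orbits of elements of $G$.
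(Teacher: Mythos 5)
Your proof is correct and follows essentially the same route as the paper's: fix a loxodromic $h_1\in G$, conjugate it by coset representatives $\gamma_n$ with $d(\gamma_n o, Go)\to\infty$, and use normality to identify $\Ax_G(h_n)$ with $\gamma_n\Ax_G(h_1)$ inside $N_R(\gamma_n Go)$. You merely fill in two details the paper asserts without proof — that an infinite normal subgroup of a non-elementary group contains a loxodromic element, and (via pigeonhole and properness) that infinite index forces the existence of escaping cosets.
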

\begin{proof}
An infinite normal subgroup $G$  contains infinitely many pairwise independent loxodromic elements. Let us fix one such $h_1\in G$.  As $\Gamma/G$ is infinite and the action $\Gamma\act X$ is proper, there exists a sequence of  right cosets $Gg_n$, $g_n\in \Gamma$ so that $d(Gg_no, Go)\to \infty$. Then $h_n:=g_n h_1g_n^{-1}$ are   loxodromic elements in $G$ with axis $\ax(h_n)=g_n\ax(h_1)$.  We claim that the axis of $h_n$  escapes to infinity, i.e.\ $d(o,\Ax_G(h_n))\to \infty$.  

Note that $\Ax_G(h_n)=G\cdot \ax(h_n)=Gg_n\ax(h_1)$.  Since   $\langle h_1\rangle o\subseteq Go$, it follows that   $$d(g_n \langle h_1\rangle  o, G o) \ge d(g_nGo, Ho)=d(Gg_no, Go)$$ and the last term  tends to $ \infty$ as $n$  tends to $ \infty$. Note that $\ax(h_1)$ stays within an $R$-neighborhood of $\langle h_1\rangle o$ for some $R>0$. Thus,    $\ax(h_n)=g_n \ax(h_1)\subset N_R(g_n\langle h_1\rangle o)\subset N_R(g_n Go)$. We then obtain $$d(\Ax_G(h_n),  o)=d(g_n \ax(h_1), G o)\ge d(g_nGo, Go)-R.$$ The last term tends to infinity, concluding the proof. 
\end{proof}

It would be interesting to note that escaping loxodromic elements also exist in confined subgroups.
\begin{defn}\label{def-conf}
A subgroup $G$ is called \textit{confined} in $\Gamma$ if there exists a finite subset $P$ in $\Gamma$ so that $gGg^{-1}$ intersects $P\setminus 1$ for any $g\in \Gamma$. The set $P$ is called the \emph{confining subset}.
\end{defn}

\begin{lem}\label{EscapingLoxoInConfinedSubgroup}
Suppose that  $X$ is  a   hyperbolic space equipped with a proper and non-elementary isometric action of  $\Gamma$.
Let $G$ be an infinite confined subgroup
of infinite index in $\Gamma$ with a finite confining subset $P$. Assume that each non-trivial element in $P$  is loxodromic.    Then $G$ has infinitely many loxodromic elements $h_n$ which escapes to infinity.   
\end{lem}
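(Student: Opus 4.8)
The plan is to run the argument of Lemma~\ref{EscapingLoxoInNormalSubgroup} with confinement substituting for normality. First I would use properness together with infinite index to pin down the ``escaping'' data: since $\Gamma\act X$ is proper, for every $N>0$ the set $\{\gamma\in\Gamma: d(o,\gamma o)\le N\}$ is finite, so only finitely many left cosets $\gamma G$ meet the ball $B(o,N)$; as $[\Gamma:G]=\infty$ there are therefore cosets $\gamma G$ with $d(o,\gamma G o)$ arbitrarily large, and I fix representatives $g_n\in\Gamma$ of distinct such cosets with $d(o,g_nGo)\to\infty$.

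Next, applying the confinement hypothesis to $g_n$ gives $p_n\in P\setminus\{1\}$ and $h_n\in G$ with $g_nh_ng_n^{-1}=p_n$, i.e.\ $h_n=g_n^{-1}p_ng_n\in G$; since every non-trivial element of $P$ is loxodromic, $h_n$ is loxodromic, with $\ax(h_n)=g_n^{-1}\ax(p_n)$. I then estimate $d(o,\Ax_G(h_n))$. As $P\setminus\{1\}$ is finite, there is a uniform $R>0$ with $\ax(p)\subseteq N_R(\langle p\rangle o)$ for all $p\in P\setminus\{1\}$ (Morse lemma for the quasi-geodesic $k\mapsto p^ko$). For $q\in G$ and $y\in\ax(p_n)$, picking $k$ with $d(y,p_n^ko)\le R$ and applying the isometry $qg_n^{-1}$ shows $qg_n^{-1}y$ lies within $R$ of $qg_n^{-1}p_n^ko=(qh_n^k)g_n^{-1}o\in Gg_n^{-1}o$, using $qh_n^k\in G$ and $g_n^{-1}p_n^k=h_n^kg_n^{-1}$. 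Hence $\Ax_G(h_n)=Gg_n^{-1}\ax(p_n)\subseteq N_R(Gg_n^{-1}o)$, so
$$ d\big(o,\Ax_G(h_n)\big)\ \ge\ d\big(o,Gg_n^{-1}o\big)-R\ =\ d(o,g_nGo)-R\ \longrightarrow\ \infty, $$
the equality holding because $\Gamma$ acts by isometries and $\{g_nq^{-1}:q\in G\}=g_nG$. Since $d(o,\Ax_G(h_n))\to\infty$, only finitely many of the $h_n$ can coincide, so $\{h_n\}$ is the desired infinite escaping sequence of loxodromic elements of $G$.

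The one place that differs genuinely from the normal case, and which I expect to be the only step needing care, is that there is no fixed conjugacy class of loxodromics guaranteed to lie in $G$: confinement only supplies, for each $g_n$, an \emph{a priori} $n$-dependent element $p_n$ of the finite set $P$, and it is precisely the finiteness of $P$ that makes the comparison constant $R$ (and the identity $g_n^{-1}\langle p_n\rangle g_n=\langle h_n\rangle\le G$ used to collapse $Gg_n^{-1}\ax(p_n)$ into a bounded neighbourhood of $Gg_n^{-1}o$) uniform over $n$. Everything else is the orbit-distance bookkeeping already done in Lemma~\ref{EscapingLoxoInNormalSubgroup}.
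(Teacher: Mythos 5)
Your proof is correct and follows essentially the same route as the paper: produce cosets escaping to infinity via properness and infinite index, conjugate a loxodromic element of $P$ into $G$ using confinement, and bound $d(o,\Ax_G(h_n))$ from below by the coset distance minus a constant. The only cosmetic difference is that the paper passes to a subsequence to make $p_n$ constant, while you keep $p_n$ varying and instead invoke a Morse constant uniform over the finite set $P\setminus\{1\}$; both uses of finiteness are equivalent here.
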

\begin{proof}
The proof follows a similar outline as  Lemma \ref{EscapingLoxoInNormalSubgroup}. 
As the index $[\Gamma:G]$ is infinite and the action $\Gamma\act X$ is proper, we take a sequence of  right cosets $Gg_n$ for $g_n\in \Gamma$ so that $d(Gg_no, Go)\to \infty$.

By definition of confined subgroups, for each   $g_n$ there exists $h_n\in G$ and $p_n\in P\setminus 1$ so that $g_n^{-1}h_ng_n=p_n$.    As $P$ is finite, we may assume $p_n=p$ for each $n\ge 1$ after passing to a subsequence. Thus, $g_npg_n^{-1}=h_n$. By assumption, $p$ is a loxodromic element. Hence  each $h_n$ is loxodromic  with axis $g_n\ax(p)$.  
 We then obtain $$d(\Ax_G(h_n),  o)=d(Gg_n \ax(p), o)\ge d(Gg_no, o)-d(Gg_no,Gg_n \ax(p))\ge d(Gg_no, o)-d(o,\ax(p)).$$ 
 The last term tends to infinity, concluding the proof. 
\end{proof}
 
The following is the main result of this subsection. It gives a lower bound on the Hausdorff dimension of non-conical limit sets for a large class of geometrically infinite groups.  

\begin{thm}\label{NormalSubgroupCase}
Suppose $\Gamma$ is a discrete    group acting on a hyperbolic space $X$. If $G$ is an infinite normal subgroup with infinite index in $\Gamma$, then 
$\HD (\Lambda G)=\HD (\Lambda^{nc}G) \ge \e G/\epsilon$, where $\ep$ is the parameter for the visual metric. 
\end{thm}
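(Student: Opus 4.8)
The plan is to establish the two inequalities $\HD(\Lambda^{nc}G)\ge \e G/\epsilon$ and $\HD(\Lambda G)\le \e G/\epsilon$; together with the inclusion $\Lambda^{nc}G\subseteq\Lambda G$ these give $\e G/\epsilon\le \HD(\Lambda^{nc}G)\le \HD(\Lambda G)\le \e G/\epsilon$, hence all the asserted equalities. The upper bound $\HD(\Lambda G)\le \e G/\epsilon$ is the classical estimate of Coornaert \cite{coornaert}: cover $\Lambda G$ by the shadows $\Pi_o(go,r)$, $g\in G$, whose $\rho_\epsilon$-diameters are comparable to $\mathrm{e}^{-\epsilon d(o,go)}$ by Lemma \ref{ShadowApproxBalls}, and note that for $s>\e G/\epsilon$ the series $\sum_{g}\mathrm{e}^{-s\epsilon d(o,go)}$ converges; no new input is needed here. (Observe also that the hypotheses force the $\Gamma$-action, hence the $G$-action, to be non-elementary: if $\Lambda\Gamma$ had at most two points then $\Gamma$ would be virtually cyclic or finite and could not contain an infinite subgroup of infinite index.) So the heart of the matter is the lower bound, which I intend to deduce from Proposition \ref{CriterionHDNonConical2}.

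First I would invoke Lemma \ref{EscapingLoxoInNormalSubgroup}: since $G\trianglelefteq\Gamma$ is infinite of infinite index, there is an escaping sequence of loxodromic elements $h_n\in G$ with $\Ax_G(h_n)=g_n\Ax_G(h_1)$ for suitable $g_n\in\Gamma$ and $d(o,\Ax_G(h_n))\to\infty$. Next I would manufacture, for each $n$, a large set of shortest arcs from $\ax(h_n)$ to itself. Fix $\Delta$ large enough for Theorem \ref{Thm:Growth-DoubleCosets} applied to $\alpha=\beta=\ax(h_1)$ and $H=K=E(h_1)$. From the definition of $\e G$, together with a standard pigeonhole argument passing from ball counts to annular counts (properness of the action prevents the radii realizing the $\limsup$ from accumulating at a bounded value), one obtains a sequence $L_j\to\infty$ and a sequence $\omega_j\to\e G$ with $|A(o,L_j,\Delta)|\ge\mathrm{e}^{\omega_j L_j}$. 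Theorem \ref{Thm:Growth-DoubleCosets} then supplies a constant $\Delta'\ge\Delta$ and $c>0$, \emph{both independent of $j$}, with $|\Ar(L_j,\Delta',[\ax(h_1)],[\ax(h_1)])|\ge c\,\mathrm{e}^{\omega_j L_j}$. This $n$-independence of $\Delta'$ is the decisive structural feature of the normal-subgroup setting — contrast Remark \ref{rem-Ln-Delta_n} — because it lets us keep $\Delta_n$ bounded while driving $\widetilde L_n$ far beyond $L_n$.

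Now I would transport these arcs onto $\ax(h_n)$. Because $G$ is \emph{normal} in $\Gamma$, for every $g\in G$ one has $g_ngg_n^{-1}\in G$; hence $g_n$ carries the $G$-orbit of a pair $(\ax(h_1),g\ax(h_1))$ to the $G$-orbit of $(\ax(h_n),\,(g_ngg_n^{-1})\ax(h_n))$, which is again an orbit of a pair of $G$-translates of $\ax(h_n)=g_n\ax(h_1)$, of the same separation. Thus translation by $g_n$ injects $\Ar(L_j,\Delta',[\ax(h_1)],[\ax(h_1)])$ into the set of shortest arcs from $\Ax_G(h_n)$ to itself with no loss of cardinality. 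Using the escaping property I would then interleave the two sequences: for each $j$ pick $n(j)$ with $d(o,\Ax_G(h_{n(j)}))\ge L_j+j$, and take as the $j$-th datum the quasi-axis $\ax(h_{n(j)})$ together with the set $A_j$ of these shortest arcs, of length roughly $L_j$ (after the bounded sliding-endpoint modifications of Construction \ref{sub-constr-actions}) and cardinality $\ge c\,\mathrm{e}^{\omega_j L_j}$; discarding a bounded proportion to enforce the separation requirement and absorbing $c$ and the sliding constants into a bounded enlargement of $L_j$, one still has $|A_j|\ge\mathrm{e}^{\omega_j' L_j}$ with $\omega_j'\to\e G$. With $\Delta_j:=\Delta'$ bounded and $L_j\to\infty$ we get $\Delta_j/L_j\to0$, and by construction $\widetilde L_j-L_j-\Delta_j\ge j-\Delta'\to\infty$. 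Proposition \ref{CriterionHDNonConical2} then yields $\HD(\Lambda^{nc}G)\ge\liminf_j\omega_j'/\epsilon=\e G/\epsilon$, completing the proof.

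I expect the main difficulty to be bookkeeping rather than conceptual: verifying that the sliding of endpoints in Construction \ref{sub-constr-actions} and the ball-to-annulus passage remain compatible with the hypotheses of Proposition \ref{CriterionHDNonConical2}, and — more importantly — checking carefully that translating a double coset $E(h_1)gE(h_1)$ by $g_n\in\Gamma$ genuinely produces a shortest arc between $G$-translates of $\ax(h_n)$, not merely between $\Gamma$-translates. That last point is exactly where normality of $G$, as opposed to mere infiniteness, is used; it is also why the same argument should adapt, mutatis mutandis, to confined subgroups (Lemma \ref{EscapingLoxoInConfinedSubgroup}) whose confining set consists of loxodromics.
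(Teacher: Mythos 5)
Your lower bound $\HD(\Lambda^{nc}G)\ge \e G/\epsilon$ follows essentially the paper's own route: escaping loxodromics from Lemma \ref{EscapingLoxoInNormalSubgroup}, transport of the arc counts of Theorem \ref{Thm:Growth-DoubleCosets} along the elements $g_n\in\Gamma$ using normality (which is exactly why $\Delta_n$ can be taken constant), a subsequence making $\widetilde L_n-L_n-\Delta_n\to\infty$, and Proposition \ref{CriterionHDNonConical2}. That part is sound, and your insistence on checking that $g_n$ carries $G$-orbits of pairs of $G$-translates to $G$-orbits of pairs of $G$-translates is precisely the point the paper records as ``the set of shortest arcs from $\gamma_n$ to itself can be sent by an isometry to the set of shortest arcs from $\gamma_m$ to itself.''

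The upper bound step, however, is a genuine error, and it leads you to assert something false. The shadows $\Pi_o(go,r)$ with $d(o,go)\ge n$ do \emph{not} cover $\Lambda G$: a point $\xi$ lies in such a shadow only if some geodesic $[o,\xi]$ passes within $r$ of the orbit point $go$, and having this happen for orbit points arbitrarily far out is essentially the definition of a conical limit point. Coornaert's covering argument therefore bounds $\HD(\cG)$, not $\HD(\Lambda G)$. The conclusion $\HD(\Lambda G)=\e G/\epsilon$ is in fact false in this setting: take $\Gamma$ a cocompact Fuchsian group and $G$ the kernel of a surjection $\Gamma\twoheadrightarrow F_2$; then $\Lambda G=S^1$, so $\HD(\Lambda G)=1$, while $\e G<\e \Gamma=1$ because $\Gamma/G$ is non-amenable (\cite{CDST}, quoted in the introduction). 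Note that the theorem asserts only $\HD(\Lambda G)=\HD(\Lambda^{nc}G)\ge \e G/\epsilon$, not equality with $\e G/\epsilon$. The missing ingredient for the equality of the two dimensions is Bishop--Jones: $\HD(\cG)=\e G/\epsilon$, and since $\HD(\Lambda G)=\max\{\HD(\cG),\HD(\ncG)\}$ while your lower bound gives $\HD(\ncG)\ge \e G/\epsilon=\HD(\cG)$, the maximum is attained by the non-conical part, whence $\HD(\Lambda G)=\HD(\ncG)$.
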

\begin{proof}
By Lemma \ref{EscapingLoxoInNormalSubgroup}, there exists an escaping sequence of  loxodromic elements $g_n\in G$  with the same stable translation length. Let $\gamma_n$ be an axis of $g_n$. For any $m, n\ge 1$, the set of shortest arcs from $\gamma_n$ to itself can be sent by an isometry to the set of shortest arcs from $\gamma_m$ to itself. (This property fails for escaping sequence of  loxodromic elements in Lemma \ref{EscapingLoxoInConfinedSubgroup}.) For any $L>0$, the set $A$ of such arcs with length in $[L-\Delta,L+\Delta]$ have cardinality $\mathrm{e}^{L \e \Gamma}$. 

Set $\widetilde L_n=d(o,\gamma_n)$ and $\Delta_n=\Delta$. Fix a bridge $\mathfrak b_n$ from $\gamma_n$ to $\gamma_{n+1}$. We fix any divergent sequence $L_n\to\infty$. Choose a set $A_n$ of shortest arcs from $\gamma_n $ to itself  with length in $[L_n-\Delta_n,L_n+\Delta_n]$ has cardinality approximately $\mathrm{e}^{L_n \e G}$.

Since $L_n$ is a fixed large constant independent of $\gamma_n$, we may extract a subsequence of $\gamma_n$  so that $\widetilde L_n-L_n-\Delta_n\to \infty$. Note that this may change the length $B_n$ of the bridge $\mathfrak b_n$ from $\gamma_n$ to $\gamma_{n+1}$ to larger values after passing to a subsequence.  We may then  choose a sufficiently large number $K_n$ of repetitions  of looping arcs in $A_n$ so that (\ref{ChoiceKnEq2}) is satisfied. This compensates for the effect of a larger $B_n$.   

Thus, all the conditions of Proposition \ref{CriterionHDNonConical2} are fulfilled. Hence $\HD (\Lambda^{nc}G) \ge \e G/\epsilon$. 
By Bishop-Jones' theorem \cite{BJ97}, the equality  $\HD (\Lambda G)=\max\{\HD (\Lambda^c G), \HD (\Lambda^{nc} G)\}$ implies $\HD (\Lambda^c G)=\e G/\epsilon$. Thus, we have $\HD (\Lambda^{nc}G) =\HD (\Lambda G)\ge \e G/\epsilon$. The conclusion follows.  
\end{proof}

 We equip  the Gromov boundary of  a complete simply connected Riemannian manifold
 of pinched negative curvature (or a CAT(-1) space) with the Bourdon  metric. Roughly speaking, the Bourdon  metric is a class of visual metrics where $\epsilon$ could be chosen to be $1$.   
\begin{cor}\label{cor-normalsubgp}
Let $N$ be a finite volume Riemannian manifold with pinched negative curvature. Let $M$ be an infinite sheeted regular cover of $N$. Set $G=\pi_1(M)$ and $\Gamma=\pi_1(N)$.  Then $\HD(\Lambda^{nc}G)=\HD(\partial\widetilde M)=\HD(\Lambda \Gamma)$.    
\end{cor}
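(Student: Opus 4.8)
The plan is to derive the Corollary directly from Theorem~\ref{NormalSubgroupCase}, together with two classical facts: that the limit set of a lattice is the whole boundary, and that a non-trivial normal subgroup of a non-elementary group has the same limit set as the ambient group. First I would set up the geometry. Put $X=\widetilde N=\widetilde M$; since $N$ has pinched negative curvature, $X$ is a complete, simply connected manifold with sectional curvature pinched in some $[-b^2,-a^2]$ with $a>0$, hence, after rescaling, a CAT$(-1)$ space — in particular proper and Gromov hyperbolic — whose visual boundary, with the Bourdon metric, is $\partial X=\partial\widetilde M$. The deck group $\Gamma=\pi_1(N)$ acts properly by isometries on $X$ with $X/\Gamma=N$ of finite volume, so $\Gamma$ is a non-elementary lattice; the cover $M\to N$ being regular and infinite-sheeted means $G=\pi_1(M)\trianglelefteq\Gamma$ has $\Gamma/G$ infinite, i.e. of infinite index, and $G$ is infinite since $\Gamma$ is torsion-free (we of course exclude the trivial case $M=\widetilde N$, for which there is nothing to prove). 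All the hypotheses of Theorem~\ref{NormalSubgroupCase} are therefore met.

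Next I would run the short chain of equalities. Theorem~\ref{NormalSubgroupCase} gives $\HD(\Lambda^{nc}G)=\HD(\Lambda G)$ (we do not even need the lower bound $\ge \e G/\epsilon$ from that theorem here). Since $\Gamma$ is a lattice, its Bowen--Margulis measure is finite and $\Lambda\Gamma=\partial X$; concretely, were $\Lambda\Gamma$ to miss an open subset of $\partial X$, the corresponding ``half-space'' in $X$ would force $N$ to have infinite volume. Since $G$ is a non-trivial normal subgroup of the non-elementary $\Gamma$, the set $\Lambda G$ is non-empty, closed and $\Gamma$-invariant, hence contains the unique minimal non-empty closed $\Gamma$-invariant set $\Lambda\Gamma$; as also $\Lambda G\subseteq\Lambda\Gamma$, we get $\Lambda G=\Lambda\Gamma=\partial X=\partial\widetilde M$. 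Taking Hausdorff dimensions with respect to the fixed Bourdon metric yields $\HD(\Lambda^{nc}G)=\HD(\Lambda G)=\HD(\Lambda\Gamma)=\HD(\partial\widetilde M)$, as claimed.

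I do not anticipate any real obstacle: the substance has been packaged into Theorem~\ref{NormalSubgroupCase}, whose proof (via the quasi-radial tree construction of \textsection\ref{sec-hausd}, Lemma~\ref{EscapingLoxoInNormalSubgroup}, and Proposition~\ref{CriterionHDNonConical2}) is where the work lies. The one point I would flag, lest it seem a gap, is that one might expect to need $\Gamma/G$ amenable — i.e. $\e G=\e \Gamma$ — for $\Lambda^{nc}G$ to carry the full dimension. This is not needed: if $\Gamma/G$ is non-amenable then $\e G<\e \Gamma$, so by Bishop--Jones $\HD(\Lambda^{c}G)=\e G/\epsilon<\e \Gamma/\epsilon=\HD(\Lambda\Gamma)=\HD(\Lambda G)$, and the decomposition $\HD(\Lambda G)=\max\{\HD(\Lambda^{c}G),\HD(\Lambda^{nc}G)\}$ then forces $\HD(\Lambda^{nc}G)=\HD(\Lambda G)$ regardless — which is exactly the unconditional equality recorded in Theorem~\ref{NormalSubgroupCase}.
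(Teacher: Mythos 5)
Your proof is correct and follows essentially the same route as the paper: reduce to Theorem~\ref{NormalSubgroupCase} together with the standard facts $\Lambda G=\Lambda\Gamma=\partial\widetilde M$ and take Hausdorff dimensions. The only cosmetic difference is that the paper splits explicitly into the cases $\e G<\e \Gamma$ (handled by Bishop--Jones and the max decomposition) and $\e G=\e \Gamma$ (handled by Theorem~\ref{NormalSubgroupCase}), whereas you correctly observe that this dichotomy is already absorbed into the unconditional equality $\HD(\Lambda^{nc}G)=\HD(\Lambda G)$ recorded in that theorem.
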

\begin{proof}
Note that $\Lambda G=\Lambda \Gamma$ and  $\HD(\Lambda G) =\HD (\Lambda^c G)=\e G$ by Bishop-Jones' theorem \cite{BJ97}. If $\e G<\e \Gamma$, the proof finishes by noting $\HD (\Lambda G)=\max\{\HD (\Lambda^c G), \HD (\Lambda^{nc} G)\}$. Otherwise, $\e G=\e \Gamma$ and the proof is completed by Theorem \ref{NormalSubgroupCase}.    
\end{proof}
\begin{rem}
By the Amenability Theorem in \cite{CDST}, we have $\e G<\e \Gamma$ if and only if $\Gamma/G$ is non-amenable.  The real crux of the above statement and its proof lies in the case when $N$ is an amenable cover of $M$.     
\end{rem}

\subsection{Discrete groups acting on trees}
Let  $\Gamma$ be an infinite $d$-regular graph with $d\ge 3$ and $A$ be a finite subset of vertices in $V(\Gamma)$. Let $\partial A$ denote the set of edges $e$ such that $e$ connects $x, y$ with $x \in A$ and $y \in V(\Gamma) \setminus  A$.
The \textit{isoperimetric constant} of $\Gamma$ is given by
$$i(\Gamma) := \inf_{A} \frac{1}{d}\frac{|\partial A|}{|A|}$$
where the infimum  is over finite subsets $A$ of vertices in $V(\Gamma)$.
It is known that $i(T_d)=(d - 1)/d$ if $T_d$ is a $2d$-regular tree.

We say that the graph $\Gamma$ is \textit{amenable} if $i(\Gamma)=0$. That is to say, there exists a sequence of finite subsets $A_n$ called \textit{Folner sets}   with $|\partial A_n|/|A_n|\to 0$.

We now explain several relations between the isoperimetric constant,  the bottom of the  spectrum of the Laplacian on the graph $\Gamma$, and the co-growth of $\Gamma$.\\ 

\noindent\textbf{Mohar inequality.} Mohar \cite{Moh88} proved  an analog of the well-known Cheeger inequality  for infinite graphs.
Let $\lambda_0(\Gamma)$ be the  bottom of the spectrum of the discrete Laplacian   on $\Gamma$. Let $r(\Gamma)$ be the spectral radius of the simple random walk on $\Gamma$. It is known that $r(\Gamma)=1-\lambda_0(\Gamma)$, and $\lambda_0(T_d)=1 - (\sqrt{2d - 1})/d$.

\begin{prop}\cite[Theorem 3.1(b)]{Moh88}\cite[Theorem 5.5(b)]{MW89}\label{MoharIneq} We have the following inequalities.
$\frac{d-d\lambda_0(\Gamma)}{d-2+\lambda_0(\Gamma)}\le i(\Gamma) \le \sqrt{1 -  \lambda_0(\Gamma))^2}.$    
\end{prop}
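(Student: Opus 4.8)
The statement is a combination of \cite[Theorem 3.1(b)]{Moh88} and \cite[Theorem 5.5(b)]{MW89}, a discrete Cheeger-type comparison between the isoperimetric number and the bottom of the Laplace spectrum, and the plan is to recall the two complementary variational arguments behind it. Throughout I write $\mathcal L = I - \tfrac1d A$ for the normalized Laplacian of the $d$-regular graph $\Gamma$, so that for finitely supported $f \in \ell^2(V(\Gamma))$ one has the Dirichlet identity $\langle \mathcal L f, f\rangle = \tfrac1d \sum_{\{x,y\} \in E(\Gamma)} (f(x)-f(y))^2$, and $\lambda_0(\Gamma)$ is the infimum of $\langle \mathcal L f, f\rangle / \|f\|^2$ over such $f \ne 0$; recall that $r(\Gamma) = 1 - \lambda_0(\Gamma)$. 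The two displayed bounds are then equivalent to two-sided estimates on $\lambda_0(\Gamma)$ in terms of $i(\Gamma)$; in the applications we only invoke the qualitative consequence that $i(\Gamma) = 0$ exactly when $\lambda_0(\Gamma) = 0$ (equivalently $r(\Gamma) = 1$).

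For the upper bound $i(\Gamma) \le \sqrt{1 - (1 - \lambda_0(\Gamma))^2}$ I would run the coarea (``layer-cake'') method. Fix a finitely supported $f \ge 0$ whose Rayleigh quotient $\rho := \langle \mathcal L f, f\rangle / \|f\|^2$ is within $\epsilon$ of $\lambda_0(\Gamma)$, and for $t > 0$ set $A_t = \{x : f(x)^2 > t\}$, a finite vertex set. Integrating $f(x)^2 = \int_0^\infty \mathbf 1_{A_t}(x)\,dt$ and $|f(x)^2 - f(y)^2| = \int_0^\infty |\mathbf 1_{A_t}(x) - \mathbf 1_{A_t}(y)|\,dt$ over edges yields
\[
\sum_{\{x,y\} \in E(\Gamma)} |f(x)^2 - f(y)^2| \;=\; \int_0^\infty |\partial A_t|\,dt \;\ge\; d\, i(\Gamma) \int_0^\infty |A_t|\,dt \;=\; d\, i(\Gamma)\, \|f\|^2 .
\]
Then write $|f(x)^2 - f(y)^2| = |f(x) - f(y)|\,|f(x) + f(y)|$, apply Cauchy--Schwarz, and use $\sum_{\{x,y\}} (f(x)-f(y))^2 = d \langle \mathcal L f, f\rangle$ together with $\sum_{\{x,y\}} (f(x)+f(y))^2 = 2d\|f\|^2 - d\langle \mathcal L f, f\rangle$ to bound the left-hand side above by $d \sqrt{\langle \mathcal L f, f\rangle\,(2\|f\|^2 - \langle \mathcal L f, f\rangle)}$; dividing by $d\|f\|^2$ gives $i(\Gamma) \le \sqrt{\rho(2 - \rho)} = \sqrt{1 - (1-\rho)^2}$, and since $\rho \mapsto 1 - (1-\rho)^2$ is increasing on $[0,1]$ and $\rho \le \lambda_0(\Gamma) + \epsilon$, letting $\epsilon \to 0$ closes this half. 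For an infinite $\Gamma$ one first replaces $f$ by a truncation to a finite exhaustion, which perturbs the Rayleigh quotient arbitrarily little.

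For the lower bound one argues in the opposite direction: given a finite set $A$ nearly realizing $i(\Gamma)$, one plugs a test function \emph{supported on} $A$ into the Rayleigh quotient. The indicator $\mathbf 1_A$ already gives $\lambda_0(\Gamma) \le i(\Gamma)$; to upgrade this to the sharp rational expression in $\lambda_0(\Gamma)$ and $d$, I would weight $\mathbf 1_A$ by a suitable function of the combinatorial distance to $\partial A$ (equivalently, analyze the Dirichlet principal eigenvalue of the subgraph induced on $A$) and optimize the weight. The step I expect to be the real obstacle is precisely extracting this \emph{optimal} constant: the plain indicator test function yields only a qualitatively correct, non-sharp inequality, and the refinement is exactly what \cite[Theorem 3.1(b)]{Moh88} and \cite[Theorem 5.5(b)]{MW89} supply, so for this direction I would simply invoke them.
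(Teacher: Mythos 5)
The paper gives no proof of this proposition at all --- it is quoted from \cite[Theorem 3.1(b)]{Moh88} and \cite[Theorem 5.5(b)]{MW89} --- so there is no internal argument to compare against. Your proof of the upper bound is correct and is the standard Dodziuk--Mohar argument: the coarea identity $\sum_{\{x,y\}}|f(x)^2-f(y)^2|=\int_0^\infty|\partial A_t|\,dt\ge d\,i(\Gamma)\|f\|^2$, Cauchy--Schwarz, and the identities $\sum(f(x)-f(y))^2=d\langle\mathcal Lf,f\rangle$ and $\sum(f(x)+f(y))^2=2d\|f\|^2-d\langle\mathcal Lf,f\rangle$ all check out, and the monotonicity of $t\mapsto t(2-t)$ on $[0,1]$ lets you pass from the Rayleigh quotient to $\lambda_0(\Gamma)$. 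Note that you have silently repaired the garbled right-hand side of the displayed inequality: what you prove is $i(\Gamma)\le\sqrt{1-(1-\lambda_0(\Gamma))^2}=\sqrt{1-r(\Gamma)^2}$, which is indeed the intended (and sharp-on-trees) statement.

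For the lower bound, however, you accept the displayed formula at face value and propose to recover it by optimizing a weighted indicator; had you tested it, you would have found that $\frac{d-d\lambda_0(\Gamma)}{d-2+\lambda_0(\Gamma)}\le i(\Gamma)$ is \emph{false} under the stated convention that $\lambda_0(\Gamma)$ is the bottom of the normalized Laplacian spectrum (so $\lambda_0=1-r$). At $\lambda_0=0$ (any amenable $d$-regular graph, e.g.\ a $3$-regular Cayley graph of $\Z\times\Z/2\Z$) it would assert $i(\Gamma)\ge d/(d-2)>1$, while $i(\Gamma)\le 1$ always (take $A$ a single vertex); it also fails for the $d$-regular tree, where the left side exceeds $1$ but $i=(d-2)/d$. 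The bound in Mohar and Mohar--Woess is stated in terms of the spectral radius (equivalently the top of the adjacency spectrum), and the correct reading here is $\frac{d-d\,r(\Gamma)}{d-2+r(\Gamma)}\le i(\Gamma)$, i.e.\ $\frac{d\lambda_0(\Gamma)}{d-1-\lambda_0(\Gamma)}\le i(\Gamma)$ --- a mild sharpening of your indicator-function bound $\lambda_0(\Gamma)\le i(\Gamma)$, since the denominator is at most $d-1$. The same $\lambda_0\leftrightarrow r$ substitution error explains the sentence following the proposition (``$\lambda_0(\Gamma)\to1$ iff $i(\Gamma)\to0$''), which is correct for $r$ but not for $\lambda_0$. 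None of this damages the applications (only $i\to0\Rightarrow r\to1$ is used, and even your weak bound $\lambda_0\le i$ delivers that), but your proposed ``optimization'' step would have been an attempt to prove a false inequality, so the target statement needs to be corrected before the deferral to the references is legitimate.
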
 

Proposition~\ref{MoharIneq} implies that $\lambda_0(\Gamma)\to 1$ if and only if $i(\Gamma) \to 0$. Moreover, a graph $\Gamma$ is amenable if and only if  $\lambda_0(\Gamma)=0$ or $r(\Gamma)=1$. \\

\noindent\textbf{Co-growth formula.}
The co-growth of $\Gamma$ is the growth rate of the fundamental group  $\pi_1(\Gamma)$ acting on the universal covering $\widetilde \Gamma$. (In a sense, this is a dual notion to the growth of the quotient $\Gamma=\widetilde \Gamma/\pi_1(\Gamma)$). More precisely,  let $H$ be a group acting isometrically and properly on a  $d$--regular tree $X$. The growth rate  $\e H$ of the action $H\act X$ is referred as the co-growth of $\U/H$. 

The co-growth formula of Grigorchuk \cite{GriH} relates  the co-growth  to 
  the spectral radius $r(\U/H)$ of the simple random walk on  the graph $\U/H$ as follows
\begin{equation}\label{Grigorchuk}
r(\U/H)=\begin{cases}
\frac{\sqrt{d-1}}{d}\left(\frac{\sqrt{d-1}}{\mathrm{e}^{\e H}}+\frac{\mathrm{e}^{\e H}}{\sqrt{d-1}}\right), &\mathrm{e}^{\e H} \ge \sqrt{d-1}\\
\frac{2\sqrt{d-1}}{d},&\mathrm{e}^{\e H} \le \sqrt{d-1}
\end{cases}    
\end{equation}

Fixing a basepoint $o\in X$, the space of ends of the tree $\U$ could be identified with the set of geodesic rays issuing from $o$. The visual metric between two geodesic rays $\alpha, \beta$   is defined to be $\mathrm{e}^{n}$, where $n$ is the length of $\alpha\cap \beta$. Then $\log (d-1)$ is the Hausdorff dimension of the space of ends of $\U$ equipped with the   visual metric.  

The following is the main theorem of this subsection.
\begin{thm}\label{thm-amenable-graph}
Assume that $G$ acts isometrically and properly on a regular tree $X$ of degree $d\ge 3$. Assume that $X/G$ is an infinite amenable graph. Then $\HD(\Lambda^{nc} G)=\log (d-1)$.    
\end{thm}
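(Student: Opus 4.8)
The plan is to apply the machinery developed in Sections~\ref{sec-hausd} and~\ref{sec-nonconical} --- specifically Proposition~\ref{CriterionHDNonConical2} (or its geometric analogue Proposition~\ref{CriterionHDNonConical}) --- to a tree $X$, after producing the required escaping loxodromic elements and the cardinality estimates on shortest arcs from amenability. The upper bound $\HD(\Lambda^{nc}G)\le\HD(\partial X)=\log(d-1)$ is automatic, so the whole content is the lower bound $\HD(\Lambda^{nc}G)\ge\log(d-1)$. Note that here the visual parameter is $\epsilon=1$ (the visual metric on ends of a $d$-regular tree has $\HD(\partial X)=\log(d-1)$), so the target exponent $\omega$ must be pushed up to $\log(d-1)$.

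First I would set up the combinatorial data. Since $X/G$ is amenable, fix a Folner sequence $S_n$ of finite connected subgraphs with $|\partial S_n|/|S_n|\to 0$. The two-dimensional-ish (here one-dimensional) geometry of graphs lets me complete each $S_n$ to a $d$-regular graph $\widetilde S_n$ with \emph{finite core}: roughly, attach to each boundary vertex of $S_n$ a copy of a rooted $(d-1)$-ary tree (or, more carefully, a suitable finite decoration that makes every vertex have degree exactly $d$ while keeping $\pi_1(\widetilde S_n)$ free of finite rank and equal, as a core, to something commensurable with $\pi_1(S_n)$). The key quantitative input is that the co-growth formula of Grigorchuk~(\ref{Grigorchuk}) together with the Mohar inequality (Proposition~\ref{MoharIneq}) forces $\omega_{\pi_1(\widetilde S_n)}=\e{\pi_1(\widetilde S_n)}\to\log(d-1)$: amenability of $X/G$ gives $r(X/G)=1$, hence the spectral radius $r(\widetilde S_n)$ of the simple random walk tends to the tree value $2\sqrt{d-1}/d$, which by~(\ref{Grigorchuk}) is exactly the critical exponent $\log(d-1)$. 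So set $\omega_n\to\log(d-1)$ with $\omega_n<\e{\pi_1(\widetilde S_n)}$.

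Next I would build the arcs. Fix an immersed loop $\gamma_n$ inside the core of $\widetilde S_n$ (the core is finite, so such loops exist and have bounded length $\Delta_n$ --- in fact since each $\widetilde S_n$ has finite core one can even arrange $\Delta_n$ bounded, cf.\ Remark~\ref{rem-Ln-Delta_n}). Apply Lemma~\ref{ShortestArcsonGraphs} to $\gamma_n$ inside $\widetilde S_n$: for any $\epsilon>0$ and all large $L_n$ there are at least $c\,\mathrm{e}^{(\e{\pi_1(\widetilde S_n)}-\epsilon)L_n}\ge\mathrm{e}^{\omega_n L_n}$ shortest arcs from $\gamma_n$ to itself of length in $[L_n-\Delta_n,L_n+\Delta_n]$. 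Crucially, because $\Delta_n$ can be taken uniformly bounded, the condition $\Delta_n/L_n\to 0$ is free for any divergent $L_n$; and since $L_n$ is chosen after $\widetilde S_n$ but \emph{the $S_n$ already escape every compact set of $X/G$} (a Folner sequence in an amenable infinite graph can be chosen escaping, as finitely many bounded sets are eventually avoided after translating), we get $\widetilde L_n:=d(o,\gamma_n)\to\infty$, and by passing to a subsequence of $(S_n,\gamma_n)$ interleaved with the choice of $L_n$ we arrange $\widetilde L_n-L_n-\Delta_n\to\infty$. Lifting everything to $X$: each $\gamma_n$ lifts to a geodesic axis of a loxodromic element $g_n\in G$, the sequence $g_n$ escapes to infinity, and the arcs of $A_n$ lift to shortest arcs between translates of $\ax(g_n)$. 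Choosing bridges $\mathfrak b_n$ between $\ax(g_n)$ and $\ax(g_{n+1})$ and repetitions $K_n$ large enough to satisfy~(\ref{ChoiceKnEq}) and~(\ref{ChoiceKnEq2}) (this absorbs the bridge lengths $B_n$, which may only be known after the subsequence extraction), Proposition~\ref{CriterionHDNonConical2} applies verbatim and yields $\HD(\Lambda^{nc}G)\ge\omega/\epsilon=\log(d-1)$ since $\epsilon=1$ for the tree. Combined with the trivial upper bound this gives equality.

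The main obstacle I expect is the completion step: turning the Folner subgraph $S_n$ into a $d$-regular graph $\widetilde S_n$ with \emph{finite core} in such a way that (a) the core genuinely contains the data needed (an immersed loop and the shortest arcs counted by Lemma~\ref{ShortestArcsonGraphs} all sitting inside the original $S_n$, so that their lifts embed in $X$ rather than in the attached infinite trees), and (b) the isoperimetric defect $|\partial S_n|/|S_n|\to 0$ translates, via Proposition~\ref{MoharIneq} and the co-growth formula~(\ref{Grigorchuk}), into $\e{\pi_1(\widetilde S_n)}\to\log(d-1)$ rather than merely $\le\log(d-1)$. The inequality direction here is delicate: one needs that a small isoperimetric ratio for $S_n$ forces the spectral radius of $\widetilde S_n$ to be \emph{close to} $2\sqrt{d-1}/d$ from below, i.e.\ a quantitative lower bound on $r(\widetilde S_n)$ in terms of $|\partial S_n|/|S_n|$; this is exactly the content of the easy direction of Mohar's inequality applied to $\widetilde S_n$, but one must check the attached trees do not spoil it --- the point being that hanging $(d-1)$-ary trees off the boundary is the ``most amenable'' thing one can do, so it can only push $r$ up toward the tree value. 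Once this bookkeeping is pinned down, everything else is a routine instance of the general framework.
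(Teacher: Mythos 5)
Your overall architecture coincides with the paper's: Folner sets $F_n$ made connected and escaping, completion to a $d$-regular graph $\widetilde F_n$ with finite core by attaching trees, the Mohar inequality plus Grigorchuk's co-growth formula to push the critical exponent of $\pi_1(\widetilde F_n)$ up to $\log(d-1)$, Lemma~\ref{ShortestArcsonGraphs} for the arc counts, and then the quasi-radial tree machinery of Section~\ref{sec-hausd}. However, the step you yourself flag as delicate is stated with the direction reversed, and as written it does not prove the lower bound. Amenability of $X/G$ forces the spectral radius of $\widetilde F_n$ to tend to $1$ (the \emph{maximal} value), not to the tree value $2\sqrt{d-1}/d$ (the \emph{minimal} value). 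Concretely: $F_n$ remains a test set inside $\widetilde F_n$, and its edge boundary there consists of the newly attached edges, of number $O(|\partial F_n|)$; hence $i(\widetilde F_n)\lesssim |\partial F_n|/|F_n|\to 0$, and the Cheeger-type direction of Proposition~\ref{MoharIneq} gives $r(\widetilde F_n)\to 1$. Feeding $r\to 1$ into the first branch of (\ref{Grigorchuk}) (the branch $\mathrm{e}^{\omega_{H}}\ge\sqrt{d-1}$) and solving $\frac{\sqrt{d-1}}{d}\bigl(\frac{\sqrt{d-1}}{x}+\frac{x}{\sqrt{d-1}}\bigr)\to 1$ forces $x=\mathrm{e}^{\omega_{H_n}}\to d-1$, i.e.\ $\omega_{H_n}\to\log(d-1)$. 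By contrast, $r=2\sqrt{d-1}/d$ is the value attained by \emph{every} $H$ with $\mathrm{e}^{\omega_H}\le\sqrt{d-1}$ (second branch of (\ref{Grigorchuk})), so it yields no lower bound on $\omega_{H_n}$ whatsoever; your heuristic that attaching trees is the ``most amenable'' operation is likewise backwards --- trees are maximally non-amenable, and the point is only that they cannot destroy the Folner property already witnessed by $F_n$ itself.

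Once that is corrected, the remaining worry you raise --- that the loop $\gamma_n$ and the arcs produced by Lemma~\ref{ShortestArcsonGraphs} might wander into the attached trees --- resolves immediately: those objects are immersed (non-backtracking) paths with endpoints in the core, and an immersed path cannot enter and leave a hanging tree, so they all lie in $F_n$ and lift into $X$ as required. With these two points repaired your argument is the paper's proof.
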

\begin{proof}
The graph $X/G$ is amenable, so there exists a sequence of {Folner sets} $F_n$  with $|\partial F_n|/|F_n|\to 0$. 
Here, we take each $F_n$ to be an induced subgraph on its vertex set. We may assume without loss of generality that $F_n$ is connected by choosing  connected components with minimal $|\partial F_n|/|F_n|$.  It is clear that $|F_n|\to\infty$. 

By passing to a subsequence,  we may assume further that $F_n$ is escaping. Indeed, if $d(o, F_n)$ is bounded for all $n$, we consider the sequence of subsets $F_n\setminus B(o,m)$ for a large fixed $m$. As $|F_n|\to \infty$ and $|B(o,m)|\ll |F_n|$, we may extract   a subsequence of $F_n$  so that  $F_n$  are still Folner sets.  Letting $m\to \infty$, a  diagonal argument produces an escaping subsequence of Folner sets $F_n$.  

We now complete $F_n$ to  a $d$-regular graph $\widetilde F_n$ by attaching trees. Namely, for each vertex $v$ in $F_n$ with degree $d_v$ less than $d$, we adjoin  $(d-d_v)$ subtrees of degree $d$ 
to $F_n$ by means of an edge to $v$. The degree of $v$ 
after adjunction is thus $d$. As a result,  $F_n$ is the core of $\widetilde F_n$.

By definition, the isoperimetric constant $i(\widetilde F_n)$ is at most $|\partial F_n|/|F_n|$, which tends to $0$ and thus $\lambda_0(\widetilde F_n)\to 1$ by Proposition \ref{MoharIneq}. By the above co-growth formula, the growth rate $\omega_n:=\e {H_n}$ of the action of the fundamental group $H_n:=\pi_1(\widetilde F_n)$ on $T$ tends to $\log (d-1)$. 

With the above preliminaries in place, we are ready to complete the proof analogous to that of Theorem \ref{NonConicalFromGeometricLimit}. 
First of all, we fix a hyperbolic element $h_n$ in $H_n$ for each $n\ge 1$. The axis of $h_n$ projects to an immersed loop $\gamma_n$ in $\widetilde F_n$. Since $F_n$ is a deformation retract of $\widetilde F_n$ obtained by collapsing the attached subtrees to the vertices in $F_n$, the immersed loop $\gamma_n$ is entirely contained in $F_n$. Thus, $h_n$ is an escaping sequence in $G$ (i.e. $\gamma_n$ is escaping in $X/G$).

 Let $\mathfrak b_n$ be the bridge given by a shortest  path from $\gamma_n$ to $\gamma_{n+1}$. In graphical terms,  $\mathfrak b_n$ is just an immersed path so that it intersects $\gamma_n$ and $\gamma_{n+1}$ only at the endpoints. Set $B_n=\ell(\mathfrak b_n)$.

By Theorem \ref{ShortestArcsonGraphs} applied to $\widetilde F_n$,  there is a family of  shortest paths $A_n$ from $\gamma_n$ to $\gamma_n$ with length in $[L_n-\Delta_n,L_n+\Delta_n]$, such that $A_n$ has cardinality at least $\mathrm{e}^{\omega_n L_n}$. By the same reasoning, each path in $A_n$ is immersed and is thus contained in $F_n$.   
Note that $\Delta_n$ depends on $\gamma_n$, but $L_n$ could be arbitrary large. We choose $L_n\to\infty$ so that $\Delta_n/L_n\to 0$ and further $K_n\to \infty$ so that $B_n/K_nL_n\to 0$ and (\ref{ChoiceKnEq2}) holds.

We then follow the earlier construction:  loop $K_n$ times about shortest arcs in $A_n$, and go to $\gamma_{n+1}$ via the bridge $\mathfrak b_n$, and loop $K_{n+1}$ times about the arcs in $A_{n+1}$.  By Lemma \ref{LargeTreePtsVersion}, the union of all these immersed  rays in $X$ lifts to a quasi-radial tree $T$. The Hausdorff dimension of the ends of $T$ is at least $\liminf_{n\to \infty} \omega_n$. As $\omega_n\to \log (d-1)$, this proves that $\HD(\partial T)\ge \log (d-1)$. Since $\partial T$ consists of non-conical limit points of $G$ and $\HD(\partial X)=\log (d-1)$,   the proof is complete.
\end{proof}

\subsection{Hyperbolic surfaces}

The \textit{Cheeger constant} of a (possibly infinite volume) Riemannian $n$-manifold $M$ is given by
$$h(M) := \inf \frac{\mathrm{Vol}_{n-1}(\partial D)}{\mathrm{Vol}_n(D)}$$
where $D \subseteq M$ is a smooth compact $n$-submanifold with boundary and $0 < \mathrm{Vol}_n(D) \le \mathrm{Vol}_n(M)/2$.  If there exists a submanifold $A$ so that 
$$
h(M)=\frac{\mathrm{Vol}_{n-1}(\partial A)}{\mathrm{Vol}_n(A)}
$$ then $\partial A$ and $A$ are referred as $(n-1)$ and $n$ dimensional \textit{Cheeger minimizers}. In analogy with amenable graphs,  if $h(M)=0$ we say that $M$ is \textit{amenable}. It is a classical result of Kanai \cite{Kanai} that if $M$ has bounded geometry, then amenability of $M$ is equivalent to the amenability of a graph whose vertices form a net in $M$. 

In what follows, we only consider  hyperbolic surfaces $S$. We may write $S$ as a space form $\mathbb H^2/G$ where $G\cong \pi_1(S)$ is a discrete subgroup in the isometry group of $\mathbb H^2$.  A \textit{cusp neighborhood} in $S$ means a neighborhood of an end of $S$ which is covered by a horoball in $\mathbb H^2$. A \textit{funnel} in $S$ is a neighborhood of an end of $S$ which is covered by a half-plane in $\mathbb H^2$.

The \textit{convex core} of $S$ is the minimal convex subsurface that is a deformation retract of $S$. Equivalently, it is the quotient of the convex hull of the limit set of $G$. We can explicitly obtain the convex core of $S$ by removing all the funnels of $S$.  A hyperbolic surface $S$ is called \textit{geometrically finite} if its convex core has finite area.

In \cite{AdamsM}, Adams-Morgan give a complete classification of \textit{boundary minimizers}  in geometrically finite hyperbolic surfaces, i.e. they find regions $A$ with fixed area and least length of boundary. Before stating their results, we introduce the following  terminology. 

Let $A$ be a (possibly disconnected) convex subsurface of $S$ bounded by simple closed geodesics. Given a real number $s$, an \textit{$s$-neighboring} of $A$ is the subset of $S$ obtained  by adding the $s$-neighborhood for each boundary component of $A$ if $s\ge 0$ or removing the $|s|$-neighborhood of  each boundary component of $A$ if $s<0$. Each boundary component of an   $s$-neighboring has constant curvature.

\begin{thm}\cite[Theorem 2.2]{AdamsM}\label{IsopermeterMinimizer} 
Let $S$ be a connected, geometrically finite hyperbolic surface. For a given $t \in (0,Area(S))$, a collection of embedded rectifiable curves bounding a region $A$ of area $t$ which minimizes $\partial A$ consists of regions of the following four types:
\begin{enumerate}
    \item 
     a metric ball,
    \item 
    a cusp neighborhood,
    \item 
    an $s$-neighboring of a closed geodesic,
    \item 
    or an $s$-neighboring of a convex subsurface for some $s\in \mathbb R$.
\end{enumerate}
Further, 
$l(\partial A) \le \sqrt{Area(A)^2 + 4\pi Area(A)}$
with equality in the case of a circle bounding a metric ball. If $S$ has at least one cusp, cases (1) and (3) do not occur and $l(\partial A) \le Area(A)$ with equality for horocycles. Finally, if $Area(A) < \pi$ and $S$ has cusps, then a minimizer consists of any collection of horoball neighborhoods of cusps with boundary having total length  $Area(A)$.
\end{thm}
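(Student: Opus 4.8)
The plan is to run the classical variational argument for isoperimetric regions, paying careful attention to the two kinds of ends of a geometrically finite hyperbolic surface.

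\emph{Step 1: existence of a minimizer.} Fix $t\in(0,\mathrm{Area}(S))$ and set $I_S(t)=\inf\{l(\partial A):\mathrm{Area}(A)=t\}$, the infimum over finite-perimeter (Caccioppoli) sets. Take a minimizing sequence $A_n$. Compactness of sets of bounded perimeter in $L^1_{\mathrm{loc}}$ together with lower semicontinuity of perimeter produces a candidate limit $A$; since $S$ is noncompact the point is to prevent area $t$ from escaping to infinity. Because $S$ is geometrically finite each end is a cusp or a funnel, and one runs a concentration--compactness dichotomy: vanishing is excluded by the local isoperimetric inequality of $\Hyp^2$ (area $t$ cannot be spread arbitrarily thin at bounded perimeter cost), and in the dichotomy case the portion of area sliding into an end is controlled by the explicit geometry there --- in a cusp a region of area $a$ has perimeter $a$ along horocycles, while in a funnel the perimeter of a region of area $a$ is bounded below by a definite function of $a$ --- so that the escaping portion is already one of the standard regions in the statement. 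One concludes that the infimum is realised either by a genuine minimizer in $S$ or by a standard region sitting at an end.

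\emph{Step 2: regularity and the constant curvature condition.} By the regularity theory for isoperimetric boundaries (Gonz\'alez--Massari--Tamanini), $\partial A$ is a finite disjoint union of embedded real-analytic simple closed curves, with no singular part since the ambient dimension is $2$. The first variation subject to the area constraint forces every component of $\partial A$ to have the same constant geodesic curvature $\kappa$ with respect to the inner normal, and nonnegativity of the second variation (stability) rules out unstable assemblies. In a hyperbolic surface a simple closed curve of constant geodesic curvature is a closed geodesic if $\kappa=0$, an equidistant curve if $0<|\kappa|<1$, a horocycle if $|\kappa|=1$, and a metric circle if $|\kappa|>1$.

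\emph{Step 3: classification by $\kappa$ and the inequalities.} If $|\kappa|>1$ every component bounds a metric disk; since $a\mapsto\sqrt{a^2+4\pi a}$ is concave with value $0$ at $0$, one connected metric ball beats any disjoint union (stability forcing connectedness), giving conclusion (1), and this is the only case where $l(\partial A)=\sqrt{\mathrm{Area}(A)^2+4\pi\,\mathrm{Area}(A)}$. If $|\kappa|=1$ every component is a horocycle, so $S$ has a cusp and $A$ is a union of horoball neighborhoods, conclusion (2); by Gauss--Bonnet $\kappa\,l(\partial A)=\mathrm{Area}(A)+2\pi\chi(A)$, and such an $A$ is a disjoint union of annuli, whence $\chi(A)=0$ and $l(\partial A)=\mathrm{Area}(A)$. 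If $\kappa=0$ then $A$ is a convex subsurface with closed geodesic boundary, conclusion (4) with $s=0$ (or conclusion (3) when $A$ is an annulus about one geodesic). If $0<|\kappa|<1$ each boundary component is equidistant from a closed geodesic, and assembling these shows $A$ is, for the value of $s$ determined by $\kappa$ (via $\tanh s$ or $\coth s$), an $s$-neighboring of the convex hull of the union of those geodesics --- a single closed geodesic (conclusion (3)) or a convex subsurface (conclusion (4)). The bound $l(\partial A)\le\sqrt{\mathrm{Area}(A)^2+4\pi\,\mathrm{Area}(A)}$ follows by comparing the minimizer with a metric ball of area $t$, which embeds whenever $S$ has a funnel (and otherwise $t$ is large relative to $S$, and one compares instead with the complement of a small standard region), the rigidity of the $\Hyp^2$ isoperimetric inequality forcing equality only for a ball. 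When $S$ has a cusp, comparison with a cusp neighborhood of area $t$, together with $l(\partial A)=(\mathrm{Area}(A)+2\pi\chi(A))/\kappa$ for $\kappa\neq0$ and $\chi(A)\le0$, rules out conclusions (1) and (3) and yields $l(\partial A)\le\mathrm{Area}(A)$; finally for $\mathrm{Area}(A)<\pi$ there is by Gauss--Bonnet no $\kappa=0$ region ($-2\pi\chi(A)\ge2\pi$), while a metric ball of area $a<\pi$ has perimeter $\sqrt{a^2+4\pi a}>a$, so unions of horoball neighborhoods are the only minimizers.

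\emph{Main obstacle.} The technical heart is Step 1: excluding escape of area into the funnels and cusps and, when some area does escape, identifying the degenerate limiting configuration as one of the four listed types --- this is where the concentration--compactness analysis must mesh with the explicit end geometry. Once existence and the constant-curvature condition are available, Step 3 is essentially bookkeeping with constant-curvature curves and Gauss--Bonnet.
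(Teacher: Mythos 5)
First, a point of comparison: the paper does not prove this statement at all — it is imported (with adapted terminology) from Adams--Morgan \cite{AdamsM}, so there is no in-paper proof to measure your argument against. Your outline does follow the standard route that Adams--Morgan themselves take (regularity of isoperimetric boundaries, constant geodesic curvature from the first variation, classification of constant-curvature curves in $\mathbb{H}^2$, Gauss--Bonnet bookkeeping), so the strategy is the right one; but as a proof it has genuine gaps beyond the Step 1 you explicitly defer.

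The main logical gap is in Step 3: from ``each boundary component is a circle / horocycle / hypercycle'' you repeatedly conclude that $A$ \emph{is} the ball, cusp neighborhood, or $s$-neighboring, but a constant-curvature component bounds the standard region only on one of its two sides, and nothing in your argument rules out that $A$ lies on the other side. When $S$ has infinite area this follows from $\mathrm{Area}(A)=t<\infty$, but you never say so; and when $S$ is a finite-area geometrically finite surface (allowed by the statement), regions such as the complement of a small disk or of a horoball neighborhood are stationary, stable, and for $t$ close to $\mathrm{Area}(S)$ genuinely minimizing, so they must either be excluded or absorbed into type (4) with $s<0$ — your ``bookkeeping'' does not address this. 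A second concrete problem is the cusped-case inequality: you obtain $l(\partial A)\le \mathrm{Area}(A)$ by ``comparison with a cusp neighborhood of area $t$'', but embedded horoball neighborhoods of the cusps have uniformly bounded total area, so this competitor does not exist for large $t$; similarly $\chi(A)\le 0$ is asserted rather than proved (a priori $A$ could have disk components until case (1) has been excluded, which is exactly what is being shown). Finally, Step 1 — excluding vanishing and identifying the limit configuration when area escapes into a funnel or cusp — is, as you say yourself, the technical heart, and it is only gestured at. As it stands the proposal is a plausible sketch of the known Adams--Morgan argument rather than a complete proof; for the purposes of this paper the correct move is the one the authors make, namely to cite \cite{AdamsM}.
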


\noindent\textbf{Elstrodt-Patterson-Sullivan-Corlette formula.} Let $\lambda_0(M)\ge 0$ be  the bottom of the $L_2$-spectrum for the Laplace-Beltrami operator on a Riemannian manifold $M$.  Alternatively, $\lambda_0(M)$ is  given by a variational formula 
$$
\lambda_0(M) = \inf_{f\in C_c^\infty(M)\setminus 0} \frac{\int_M |\mathrm{grad} f|^2}{\int_M f^2},
$$
where $C_c^\infty(M)$ denotes  compactly supported smooth functions on $M$. If $M$ is compact, then $\lambda_0(M)=0$.

Let $X$ be a rank-1 symmetric space (i.e. the real, complex, quaternionic hyperbolic spaces or the Cayley plane). We equip   the visual boundary $\partial X$ with the visual metric and denote by $\HD(\pU)$ the Hausdorff dimension. If $G$ is a lattice in $\isom(X)$, then $\HD(\pU)=\e G$.  The  Elstrodt-Patterson-Sullivan-Corlette formula below relates the growth rate to the  bottom of the spectrum (\cite{Sul,Corlette}): 
\begin{equation}\label{EPSC}
\lambda_0(\U/G)=\begin{cases}
\e G(\HD(\pU)-\e G), &\e G\ge \HD(\pU)/2\\
\frac{\HD(\pU)^2}{4},&\e G \le \HD(\pU)/2
\end{cases}    
\end{equation}

\noindent\textbf{Cheeger-Buser inequality.}
The  Cheeger-Buser inequality bounds the first non-zero eigenvalue from below and above in terms of the Cheeger constant $h(M)$.  Cheeger  showed  that if $\lambda_0(M)\ne 0$ for a Riemannian $n$-manifold $M$, then
$$\lambda_0(M) \ge h(M)^2/4.$$ 
When $M=X/G$ is locally symmetric, the above formula shows $\lambda_0(M)\ne 0$  if and only if  $\e G<\HD(\pU)$.  

If $M$ is a Riemannian $n$-manifold with Ricci curvature bounded below by $-\delta^2(n - 1)$ where $\delta\ge 0$, Buser then showed that   
$$\lambda_0(M) \le 2\delta(n - 1)h(M) + 10h^2(M)$$

We have the following improvement to the main result of \cite{FM01}. 
\begin{thm}\label{thm-cheeger-surface}
Assume that $G$ acts isometrically and properly on the hyperbolic plane $\mathbb H^2$. Assume that $\mathbb H^2/G$ has   Cheeger constant 0. Then $\HD(\Lambda^{nc} G)=1$.    
\end{thm}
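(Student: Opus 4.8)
The plan is to reduce Theorem~\ref{thm-cheeger-surface} to Proposition~\ref{CriterionHDNonConical}, exactly parallel to the proofs of Theorem~\ref{NonConicalFromGeometricLimit} and Theorem~\ref{thm-amenable-graph}, with the two-dimensional \emph{completion} of a Følner subsurface playing the role that attaching subtrees played in the graph case. Since $\HD(\partial\mathbb H^2)=1$ (and the visual parameter is $\epsilon=1$ for the Bourdon metric), it suffices to produce, for every $\omega<1$, a quasi-radial tree whose ends are non-conical and whose boundary has Hausdorff dimension at least $\omega$. First I would unwind the hypothesis $h(\mathbb H^2/G)=0$ via the Cheeger--Buser inequality (stated above) to get $\lambda_0(S)=0$ for $S=\mathbb H^2/G$, where here the relevant direction is Cheeger's $\lambda_0\ge h^2/4$ — wait, that gives only $\lambda_0\ge 0$, so instead I use Buser's inequality $\lambda_0(S)\le 2\delta h(S)+10h(S)^2$ with $\delta=1$ (curvature $-1$) to conclude $\lambda_0(S)=0$. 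Equivalently, by the Elstrodt--Patterson--Sullivan--Corlette formula~(\ref{EPSC}) with $\HD(\partial\mathbb H^2)=1$, this forces the critical exponent $\omega_G$ to be $1$ (this is the case $\omega_G\ge 1/2$, giving $\omega_G(1-\omega_G)=\lambda_0=0$, hence $\omega_G=1$); the case $\omega_G\le 1/2$ would give $\lambda_0=1/4\ne 0$, a contradiction. In particular $h(S)=0$ implies $\omega_G=1$.

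Next, I would extract from $h(S)=0$ a Følner exhaustion: a sequence $D_n\subset S$ of compact subsurfaces with $\ell(\partial D_n)/\mathrm{Area}(D_n)\to 0$. Replacing $D_n$ by a connected component minimizing the ratio, and then (as in the graph proof, removing a large fixed ball and running a diagonal argument) passing to a subsequence, I may assume $D_n$ is connected and \emph{escaping}, i.e. $d(o,D_n)\to\infty$, while still $\ell(\partial D_n)/\mathrm{Area}(D_n)\to 0$. After an isotopy I may take $\partial D_n$ to consist of simple closed geodesics, so $D_n$ is a convex compact subsurface; then I \emph{complete} $D_n$ to a geometrically finite surface $\widetilde D_n$ by gluing a funnel along each boundary geodesic of $D_n$. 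Thus $D_n$ is the convex core of $\widetilde D_n$. Now I apply Theorem~\ref{IsopermeterMinimizer}: since $\widetilde D_n$ is geometrically finite, its Cheeger constant is realized (or at least approximated) by regions of the four listed types, and the bound $\ell(\partial A)\le\sqrt{\mathrm{Area}(A)^2+4\pi\,\mathrm{Area}(A)}$ together with the Følner estimate for $A=D_n$ shows $h(\widetilde D_n)\to 0$; then the Cheeger--Buser and Elstrodt--Patterson--Sullivan--Corlette formulas give $\lambda_0(\widetilde D_n)\to 0$, hence the critical exponent $\omega_n:=\omega_{\pi_1(\widetilde D_n)}\to 1$. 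Here is where I must be careful that the Følner region $D_n$ (a subsurface, not a subtree) genuinely controls the isoperimetric profile of the completed surface and that the minimizing regions of types (3)--(4) do not give a \emph{smaller} boundary-to-area ratio than $D_n$ itself does — this should follow since any Cheeger minimizer of $\widetilde D_n$ can be compared against $D_n$, whose ratio tends to $0$.

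With $\omega_n\to 1$ in hand, the rest runs as in Theorem~\ref{thm-amenable-graph}. I fix a primitive loxodromic $h_n\in\pi_1(\widetilde D_n)$; its axis projects to a closed geodesic $\gamma_n$, which lies in the convex core $D_n$ and is therefore escaping in $S$, so $\{h_n\}$ (or rather the corresponding closed geodesics $\gamma_n$ in $S$) is an escaping sequence. I take $\mathfrak b_n$ a shortest arc from $\gamma_n$ to $\gamma_{n+1}$, with length $B_n$ (which may depend on $\gamma_n$). By Lemma~\ref{ShortestArcsonMfd} applied inside $\widetilde D_n$ (negatively curved, pinched), for any $\omega<\omega_n$ there is $\Delta_n$ (depending on $\gamma_n$) and, for all large $L_n$, a set $A_n$ of shortest arcs from $\gamma_n$ to itself with length in $[L_n-\Delta_n,L_n+\Delta_n]$ and $|A_n|\ge e^{\omega L_n}$; since these arcs are length-minimizing in their constrained homotopy class, they stay in the convex core $D_n$. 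I choose $L_n\to\infty$ fast enough that $\Delta_n/L_n\to 0$, then (having only now fixed $B_n$ after the subsequence extraction) pass to a further subsequence of the $\gamma_n$ so that $\widetilde L_n-L_n-\Delta_n\to\infty$ where $\widetilde L_n=d_S(o,\gamma_n)$, and finally choose repetitions $K_n\to\infty$ large enough that~(\ref{ChoiceKnEq}) and~(\ref{ChoiceKnEq2}) hold, absorbing the (possibly large) bridge lengths $B_n$. Then Proposition~\ref{CriterionHDNonConical} yields $\HD(\Lambda^{nc}G)\ge\omega/\epsilon=\omega$; letting $\omega\to1$ gives $\HD(\Lambda^{nc}G)\ge1$, and the reverse inequality is trivial since $\Lambda^{nc}G\subset\partial\mathbb H^2$ has Hausdorff dimension $1$. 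Hence $\HD(\Lambda^{nc}G)=1$.

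The main obstacle I anticipate is the completion step and the bookkeeping around it: ensuring the Følner subsurfaces $D_n$ can be taken with geodesic boundary and escaping simultaneously, that gluing funnels produces a genuinely geometrically finite surface to which Theorem~\ref{IsopermeterMinimizer} and Lemma~\ref{ShortestArcsonMfd} apply, and above all that $h(\widetilde D_n)\to 0$ really does follow from $\ell(\partial D_n)/\mathrm{Area}(D_n)\to 0$ — i.e. that completing the surface does not create a much more efficient isoperimetric region of type (2), (3), or (4) that would keep $\lambda_0(\widetilde D_n)$ bounded away from $0$. The classification in Theorem~\ref{IsopermeterMinimizer} is exactly the tool that rules this out, since every candidate minimizer's boundary length is controlled by its area, and $D_n$ itself witnesses a vanishing ratio. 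A secondary subtlety, as flagged in Remark~\ref{rem-Ln-Delta_n}, is the interplay between needing $L_n$ very large (to get $|A_n|\ge e^{\omega L_n}$ with $\Delta_n$ depending on $\gamma_n$) and still arranging $\widetilde L_n-L_n-\Delta_n\to\infty$; this is handled precisely because $L_n$ is chosen before selecting which $\gamma_n$ to keep, so one can always pass to a deeper subsequence of the escaping geodesics.
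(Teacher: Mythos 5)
Your architecture is the paper's: F\o lner subsurfaces, passed to an escaping subsequence, convexified and completed to geometrically finite surfaces whose critical exponents tend to $1$ via Cheeger--Buser and the Elstrodt--Patterson--Sullivan--Corlette formula, then Lemma~\ref{ShortestArcsonMfd} inside those completions feeding the quasi-radial tree machinery of Proposition~\ref{CriterionHDNonConical}. The one place you genuinely diverge is the convexification step, and it is exactly where the paper's proof does its real work. You write ``after an isotopy I may take $\partial D_n$ to consist of simple closed geodesics, so $D_n$ is a convex compact subsurface,'' but this hides two things: first, the boundary components must be made \emph{essential} (none null-homotopic or peripheral, no two homotopic), else the geodesic representatives degenerate or coincide --- the paper explicitly fills in disks, cusps and annuli first; second, tightening the boundary to geodesics changes the area of the region, and one must check the F\o lner ratio survives (it does, because the boundary length only drops and the area changes by at most the area of the crescents between each boundary curve and its geodesic representative, which is $O(\ell(\partial D_n))=o(\mathrm{Area}(D_n))$, but this estimate is not free). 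The paper sidesteps this entirely by passing to the cover $\tilde S_n\to\Sigma$ associated to $\pi_1(S_n)$ and invoking the Adams--Morgan classification (Theorem~\ref{IsopermeterMinimizer}) to replace $S_n$ by the isoperimetric minimizer of the \emph{same area}, which is automatically an $s$-neighboring of a convex subsurface with geodesic boundary; your direct tightening is more elementary but needs the missing estimate spelled out.

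Two smaller remarks. The ``main obstacle'' you anticipate --- that completing to $\widetilde D_n$ might create a more efficient isoperimetric region --- is not an obstacle at all: you only need an \emph{upper} bound on $h(\widetilde D_n)$, and since the Cheeger constant is an infimum over competitors and $\widetilde D_n$ has infinite area (so the half-area constraint is vacuous), the single competitor $D_n$ already forces $h(\widetilde D_n)\le \ell(\partial D_n)/\mathrm{Area}(D_n)\to 0$; a more efficient region only helps. And your opening derivation that $h(S)=0$ forces $\e G=1$ is correct but unused: the argument never needs the critical exponent of $G$ itself, only those of the subgroups $\pi_1(\widetilde D_n)$.
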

\begin{rem}\label{rem-amenable-recurrent}
We say that a Riemann surface is of \textit{parabolic type} if  the Brownian motion on $\mathbb H^2/G$  is recurrent; equivalently $\mathbb H^2/G$ admits no  Green functions.    By \cite[Theorem 2.1]{HP97}, this happens exactly when the Poincar\'e series of $G$ is divergent at $1$ (and thus $\e G=1$). In \cite{FM01}, Theorem \ref{thm-cheeger-surface} is proved for  hyperbolic surfaces $\mathbb H^2/G$ with infinite area and of parabolic type.  As $\e G=1$,  the Cheeger constant is zero by the formula (\ref{EPSC}). Thus, a recurrent hyperbolic surface with infinite area must be amenable. Conversely, it is easy to construct an amenable hyperbolic surface with funnels, which thus admits transient Brownian motions.
\end{rem}
\begin{proof}
Let $\Sigma$ denote the hyperbolic surface $\mathbb H^2/G$.  It is known that the Cheeger constant of a geometrically finite hyperbolic surface is non-zero. Thus, $\Sigma$ must be of infinite type with infinite area. 

Let $S_n$ be a sequence of compact subsurfaces  in $\Sigma$, so that $l(\partial S_n)/Area(S_n)\to 0$. Since $\Sigma$ is of infinite type, $S_n$ must have non-empty boundary.  We may  assume that $S_n$ is \textit{essential}: no boundary components are peripheral and no two boundary components are homotopic. Indeed, if a boundary component bounds a disk or a cusp, we may include the disk or the cusp into $S_n$. Similarly, if two boundary components are homotopic, we add the bounding annulus to $S_n$. The resulting surfaces, still denoted by  $S_n$,   have $l(\partial S_n)/Area(S_n)\to 0$.  

We now  modify $S_n$ further as follows.
\begin{claim}
There exists an escaping sequence of convex compact subsurfaces $S_n^\star\subset\Sigma$ so that the Cheeger constant of  $S_n^\star$ tends to 0.    
\end{claim}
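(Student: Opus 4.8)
The strategy is to derive $S_n^\star$ from the essential compact subsurfaces $S_n$ already produced (with $\ell(\partial S_n)/\mathrm{Area}(S_n)\to 0$, recalling that $\Sigma$ has infinite type and infinite area) by first pushing the sequence to infinity and then straightening its boundary. The escaping reduction is carried out exactly as in the proof of Theorem~\ref{thm-amenable-graph}: if $d(o,S_n)$ does not already tend to infinity, fix a large $R>0$ and pass to a suitable component $C$ of $S_n\setminus B(o,R)$; since $\mathrm{Area}(S_n)\to\infty$ while $\mathrm{Area}(B(o,R))$ and $\ell(\partial B(o,R))$ are fixed,
$$\frac{\ell(\partial C)}{\mathrm{Area}(C)}\ \le\ \frac{\ell(\partial S_n)+\ell(\partial B(o,R))}{\mathrm{Area}(S_n)-\mathrm{Area}(B(o,R))},$$
which is arbitrarily small for $n\gg 0$, and $C\subset\Sigma\setminus B(o,R)$. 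Letting $R\to\infty$ and diagonalising yields connected essential compact $F_n$ with $\ell(\partial F_n)/\mathrm{Area}(F_n)\to 0$ and $d(o,F_n)\to\infty$. Absorbing trivial and peripheral complementary pieces as before, we may assume $F_n$ is essential and — changing the ratio by at most a bounded factor — that each boundary curve of $F_n$ has geodesic curvature at most $1$ in absolute value (e.g.\ replacing $\partial F_n$ by nearby horocycles and curves equidistant to geodesics, or invoking the classification of isoperimetric minimisers of Theorem~\ref{IsopermeterMinimizer} on the geometrically finite pieces of an exhaustion of $\Sigma$).

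\emph{Straightening to a convex subsurface.} Replace each boundary component of $F_n$ by its geodesic representative; since these curves are disjoint, simple, essential and non-peripheral, so are their geodesic representatives, and the subsurface $S_n^\star$ they bound (keeping the component carrying the topology) is convex, compact, with totally geodesic boundary and $\chi(S_n^\star)=\chi(F_n)$. A closed curve of length $L$ lies in an $O(\log L)$-neighbourhood of its geodesic representative, so $S_n^\star$ stays in a controlled neighbourhood of $F_n$; choosing $R=R_n\to\infty$ fast enough above keeps $d(o,S_n^\star)\to\infty$, so $\{S_n^\star\}$ is escaping. By Gauss--Bonnet, $\mathrm{Area}(S_n^\star)=-2\pi\chi(F_n)$, and $\ell(\partial S_n^\star)\le\ell(\partial F_n)$ since geodesics minimise length in their free homotopy class.

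\emph{Conclusion.} It remains to check that $-2\pi\chi(F_n)$ is not much smaller than $\mathrm{Area}(F_n)$, which is exactly where the curvature bound is used. Gauss--Bonnet for $F_n$ (compact, hence no cusp contribution) gives
$$-2\pi\chi(F_n)\ =\ \mathrm{Area}(F_n)-\int_{\partial F_n}k_g\ \ge\ \mathrm{Area}(F_n)-\ell(\partial F_n),$$
so $\mathrm{Area}(S_n^\star)=-2\pi\chi(F_n)\ge(1-o(1))\,\mathrm{Area}(F_n)$ and hence
$$\frac{\ell(\partial S_n^\star)}{\mathrm{Area}(S_n^\star)}\ \le\ (1+o(1))\,\frac{\ell(\partial F_n)}{\mathrm{Area}(F_n)}\ \longrightarrow\ 0.$$
Finally, $h(S_n^\star)\le\ell(\partial S_n^\star)/\mathrm{Area}(S_n^\star)$: gluing a funnel to each boundary geodesic completes $S_n^\star$ to a geometrically finite surface of infinite area, so the $\tfrac12$-area constraint in the definition of the Cheeger constant is vacuous, and testing with the region $S_n^\star$ itself gives the bound (removing the funnels does not increase the Cheeger constant of the piece). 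Therefore $h(S_n^\star)\to 0$.

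\emph{Main obstacle.} The genuine difficulty is the last step: straightening the boundary of an arbitrary Følner subsurface can collapse its area (picture a ``fat'' region pressed against the convex side of very short boundary geodesics), so one must arrange the auxiliary subsurfaces $F_n$ to have boundary of bounded total turning and thereby control $\mathrm{Area}(F_n)+2\pi\chi(F_n)$ by $\ell(\partial F_n)$. Everything else — the escaping reduction via the diagonal trick (already used for graphs), the convexification via geodesic representatives, and the inequality $h\le\ell(\partial)/\mathrm{Area}$ — is routine.
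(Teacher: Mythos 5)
Your architecture is reasonable, and you have correctly located the crux, but the step you flag as ``the main obstacle'' is not actually overcome in the proposal, so there is a genuine gap. Everything hinges on the inequality $\int_{\partial F_n}k_g\le \ell(\partial F_n)$, i.e.\ on being able to ``arrange'' that the boundary of the F\o lner subsurface has geodesic curvature at most $1$ after a bounded-factor change of the ratio. By Gauss--Bonnet this is exactly the assertion that $\mathrm{Area}(F_n)+2\pi\chi(F_n)=o(\mathrm{Area}(F_n))$, equivalently that straightening $\partial F_n$ cannot collapse the area --- which is the entire content of the claim, not a normalization one may assume. An arbitrary F\o lner subsurface has no a priori control on its total (signed) boundary turning: the hypothesis $\ell(\partial F_n)/\mathrm{Area}(F_n)\to 0$ bounds length, not curvature, and your two parenthetical justifications do not close this. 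Replacing $\partial F_n$ by ``nearby horocycles and curves equidistant to geodesics'' is unsupported: there is no reason an arbitrary rectifiable F\o lner boundary is close to such a curve, nor that such a replacement changes length and area by bounded factors. The second suggestion --- invoking Theorem~\ref{IsopermeterMinimizer} on a geometrically finite completion containing $F_n$ --- does work, but once you apply Adams--Morgan the minimizer of area $t_n=\mathrm{Area}(F_n)$ already has boundary length at most $\ell(\partial F_n)$ and is forced (by eliminating cases (1)--(3), whose boundary-to-area ratios are bounded below) to be an $s$-neighboring of a convex geodesic subsurface $S_n^\star$ with small ratio; the Gauss--Bonnet straightening argument then plays no role. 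In other words, the only complete route through your proposal \emph{is} the paper's mechanism: the paper passes to the geometrically finite cover of $\Sigma$ associated with $\pi_1(S_n)$, lifts $S_n$ there, and applies Adams--Morgan; your exhaustion-plus-funnels variant is an acceptable substitute for the covering trick, but it must be carried out, and it supersedes rather than supplements the curvature computation.

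Two smaller points. First, the half-volume constraint in the definition of $h(S_n^\star)$ forbids testing with $D=S_n^\star$ itself inside the compact piece; the estimate one actually needs (and that the main proof uses, via Buser and the formula (\ref{EPSC})) is for the complete infinite-area surface $\overline{S_n^\star}$ obtained by attaching funnels, where the constraint is vacuous --- your parenthetical gets the right idea but conflates the two surfaces. Second, your front-loaded escaping reduction creates a mild circularity ($R_n$ must dominate $\log\ell(\partial F_n)$, but enlarging $R$ changes $F_n$ and hence $\ell(\partial F_n)$); it is cleaner to do what the paper does and restore the escaping property at the end by excising a fixed-area convex piece from each $S_n^\star$, which perturbs neither convexity nor the vanishing of the Cheeger constants.
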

\begin{proof}[Proof of Claim:] 
Let $S_n$ be a sequence of compact subsurfaces as above. As $S_n$  is essential, the natural inclusion into $\Sigma$ induces an embedding at the level of fundamental groups. 
Consider the cover  $p: \tilde S_n\to \Sigma$ of $\Sigma$ associated to the subgroup $\pi_1(S_n)<\pi_1(\Sigma)$ (with a fixed basepoint).  As $S_n$  is a  finite type surface with boundary,  $\tilde S_n$ is a geometrically finite hyperbolic surface with infinite area. The convex-core $C(\tilde S_n)$ of  $\tilde S_n$ is the minimal convex subsurface which is a deformation retract. It is obtained from  $\tilde S_n$ by cutting out finitely many funnels with geodesic boundary. 

As $\pi_1(S_n)$ is identified with $p_\star(\pi_1(\tilde S_n))$, we may lift $S_n$ to get a subsurface $\hat S_n$ in $\tilde S_n$, such that  $\hat S_n$ is homeomorphic to $S_n$  and has  the same area  as $S_n$. Let us denote this area as $t_n$. By Theorem \ref{IsopermeterMinimizer}, the boundary {minimizer}  in $\tilde S_n$  with the area $t_n$ and with least boundary length  is a compact subsurface with constant curvature boundary. Equivalently, it is obtained from a convex compact subsurface $S_n^\star$ bounded by closed geodesics, in the following two ways. For some fixed $s\ge 0$,  \begin{enumerate}
    \item 
    either add  the $s$-neighborhood of each boundary component,
    \item 
    or remove the $s$-neighborhood of each boundary component.
\end{enumerate} By definition, the Cheeger constant $h(S_n^\star)$   of $S_n^\star$ is less than $l(\partial S_n)/Area(S_n)$. Thus,  $h(S_n^\star)$  also tends to $0$ as $n\to\infty$. In the case (2), we adjoin the removed $s$-neighborhood to  $S_n^\star$
and continue to denote the resulting surface as  $S_n^\star$. The boundary length decreases, so the Cheeger constant of $S_n^\star$  decreases and still tends to $0$ as $n\to \infty$. We project $S_n^\star$ to give the desired subsurface on  $\Sigma$ in the claim.  Moreover,  $h(S_n^\star)\to 0$ as $n\to \infty$ implies that the first three cases of Theorem \ref{IsopermeterMinimizer} are impossible.   

It remains to get an escaping sequence of the $S_n^\star$'s. If $S_n^\star$ is not escaping,  we may excise a large convex subsurface  from each $S_n^\star$  so that $S_n^\star$ is still convex and $d_\Sigma(o,S_n^\star)\to \infty$. As $Area(S_n^\star)\to\infty$ and the excised large  subsurface is of fixed area,  we may extract a subsequence so that $h(S_n^\star)\to 0$.  Hence the claims follows.
\end{proof}

We are ready to complete the proof along the lines of Theorem \ref{NonConicalFromGeometricLimit} or \ref{thm-amenable-graph}. 
By  formula (\ref{EPSC}), if  $H_n<G$ denotes the fundamental group of $S_n^\star$,   the critical exponent  $\omega_n$ of  the action $H_n\act \mathbb H^2$ tends to  $1$.

We fix the following:
\begin{itemize}
    \item a sequence of closed geodesics $\gamma_n$ in $S_n^\star$
    such that $\gamma_n$ is an escaping sequence on $\Sigma$,
    \item a bridge $\mathfrak b_n$, which is a shortest  path from $\gamma_n$ to $\gamma_{n+1}$. 
\end{itemize}  

We complete $S_n^\star$ to  a complete hyperbolic surface $\overline S_n$, by adjoining funnels along closed geodesics. In other words, $S_n^\star$ is the convex-core of the completion.
By Theorem \ref{ShortestArcsonMfd} applied to $\overline S_n$, there exists  a family of  shortest paths $A_n$ from $\gamma_n$ to $\gamma_n$ with length in $[L_n-\Delta_n,L_n+\Delta_n]$, such that $A_n$ has cardinality at least $\mathrm{e}^{\omega_n L_n}$. As the convex core $S_n^\star$  contains all closed geodesics in  $\overline S_n$ and thus every shortest arcs between them, we see that $A_n$ is entirely contained in $S_n^\star$ for any $L_n$. 

Note that $\Delta_n$ depends on $\gamma_n$, but $L_n$ could be arbitrary large. We choose $L_n\to\infty$ so that $\Delta_n/L_n\to 0$ and further $K_n\to \infty$ so that $B_n/K_nL_n\to 0$ and (\ref{ChoiceKnEq2}) holds.

We then follow the construction in \textsection \ref{sub-constr-mfld}:  loop $K_n$ times about the shortest arcs in $A_n$, and go to $\gamma_{n+1}$ via the bridge $\mathfrak b_n$, and loop $K_{n+1}$ times  about the arcs in $A_{n+1}$.  By Lemma \ref{LargeTreePtsVersion}, the union of all these   rays   lifts to a quasi-radial tree $T$ in $\mathbb H^2$. The Hausdorff dimension of the ends of $T$ is at least $\liminf_{n\to \infty} \omega_n$. As $\omega_n\to 1$, this proves that $\HD(\partial T)\ge 1$ and thus $\HD(\partial X)=1$.  The proof is complete.
\end{proof}

\section{Hausdorff dimension of non-conical points of Kleinian groups}\label{sec-nonconical-kl}
For the purposes of this section, $\Gamma$ will denote a finitely generated geometrically infinite Kleinian group. Let $M^h = \Hyp^3/\Gamma$. If $\Gamma$ has parabolics, then $M^h$ has cusps. Let $M$ denote $M^h$ minus a small neighborhood of the cusps. 
We assume that the neighborhoods of distinct cusps are chosen small enough to have disjoint closures. Note that $M=M^h$ if $\Gamma$ has no parabolics.

\begin{defn}\label{def-truncate}
	Let $M^h$ be either a complete hyperbolic manifold or a convex
    codimension zero submanifold of a complete hyperbolic manifold.
    Then $M^h$ minus a small neighborhood of its cusps (if any) , denoted by $M$, will be referred to as a \emph{truncated} 
	hyperbolic manifold. We shall refer to $M$ as the \emph{truncation} of $M^h$.
\end{defn}

The purpose of Definition~\ref{def-truncate} is to be able to deal with hyperbolic 3-manifolds with or without cusps on the same footing.
We now state the main Theorem of this section.

\begin{thm}\label{thm-kl-nonconical}
	Let $\Gamma$  denote a finitely generated geometrically infinite Kleinian group, $M^h = \Hyp^3/\Gamma$, and  $M$ denote the associated truncated 3-manifold. Then there exists an unbounded sequence of points $x_n \in M$, such that $(M,x_n)$ converges geometrically to  a geometrically infinite 
	truncated hyperbolic 3-manifold $N$. Further, if $\Gamma_\infty$ denote the Kleinian group associated to $N$, then the limit set {$\Lambda_{\Gamma_\infty}$} equals the whole Riemann sphere.
\end{thm}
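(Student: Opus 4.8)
The plan is to produce the sequence of basepoints $x_n$ by exploiting the geometry of a degenerate (geometrically infinite) end $E$ of $M^h$, and then to identify the geometric limit $N$ via the end invariant of $E$. By Thurston--Canary tameness and Canary's covering theorem, a finitely generated geometrically infinite $\Gamma$ has at least one simply degenerate end $E$, and the associated truncated manifold $M$ has a neighborhood of $E$ that is homeomorphic to $S\times[0,\infty)$ for a compact surface $S$, exiting every compact set. The ending lamination $\mu$ of $E$ gives, via Minsky's and Brock--Canary--Minsky's model manifold technology (as adapted in \cite{mahan-ibdd,mahan-split}), a bi-Lipschitz model for the neighborhood of $E$: a sequence of blocks glued along the curves of successive tight geodesics/resolution sweep-outs in the curve complex of $S$. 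First I would choose $x_n$ to be points on a sequence of level surfaces $S\times\{t_n\}$ with $t_n\to\infty$; these are unbounded in $M$ by construction.

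The next step is to extract a geometric limit. Since $M$ has bounded geometry away from Margulis tubes (after truncation the injectivity radius is bounded below along the thick part, and the thin part consists of Margulis tubes of controlled shape), the pointed sequence $(M,x_n)$ is precompact in the pointed Gromov--Hausdorff / geometric topology by Gromov's compactness theorem; pass to a convergent subsequence with geometric limit $(N,x_\infty)$. The crucial point is to arrange that $x_n$ is chosen \emph{within the degenerate end} and deep enough so that, locally around $x_n$ at larger and larger radius, the manifold looks like a bi-infinite stack of model blocks. Here one uses that the end invariant $\mu$ is minimal and filling (it is an ending lamination), so the combinatorics of the model is ``recurrent'' in the sense that arbitrarily long sub-patterns of the model near $x_n$ repeat the essential features; a subsequential limit of such sub-patterns is a doubly degenerate model, i.e. the model for a doubly degenerate surface group $N = S\times\mathbb{R}$ whose two ends are both simply degenerate. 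This $N$ is geometrically infinite. I would make this precise either by a direct combinatorial argument on the model (choosing $t_n$ so the curve-complex geodesic segment used near $x_n$ has length tending to $\infty$ on both sides of the marking at $x_n$, via recurrence of the subsurface projections of $\mu$), or by invoking the known fact that geometric limits of such ends along points exiting the end are doubly degenerate surface groups (this is essentially in Thurston's original discussion of geometric limits of degenerate ends, see also \cite{thurstonnotes,bonahon-bouts,canary,minsky-elc1,minsky-elc2}).

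Finally, once $N$ is known to be a doubly degenerate hyperbolic surface group $\Hyp^3/\Gamma_\infty$ with $S$ a closed (or punctured, after truncation) surface, the fact that $\Lambda_{\Gamma_\infty}=S^2$ is classical: a doubly degenerate surface Kleinian group has empty domain of discontinuity, equivalently $\Omega(\Gamma_\infty)=\emptyset$, so $\Lambda_{\Gamma_\infty}$ is all of $\widehat{\mathbb C}$. This is Thurston--Bonahon: a surface group that is degenerate at both ends has no invariant component of the domain of discontinuity, and being quasi-Fuchsian is excluded by geometric infiniteness, hence $\Omega=\emptyset$; alternatively one cites that $N$ itself is geometrically infinite of infinite volume with $\pi_1(N)$ a surface group, and the Ahlfors finiteness / Bonahon tameness picture forces $\Lambda = S^2$ exactly when the group is doubly degenerate. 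In the cuspidal case one runs the same argument for the truncated manifold, tracking parabolic loci through the geometric limit.

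The main obstacle I expect is the middle step: showing that the geometric limit of $(M,x_n)$ along a sequence exiting a \emph{simply} degenerate end is actually \emph{doubly} degenerate, rather than (say) simply degenerate again or quasi-Fuchsian. This requires controlling the model combinatorics on \emph{both} sides of $x_n$ simultaneously — one must choose $x_n$ (equivalently the resolution parameter $t_n$) so that the subsurface projection distances $d_{S}(\mu, \text{marking at }x_n)$ and the ``backward'' distances both diverge, so that in the limit no compact core survives with an incompressible boundary mapping to a quasi-Fuchsian end. Making this choice and verifying it with the Lipschitz model estimates of \cite{minsky-elc1,minsky-elc2,mahan-ibdd,mahan-split} is where the real work lies; everything before and after is either compactness (Gromov) or classical Kleinian-group theory (Thurston--Bonahon--Canary).
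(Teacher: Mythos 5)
Your outline is right in spirit but the middle step---identifying the geometric limit---contains a genuine gap, and it is precisely the step you flag as the ``real work.'' You assert that the limit of $(M,x_n)$ along basepoints exiting a simply degenerate end is a doubly degenerate surface group homeomorphic to $S\times\R$, with $S$ the base surface of the end, and then you conclude $\Lambda_{\Gamma_\infty}=S^2$ from the classical theory of doubly degenerate surface groups. This identification is only correct when the end has bounded geometry (the paper isolates this as Proposition~\ref{prop-bddgeoseq}). In the unbounded geometry case, Margulis tubes near $x_n$ can have diameters tending to infinity; in the geometric limit these become rank-two cusps, so the limit typically has \emph{infinitely generated} fundamental group and is not a surface group at all. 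Moreover, ``recurrence'' of the ending lamination does not make the full model on $S$ recur near $x_n$: what the paper actually exploits is a dichotomy on the hierarchy $\HH(\tau,\LL)$. There is a \emph{minimal} complexity $\zeta_0\ge 4$ such that tight geodesics $g_W$ supported on subsurfaces $W$ with $\zeta(W)=\zeta_0$ have unbounded lengths while all strictly lower-complexity geodesics are uniformly bounded. One then places $x_n$ in the thick part of the middle block of the i-bounded sub-model $E_m(W_n)$ over a fixed topological type $\Sigma$ of complexity $\zeta_0$, and the limit $N$ is a \emph{drilled product} of the proper essential subsurface $\Sigma$ with $\R$, admitting a generalized i-bounded geometry model (Theorem~\ref{thm-maintechnicalgeolt}). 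Your proposed choice of $x_n$ on level surfaces $S\times\{t_n\}$, controlled only by the bottom geodesic in $\CC(S)$, does not rule out that the local geometry near $x_n$ is dominated by a single enormous Margulis tube or by a long quasi-Fuchsian-like piece on a subsurface, so the limit's structure is not pinned down.

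Consequently your final step also needs repair: since the actual limit $N^h$ is not a doubly degenerate surface group, you cannot invoke Thurston--Bonahon for surface groups to get $\Omega(\Gamma_\infty)=\emptyset$. The paper instead observes that $N$, being a drilled product of $\Sigma$ with $\R$, contains sequences of closed geodesics exiting in both the $+\infty$ and $-\infty$ directions, hence $N^h$ equals its own convex core, which forces $\Lambda_{\Gamma_\infty}=S^2$ even though $\pi_1(N)$ may be infinitely generated. Your compactness step (extracting a subsequential geometric limit from basepoints in the thick part) and your use of tameness are fine; the missing ideas are the minimal-complexity dichotomy that selects the subsurface $\Sigma$ and the convex-core argument for the limit set.
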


We refer the reader to \cite[Ch. 8,9]{thurstonnotes} for the original source on geometric limits and to \cite[Section 3]{mms} for an exposition suited to the needs of the present paper.
Combining Theorem~\ref{thm-kl-nonconical} with Theorem~\ref{NonConicalFromGeometricLimit}, we immediately have the following.
\begin{cor}\label{cor--kl-nonconical}
	Let $\Gamma$  denote a finitely generated geometrically infinite Kleinian group, and $M^h = \Hyp^3/\Gamma$. Then the Hausdorff dimension of non-conical points for {$\Gamma$} equals 2. 
\end{cor}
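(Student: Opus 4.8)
The plan is to produce a geometric limit of $(M, x_n)$ along a suitable unbounded sequence $x_n$, where the limit manifold $N$ is geometrically infinite with limit set the entire sphere, and then quote Theorem~\ref{NonConicalFromGeometricLimit}. The statement of Corollary~\ref{cor--kl-nonconical} itself is immediate once Theorem~\ref{thm-kl-nonconical} is in hand: if $N = \Hyp^3/\Gamma_\infty$ has $\Lambda_{\Gamma_\infty} = S^2$, then $\Gamma_\infty$ is non-elementary and its critical exponent $\omega_N$ equals $\HD(S^2) = 2$ (for a geometrically infinite group in dimension $3$ the critical exponent is $2$ by Bishop--Jones \cite{BJ97}, and in any case $\Lambda_{\Gamma_\infty}=S^2$ forces $\HD(\Lambda_{\Gamma_\infty})=2$, hence $\omega_N \ge 2$, and trivially $\omega_N\le 2$). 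Theorem~\ref{NonConicalFromGeometricLimit}, applied with $X = \Hyp^3$ (so $\epsilon = 1$ in the visual/Bourdon metric), then gives $\HD(\Lambda^{nc}\Gamma) \ge \omega_N/\epsilon = 2$. The reverse inequality $\HD(\Lambda^{nc}\Gamma)\le \HD(S^2) = 2$ is trivial since $\Lambda^{nc}\Gamma \subseteq S^2$. Hence $\HD(\Lambda^{nc}\Gamma) = 2$, which is the content of Theorem~\ref{thm-kl-nonconical-intro} as well.

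So the substantive work is entirely in Theorem~\ref{thm-kl-nonconical}. The approach is to invoke the model manifold technology of Minsky \cite{minsky-elc1} and Brock--Canary--Minsky \cite{minsky-elc2}, as adapted by the first author in \cite{mahan-ibdd, mahan-split}. First, since $\Gamma$ is finitely generated and geometrically infinite, by tameness \cite{gab-cal, agol-tameness} $M^h$ has a geometrically infinite end $E$; the model manifold gives a bi-Lipschitz (away from Margulis tubes, with controlled behavior in them) model for a neighborhood of $E$ built from a resolution of an infinite sequence of markings / a hierarchy path in the curve complex. The key point is to find, within this end, an unbounded sequence of basepoints $x_n \in M$ at which the \emph{local geometry} of $M$ converges. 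This is where one needs the adapted i-bounded or split geometry framework of \cite{mahan-ibdd, mahan-split}: one extracts a sequence of ``blocks'' of the model manifold whose geometry stabilizes (up to passing to a subsequence, using compactness of the space of pointed manifolds with a lower injectivity radius bound on the truncation $M$ — recall $M$ omits the cusp neighborhoods so injectivity radius is bounded below). Taking a geometric limit $(M, x_n) \to (N, x_\infty)$, the limit $N$ is again a truncated hyperbolic $3$-manifold, and it inherits a geometrically infinite end from the blocks accumulating in the limit.

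The final — and I expect most delicate — step is to show $\Lambda_{\Gamma_\infty} = S^2$. This should follow from the fact that the geometric limit $N$ has \emph{no} embedded half-space / funnel: every point of $N$ is within bounded distance of a closed geodesic coming from one of the short curves in the model (the curves appearing infinitely often, or the ``tight geodesic'' in the ending lamination data), so the convex core of $N$ is all of $N$, and moreover $N$ is doubly degenerate or has degenerate ends filling the sphere. Concretely, one argues that the domain of discontinuity $\Omega(\Gamma_\infty)$ is empty: if it were nonempty, $N$ would contain a funnel-like region escaping every compact set, contradicting that the model blocks (hence closed geodesics) recur in every neighborhood of infinity in $N$. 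The main obstacle is precisely controlling this recurrence of geometry and the non-existence of an escaping funnel in the geometric limit — i.e., ensuring the sequence $x_n$ is chosen so that the limit is a genuinely degenerate (limit set $=S^2$) manifold rather than, say, a geometrically finite limit or a limit with a funnel. This is exactly where the careful block-by-block analysis of the model manifold, together with the adapted split-geometry decomposition of \cite{mahan-split}, does the heavy lifting: it guarantees that one can always find $x_n$ deep in the end at which the bi-Lipschitz model exhibits closed geodesics in every direction, forcing $\Lambda_{\Gamma_\infty}=S^2$ in the limit.
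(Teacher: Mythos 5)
Your top-level reduction is exactly the paper's: Corollary~\ref{cor--kl-nonconical} follows by feeding the geometric limit $N$ of Theorem~\ref{thm-kl-nonconical} into Theorem~\ref{NonConicalFromGeometricLimit} (with $\epsilon=1$), and the upper bound is trivial. (One caveat even here: $\Lambda_{\Gamma_\infty}=S^2$ gives $\HD(\Lambda_{\Gamma_\infty})=2$ but not directly $\omega_N\ge 2$, since a priori $\omega_N=\HD(\Lambda^c\Gamma_\infty)$ could be smaller; you should instead note that $\Gamma_\infty$ contains a finitely generated geometrically infinite subgroup, to which Bishop--Jones applies.) The substantive issues are in your sketch of Theorem~\ref{thm-kl-nonconical}, where I see two genuine gaps. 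First, your compactness argument rests on the claim that truncating the cusps bounds the injectivity radius of $M$ from below. That is false: removing cusp neighborhoods kills the thin parts coming from parabolics, but Margulis tubes around arbitrarily short closed geodesics persist deep in the end (this is precisely the ``unbounded geometry'' case the paper must handle). The paper's fix is to allow such tubes to converge to rank-two cusps, which is why the limit is only a \emph{drilled} product with \emph{generalized} i-bounded geometry (Definitions~\ref{def-dthinblock}, \ref{def-genlzdibddgeo}, Proposition~\ref{prop-geoltofibddgibdd}); your sketch has no mechanism for this.

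Second, and more importantly, you never specify how the basepoints $x_n$ are chosen so that the limit has limit set all of $S^2$ rather than being simply degenerate (or worse). Saying that closed geodesics ``recur in every neighborhood of infinity'' is exactly what needs to be arranged, and an arbitrary unbounded sequence $x_n$ in the end will not do it: the limit of $(E,x_n)$ for $E\cong S\times[0,\infty)$ could again be one-sided. The paper's mechanism is concrete: let $\zeta_0$ be the minimal complexity such that the hierarchy $\HH(\tau,\LL)$ contains tight geodesics $g_{W_n}$ supported on subsurfaces $W_n$ with $\zeta(W_n)=\zeta_0$ and $\ell(g_{W_n})\to\infty$, while all tight geodesics on subsurfaces of smaller complexity are uniformly bounded. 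After passing to a subsequence the $W_n$ have a fixed topological type $\Sigma$, the sub-models $E_m(W_n)\cong \Sigma\times[0,\ell(g_{W_n})]$ have i-bounded geometry by Theorem~\ref{thm-effectivesub}(2), and one places $x_n$ at the \emph{midpoint} block. Only then does the limit become a drilled product of $\Sigma$ with all of $\R$, with closed geodesics exiting in both directions, forcing $N^h$ to equal its convex core and hence $\Lambda_{\Gamma_\infty}=S^2$. Without identifying this minimal-complexity/midpoint device, the most delicate step of your argument remains unproved.
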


The rest of this section is devoted to proving Theorem~\ref{thm-kl-nonconical}. The geometric limit $N$ that is the output of Theorem~\ref{NonConicalFromGeometricLimit} has more structure, and is a  variant of a doubly degenerate hyperbolic 3-manifold. The proof will involve a detour through models of ends of geometrically infinite manifolds, notably
the work of Minsky from \cite{minsky-elc1}.  
For purposes of exposition, we split the proof  into two cases.
\begin{enumerate}
	\item The bounded geometry case, where the truncated manifold $M$ has injectivity radius bounded below by some $\ep > 0$,
	This is dealt with in Section~\ref{sec-bddgeo} below.
	\item The complementary unbounded geometry case, where no such lower bound exists. 
\end{enumerate}
The bounded geometry case is considerably easier (see Corollary~\ref{cor--kl-nonconical-bddgeo} below) and demonstrates some features of the general case.

Let $M$ be the truncation of the convex core of a  hyperbolic 3-manifold in the sense of Definition~\ref{def-truncate}.  The resolution  of the tameness conjecture
\cite{agol-tameness, gab-cal} shows that  any end $E$ of $M$ has a neighborhood 
homeomorphic to $S \times [0, \infty )$, where $S$ is a compact surface, possibly with boundary. In other words ends of hyperbolic 3-manifolds are {\bf topologically tame}. Further, Thurston-Bonahon \cite{thurstonnotes,bonahon-bouts} and Canary \cite{canary} establish that topologically tame  ends are geometrically tame, i.e.\ there exists a sequence of pleated surfaces exiting them. However, the geometry of such ends can be quite complicated. We shall now proceed to 
describe model geometries of ends of hyperbolic 3-manifolds following \cite{minsky-bddgeom, minsky-elc1, minsky-elc2, mahan-bddgeo, mahan-ibdd, mahan-amalgeo, mahan-split}. For now, we start with the following general definition. We do not specify for now what a \emph{prescribed geometry} is. 
For now, it will suffice for the reader to assume that any prescribed geometry specifies a finite or countable collection of metrics on $S \times [0,1]$ for $S$ a  fixed truncated hyperbolic surface.

\begin{defn} \label{def-glue-end2end}
	We say that a geometrically infinite end $E$ of a truncated hyperbolic 3-manifold $M$ is built up of blocks of some prescribed geometries \emph{glued end to end}, if 
	\begin{enumerate}
		\item $E $ is homeomorphic to $S \times [0, \infty)$, and 
		\item There exists $L \geq 1$ such that $S \times [i, i+1]$ (equipped with the metric induced from $E$) is  $L-$bi-Lipschitz to a block of the prescribed
		geometry.
	\end{enumerate} 
	We shall refer to	$S \times [i, i+1]$ as the { $(i+1)$th block} of  $E$.	
\end{defn}

\subsection{The bounded geometry case}\label{sec-bddgeo}

\begin{defn}\label{def-bddgeo}\cite{minsky-bddgeom, minsky-jams}
	An end $E$ of a truncated hyperbolic $M$ has \textit{bounded geometry}
	if there exists $\ep >0$ such that the injectivity radius of $M$ at $x \in M$ is bounded below by $\ep$ for all $ x \in M$.
\end{defn}

\begin{defn}\label{def-bddgeoblock} Let $S$ be a fixed truncated hyperbolic surface. Equip $B_0 = S \times [0,1]$ with the product metric.
	If $B$ is $L-$bi-Lipschitz homeomorphic to $B_0$, for some $L\geq 1$, it is called an $L-$thick  block.   
	
	If a geometrically infinite end $E$ is built up of $L-$thick  blocks 
	glued end to end (in the sense of Definition~\ref{def-glue-end2end}) 
	for some $L \geq 1$ then we say that $E$ admits an $L-$thick bounded geometry model.
	If a geometrically infinite end $E$ admits an $L-$thick bounded geometry model for some $L\geq 1$, we say that $E$ admits a \textit{bounded geometry model}.
\end{defn}

In the following Definition, we do not assume that $E$ admits a bounded geometry model.
This notion will be used in Section~\ref{sec-splitt}.
\begin{defn}\label{def-bddgeoblocksub} Let $E$ be any geometrically infinite end.
	Let $\Sigma$ be an essential subsurface of $S$. if $\Sigma \times [0,n] \subset E$, 
	$n \in \natls$  equipped with the metric induced from $E$
	is built up of $L-$thick  blocks of the form $\Sigma \times [i,i+1]$
	glued end to end (in the sense of Definition~\ref{def-glue-end2end}), then we say that  $\Sigma \times [0,n]$ admits a  \textit{bounded geometry sub-model of length $n$}.
\end{defn}

The following statement is now a consequence of work of Minsky
\cite{minsky-top,minsky-jams} (see also \cite{mitra-trees, mahan-bddgeo}).
\begin{thm}\label{thm-minsky-bddgeo}
	Let $E$ be an end of a truncated hyperbolic $M$ such that $E$ has bounded geometry in the sense of Definition~\ref{def-bddgeo}. Then there exists $L \geq 1$ such that $E$ admits an $L-$thick bounded geometry model.
\end{thm}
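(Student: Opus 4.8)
The plan is to build the bounded geometry model for $E$ by slicing $E$ along a sequence of pleated surfaces exiting the end and checking that each resulting slab is uniformly bi-Lipschitz to the standard block $B_0 = S\times[0,1]$ of Definition~\ref{def-bddgeoblock}. First I would set up the pleated surfaces. By topological tameness \cite{agol-tameness,gab-cal} fix an identification $E \cong S\times[0,\infty)$, and by geometric tameness \cite{thurstonnotes,bonahon-bouts,canary} choose a sequence of essential simple closed curves (or filling measured laminations) on $S$ whose geodesic realizations in $M$ are homotopic into $E$ and exit every compact set, each realized by a pleated surface $f_n\colon S\to M$ homotopic to $S\times\{0\}\hookrightarrow E$ and mapping the cusps of $S$ onto the cusps of $M^h$. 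Bounded geometry then pins down the geometry of these surfaces: by Gauss--Bonnet the intrinsic area of $f_n(S)$ is $2\pi\lvert\chi(S)\rvert$, and $\pi_1$-injectivity of $f_n$ together with the lower bound on the injectivity radius of $M$ (no short closed geodesics, and cusps only at the truncated cusps) forces a uniform lower bound $\ep_0 = \ep_0(\ep,\chi(S))>0$ on the injectivity radius of the intrinsic hyperbolic metric of $f_n(S)$ away from its cusps: a short essential geodesic on $f_n(S)$ maps, under the $1$-Lipschitz pleating map, to a short homotopically essential loop in $M$, which must be homotopic either to a short closed geodesic (excluded) or into a cusp (excluded away from the cusps of $f_n(S)$). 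Hence each $f_n$ lies in the $\ep_0$-thick part of moduli space and has uniformly bounded intrinsic, hence extrinsic, diameter.

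Next I would extract the block decomposition. Bounded geometry makes the pleated surfaces realizing short curves coarsely dense in $E$ with uniformly bounded gaps: any $x\in E$ lies within a uniform distance of some $f_n(S)$, because the level surface through $x$ carries an essential simple closed curve of length $\le \ell_0(\ep)$ through $x$, whose geodesic representative has bounded length and is realized by a pleated surface passing within bounded distance of $x$. Combining this with the diameter bound from the first paragraph, I would pass to a subsequence of pleated surfaces $S_0, S_1, S_2,\dots$ that are properly separating in $E$ (using that in bounded geometry they can be taken embedded up to a bounded-distance homotopy) and exhaust $E$, with $d_M(S_i,S_{i+1})$ bounded above by a constant $D=D(\ep)$. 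Writing $W_i\subset E$ for the compact region cobounded by $S_i$ and $S_{i+1}$, it is homeomorphic to $S\times[0,1]$, has diameter $\le D'(\ep)$, and has injectivity radius bounded below by $\ep$.

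It then remains to produce, for each $i$, an $L$-bi-Lipschitz homeomorphism $W_i\to B_0$ with $L=L(\ep)$ independent of $i$; stacking these end to end exhibits $E$ as built from $L$-thick blocks in the sense of Definition~\ref{def-glue-end2end} and finishes the proof. I would obtain the map by interpolating between the two boundary pleated surfaces: a Thurston-type sweepout (equivalently, a straight-line homotopy lifted to $\til M$) between $f_i$ and $f_{i+1}$ gives a homeomorphism $(S,\sigma_i)\times[0,1]\to W_i$ whose bi-Lipschitz constant is controlled by the intrinsic geometries of the two bounding pleated surfaces and their bounded separation; since both are uniformly thick and joined in $E$ by a path of bounded length (so differ by a bounded amount of bending), the constant is uniform. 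Finally, by Mumford compactness the $\ep_0$-thick surface $(S,\sigma_i)$ is $L_0(\ep_0,\chi(S))$-bi-Lipschitz to the fixed model surface $S$, which upgrades the interpolation to an $L$-bi-Lipschitz homeomorphism onto $B_0$.

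The main obstacle is the last step: controlling the bi-Lipschitz geometry of the interpolation between two consecutive pleated surfaces. This is exactly where the bounded geometry hypothesis is indispensable — without it the cobounded region can be arbitrarily distorted — and it is the technical core of Minsky's bounded-geometry work \cite{minsky-bddgeom,minsky-top,minsky-jams} (compare also the model-manifold perspective of \cite{minsky-elc1}); the argument sketched here is a reorganization of that machinery. A secondary point requiring care is the coarse-density-with-bounded-gaps claim and the choice of the $S_i$ as genuinely separating surfaces, which again rests on the bounded-geometry hypothesis through uniform injectivity of pleated surfaces.
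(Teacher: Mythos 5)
The paper does not prove this statement at all: it is quoted as "a consequence of work of Minsky \cite{minsky-top,minsky-jams} (see also \cite{mitra-trees,mahan-bddgeo})," so there is no internal proof to compare against. Your sketch is the standard argument underlying that citation — pleated surfaces exiting the end, uniform thickness of their intrinsic metrics from the injectivity radius bound, coarse density with bounded gaps, and uniformly bi-Lipschitz interpolation between consecutive surfaces — and you correctly identify the interpolation step as the technical core that must in any case be imported from Minsky, so your write-up is at essentially the same level of completeness as the paper's treatment. One loose spot worth flagging: the coarse-density claim should not be derived from "the level surface through $x$ carries a short essential curve," since the topological product levels carry no a priori geometric control; the correct route is Thurston's and Canary's filling theorem for pleated surfaces in a geometrically tame end, after which your bounded-gap and separation claims go through.
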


We then observe the following.  
\begin{prop}\label{prop-bddgeoseq}
	Let $E$ be a simply degenerate end of a truncated hyperbolic $M$ such that 
	$E\big(\cong {S \times [0,\infty)}\big)$   has bounded geometry. Let $x_n \in E$ such that $x_n \to \infty$. Then, after passing to a subsequence if necessary, $(M,x_n)$ geometrically converges to $N$ where $N$ is 
	a (truncated) doubly degenerate hyperbolic manifold of bounded geometry homeomorphic to
	$S \times (-\infty,\infty)$.
\end{prop}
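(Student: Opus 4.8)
The plan is to establish geometric convergence by producing, for each radius $R>0$, compact submodels around $x_n$ that stabilize up to $(1+\ep_n)$-bi-Lipschitz maps with $\ep_n \to 0$, and to identify the limit as the desired doubly degenerate manifold. First I would fix the $L$-thick bounded geometry model for $E$ guaranteed by Theorem~\ref{thm-minsky-bddgeo}: $E$ is built from $L$-thick blocks $S\times[i,i+1]$ glued end to end. Each $x_n$ lies (up to bounded error) in some block $S\times[i_n,i_n+1]$ with $i_n\to\infty$. The key point is that there are only finitely many isometry types of $L$-bi-Lipschitz metrics on $S\times[0,1]$ up to a fixed small error --- more precisely, the space of $L$-thick block metrics is precompact in the bi-Lipschitz topology (this uses that $S$ is a fixed compact truncated surface and standard compactness for metrics with two-sided curvature and injectivity radius bounds, i.e.\ a Cheeger--Gromov type argument). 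Hence, for each fixed $k$, the string of $2k+1$ consecutive blocks $S\times[i_n-k,i_n+k]$ subconverges in the bi-Lipschitz topology as $n\to\infty$. A diagonal argument over $k$ then yields a subsequence of $\{x_n\}$ along which $(M,x_n)$ converges geometrically (in the sense of Definition~\ref{def-geolt}) to a pointed manifold $(N,x_\infty)$, where $N$ is an increasing union of block strings, hence homeomorphic to $S\times(-\infty,\infty)$ and built of $L$-thick blocks glued end to end.

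Next I would check that $N$ is a complete hyperbolic manifold. Since the bi-Lipschitz convergence is of hyperbolic metrics with constants tending to $1$, the limit metric is hyperbolic; completeness follows because the $R$-balls around $x_\infty$ are $(1+\ep_n)$-bi-Lipschitz to the $R$-balls around $x_n$ in $M$, which are complete and uniformly thick, so no finite-length geodesic can escape. Thus $N$ is a truncated complete hyperbolic $3$-manifold homeomorphic to $S\times\reals$, manifestly of bounded geometry (injectivity radius bounded below, inherited from the uniform lower bound for $E$).

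Finally I would argue that $N$ is doubly degenerate, i.e.\ both of its ends are geometrically infinite (simply degenerate). For this I would use that $N$ is tiled by $L$-thick blocks in both the positive and negative directions, and that the pleated surfaces exiting the end $E$ of $M$ --- whose existence comes from geometric tameness (Thurston--Bonahon--Canary) --- pull back, via the bi-Lipschitz maps of Definition~\ref{def-glue-end2end}, to a sequence of uniformly bounded-geometry pleated-like surfaces exiting each end of $N$; these have uniformly bounded diameter but exit every compact set, so neither end of $N$ is geometrically finite (a geometrically finite end of bounded geometry would be a flared funnel, incompatible with having such an exiting sequence of bounded-diameter surfaces). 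The hard part will be making the compactness/precompactness of the block-metric space precise enough to run the diagonal argument cleanly --- i.e.\ verifying that finitely many blocks glued end to end genuinely subconverge $(1+\ep_n)$-bi-Lipschitzly, which is where the Cheeger--Gromov compactness theorem for pointed manifolds of bounded geometry does the real work; alternatively one can invoke the standard fact (see \cite[Ch.~8,9]{thurstonnotes} or \cite[Section 3]{mms}) that any sequence of pointed complete hyperbolic $3$-manifolds with injectivity radius uniformly bounded below subconverges geometrically, and then it remains only to identify the topological and degeneracy type of the limit, which is forced by the block structure.
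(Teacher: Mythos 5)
Your proposal is correct and follows essentially the same route as the paper: both pass the $L$-thick block model of Theorem~\ref{thm-minsky-bddgeo} to a geometric limit of block strings $S\times[-k,k]$ centred at $x_n$, identify the limit as $S\times(-\infty,\infty)$ admitting an $L$-thick bounded geometry model, and conclude that it is a doubly degenerate bounded geometry manifold. The only difference is that the paper outsources the compactness step and the bounded-geometry/degeneracy conclusions to citations (\cite{mms} and Minsky's work \cite{minsky-jams,minsky-bddgeom}), whereas you supply the Cheeger--Gromov/geometric-limit compactness and the exiting bounded-diameter surfaces argument by hand.
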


\begin{proof}
	The proof is essentially the same as that in \cite[Remark 3.2]{mms}. Let $N$ be a (subsequential) geometric limit of $(M,x_n)$. Since $x_n \to \infty$, we can assume, by passing to a further subsequence if necessary, that $N$ is also a geometric limit of compact hyperbolic manifolds of the form $S \times [-n,n]$, where $x_n \in S \times \{0\}$. Further,  by Theorem~\ref{thm-minsky-bddgeo}, there exists $L \geq 1$ such that each $S \times [i, i+1]\subset S \times [-n,n]$ is an $L-$thick block. Passing to the limit, it follows that 
	$S \times [-n,n]$, and hence $(M,x_n)$ converges to a hyperbolic manifold $N$ homeomorphic to $S \times (-\infty,\infty)$ admitting an $L-$thick bounded geometry model. It follows again from work of Minsky \cite{minsky-jams,minsky-bddgeom} that  $N$ is of bounded geometry. Thus, 
	$N$ is 
	a (truncated) doubly degenerate hyperbolic manifold of bounded geometry homeomorphic to
	$S \times (-\infty,\infty)$.
\end{proof}

We thus have the following special case of Corollary~\ref{cor--kl-nonconical}.

\begin{cor}\label{cor--kl-nonconical-bddgeo}
	Let $\Gamma$  denote a finitely generated geometrically infinite Kleinian group, and $M^h = \Hyp^3/\Gamma$. Further, assume that one of the geometrically infinite ends of $M^h$ (after truncation if necessary) has bounded geometry.
	Then the Hausdorff dimension of non-conical points for $M^h$ equals 2. 
\end{cor}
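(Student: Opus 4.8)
The plan is to deduce this as a short assembly of two results proved earlier: the geometric-limit criterion of Theorem~\ref{NonConicalFromGeometricLimit} and the structure of bounded-geometry ends recorded in Proposition~\ref{prop-bddgeoseq}. Write $M$ for the truncation of $M^h$, and let $E \cong S \times [0,\infty)$ be a geometrically infinite end of $M$ with bounded geometry in the sense of Definition~\ref{def-bddgeo}; by tameness \cite{agol-tameness, gab-cal} and geometric tameness \cite{thurstonnotes, bonahon-bouts, canary} such an end is simply degenerate, and $S$ is a compact surface with $\chi(S) < 0$, so $\pi_1(S)$ is non-elementary. The first step is to choose an unbounded sequence of basepoints $x_n \in E$ with $x_n \to \infty$; since bounded geometry of the end means the injectivity radius of $M$ is bounded below by some $\epsilon_0 > 0$ everywhere, these $x_n$ lie in a fixed thick part, so the pointed manifolds $(M,x_n)$ are legitimate inputs for a geometric-limit argument.

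Next I would invoke Proposition~\ref{prop-bddgeoseq}: after passing to a subsequence, $(M,x_n)$ converges geometrically to a pointed truncated hyperbolic $3$-manifold $(N,x_\infty)$ which is doubly degenerate, of bounded geometry, and homeomorphic to $S \times (-\infty,\infty)$. Let $\Gamma_\infty \cong \pi_1(N) \cong \pi_1(S)$ denote the associated Kleinian group; it is finitely generated, non-elementary, and geometrically infinite. The key observation is that its critical exponent $\omega_N$ equals $2$: being doubly degenerate, $\Gamma_\infty$ has empty domain of discontinuity, so $\Lambda\Gamma_\infty = S^2$, and Bishop--Jones' theorem \cite{BJ97} gives $\omega_N = 2$ for a finitely generated geometrically infinite Kleinian group (equivalently, $2 = \HD(S^2) = \HD(\Lambda\Gamma_\infty) \le \omega_N \le 2$).

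Then I would apply Theorem~\ref{NonConicalFromGeometricLimit} to the convergent sequence $(M,x_n) \to (N,x_\infty)$, using that $\pi_1(N)$ is non-elementary. For $X = \Hyp^3$ the boundary $\partial X = S^2$ carries the Bourdon metric, whose visual parameter is $\epsilon = 1$, so the theorem yields
$$
\HD(\Lambda^{\mathrm{nc}}\Gamma) \ \ge\ \frac{\omega_N}{\epsilon} \ =\ 2 .
$$
Since $\Lambda^{\mathrm{nc}}\Gamma \subseteq \partial\Hyp^3 = S^2$ trivially has Hausdorff dimension at most $2$, we obtain $\HD(\Lambda^{\mathrm{nc}}\Gamma) = 2$, which is the assertion of the corollary.

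The substantive content is entirely packaged into Proposition~\ref{prop-bddgeoseq} (which rests on Minsky's bounded-geometry model, Theorem~\ref{thm-minsky-bddgeo}) and Theorem~\ref{NonConicalFromGeometricLimit}, so I do not expect a real obstacle here. The only points that need genuine care are verifying that an escaping sequence of basepoints inside the bounded-geometry end produces a \emph{doubly} degenerate geometric limit rather than a singly degenerate or geometrically finite one — which is exactly what Proposition~\ref{prop-bddgeoseq} supplies — and that the limit group has full limit set, which is what forces $\omega_N = 2$ and hence the sharp lower bound.
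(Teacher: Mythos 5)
Your proposal is correct and follows essentially the same route as the paper: take an escaping sequence of basepoints in the bounded-geometry end, invoke Proposition~\ref{prop-bddgeoseq} to obtain a doubly degenerate bounded-geometry geometric limit $N$, and then apply Theorem~\ref{NonConicalFromGeometricLimit}. The only difference is that you spell out the step the paper leaves implicit — that the limit group has full limit set $S^2$ and hence critical exponent $2$ by Bishop--Jones, which is what makes the lower bound sharp — and this is a correct and welcome addition of detail.
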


\begin{proof}
	Let $E$ be the truncation of the geometrically infinite end of bounded geometry.
	Let $x_n \in E$ such that $x_n \to \infty$.  Then by Proposition~\ref{prop-bddgeoseq}, after passing to a subsequence if necessary, $(M,x_n)$ geometrically converges to $N$ where $N$ is 
	a (truncated) doubly degenerate hyperbolic manifold of bounded geometry homeomorphic to
	$S \times (-\infty,\infty)$. Theorem~\ref{NonConicalFromGeometricLimit} now applies to furnish the conclusion.
\end{proof}

\subsection{i-bounded Geometry} \label{sec-ibdd} The next model geometry is satisfied by degenerate Kleinian punctured-torus groups as shown by Minsky in \cite{minsky-torus}.

\begin{defn}\cite{mahan-ibdd}\label{def-ibdd}
	An end $E$ of a hyperbolic $M$ has \emph{i-bounded geometry} 
	if the boundary torus of every Margulis tube in $E$ has bounded diameter. 
\end{defn}

We will need to generalize Definition~\ref{def-ibdd} to allow rank 2 cusps in place of Margulis tubes.
Towards this, we need an i-bounded geometry analog of Definition~\ref{def-bddgeoblock}.
Fix a truncated hyperbolic surface $S$. Let 
$\CC=\{\sigma_i\}$ be a finite collection of disjoint simple closed geodesics on
$S$. Let
$N_\epsilon ( \sigma_i )$  denote the $\epsilon-$neighborhood of
$\sigma_i$, ($\sigma_i \in \mathcal{C}$), where we choose $\epsilon$ 
small enough so
that the neighborhoods are disjoint.

\begin{defn}\label{def-dthinblock}
	Let $S, \CC, \sigma_i, \ep$ be as above.
	Let $I = [0,3]$. Equip $S \times I$ with the product metric. Let
	$B^c = (S \times I - \cup_i N_{\epsilon} ( \sigma_i ) \times
	[1,2]$.  Equip $B^c$ with the induced path-metric.
	Then $B^c$ is referred to as a \emph{drilled thin block.}
	
	Let $\Sigma$ be an essential subsurface of $S$. Repeat the above construction with $S$ replaced by $\Sigma$. Then the output of this construction will be referred to 
	as a \emph{drilled thin block} associated to $\Sigma \subset S$.
\end{defn}

We now proceed to Dehn fill a drilled thin block.
For each resultant torus component $\T_i$ of
the boundary of $B^c$, perform Dehn filling on some $(1,n_i)$ curve by attaching a 
solid torus  $\Theta_i$  whose meridian is the $(1,n_i)$ curve. Let $B$ denote the
result of Dehn filling. Note that $B$
is homeomorphic to $S \times I $.
Note that the $n_i$'s are allowed to be quite arbitrary.
We refer to  $n_i$ as a \emph{twist coefficient}. Equip $\Theta_i$ with a hyperbolic 
metric such that it is foliated by totally geodesic  hyperbolic disks whose centers lie on a core geodesic in $\Theta_i$. 

\begin{defn}\label{def-thinblock}
	The resulting copy of $S \times I$ obtained, equipped with the metric just
	described, is called a \emph{filled thin block}, or simply a \emph{thin  block}. 
	
	The hyperbolic solid torus $\Theta_i$ is referred to as a \emph{Margulis tube} of the 
	thin  block.
	
	Let $\Sigma$ be an essential subsurface of $S$. Repeat the above construction with $S$ replaced by $\Sigma$, i.e.\ perform the Dehn filling on a drilled thin block
	associated to $\Sigma \subset S$ (in the sense of Definition~\ref{def-dthinblock}). Then the output of this construction will be referred to 
	as a \emph{filled thin block} or simply a \emph{thin  block} associated to $\Sigma \subset S$.
\end{defn}

\begin{defn}\label{def-ibddgeomodel}
	An end $E$ of a hyperbolic 3-manifold $M$  is said to admit an \emph{i-bounded geometry model} if 	it is bi-Lipschitz homeomorphic to a model manifold $E_m$ 
	consisting of gluing  thick and thin blocks end-to-end.
\end{defn}

The following statement is a consequence of the model in
\cite[Section 9]{minsky-elc1} and the bi-Lipschitz model theorem of \cite{minsky-elc2}. The  complex structure for boundary tori of Margulis tubes is encoded in terms of certain
meridian coefficients that are of the form $a+bi$, where $a, b$ are integers. If $a, b$ are both uniformly bounded for all blocks, we get back
the models of bounded geometry. If there is  a uniform bound on only the imaginary part of
these coefficients, we obtain the models of i-bounded geometry. See Figure~\ref{fig-ibdd} below.

\begin{prop}\label{prop-ibdd-model}
	An end $E$ of a hyperbolic 3-manifold $M$ has
	{i-bounded geometry} in the sense of Definition~\ref{def-ibdd} if and only if
	it admits an i-bounded geometry model in the sense of Definition~\ref{def-ibddgeomodel}.
\end{prop}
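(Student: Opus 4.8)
The plan is to deduce Proposition~\ref{prop-ibdd-model} from two ingredients: the combinatorial model of a geometrically infinite end constructed in \cite[Section 9]{minsky-elc1}, and the bi-Lipschitz model theorem of \cite{minsky-elc2}, which asserts that such an end $E$ is bi-Lipschitz homeomorphic to Minsky's model manifold $E_m$ with bi-Lipschitz constant depending only on the topological type of $E$. The model $E_m$ is assembled by gluing, end-to-end along a fixed truncated hyperbolic surface $S$, blocks indexed by the hierarchy of tight geodesics associated to the end invariant of $E$; each block is either an internal product-type block carrying a uniformly thick metric, or a block containing a Margulis tube whose geometry is recorded by a complex meridian coefficient $\omega = a + bi$, with $b$ governed by (a standard function of) the length of the core geodesic of the tube and $a$ recording the twisting. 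The crux is the translation of ``every Margulis tube in $E$ has boundary torus of bounded diameter'' into the statement that $\sup |\mathrm{Im}\,\omega| = \sup|b| < \infty$ over all tubes of $E$, with the real parts $a$ (equivalently the twist coefficients) left entirely unconstrained.

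First I would prove the implication ``i-bounded geometry $\Rightarrow$ i-bounded geometry model''. Assuming $E$ has i-bounded geometry, the explicit description of tube geometry in \cite[Section 9]{minsky-elc1} --- in which the flat structure on the boundary torus of a Margulis tube is computed from $\omega$ --- yields the uniform bound $\sup|b| < \infty$ just discussed. Inserting this into the block decomposition of $E_m$: every block is either an internal thick block, uniformly bi-Lipschitz to the product block $B_0 = S \times [0,1]$ of Definition~\ref{def-bddgeoblock}, or a tube block with bounded $|b|$, which is uniformly bi-Lipschitz to a filled thin block as in Definition~\ref{def-thinblock}, the twist coefficient $n_i$ being the nearest integer to the corresponding $a_i$ (recall these $n_i$ are allowed to be arbitrary in Definition~\ref{def-dthinblock}). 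Since the blocks are glued end-to-end --- respecting the splitting of $S$ into the annular subsurface carrying the short curve $\sigma_i$ of each thin block and its complement --- this exhibits $E_m$ as built of thick and thin blocks in the sense of Definition~\ref{def-ibddgeomodel}; composing with the bi-Lipschitz model map of \cite{minsky-elc2} gives the desired model for $E$ itself.

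The converse, ``i-bounded geometry model $\Rightarrow$ i-bounded geometry'', is the easier direction. If $E$ is bi-Lipschitz to a manifold built of thick and thin blocks, then every Margulis tube of $E$ is, up to bi-Lipschitz distortion by a factor depending only on the model constants, coarsely identified with a Margulis tube of the model. A Margulis tube of the model is supported either inside a thick block --- whose metric is uniformly comparable to a product metric on a fixed hyperbolic surface times an interval, so its diameter, and in particular that of its boundary torus, is uniformly bounded --- or it is one of the solid tori $\Theta_i$ glued into a thin block. For the latter, a direct computation with the metric of Definition~\ref{def-thinblock}, as carried out in \cite[Section 9]{minsky-elc1}, shows that the flat torus $\partial \Theta_i$ admits two independent closed geodesics whose lengths are bounded in terms of $S$ and the fixed constant $\epsilon$ alone --- in particular independently of the twist $n_i$ --- so $\mathrm{diam}(\partial\Theta_i)$ is uniformly bounded. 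Since bi-Lipschitz maps distort diameters of boundary tori by at most a bounded factor, every Margulis tube of $E$ has boundary torus of bounded diameter, i.e.\ $E$ has i-bounded geometry.

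I expect the main obstacle to be making precise the dictionary invoked in the first paragraph: that i-bounded geometry corresponds exactly to $\sup|b| < \infty$, and that a tube block with bounded $|b|$ and arbitrary twisting is uniformly bi-Lipschitz to a standard filled thin block. Establishing this requires carefully combining the tube-geometry estimates of \cite[Section 9]{minsky-elc1} with the Lipschitz and bi-Lipschitz parts of the model theorem of \cite{minsky-elc2} (together with the identification of the Margulis tubes of $E$ with those of $E_m$ up to bounded distortion); once this correspondence is in place, the remaining work --- bookkeeping the end-to-end gluing of blocks and tracking bi-Lipschitz constants --- is routine.
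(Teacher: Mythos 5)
Your proposal is correct and follows essentially the same route as the paper, which likewise deduces the statement from Minsky's Section 9 model, the bi-Lipschitz model theorem of \cite{minsky-elc2}, and the dictionary identifying i-bounded geometry with a uniform bound on the imaginary parts of the meridian coefficients while leaving the twist parts unconstrained. Your elaboration of both implications (in particular the lattice observation that a flat torus with bounded $\mathrm{Im}\,\omega$ has bounded diameter regardless of $\mathrm{Re}\,\omega$) fills in details the paper leaves implicit, but the argument is the same.
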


\begin{figure}[H]
	
	\includegraphics[height=5cm]{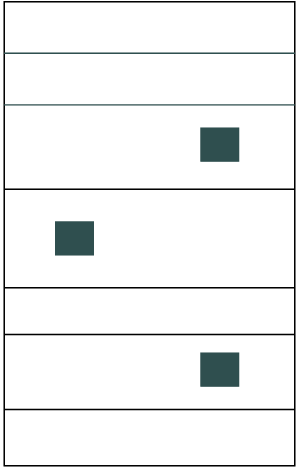}
	
	\smallskip

	\caption{Model of i-bounded geometry: black squares denote Margulis tubes \cite{mahan-ibdd}}
	\label{fig-ibdd}
\end{figure}

Recall that an end $E$ of a truncated hyperbolic manifold is homeomorphic to
$S \times [0, \infty)$, where 
 $S$ is a  topological surface, possibly with boundary, underlying a truncated hyperbolic surface.
Let $J$ denote either $(-\infty,\infty)$ or $[0,\infty)$. Let $J \cap \Z = J_\Z$ denote the integer points in $J$. 
Let $S_\Z := S \times (J_\Z + \frac{1}{2}) \subset S \times J$. Let $\CC$ be some collection of  simple closed curves contained in 
$S_\Z $, such that for all $n \in \Z$, the collection of curves in $\CC$ contained 
in  $S \times \{n+ \frac{1}{2} \}$ are disjoint, We shall then refer to 
$S \times J$ minus a small neighborhood of the curves $\sigma \in \CC$ as a \emph{drilled product of $S$ and $J$.} The closures of the small neighborhoods are required to be disjoint, and contained in $S \times [n+\frac{1}{3}, n+\frac{2}{3}]$ for some $n$. 
 
\begin{defn}\label{def-genlzdibddgeo} 
	Let $E$ be a truncated hyperbolic manifold homeomorphic to a drilled product of $S$ and $J$ for some $S,J$ as above.
	Suppose that $E$ is bi-Lipschitz homeomorphic to a model manifold $E_m$ built out of
	\begin{enumerate}
		\item $L-$thick  blocks for some $L \geq 1$ in the sense of Definition~\ref{def-bddgeoblock},
		\item drilled thin blocks in the sense of Definition~\ref{def-dthinblock}, and 
		\item thin blocks in the sense of Definition~\ref{def-thinblock}, 
	\end{enumerate}
	glued end to end (in the sense of Definition~\ref{def-glue-end2end}). Then we say that $E$ admits a \emph{generalized i-bounded geometry model}.
\end{defn}

The difference between an  i-bounded geometry model (Definition~\ref{def-ibddgeomodel}) and  a
generalized i-bounded geometry model (Definition~\ref{def-genlzdibddgeo}) is that in the latter drilled thin blocks are allowed. 
{Such generalized i-bounded geometry models arise naturally as follows.
Start with a degenerate end $E^h$ of a hyperbolic manifold $M^h$. Let $E$ denote the
truncation of $E^h$. Note that $E$ is obtained from $E^h$ by removing a small neighborhood of rank one cusps. 
Assume that $E$ has i-bounded geometry. Then,
after removing the interiors of some disjoint Margulis tubes from $E$ we obtain a manifold of generalized i-bounded geometry.}

In Section~\ref{sec-splitt} below, we shall need  a notion of generalized i-bounded geometry sub-models associated
to essential subsurfaces (cf.\ Definition~\ref{def-bddgeoblocksub}). We point out that in Definition~\ref{def-ibddgeoblocksub} below, as in Definition~\ref{def-bddgeoblocksub}, we do not impose any restriction on the model geometry of $E$ itself.

\begin{defn}\label{def-ibddgeoblocksub} Let $E$ be any geometrically infinite end.
	Let $\Sigma$ be an essential subsurface of $S$. If $\Sigma \times [0,n] \subset E$, 
	$n \in \natls$   equipped with the metric induced from $E$
	is built up of $L-$thick and filled thin  blocks of the form $\Sigma \times [i,i+1]$
	glued end to end (in the sense of Definition~\ref{def-glue-end2end}), then we say that  $\Sigma \times [0,n]$ admits an  \textit{i-bounded geometry sub-model of length $n$}.
\end{defn}

\begin{prop}\label{prop-geoltofibddgibdd}
	Let $E^h$ be a degenerate end of i-bounded geometry, and let $x_n \in E^h$ be a sequence of points such that each $x_n$ lies in the thick part of $E^h$
	and $x_n \to \infty$ in $E^h$. 
	After passing to a subsequence if necessary, assume that $(N^h, x_\infty)$ 
	is the geometric limit of $(E,x_n)$. Then the truncation $N$ admits a model of
	generalized i-bounded geometry.
\end{prop}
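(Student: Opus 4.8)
The strategy is to transfer the model-manifold structure of $E^h$ (resp. its truncation $E$) to the geometric limit $N^h$ (resp. $N$) blockwise, exactly as in the bounded geometry case (Proposition~\ref{prop-bddgeoseq}), but now keeping track of the two types of thin pieces. First I would fix the $L \geq 1$ for which $E^h$ admits an i-bounded geometry model in the sense of Definition~\ref{def-ibddgeomodel}, via Proposition~\ref{prop-ibdd-model}. Since each $x_n$ lies in the thick part of $E^h$ and $x_n \to \infty$, after passing to a subsequence I may assume each $x_n$ lies in a thick block $S \times [i_n, i_n+1]$, and after a further translation in the $[0,\infty)$ coordinate I may normalize so that $x_n \in S \times \{i_n + \tfrac12\}$. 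Then for each fixed $R > 0$, the $R$-ball around $x_n$ in $E^h$ is eventually (for $n$ large) contained in a finite sub-union of consecutive blocks $S \times [i_n - k_R, i_n + k_R]$, each of which is $L$-bi-Lipschitz to one of the finitely many (up to the twist coefficients, which do not affect the bi-Lipschitz type because the Margulis tube metric is prescribed independently of the twist) model blocks. So I would extract, by a diagonal argument over $R$, a subsequence along which the pointed sequence $(S \times [i_n - k, i_n + k], x_n)$ converges geometrically, block by block, to a bi-infinite gluing of thick and thin blocks.

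The key point that distinguishes this from Proposition~\ref{prop-bddgeoseq} is the behavior near the rank-one cusps that are removed in passing from $E^h$ to $E$. In $E$, the truncation removes a small neighborhood of the rank-one cusps of $E^h$; these cusps are associated to simple closed curves $\sigma$ on $S$ that become parabolic, and in the truncated picture they appear as the drilled loci in drilled thin blocks (Definition~\ref{def-dthinblock}). Under the geometric limit, a cusp of $E$ at bounded distance from $x_n$ persists as a cusp of $N$ (horoball neighborhoods converge to horoball neighborhoods under bi-Lipschitz-to-$1$ convergence), and the corresponding block of $N$ is a drilled thin block. A Margulis tube of $E$ (equivalently, a filled thin block, Definition~\ref{def-thinblock}) at bounded distance either has bounded geometry data — in which case it converges to a filled thin block of $N$ — or its core geodesic length tends to $0$, in which case in the limit it opens up to a rank-two cusp; but by the i-bounded geometry hypothesis the boundary torus of every Margulis tube has bounded diameter, which pins down the imaginary part of the meridian coefficient, so the only degeneration possible is the length $\to 0$ one, and this is precisely accounted for in Definition~\ref{def-genlzdibddgeo} by allowing drilled thin blocks. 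Putting these together, $N$ is bi-Lipschitz homeomorphic to a model manifold built out of $L$-thick blocks, drilled thin blocks, and thin blocks glued end to end, which is exactly a generalized i-bounded geometry model.

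Concretely I would argue as follows. Let $N^h$ be the (subsequential) geometric limit of $(E^h, x_n)$, with approximating bi-Lipschitz maps $h_n \colon C_n \to C$ with $C_n$ exhausting $N^h$ and bi-Lipschitz constants $K_n \to 1$. Pull back the block decomposition of $E^h$ through $h_n$; because blocks have uniformly bounded diameter (thick blocks by construction, thin blocks by i-bounded geometry — the Margulis tube boundary tori have bounded diameter and the solid torus metric is the standard one), the images of the blocks meeting a fixed $C$ stabilize as $n \to \infty$, yielding a block decomposition of $N^h$. Each such block, being a $K_n$-bi-Lipschitz image ($K_n \to 1$) of one of the three model block types, is itself $L'$-bi-Lipschitz to a model block of the same type for any $L' > L$ and $n$ large, with the caveat about Margulis tubes converging to rank-two cusps handled by passing to the truncation $N$ of $N^h$: in $N$ those become drilled loci, i.e. drilled thin blocks. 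Hence $N$ admits a generalized i-bounded geometry model in the sense of Definition~\ref{def-genlzdibddgeo}.

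\textbf{Main obstacle.} The delicate step is controlling, in the limit, the thin blocks whose Margulis tube core length degenerates to zero: one must verify that the geometric limit of such a sequence of filled thin blocks is genuinely a drilled thin block (rather than some other degeneration), and that the horoball removed in truncating $N^h$ to $N$ matches up with the small neighborhood of the drilled locus in Definition~\ref{def-dthinblock} — i.e. that the truncation operation commutes with taking the geometric limit. This is where the i-bounded geometry hypothesis is essential: it bounds the boundary-torus diameter uniformly, preventing the imaginary part of the meridian coefficient from blowing up, so that the only way a thin block can degenerate is by its core geodesic shrinking, which is exactly the drilled thin block degeneration. The remaining verifications — that thick blocks limit to thick blocks, that finitely many block types up to twist coefficient appear, and the diagonal extraction of a convergent subsequence — are routine adaptations of the argument in \cite[Remark 3.2]{mms} used in Proposition~\ref{prop-bddgeoseq}.
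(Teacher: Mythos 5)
Your proposal follows essentially the same route as the paper: pass the block decomposition of the i-bounded model to the geometric limit, observe that thick blocks and drilled thin blocks limit to blocks of the same type, and resolve the filled thin blocks by the dichotomy between tubes with bounded data (which limit to filled thin blocks) and degenerating tubes (which open into rank-two cusps and, after truncation, become drilled thin blocks). Your identification of the degenerating-tube case as the crux, and of the i-bounded hypothesis as what forces the degeneration to be of this particular type, matches the paper's argument.

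One clause in your ``Concretely'' paragraph is false as stated and is load-bearing there: you assert that thin blocks have uniformly bounded diameter ``by i-bounded geometry.'' Only the \emph{boundary tori} of the Margulis tubes have uniformly bounded diameter; the tubes themselves become arbitrarily deep exactly when the twist coefficient blows up and the core length tends to zero, which is the case you correctly isolate elsewhere as producing a rank-two cusp. So the stabilization of ``blocks meeting a fixed compact set $C$'' cannot be run on whole thin blocks; it should be run on the complement of the Margulis tubes (where everything is uniformly bounded), with the tubes themselves handled separately by the bounded-diameter versus diameter-tending-to-infinity dichotomy, as in the rest of your argument and in the paper.
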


\begin{proof}
	By Proposition~\ref{prop-ibdd-model}, the truncation $E$ of $E^h$ admits a model of i-bounded geometry. 
    %(\ywy{If my understanding before Def. \ref{def-genlzdibddgeo} is right, here you meant: $E^h$ admits a model of i-bounded geometry, so the truncation $E$ admits a model of \textbf{generalized} i-bounded geometry?}) \textcolor{red}{Not quite. It is $E$ that admits a model of i-bounded geometry. See comment after Definition~\ref{def-genlzdibddgeo}.}\ywy{Now I see it. My concern was that in the next paragraph, how drilled thin block appears in the geometric limit, since $E$ has no drilled thin block as it is of i-bounded geometry? }
    Since each $x_n$ lies in the thick part of $E^h$, it follows that $x_n$ lies in the thick part of $E$, and $x_n \to \infty$ in $E$. 
	
	Note now that 
	a geometric limit of a drilled thin block continues to be a drilled thin block. This continues to be true for any finite concatenation of drilled thin blocks glued end to end. Hence, away from Margulis tubes, any geometric limit of a sequence $\{(E,x_n)\}$ admits
	a model of generalized i-bounded geometry.
	Further, the boundary $\partial \mathbb T$ of any Margulis tube $\mathbb T$ has uniformly bounded  diameter in $E$ and hence in
	$N$. It follows that 
	$N$ admits a model of
	generalized i-bounded geometry. 
    
    {To see the last claim, consider a geometrically convergent sequence of Margulis tubes
    $\mathbb T_n$ in $\{(E,x_n)\}$. As noted above, $\partial \mathbb T_n$ has uniformly bounded  diameter in $E$. If, 
    in addition,  $\mathbb T_n$ has uniformly
    bounded diameter, then so does the limiting  Margulis tube $\mathbb T_\infty$. On the other hand, if $\mathbb T_n$ has  diameter tending to infinity, then any geometric limit 
    $\mathbb T_\infty$ gives a \emph{rank two} cusp whose boundary $\partial \mathbb T_\infty$ is a torus. It is
    precisely in  the latter case, that truncation yields a
    {drilled} thin block, and hence a model
    generalized i-bounded geometry. In the former case, the
    limiting block is simply a thin block.} 
\end{proof}

We are now in a position to state the main technical theorem of this section. The proof will occupy the rest of this section.

\begin{thm}\label{thm-maintechnicalgeolt}
	Let $E^h$ be a degenerate end of a  hyperbolic $M^h$, so that $E^h$ is homeomorphic to $S^h \times [0,\infty)$, where $S^h$ is a complete hyperbolic surface possibly with cusps. Let $E, M, S$ denote the truncations of $E^h, M^h, S^h$ respectively, so that $E$ is homeomorphic to $S \times [0,\infty)$.
    There exists a sequence $x_n \in E (\subset M \subset M^h)$ such that $(M^h,x_n)$ geometrically converges to a complete hyperbolic 3-manifold $N^h$, such that the following holds.\\
    There exists an essential subsurface $\Sigma$ of $S$ such that the truncation $N$ of $N^h$ satisfies the following:
	\begin{enumerate}
		\item $N$ is homeomorphic to a drilled product of $\Sigma$ and $J$, where   $J=\R$,
		\item  $N$ admits a {generalized i-bounded geometry model} $N_m$.
	\end{enumerate}
\end{thm}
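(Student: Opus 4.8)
The goal is to produce, by a geometric limit argument, a doubly-degenerate-type hyperbolic manifold $N^h$ whose truncation $N$ has a generalized i-bounded geometry model and is homeomorphic to a drilled product of an essential subsurface $\Sigma \subset S$ with $\R$. The strategy is to reduce the general (unbounded geometry) end $E$ to an i-bounded geometry sub-model supported on an essential subsurface, by exploiting the model-manifold technology of Minsky \cite{minsky-elc1} and Brock--Canary--Minsky \cite{minsky-elc2} in the form adapted by the first author in \cite{mahan-ibdd, mahan-split}. Concretely, the ending lamination / hierarchy governing $E$ produces a sequence of "slices" of the model; when the geometry is unbounded, long geodesics appear because of large twisting in annuli or because of subsurface projections growing along proper essential subsurfaces. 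The plan is to pass down to an essential subsurface $\Sigma$ on which, infinitely often, one sees long stretches $\Sigma \times [0, m_n]$ (with $m_n \to \infty$) that carry only thick blocks and \emph{filled} thin blocks in the sense of Definition~\ref{def-thinblock} — i.e.\ an i-bounded geometry sub-model of length $m_n$ in the sense of Definition~\ref{def-ibddgeoblocksub}. This is exactly where one uses the ``split geometry'' decomposition of \cite{mahan-split}: every geometrically infinite end decomposes into blocks split along essential subsurfaces, and a pigeonhole/compactness argument on the (finitely many topological types of) subsurfaces of $S$ selects one fixed $\Sigma$ along which the sub-model has bounded ``imaginary part'', i.e.\ bounded Margulis-tube boundary diameters — the i-bounded condition.

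Having fixed $\Sigma$ and the sequence of i-bounded sub-models $\Sigma \times [0,m_n] \subset E$ of lengths $m_n \to \infty$, I would choose basepoints $x_n \in \Sigma \times \{ m_n/2 \}$ lying in the thick part, so that $x_n \to \infty$ in $E^h$. By the Margulis lemma and the uniform block geometry there is $L \geq 1$ with every block within the sub-model $L$-bi-Lipschitz to one of the finitely many model blocks, so the sequence $(M^h, x_n)$ has uniformly bounded geometry in balls of any fixed radius around $x_n$; hence, after passing to a subsequence, $(M^h, x_n)$ converges geometrically to some complete hyperbolic $3$-manifold $N^h$ (Definition~\ref{def-geolt}). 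Now I run the same reasoning as in Proposition~\ref{prop-geoltofibddgibdd} and Proposition~\ref{prop-bddgeoseq}: a geometric limit of thick blocks is a thick block, a geometric limit of filled thin blocks is either a filled thin block (if the twist coefficients stay bounded) or, if the tube diameters blow up, a \emph{drilled} thin block carrying a rank-two cusp. Because the sub-model lengths $m_n$ tend to infinity and $x_n$ sits at their midpoints, the limit $N^h$ is built from such blocks glued end to end over $J = \R$; after truncation this says precisely that $N$ is homeomorphic to a drilled product of $\Sigma$ and $\R$ and admits a generalized i-bounded geometry model $N_m$ in the sense of Definition~\ref{def-genlzdibddgeo}. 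This establishes conclusions (1) and (2) of Theorem~\ref{thm-maintechnicalgeolt}.

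Two points need care. First, the convergence must be to a \emph{doubly} infinite product over $J = \R$, not a half-open one: this is guaranteed by choosing $x_n$ at the center of the length-$m_n$ sub-model and letting $m_n \to \infty$, so that for every fixed $R$ eventually the $R$-ball around $x_n$ sits in the interior of the sub-model and sees blocks on both sides. Second, one must ensure the selected $\Sigma$ is \emph{essential} and that the sub-model really is i-bounded and not merely ``thick-and-thin with unbounded twisting''; this is the step that genuinely invokes the split-geometry machinery of \cite{mahan-split} and \cite{mahan-ibdd}, specifically the fact that the failure of i-bounded geometry on $S$ is always witnessed by subsurface projections along a \emph{proper} essential $\Sigma$ (an annulus or a higher-complexity subsurface), so that ``descending'' to $\Sigma$ strictly reduces complexity and the process terminates.

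\textbf{Main obstacle.} The crux is the subsurface-descent/pigeonhole step: extracting, from an arbitrary geometrically infinite end of possibly wildly unbounded geometry, a \emph{fixed} essential subsurface $\Sigma$ and arbitrarily long sub-intervals of the model over which only thick and filled thin blocks occur with the i-bounded (bounded imaginary part) property. This requires a careful bookkeeping of the Minsky hierarchy / split-level structure — knowing that non-i-bounded behavior localizes to proper subsurfaces, that there are only finitely many topological types of essential subsurface in $S$, and that complexity strictly drops upon descent — so that after finitely many descents one lands in the i-bounded regime on some $\Sigma$, and then a standard diagonal argument produces $m_n \to \infty$. Once that combinatorial-geometric input is in hand, the geometric-limit argument is a routine adaptation of Propositions~\ref{prop-bddgeoseq} and \ref{prop-geoltofibddgibdd}.
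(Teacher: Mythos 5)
Your proposal follows essentially the same route as the paper: locate arbitrarily long i-bounded geometry sub-models supported on essential subsurfaces of a single topological type $\Sigma$, place basepoints $x_n$ at their midpoints (in the thick part), extract a subsequential geometric limit, and run the block-by-block limit analysis of Propositions~\ref{prop-bddgeoseq} and \ref{prop-geoltofibddgibdd} to conclude that the truncated limit is a generalized i-bounded geometry model over a drilled product of $\Sigma$ with $\R$. The midpoint choice forcing a doubly infinite ($J=\R$) limit is also exactly the paper's device.

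The one place where your sketch diverges from the paper is the step you yourself flag as the crux, and your phrasing of it has a gap. You describe a recursive top-down descent (``non-i-bounded behaviour localizes to a proper subsurface; descend; complexity drops; terminate''), but as stated this does not quite parse: the proper subsurfaces witnessing failure of i-boundedness vary with $n$, so there is no single subsurface to ``descend to,'' and the sub-model over a given witness need not itself be i-bounded, so the recursion is not well-founded as described. The paper's formalization is bottom-up: let $\zeta_0\geq 4$ be the \emph{minimal} complexity at which tight geodesics $g_W$ in the hierarchy $\HH(\tau,\LL)$ have unbounded length. Minimality yields a uniform bound $R$ on all tight geodesics supported on non-annular subsurfaces of complexity strictly below $\zeta_0$, which is precisely the hypothesis of Theorem~\ref{thm-effectivesub}(2); that theorem converts each long $g_{W_n}$ at level $\zeta_0$ into an i-bounded sub-model of length $\ell(g_{W_n})$, and finiteness of topological types at fixed complexity pins down $\Sigma$ after passing to a subsequence. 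Note also that large \emph{annular} projections are compatible with i-boundedness (they are absorbed into the twist coefficients of filled thin blocks), so your parenthetical ``large twisting in annuli'' belongs on the i-bounded side of the dichotomy, not the obstruction side. Finally, your claim that $(M^h,x_n)$ has ``uniformly bounded geometry in balls of any fixed radius'' is too strong near deep Margulis tubes and is not needed: subsequential geometric convergence only requires the injectivity radius at the basepoints $x_n$ to be bounded below, which your thick-part choice already guarantees. With these substitutions your argument is the paper's argument.
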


We can now complete the proof of Theorem~\ref{thm-kl-nonconical} modulo Theorem~\ref{thm-maintechnicalgeolt}.

\begin{proof}[Proof of Theorem~\ref{thm-kl-nonconical} assuming Theorem~\ref{thm-maintechnicalgeolt}:] $ $\\
	Note that, in Theorem~\ref{thm-maintechnicalgeolt}, $N$ is a deformation retract of $N^h$. Further, $N^h$ is allowed to have infinitely generated fundamental group $\pi_1(N)$. Indeed, each rank 2 cusp of $N^h$ corresponds to a torus boundary component of a drilled thin block of $N_m$. 
	
	Next we observe that if $\rho(\pi_1(N))$ is the Kleinian group with $N^h = \Hyp^3/\rho(\pi_1(N))$, then its limit set is necessarily 
	all of the sphere $S^2$ at infinity. 
	Indeed, $N$ admits sequences of closed geodesics exiting 
	in the $+\infty$ and  $-\infty$ directions of $J=\R$ since $N$ is homeomorphic to a drilled product of $S$ and $J$ by Theorem~\ref{thm-maintechnicalgeolt}. Hence 
	$N^h$ equals its own convex core, and so the limit set of $\rho(\pi_1(N))$  is  the sphere $S^2$ at infinity.
\end{proof}

To prove  Theorem~\ref{thm-maintechnicalgeolt}, we shall
\begin{enumerate}
	\item recall the necessary background on the model geometry of ends from \cite{minsky-elc1,minsky-elc2} in Section~\ref{sec-splitt},
	\item use this background to construct the relevant geometric limit in Section~\ref{sec-geolts}.
\end{enumerate}

\noindent {Scheme of proof of  Theorem~\ref{thm-maintechnicalgeolt}:}\\
For now, we indicate the two major steps of the argument referring the reader to 
Section~\ref{sec-splitt} for necessary background on hierarchies and model geometries.
Given a degenerate end $E$ and its ending lamination, Minsky associates  with it a hierarchy $\HH$ of tight geodesics $g_Y$ corresponding to essential subsurfaces $Y$ of $S$. Let $\zeta(Y)$ denote the complexity of the subsurface $Y$. (Recall that $\zeta(Y)$ equals 3 times the genus of $Y$ plus the number of boundary components.)
Then there exists a minimal $\zeta_0 \geq 4$ such that any tight geodesic 
$g_Y \in \HH$ supported on a subsurface $Y$ of complexity \emph{strictly less than $\zeta_0$} is bounded independent of $Y$. 

\begin{proof}[Proof of Theorem~\ref{thm-maintechnicalgeolt} when $\zeta_0=4$.] We provide here a proof of Theorem~\ref{thm-maintechnicalgeolt} when $\zeta_0=4$ so that the main idea is explicated without getting into technicalities.
Note that an essential surface of complexity $\zeta_0=4$ is either a  4-punctured sphere, or a punctured torus.

Thus, there exist subsurfaces $Y_i\subset S$ of complexity $\zeta(Y_i)=4$ and tight geodesics $g_i \in \HH$ supported on $Y_i$ with length $\ell(g_i)$ tending to infinity. Such tight geodesics are referred to as 4-geodesics.
In this case, after passing to a subsequence if necessary, we can assume that each $Y_i$ is a copy of  $\Sigma$, where  $\Sigma$ is either a truncated 4-punctured sphere, or a truncated punctured torus.
Further, the combinatorial model manifold for $E$ contains
combinatorial sub-models $E(Y_i)$ consisting of $\ell(g_i)$ drilled thin blocks
(in the sense of Definition~\ref{def-dthinblock}) glued end to end. Let $N_i$ denote this concatenation of drilled thin blocks.
Each $N_i$ is obtained from $\Sigma \times [0, \ell(g_i)]$ after drilling.
We choose a sequence $x_i$ such that $x_i$ lies in the $[\ell(g_i) / 2]-$th block and let $i \to \infty$. Then the geometric limit of $(M, x_i)$ equals the geometric limit of $(E,x_i)$. Recall that $M$ (resp. $E$) is the truncation of $M^h$ (resp.\ $E^h$).
Finally, the geometric limit of $(E,x_i)$ agrees with the geometric limit 
$(N_\infty, x_\infty)$ of 
$(N_i,x_i)$ away from Margulis tubes.  Proposition~\ref{prop-geoltofibddgibdd} now furnishes the conclusion when $\zeta_0=4$. 
\end{proof} When $\zeta_0> 4$, the proof is similar and will be given in Section~\ref{sec-geolts} below.

\subsection{Hierarchies, subsurface projections and the Ending Lamination Theorem} \label{sec-splitt}
In this subsection, we quickly recall 
the essential aspects of hierarchies and subsurface projections from \cite{masur-minsky,masur-minsky2, minsky-elc1,minsky-elc2} that we shall need. 
We shall cull out, particularly from \cite[Sections 8,9]{minsky-elc1}, the 
necessary aspects of the relationship between subsurface projections and the combinatorial model manifold built there. In \cite{minsky-elc2}, it is established that this 
combinatorial model  is bi-Lipschitz homeomorphic to the truncation of a simply or doubly degenerate hyperbolic 3-manifold with the same end-invariants. We refer to 
\cite{minsky-elc1} for details.

Recall that for a compact surface $S(=S_{g,b})$ of genus $g$  with $b$ boundary components, $\xi ( S_{g,b} ) =
3g+b$ denotes its 	{\it complexity}. Let $Y$ be an essential subsurface  of $S$
(possibly an annulus).   Its curve complex is denoted as $\CC(Y)$, and its arc-and-curve complex by $\AAA\CC(Y)$.
The distance
in $\CC(Y)$ will be denoted as $d_Y$. Also, if $\eta$ is a simple closed curve or a lamination,
$\eta|_Y$ will denote its projection to $\AAA\CC(Y)$.
By performing surgery on the arcs of $\eta|_Y$ along boundary components of $Y$ 
(cf.\ \cite[Section 2.2]{minsky-bddgeom}) we obtain an element of $\CC(Y)$ that we refer to as the \emph{subsurface projection} of $\eta$ to $Y$. We denote it as $\pi_Y(\eta)$.

Now, let $E$ denote a truncated simply degenerate  end of a complete hyperbolic 3-manifold $M^h$. Then $E$ is homeomorphic to $S \times [0,\infty)$. Let $\tau$ denote a marking 
on $S \times \{0\}$, and $\LL$ denote the ending lamination for $E$. 
The following theorem can be culled out of \cite[Theorem 9.1]{minsky-elc1}. It characterizes bounded geometry ends $E$ (see also \cite[p. 150-151]{minsky-bddgeom}).

\begin{thm} \label{thm-bddgeoss}
	The truncated end $E$  is of bounded geometry if and only if there exists $D>0$ such that for every proper essential subsurface $W$ of $S$ (including annular domains), $d_W(\tau,\LL) \leq D$.
\end{thm}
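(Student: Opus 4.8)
The statement is a criterion characterizing bounded geometry of a truncated simply degenerate end $E$ in terms of a uniform bound $d_W(\tau,\LL)\le D$ on all subsurface projections. The proof has two directions, and I expect the forward direction (bounded geometry $\Rightarrow$ bounded projections) to be the easier one, with the converse being the real content.

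\emph{Forward direction.} Suppose $E$ has bounded geometry, so there is $\ep_0>0$ with $\mathrm{inj}_x(M)\ge \ep_0$ for all $x\in E$. By geometric tameness (Thurston--Bonahon--Canary), there is a sequence of pleated surfaces $f_n:S\to M$ exiting $E$, realizing simple closed curves $\alpha_n$ whose geodesic representatives exit $E$, and with $\alpha_n\to\LL$ in the appropriate sense. The lower bound on injectivity radius gives a uniform upper bound on the diameter (in the induced path metric) of each pleated surface image, hence a uniform bound on the lengths of $\alpha_n$ on the pleated surfaces, and therefore (via the standard ``efficiency of pleated surfaces'' estimates, e.g.\ as in Minsky \cite{minsky-bddgeom}) a uniform bound on $d_W(\alpha_n,\alpha_{n+1})$ for every essential subsurface $W\subsetneq S$: two short curves on the same bounded-diameter pleated surface have bounded projection distance. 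Likewise $d_W(\tau,\alpha_1)$ is bounded (this is a fixed finite datum). Since the $\alpha_n$ converge to $\LL$, the projections $\pi_W(\alpha_n)$ converge to $\pi_W(\LL)$ in $\CC(W)$ for each fixed $W$; summing the bounded increments $d_W(\alpha_n,\alpha_{n+1})$ is \emph{not} what we want directly, rather we use that once $\alpha_n$ has exited past the part of $E$ ``seen'' by $W$, the projection $\pi_W(\alpha_n)$ stabilizes. Concretely, only boundedly many pleated surfaces can intersect the region where $W$ is non-peripheral, so $d_W(\tau,\LL)\le d_W(\tau,\alpha_n)+d_W(\alpha_n,\LL)$ is bounded by a constant $D$ independent of $W$.

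\emph{Converse direction.} Suppose $d_W(\tau,\LL)\le D$ for all proper essential $W$, including annuli. This is exactly the hypothesis that feeds into the combinatorial model manifold of \cite[Sections 8--9]{minsky-elc1}: the coefficients recording the geometry of each block and each Margulis tube of the model are controlled by subsurface projection distances $d_W(\mu_-,\mu_+)$ between consecutive markings of the hierarchy $\HH$, and the meridian/twist coefficients of tubes are governed by annular projections. A uniform bound $D$ on \emph{all} subsurface projections (annular and non-annular) forces every block of the model to be $L$-bi-Lipschitz to the product block $S\times[0,1]$ for some $L=L(D)$, and every Margulis tube to have uniformly bounded geometry (bounded meridian coefficient). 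Hence the model $E_m$ is built of $L$-thick blocks glued end to end in the sense of Definition~\ref{def-bddgeoblock}, i.e.\ $E$ admits an $L$-thick bounded geometry model. By the bi-Lipschitz model theorem of Minsky \cite{minsky-elc2}, $E$ is bi-Lipschitz to $E_m$, and then by \cite{minsky-jams,minsky-bddgeom} an end with an $L$-thick bounded geometry model has injectivity radius bounded below; so $E$ has bounded geometry. I would cite Theorem~\ref{thm-minsky-bddgeo} for the clean packaging of the last implications.

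\emph{Main obstacle.} The crux is the converse: translating ``all subsurface projections bounded'' into ``all model blocks thick and all tubes bounded.'' This requires being careful about which projection data (geodesics $g_Y$ in the hierarchy, lengths $\ell(g_Y)$, annular coefficients) control which geometric features of the model, and it is really a matter of correctly quoting the structure of the combinatorial model from \cite[Sections 8--9]{minsky-elc1} rather than reproving it. The subtlety worth flagging is the role of \emph{annular} domains $W$: without the bound on annular projections one only controls block shapes but not the twisting in Margulis tubes, so the hypothesis must explicitly include annuli (as it does). I would therefore organize the converse as: (i) bounded non-annular projections $\Rightarrow$ every $g_Y$ has $\ell(g_Y)\le D'$, hence no long ``drilled'' regions and no thin blocks of large complexity; (ii) bounded annular projections $\Rightarrow$ every tube has bounded meridian coefficient; (iii) conclude via Definition~\ref{def-bddgeoblock} and Theorem~\ref{thm-minsky-bddgeo}.
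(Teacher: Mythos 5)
The paper does not actually prove this statement: it is introduced with the remark that it ``can be culled out of'' \cite[Theorem 9.1]{minsky-elc1}, with a pointer to \cite[p.~150--151]{minsky-bddgeom}, and is then used as a black box. So there is no in-paper argument to compare yours against; what you have written is a sketch of the proof that lives in the cited references. On those terms your outline is the standard one and structurally correct. In particular the converse direction is exactly right: bounded non-annular projections bound the lengths of all hierarchy geodesics (via the Large Links lemma of Masur--Minsky), bounded annular projections bound the meridian/twist coefficients of the tubes, hence every block of the combinatorial model is $L$-thick with $L=L(D)$ and every tube has bounded geometry; the bi-Lipschitz model theorem of \cite{minsky-elc2} and Theorem~\ref{thm-minsky-bddgeo} then convert this into a lower injectivity radius bound. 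Your insistence that annular domains must be included is precisely the point that distinguishes this statement from the i-bounded-geometry criterion (Theorem~\ref{thm-ibddgeoss}).

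One soft spot in your forward direction: the phrase ``only boundedly many pleated surfaces can intersect the region where $W$ is non-peripheral'' does not parse as stated, since every pleated surface in the end is homotopic to $S$ and therefore carries a copy of every essential $W\subsetneq S$; stabilization of $\pi_W(\alpha_n)$ is not free. The mechanism in \cite{minsky-bddgeom} is the contrapositive (the Projection Bound Theorem): if $d_W(\tau,\LL)$ is large, then the geodesic representative of $\partial W$ in $M$ is forced to be short, contradicting the injectivity radius bound. Your bounded-diameter and efficiency-of-pleated-surfaces estimates are the correct ingredients for that argument, but the ``stabilization'' step should be replaced by, or reduced to, that short-curve statement. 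Since the theorem is in any case a quotation of known results, this is a matter of citing the right theorem of Minsky rather than a gap that would sink the proof.
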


A similar characterization of i-bounded geometry ends $E$ can be culled out of \cite[Theorem 9.1]{minsky-elc1}.

\begin{thm} \label{thm-ibddgeoss}
	The truncated end $E$  is of i-bounded geometry if and only if there exists $D>0$ such that for every proper \emph{non-annular} essential subsurface $W$ of $S$, $d_W(\tau,\LL) \leq D$.
\end{thm}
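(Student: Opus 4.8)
The plan is to prove Theorem~\ref{thm-ibddgeoss} by adapting the proof of the bounded geometry characterization in Theorem~\ref{thm-bddgeoss}, keeping careful track of which subsurface projections are allowed to be large. Recall from \cite[Theorem 9.1]{minsky-elc1} (combined with the bi-Lipschitz model theorem of \cite{minsky-elc2}) that the combinatorial model manifold for $E$ has Margulis tubes indexed by those subsurfaces $W$ (annular or not) for which $d_W(\tau,\LL)$ is large, and the \emph{geometry} of the tube associated to $W$ is controlled by $d_W(\tau,\LL)$: the meridian coefficient of the boundary torus has real part roughly $d_W(\tau,\LL)$ when $W$ is an annulus, and the imaginary part (the ``vertical size'' / diameter contribution transverse to the tube) is controlled by the projection coefficients $d_V(\tau,\LL)$ over the \emph{non-annular} $V$. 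So the strategy is: i-bounded geometry in the sense of Definition~\ref{def-ibdd} says exactly that the boundary torus of every Margulis tube in $E$ has uniformly bounded diameter, and the diameter of such a torus is, by the model theorem, comparable to the largest relevant non-annular subsurface projection coefficient.

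First I would set up the easy direction: assume there exists $D>0$ with $d_W(\tau,\LL)\le D$ for every proper non-annular essential subsurface $W$ of $S$. I would invoke the model manifold of \cite{minsky-elc1,minsky-elc2}: the blocks of the model are either thick blocks (bounded below by the bound on non-annular coefficients, up to the short-curve structure) or thin blocks around Margulis tubes. The tubes themselves may still be long (annular coefficients $d_\alpha(\tau,\LL)$ can be arbitrarily large, giving large twist coefficients $n_i$), but the claim is that each tube's boundary torus has bounded diameter. By the geometric estimates in the bi-Lipschitz model theorem, the diameter of $\partial\mathbb T_\alpha$ for a Margulis tube $\mathbb T_\alpha$ around a curve $\alpha$ is comparable to $\max\{d_W(\tau,\LL): W \text{ non-annular}, \alpha\subset\partial W \text{ or } \alpha\subset W\}$ plus a universal constant — precisely because the transverse size of the tube is built out of the non-annular coefficients of subsurfaces adjacent to $\alpha$, whereas the annular coefficient $d_\alpha$ only contributes to the length (the modulus in the ``long'' direction). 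Hence all these diameters are $\lesssim D$, so $E$ has i-bounded geometry in the sense of Definition~\ref{def-ibdd}. This matches the structure of Proposition~\ref{prop-ibdd-model}, which asserts the equivalence of Definition~\ref{def-ibdd} with admitting an i-bounded geometry model; I would in fact phrase this direction as: a bound on non-annular coefficients $\Rightarrow$ the model is built of thick and (filled) thin blocks $\Rightarrow$ i-bounded geometry model $\Rightarrow$ (by Proposition~\ref{prop-ibdd-model}) i-bounded geometry.

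For the converse, assume $E$ has i-bounded geometry, i.e.\ every Margulis tube boundary torus in $E$ has diameter $\le D_0$. Suppose for contradiction that there is a sequence of proper non-annular essential subsurfaces $W_n$ with $d_{W_n}(\tau,\LL)\to\infty$. By the model manifold construction, each such $W_n$ with large coefficient forces a structural feature in the model: either a short geodesic (Margulis tube) whose boundary torus picks up a large transverse modulus comparable to $d_{W_n}(\tau,\LL)$ — contradicting the uniform diameter bound $D_0$ — or, if $W_n$ itself is not ``coming from'' a short curve, the large coefficient over $W_n$ still forces a long ``block of type $W_n$'' in the hierarchy, along whose boundary $\partial W_n$ the associated Margulis tube again has boundary torus of diameter $\gtrsim d_{W_n}(\tau,\LL)$; in either case we contradict i-bounded geometry. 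This is essentially the ``only if'' reasoning of Theorem~\ref{thm-bddgeoss} with the annular projections deliberately exempted. I would extract the relevant quantitative statements — ``large non-annular subsurface coefficient $\Rightarrow$ Margulis tube boundary of large diameter'' and its converse — directly from \cite[Sections 8,9]{minsky-elc1} and the Lipschitz bounds of \cite{minsky-elc2}, citing them rather than reproving them.

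The main obstacle I anticipate is making precise the exact correspondence between ``diameter of a Margulis tube boundary torus'' and ``the maximum of non-annular subsurface projection coefficients over subsurfaces adjacent to the core curve,'' uniformly and in both directions. In the bounded-geometry case (Theorem~\ref{thm-bddgeoss}) one bounds \emph{all} coefficients, so one does not need to separate the length direction from the transverse direction of a tube; here the whole content is precisely that separation. I would handle this by isolating, as a lemma, the statement that in the model manifold of \cite{minsky-elc1,minsky-elc2} the Euclidean metric on $\partial\mathbb T_\alpha$ has one side comparable to $\max_W d_W(\tau,\LL)$ (max over non-annular $W$ with $\alpha$ on the boundary or interior) and the orthogonal side comparable to $\ell_E(\alpha)^{-1}$ times a bounded factor, so that the diameter is governed by the former. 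Granting this lemma — which is implicit in Minsky's construction of the meridian coefficients $a+bi$ and the discussion around Figure~\ref{fig-ibdd} — the theorem follows by the dichotomy above. The remaining routine point is to note that ``proper non-annular essential subsurface of $S$'' in the hierarchy sense matches the subsurfaces $W$ indexing the blocks, and that the thick-part lower bound is automatic once the non-annular coefficients are bounded, which is standard from \cite{masur-minsky2,minsky-elc1}.
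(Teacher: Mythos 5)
Your proposal is correct and follows essentially the same route as the paper, which offers no independent argument for this statement but simply culls it from Minsky's Theorem 9.1 in \cite{minsky-elc1} together with the bi-Lipschitz model theorem of \cite{minsky-elc2} — precisely the meridian-coefficient dichotomy (annular projections controlling the real/twist part, non-annular projections the imaginary part that governs the diameter of a tube's boundary torus) that you invoke. The only caveat, immaterial here since you defer the quantitative statements to \cite[Sections 8,9]{minsky-elc1} and \cite{minsky-elc2} exactly as the paper does, is that the imaginary part is coarsely a \emph{sum} of lengths of hierarchy geodesics supported on the non-annular domains adjacent to the core curve rather than a maximum of coefficients, so both directions (in particular the case of a high-complexity $W_n$ with large coefficient, whose boundary curves accumulate many adjacent blocks) should be phrased through that sum.
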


More information can be culled out of \cite[Theorem 9.1]{minsky-elc1}. 
Let  $E_m$ denote the combinatorial model manifold for $E$ constructed in \cite{minsky-elc1}. It is established in \cite{minsky-elc2} that there exists a bi-Lipschitz homeomorphism
$\Phi: E_m \to E$.

We refer the reader to \cite{masur-minsky2} for details about hierarchies and to \cite[pgs. 6-8]{minsky-elc1} for a quick overview of the construction of the combinatorial model $E_m$.
For the pair
$(\tau,\LL)$, let $\HH(\tau,\LL)$ denote the associated hierarchy of geodesics (the existence of $\HH(\tau,\LL)$ is guaranteed by
\cite[Lemma 5.13]{minsky-elc1}).  
For our purposes, we shall need the following:
$\HH(\tau,\LL)$  consists  of a family of tight geodesics $g_Y$ supported on essential
non-annular subsurfaces $Y$ of $S$. In \cite[Lemmas 5.7, 5.8]{minsky-elc1}, Minsky constructs a \emph{resolution} of $\HH(\tau,\LL)$, i.e.\ a sequence of markings, separated by elementary moves, sweeping through $\HH(\tau,\LL)$. 

For a subsurface $Y$,
let $u_Y, v_Y$ denote initial and terminal vertices for $g_Y$ in $\CC(Y)$. 
Let $\ell(g_Y)$ denote the length of $g_Y$. Then the model manifold $E_m$ contains a \emph{sub-model} 
$E_m(Y)$   for $Y \times [0, \ell(g_Y)]$ with initial and terminal vertices   $u_Y, v_Y$. The construction of the sub-model
$E_m(Y)$ can be culled out of \cite[pgs. 37-40]{minsky-elc1},   to
which we refer for details on resolutions of hierarchies and slices.
Indeed,
the collection of tight geodesics $g_W$ supported on subsurfaces of $Y$ is used to construct the  model
$E_m(Y)$. 
Further, $E_m(Y)$ embeds locally isometrically in 
the full model manifold $E_m$. Hence, $\Phi: E_m \to E$ restricts to an embedding of
$E_m(Y)$ so that $\Phi (E_m(Y)) (\subset E)$ is bi-Lipschitz homeomorphic to   
$E_m(Y)$ with bi-Lipschitz constant depending only on $\Phi$ (but not on $Y$).

Recall Definitions~\ref{def-bddgeoblocksub} and \ref{def-ibddgeoblocksub}. Then the following refines of one direction each of Theorems~\ref{thm-bddgeoss} and \ref{thm-ibddgeoss}. Again, the proof of \cite[Theorem 9.1]{minsky-elc1} contains its proof.
Assume that the bi-Lipschitz constant $L$ occurring 
in Definition~\ref{def-bddgeoblock} is fixed below. Our quantification will be in terms of a new bi-Lipschitz constant $K$.

\begin{thm}\label{thm-effectivesub}  Let $E$ denote any truncated degenerate
	end of a hyperbolic 3-manifold $M^h$. 
	Let  $E_m$ denote the \emph{combinatorial} model manifold for $E$ constructed in \cite{minsky-elc1}
	and $\Phi: E_m \to E$ denote the bi-Lipschitz homeomorphism furnished by \cite{minsky-elc2}. 
	Let $\HH(\tau,\LL)$ denote the hierarchy of tight geodesics  associated with $E_m$. 
	Given $R \geq 1$, there exists $K \geq 1$ such that the following holds.
	Let $Y$ be any essential subsurface of $S$ with $\zeta(Y) \geq 4$ and $g_Y \in \HH(\tau,\LL)$ be a tight geodesic supported on $Y$, and let 
	$E_m(Y)$, homeomorphic to $Y \times [0, \ell(g_Y)]$ be a 
    {model manifold constructed from $g_Y$ and all tight geodesics subordinate to $g_Y$ in the sense of \cite{minsky-elc1}. (Note that $E_m(Y) \subset E_m$.)}
	\begin{enumerate}
		\item Suppose that for every proper essential subsurface $W$ of $Y$ (including annular domains), $d_W(\tau,\LL) \leq R$.
		Then $\Phi(E_m(Y))$ is $K-$bi-Lipschitz to a bounded geometry sub-model of length 
		$\ell(g_Y)$ in the sense of Definition~\ref{def-bddgeoblocksub}.
		
		\item Suppose that for every proper \emph{non-annular} essential subsurface $W$ of $Y$, $d_W(\tau,\LL) \leq R$.
		Then $\Phi(E_m(Y))$ is $K-$bi-Lipschitz to an i-bounded geometry sub-model of length 
		$\ell(g_Y)$ in the sense of Definition~\ref{def-ibddgeoblocksub}.
	\end{enumerate}
\end{thm}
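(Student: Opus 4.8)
<br>

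The statement to be proved is Theorem~\ref{thm-effectivesub}, which refines one direction of the bounded / i-bounded geometry characterizations (Theorems~\ref{thm-bddgeoss} and \ref{thm-ibddgeoss}) to the level of sub-models $E_m(Y)$ attached to tight geodesics $g_Y$ in the hierarchy. Let me sketch how I would prove it.

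\medskip

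\textbf{Overall strategy.} The plan is to run, \emph{inside the subsurface $Y$}, exactly the argument that Minsky uses in \cite[Theorem 9.1]{minsky-elc1} to prove the global statements, and to extract the effective (uniform-in-$Y$) bi-Lipschitz bound by tracking constants. The key structural fact supplied by \cite{minsky-elc1,minsky-elc2} is that the sub-model $E_m(Y)$ is built out of precisely the tight geodesics $g_W$ subordinate to $g_Y$ (i.e.\ supported on subsurfaces $W \subseteq Y$), that $E_m(Y)$ embeds locally isometrically into the full model $E_m$, and that $\Phi$ restricts to a bi-Lipschitz embedding of $E_m(Y)$ with constant depending only on $\Phi$. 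So everything is local to $Y$, and the only input data we are allowed to use are the projection coefficients $d_W(\tau,\LL)$ for $W \subseteq Y$.

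\medskip

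\textbf{Step 1: blocks of $E_m(Y)$ are thick or thin, and the Margulis tubes are indexed by curves appearing in the hierarchy restricted to $Y$.} Recall from \cite[Sections 8,9]{minsky-elc1} that each block of the combinatorial model is either a (uniformly $L$-bi-Lipschitz) thick block, or a thin block obtained by drilling along a curve $\sigma$ and Dehn-refilling with a twist coefficient determined by $d_\sigma(\tau,\LL)$ (more precisely by the subsurface projection to the annulus $A_\sigma$), and a meridian coefficient $a_\sigma + b_\sigma i$, where $b_\sigma$ is governed by the projections $d_W(\tau,\LL)$ over the non-annular $W$ containing $\sigma$ in their boundary. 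Restricting to $E_m(Y)$, the relevant curves $\sigma$ are exactly those $\sigma \subset Y$ that arise as vertices along tight geodesics $g_W$, $W\subseteq Y$; this is where the hypothesis ``$Y$ has $\zeta(Y)\geq 4$'' is used, since $g_Y$ itself is then supported on a subsurface of complexity $\geq 4$ and the resolution of $\HH(\tau,\LL)|_Y$ makes sense.

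\medskip

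\textbf{Step 2: convert the projection bounds into block-geometry bounds.} For part (1): the hypothesis $d_W(\tau,\LL)\leq R$ for \emph{all} proper essential $W\subset Y$ (annular included) bounds both the twist coefficients $d_\sigma(\tau,\LL) = d_{A_\sigma}(\tau,\LL)$ and the imaginary/real parts of the meridian coefficients $a_\sigma+b_\sigma i$ by a function of $R$ alone. By the bi-Lipschitz model theorem of \cite{minsky-elc2} applied to $E_m(Y)$ (which embeds locally isometrically in $E_m$), this forces the diameter of each Margulis tube of $E_m(Y)$ to be bounded by a function of $R$; combined with the fact that off the tubes the blocks are already $L$-thick, one gets a single bi-Lipschitz constant $K = K(R, L, \Phi)$ comparing $\Phi(E_m(Y))$ to a concatenation of $L'$-thick blocks (Definition~\ref{def-bddgeoblocksub}) — i.e.\ a bounded geometry sub-model of length $\ell(g_Y)$. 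For part (2): the hypothesis now bounds only $d_W(\tau,\LL)$ over \emph{non-annular} $W\subset Y$, so the meridian coefficients $a_\sigma + b_\sigma i$ have $|b_\sigma|$ bounded by $R$ but $|a_\sigma|$ unbounded; this is precisely the defining feature of a \emph{thin block} (Definition~\ref{def-thinblock}) — the boundary torus of each Margulis tube has bounded diameter (comparable to $b_\sigma$) but the tube itself may be long. Hence each block of $\Phi(E_m(Y))$ is $K$-bi-Lipschitz to an $L$-thick block or a filled thin block, giving an i-bounded geometry sub-model of length $\ell(g_Y)$ in the sense of Definition~\ref{def-ibddgeoblocksub}.

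\medskip

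\textbf{The main obstacle.} The substantive difficulty is the \emph{uniformity of $K$ in $Y$}: a priori the bi-Lipschitz comparison between the combinatorial sub-model and the hyperbolic sub-model could degrade as $\ell(g_Y)\to\infty$ or as $\zeta(Y)$ varies. The resolution is that the bi-Lipschitz homeomorphism $\Phi: E_m \to E$ of \cite{minsky-elc2} is \emph{global} with a single constant, and $E_m(Y)\hookrightarrow E_m$ is a local isometry onto a ``product region'' carved out by the tight geodesic $g_Y$; thus $\Phi|_{E_m(Y)}$ inherits the global constant, and the only place the hypothesis $R$ enters is in bounding the \emph{internal} combinatorial geometry of $E_m(Y)$ (twist and meridian coefficients), which is manifestly a function of $R$ and the complexity $\zeta(S)$ alone, independent of which copy of $Y$ we look at. Verifying that the sub-model construction of \cite[pgs.\ 37--40]{minsky-elc1} and its embedding into $E_m$ genuinely have constants independent of $Y$ — i.e.\ chasing Minsky's estimates to confirm they are stated (or can be restated) uniformly — is the real content and the step I would expect to require the most care; everything after that is a bookkeeping application of the model and bi-Lipschitz model theorems.
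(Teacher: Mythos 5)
Your proposal is correct and follows essentially the same route as the paper, which simply observes that the proof of \cite[Theorem 9.1]{minsky-elc1} (subsurface projection bounds controlling the twist and meridian coefficients of the blocks, hence thick versus thin block geometry) together with the bi-Lipschitz model theorem of \cite{minsky-elc2} contains the argument, with uniformity in $Y$ coming exactly as you say from the global constant of $\Phi$ and the local isometric embedding $E_m(Y)\subset E_m$. Your write-up is in fact more detailed than the paper's one-line justification, but the ingredients and their roles are the same.
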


\subsection{Proof of Theorem~\ref{thm-maintechnicalgeolt}}\label{sec-geolts} With the background on model geometries of Section~\ref{sec-splitt} in place, the proof of Theorem~\ref{thm-maintechnicalgeolt} now follows the scheme sketched at the end of Section~\ref{sec-ibdd}.

\begin{proof}[Proof of Theorem~\ref{thm-maintechnicalgeolt}:] We continue with the notation used in Theorem~\ref{thm-effectivesub}. 
	We observe first that $d_S (\tau, \LL) = \infty$, and that for any proper essential subsurface $W$ of $S$, $d_W (\tau, \LL)$ is finite.
	Hence 
	\begin{enumerate}
		\item either there exist tight geodesics $g_W\in \HH(\tau,\LL)$ supported on subsurfaces $W$ of complexity $\zeta_0 (W) = 4$, such that $\{\ell(g_W)\}$ is unbounded,
		\item or 	there exists a minimal $\zeta_0 >  4$ and $R \geq 1$, such that  the following happens:
		\begin{itemize}
			\item   there exist tight geodesics $g_W\in \HH(\tau,\LL)$ supported on subsurfaces $W$ of complexity $\zeta (W) = \zeta_0$, such that $\{\ell(g_W)\}$ is unbounded,
			\item  for all $Y$   satisfying $4\leq \zeta (Y) < \zeta_0$, any tight geodesic $g_Y\in \HH(\tau,\LL)$ supported on $Y$ satisfies $\ell(g_Y) \leq R$.
		\end{itemize}
	\end{enumerate}
    In either case, we show now that Theorem~\ref{thm-effectivesub}(2) furnishes $K \geq 1$ and a sequence of subsurfaces $W_n$ with $ \zeta (W_n) = \zeta_0$ and $E_m(W_n)$ embedded in $E_m$ such that   $\Phi(E_m(W_n))$ is $K-$bi-Lipschitz to an i-bounded geometry sub-model of length 
	$\ell(g_{W_n})$ in the sense of Definition~\ref{def-ibddgeoblocksub}.
    
    The first case was dealt with at the end of Section~\ref{sec-ibdd}. In the second case, we follow the same scheme. 
    Choose $x_n$ to lie in the thick part of the  $[\frac{\ell(g_{W_n})}{2}]$-th block. The number of topological types of surfaces with a fixed complexity $\zeta_0$ is finite.
    Hence, after passing to a subsequence if necessary, we can assume that the subsurfaces $W_n$ with $ \zeta (W_n) = \zeta_0$  are homeomorphic to a fixed surface $\Sigma$ with
    $\zeta (\Sigma) = \zeta_0$. 
    
	As in the argument for $\zeta_0=4$,
the combinatorial model manifold for $E$ contains
combinatorial sub-models $E_m(W_n)$ consisting of $\ell(g_{W_n})$ drilled thin blocks
(in the sense of Definition~\ref{def-dthinblock}) glued end to end. Here, each $E_m(W_n)$ is a concatenation of drilled thin blocks
 obtained from $\Sigma \times [0, \ell(g_{W_n})]$ after drilling.

We choose a sequence $x_n$ such that $x_n$ lies in the (necessarily thick part of the) $[\ell(g_{W_n}) / 2]-$th block 
of $E_m(W_n)$. Finally, let $n \to \infty$. Let 
$(N_\infty, x_\infty)$ denote the geometric limit of 
$E_m(W_n)$ (after passing to a subsequence if necessary).

Recall that $M$ (resp. $E$) is the truncation of $M^h$ (resp.\ $E^h$).
Hence, the geometric limit of $(M, x_n)$ equals the geometric limit of $(E,x_n)$. 
Finally, the geometric limit of $(E,x_n)$ agrees with the geometric limit 
$(N_\infty, x_\infty)$  away from Margulis tubes.  Proposition~\ref{prop-geoltofibddgibdd} now  shows that the geometric limit $N^h$ of  $\big(\Phi(E_m(W_n)), x_n \big)$ has a truncation $N$ that admits a model of generalized i-bounded geometry, completing the proof.
	\end{proof}
	 
	\begin{comment}
		content...
		
		There exists a sequence $\{x_i\}$ such that $(M,x_i)$ geometrically converges to a possibly cusped i-bounded doubly degenerate manifold
		i.e. they area allowed to have rank 2 cusps (see \cite{bon-otal}
		for instance). Note however that if $M= S \times \R$, the geometric limit is allowed to have base surface of type $\Sigma$, where $\Sigma$ is an essential
		subsurface of $S$.
		
		\begin{proof}
			Define "$R-$thick geodesic" mod annuli as a hierarchy geodesic $l$ in
			a subsurface $\Sigma_i$, where all hierarchy paths subordinate to $l$ have subsurface
			projections of length $< R$ *except possibly* projections to annuli.
			
			Now, consider all $R-$thick geodesics in the hierarchy paths. Since complexities
			and topological types of subsurfaces are finite in number, there exists a sequence $\Sigma_i$ of a fixed
			topological type $\Sigma$ such that the length of the $R-$thick geodesic on $\Sigma_i $ is $> i$.
			
			Consider the mid-points of these geodesics and the model surface $\Sigma_i^0$
			corresponding to it. Take the basepoint $x_i$ on $\Sigma_i^0$.
			
			The limit of $(M,x_i)$ is homeomorphic to $\Sigma x \R$ and has generalized i-bounded
			geometry.
		\end{proof}
	\end{comment}

\begin{rem}\label{rmk-ct} Recall that Corollary~\ref{cor--kl-nonconical} tells us that the Hausdorff dimension of the non-conical limit set $\Lambda^{nc}(G)$ of a finitely generated geometrically infinite Kleinian group $G$ is 2 and the above proof of Theorem~\ref{thm-maintechnicalgeolt} 
completed the proof of Corollary~\ref{cor--kl-nonconical}.

There is, however, a much more elementary statement that can be deduced much more easily from results in the existing literature: Let $G$ be a finitely generated 
Kleinian group. Then  $\Lambda^{nc}(G)$  is countable if and only if $G$ is geometrically finite. Indeed, if $G$ is geometrically finite, non-conical limit points agree with parabolic fixed points in $\Lambda(G)$, and this collection is countable.
On the other hand,
when $G$ is   geometrically infinite, there is a  Cannon-Thurston map $\partial i$ from the Gromov boundary (when $G$ has no parabolics) or the Bowditch boundary (when $G$ has  parabolics) onto $\Lambda(G)$ \cite{mahan-split,mahan-kl}. Further, 
\begin{enumerate}
\item  points in   $\Lambda(G)$ with multiple pre-images under $\partial i$
are non-conical,
\item $\partial i$ identifies precisely the ideal end-points of leaves of ending laminations \cite{mahan-elct,mahan-kl}, and
\item Any ending lamination has uncountably many leaves.
\end{enumerate} 
Hence $\Lambda^{nc}(G)$  is uncountable when $G$ is   geometrically infinite.
\end{rem}

\begin{eg}\label{eg-2dsurf}
We finally construct an example of a geometrically infinite hyperbolic surface to show that the sufficient conditions of Theorem~\ref{NonConicalFromGeometricLimit}
are \emph{not necessary}. We will construct a geometrically infinite hyperbolic surface surface $S$  such that as $x\in S$ tends to infinity, the injectivity radius
$\inj_x$ at $x$ also  tends to infinity. Hence, any geometric limit $(S,x_n)$ with 
$x_n\to \infty$ is necessarily the full hyperbolic plane $\Hyp^2$. This violates the hypotheses of Theorem~\ref{NonConicalFromGeometricLimit}. Nevertheless, the Hausdorff dimension of the non-conical limit set of the subgroup of $PSL(2, \R)$ corresponding
to $S$ is one. This will follow from Theorem~\ref{thm-cheeger-surface} once the construction is done.

We proceed with our construction. For each $n \in \natls$, construct a closed hyperbolic surface $S_n$ with injectivity radius at least $n$. Such $S_n$'s may be constructed as covers of a fixed closed hyperbolic surface $\Sigma$ by using the residual finiteness of surface groups, and constructing covers corresponding to subgroups that exclude small elements. Next, let $\sigma_n \subset S_n$ denote
a simple closed non-separating geodesic, with length $l_n$. Clearly $l_n \to \infty$ as
$n\to \infty$. Passing to a subsequence if necessary, we can assume that 
$l_n < l_{n+1}$ for all $n$. Let $S_n^c$ denote $S_n$ cut open along $\sigma_n$.
Then  $S_n^c$ has two boundary components $\sigma_n^\pm$, both of length $l_n$.
We will modify $S_n^c$ minimally to change the length of  $\sigma_n^+$ to $l_{n+1}$.
Towards this, let $\PP_n \subset S_n^c$ be an embedded pair of pants with one boundary component $\sigma_n^+$ and the other boundary components $\alpha_n, \beta_n$ say.
Let  $\PP_n^\prime$ denote the unique pair of pants with boundary components of length
$l_{n+1}$,  $\ell(\alpha_n), \ell(\beta_n)$. Let $S_n^\prime = (S_n \setminus \PP_n) \cup
\PP_n^\prime$ denote the hyperbolic surface with totally geodesic boundary obtained by replacing $\PP_n$ with  $\PP_n^\prime$. Note that the lengths $\ell(\alpha_n), \ell(\beta_n)$ are both at least $2n$ by the assumption on injectivity radius,
as is $l_{n+1}$. For $n$ large, the pair of pants $\PP_n\prime$ is `skinny', i.e.\ the distance between any pair of its boundary geodesics tends to zero as $n \to \infty$.
Hence, there exists $\ep_n$ with $\ep_n\to 0$ as $n \to \infty$ such that for each point $p$ on the boundary of $S_n^\prime$, there exists a hyperbolic half-disk $\HH_p$ with boundary of radius at least $(n-\ep_n)$ such that  the boundary of $\HH_p$ is
 contained in the totally geodesic  boundary of $S_n^\prime$. Denote the boundary component of $S_n^\prime$ of length $l_{n+1}$ by $\sigma_n^m$, so that 
 $S_n^\prime$ has two boundary components $\sigma_n^-$ and $\sigma_n^m$ with lengths $l_n$ and $l_{n+1}$ respectively.
 For all $n \geq 1$, glue $S_n^\prime$ and $S_{n+1}^\prime$ together along the boundary components of length $l_{n+1}$. This gives us a surface $S^\prime$ with
 one geodesic boundary component of length $l_1$ corresponding to $\sigma_1^-$.
 Finally, double $S^\prime$  along  $\sigma_1^-$ to obtain $S$.

 A caveat: it is possible, a priori, that the length of a geodesic in  $S^\prime$ intersecting each $S_n^\prime$ and escaping to infinity still has finite length. However, since each $S_n$ has injectivity radius at least $n$, this is not possible in the above example. In particular,  $S$ has a complete hyperbolic structure.
 Further, as promised, the construction shows that the injectivity radius
 $\inj_x$ at $x$   tends to infinity  as $x\in S$ tends to infinity.
 
 Finally, we exploit the freedom in the construction of $S_n$ to ensure that 
 the lengths $l_n$ grow slowly with respect to the areas of $S_n$, i.e.\ we demand that $(l_n + l_{n+1})/A(S_n) \to 0$ as $n \to \infty$. This can be arranged for instance by increasing the area of each $S_n$ arbitrarily by increasing its topology as follows.
Let $\beta_n$ be an auxiliary non-separating curve in $S_n$ disjoint from $\sigma_n$. Then cutting $S_n$ open along  $\beta_n$ and gluing finitely many of these cyclically end to end, we can construct a finite cyclic cover
 (of as large a degree as we like) of $S_n$. Since $\sigma_n$ is unaffected by this cyclic cover construction, we can arrange so that $(l_n + l_{n+1})/A(S_n) \to 0$ as $n \to \infty$. It follows that the Cheeger constant of $S$ is zero.
  Hence, by  Theorem~\ref{thm-cheeger-surface} the Hausdorff dimension of the non-conical limit set of the subgroup of $PSL(2, \R)$ corresponding
 to $S$ is one.  
 \end{eg}

\section{Hausdorff dimension of Myrberg limit sets}\label{sec-Myrberg}

Let $G$ be any non-elementary discrete group acting
properly by isometries on a Gromov hyperbolic space $X$. Let $\mG$ be the set of Myrberg limit points as in Definition \ref{MyrbergDefn}. The goal of this section is to prove that the Hausdorff dimension of the Myrberg limit set is the same as that of the whole conical limit set. In the next section, we will explain how to prove the same result for the Myrberg limit 
set in the Floyd boundary.

\begin{thm}\label{MyrbergHdimHypThm}
The Hausdorff dimension of the Myrberg limit set $\mG$  is equal to $\frac{\e G}{\epsilon}$, where $\epsilon$ is the parameter for the visual metric in Lemma \ref{VisualMetric}.    
\end{thm}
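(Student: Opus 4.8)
The plan is to realize the Myrberg limit set as the boundary of a quasi-radial tree built via the general machinery of Section~\ref{sec-hausd}. First I would establish the upper bound $\HD(\mG)\le \e G/\epsilon$, which is the easy direction: since $\mG\subseteq \cG$ and $\HD(\cG)=\e G/\epsilon$ by the Bishop--Jones type argument (the conical limit set is covered by shadows of orbit points, and a standard Poincar\'e-series/Borel--Cantelli argument using Lemma~\ref{ShadowApproxBalls} gives the bound), this follows immediately. One can also simply quote that $\HD(\cG)=\e G/\epsilon$, which the paper treats as known, and note $\mG\subseteq \cG$.

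For the lower bound $\HD(\mG)\ge \e G/\epsilon$, I would run the group-action construction of \textsection\ref{subsec-qtreegroup}. Fix a basepoint $o\in X$. Take a divergent sequence $L_n\to\infty$ and a sequence $\Delta_n$ (which may be taken constant, say $\Delta_n=\Delta$ large relative to the separation constant $R$), and set $A_n$ to be a maximal $R$-separated subset of the annular set $A(L_n,\Delta,o)=\{g\in G: |d(o,go)-L_n|\le\Delta\}$. By the definition of the critical exponent, for any $\omega_n<\e G$ we have $|A(L_n,\Delta,o)|\ge \mathrm{e}^{\omega_n L_n}$ for $L_n$ large; passing to the $R$-separated subset costs only a fixed multiplicative factor (absorbed by slightly decreasing $\omega_n$ or increasing $L_n$), so Conditions (\ref{LargeGrowth1}), (\ref{LargeGrowth2}), (\ref{Separation}) hold with $\omega_n\to\e G$. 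The auxiliary elements $b_n$ are chosen so that $\{b_n:n\ge1\}$ enumerates \emph{all} loxodromic elements of $G$ (with repetition allowed), and we set $K_n=1$ for all $n$. After replacing $A_n$ and $\{b_n\}$ by suitable conjugates/translates and inserting short "connecting" pieces as in the sliding-endpoints discussion, the local $\tau$-straightness Conditions (\ref{LocalStraight1}), (\ref{LocalStraight2}) can be arranged (this is where one uses that $G$ has infinitely many independent loxodromics and bounded-projection facts à la Lemma~\ref{BddProjLem}); alternatively one uses the general admissible-path formalism of \textsection\ref{sec-contracting}. Now choose $K_n$ (trivially $1$ here, but one may instead thin $A_n$ or enlarge $L_n$) so that (\ref{ChoiceKnEq}) and (\ref{ChoiceKnEq2}) hold, i.e.\ $\Delta_n/L_n\to0$, $B_n/L_n\to0$ (achievable by taking $L_n$ growing fast relative to the lengths $B_n=d(o,b_no)$ of the first $n$ loxodromics), and $(L_{m+1}+\Delta_{m+1})/\sum_{n\le m}(L_n+\Delta_n+B_n)\to0$.

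By Lemma~\ref{QuasiRadialTree}, the resulting map $\Psi$ is injective and $T=T(\mathcal W)$ is a quasi-radial tree; by Lemma~\ref{LargeTreeGrpVersion} its growth rate is $\ge\omega=\liminf\omega_n=\e G$; and by Lemma~\ref{HDLargeTree}, $\HD(\partial T)\ge \e G/\epsilon$. It remains to show $\partial T\subseteq\mG$. This is the content sketched in the introduction: a radial ray in $T$ is labeled by an infinite admissible word $W=a_1b_1a_2b_2\cdots$, and since $\{b_n\}$ ranges over all loxodromic elements, $W$ contains every loxodromic element as a subword. By the characterization of Myrberg points in Lemma~\ref{lem:CharMyrberg} — for every loxodromic $h$, the geodesic $[o,\xi]$ must come $r$-close to $g_n\ax(h)$ along arbitrarily long sub-segments for some sequence $g_n$ — one checks that the prefixes of $W$ ending just before each occurrence of $b_n=h$ supply exactly such a sequence $g_n$, using that $[o,Wo]$ fellow-travels $p(W)$ (Lemma~\ref{lem:localqginhyp}) and that $p(W)$ passes near $g_n\ax(h)$ for a long stretch precisely because $b_n=h$ is inserted there and $L_{n+1}$ is huge. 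Hence $\xi\in\mG$, giving $\HD(\mG)\ge\HD(\partial T)\ge\e G/\epsilon$, and combined with the upper bound the theorem follows.

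The main obstacle, I expect, is the last step: carefully verifying that $\partial T\subseteq\mG$ via Lemma~\ref{lem:CharMyrberg}. One must be precise about how the insertion of a single loxodromic element $b_n=h$ into the admissible word forces the radial geodesic to spend a \emph{long} time (diameter $\to\infty$ as $n\to\infty$) near a translate of $\ax(h)$ — naively a single copy of $h$ only forces bounded fellow-travelling. The fix is that $L_{n+1}$ (hence the distance traveled \emph{after} $b_n$ before the next divergence) is enormous, so the projection of $[o,\xi]$ onto $g_n\ax(h)$ is controlled, combined with the Morse lemma and the bounded-projection setup; making this quantitative, and ensuring the straightness conditions survive the enumeration of \emph{all} loxodromics (which forces the $b_n$ to have wildly varying lengths and axes), is the delicate bookkeeping. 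The secondary nuisance is arranging Conditions (\ref{LocalStraight1})--(\ref{LocalStraight2}) simultaneously with the enumeration constraint on $\{b_n\}$; this is handled by the sliding-endpoints trick and, in full generality, by deferring to the contracting-element framework of Theorem~\ref{thm-qrtree-Myrberg-general}.
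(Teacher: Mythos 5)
Your overall architecture — upper bound via $\mG\subseteq\cG$ and Bishop--Jones, lower bound via a quasi-radial tree with annular sets $A_n$, connectors from a finite set of independent loxodromics to enforce (\ref{LocalStraight1})--(\ref{LocalStraight2}), bridges $b_n$ running over loxodromic elements, $K_n=1$, and then Lemmas \ref{QuasiRadialTree} and \ref{HDLargeTree} — is exactly the paper's construction in Section \ref{sec-Myrberg}. However, there is a genuine gap at precisely the point you flag as the main obstacle, and your proposed fix does not work. You correctly observe that a single insertion of $h$ only forces the radial geodesic to fellow-travel a translate $g_n\ax(h)$ over a segment of diameter roughly $d(o,ho)+O(1)$, which is bounded. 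But taking $L_{n+1}$ enormous does not repair this: the straightness/bounded-projection conditions you impose (via the connector $h_n\in F$ and the next block $a_{n+1}$) guarantee that the path \emph{leaves} $N_r(g_n\ax(h))$ within a uniformly bounded distance after the $b_n$-segment — that is exactly what makes the concatenation a local quasi-geodesic. So the intersection $[o,\xi]\cap N_r(g_n\ax(h))$ stays bounded no matter how large $L_{n+1}$ is, and Lemma \ref{lem:CharMyrberg} is not verified by this mechanism.

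The correct mechanism — which is in fact already available inside your own setup, since your enumeration of ``all loxodromic elements'' automatically contains every power $h^i$ — is to use the occurrences of the \emph{high powers} of $h$ rather than repeated occurrences of $h$ itself. Since $\ax(h^i)=\ax(h)$, the prefix $g_{n_i}$ just before the bridge $b_{n_i}=h^i$ gives a translate $g_{n_i}\ax(h)$, and the two points $g_{n_i}o$ and $g_{n_i}h^io$ are both within $R_0$ of $[o,\xi]$ (Morse lemma applied to the admissible path) and within a distance depending only on $h$ of $g_{n_i}\ax(h)$; hence $[o,\xi]\cap N_r(g_{n_i}\ax(h))$ has diameter at least $d(o,h^io)-O(1)\to\infty$ as $i\to\infty$. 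This is the argument in Lemma \ref{EndingMyrbergPts}, and it is why the paper explicitly stipulates that $\mathcal B$ contains all non-trivial powers of loxodromic elements. Once you replace your ``$L_{n+1}$ is huge'' step by this power argument, the rest of your proof goes through as written.
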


The remainder of this section is devoted to the proof of this theorem. The scheme is analogous to the one followed in the construction of non-conical limit points in Section  \ref{sec-nonconical}. We construct a sequence $A_n$ of  \textit{large annular sets} of elements and a sequence of \textit{bridges} $ b_n$ inserted between $A_n$ and $A_{n+1}$.
We then proceed to concatenate these appropriately.
We now make this precise.\\

\noindent\textbf{Bridges.} As $G$ is a countable group, we list all loxodromic  elements in $G\act X$ as follows.
$$
\mathcal B=\{ b_1,   b_2, \cdots,  b_n, \cdots \}
$$
We include all non-trivial powers of loxodromic elements in $\mathcal B$. 
Fix a basepoint $o\in X$. Let $B_n=d(o, b_no)$ be the  length of elements $b_n$ in $\mathcal B$.
We fix  a  set    $F=\{f_1,f_2,f_3\}$   of three pairwise independent loxodromic elements in $G$. The following lemma will be useful.

\begin{lem}\label{ChooseAi}
Let $b\in\mathcal B$ and $\tilde A$ be a subset in $G$.
There exist a subset $A\subseteq \tilde A$ and a loxodromic element $f\in F$  with the following properties:
\begin{enumerate}
    \item $3|A|\ge |\tilde A|$.
    \item For   any $a\in A$, we have $[o,ao]$ has $\tau$-bounded projection to $\ax(f)$.
    \item  $[o,bo]$ has $\tau$-bounded projection to $\ax(f)$.
 
\end{enumerate}  
where the constant $\tau>0$ depends only on the axis of $f\in F$ and $\mathcal B$.
\end{lem}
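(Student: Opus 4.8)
\textbf{Proof proposal for Lemma~\ref{ChooseAi}.}

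The plan is to use the bounded projection estimate \eqref{BddProjEQ2} as a pigeonhole mechanism. Recall that the three elements $f_1,f_2,f_3\in F$ are pairwise independent loxodromics, and that \eqref{BddProjEQ1} supplies a constant $\tau_0>0$ with $\mathrm{diam}(\pi_{\mathrm{Ax}(f_i)}(\mathrm{Ax}(f_j)))\le\tau_0$ for $i\ne j$, and hence \eqref{BddProjEQ2}: for every $g\in G$ and every $f_i\ne f_j\in F$, $\min\{\mathrm{diam}(\pi_{\mathrm{Ax}(f_i)}([o,go])),\mathrm{diam}(\pi_{\mathrm{Ax}(f_j)}([o,go]))\}\le\tau_0$. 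First I would fix $b\in\mathcal B$. Applying \eqref{BddProjEQ2} to the three pairs drawn from $F$ and to the element $b$, I observe that $[o,bo]$ can have large ($>\tau_0$) projection to at most one of $\mathrm{Ax}(f_1),\mathrm{Ax}(f_2),\mathrm{Ax}(f_3)$; consequently there are at least two indices $i$ for which $\mathrm{diam}(\pi_{\mathrm{Ax}(f_i)}([o,bo]))\le\tau_0$. Call this set of "good for $b$" indices $S_b$, so $|S_b|\ge 2$.

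Next I would apply the same reasoning element-by-element to $\tilde A$. For each $a\in\tilde A$, \eqref{BddProjEQ2} forces $[o,ao]$ to have $\tau_0$-bounded projection to at least two of the three axes; let $S_a\subseteq\{1,2,3\}$ be the corresponding set of good indices, $|S_a|\ge 2$. Now for each $a$, since $|S_a|\ge 2$ and $|S_b|\ge 2$ inside a $3$-element set, $S_a\cap S_b\ne\emptyset$; pick an index $i(a)\in S_a\cap S_b$. By pigeonhole over $\tilde A$ (partitioning $\tilde A$ according to the value of $i(a)\in\{1,2,3\}$), some index $i_0$ occurs for at least $|\tilde A|/3$ elements $a$. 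Set $f:=f_{i_0}$ and let $A:=\{a\in\tilde A: i(a)=i_0\}$. Then $3|A|\ge|\tilde A|$, giving (1); for every $a\in A$ we have $i_0\in S_a$, so $\mathrm{diam}(\pi_{\mathrm{Ax}(f)}([o,ao]))\le\tau_0$, giving (2); and $i_0\in S_b$ gives $\mathrm{diam}(\pi_{\mathrm{Ax}(f)}([o,bo]))\le\tau_0$, which is (3). Taking $\tau:=\tau_0$ (which depends only on the axes of the $f_i$'s and — through the choice of $F$ relative to $\mathcal B$ in \eqref{BddProjEQ1} — on $\mathcal B$) completes the argument.

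I do not anticipate a serious obstacle here: the whole content is the elementary combinatorial fact that two subsets of size $\ge 2$ in a $3$-element set must intersect, combined with the already-established projection estimate \eqref{BddProjEQ2}. The only point requiring a little care is bookkeeping the dependence of $\tau$: it must be a single constant working for all $b\in\mathcal B$ simultaneously, which is fine because \eqref{BddProjEQ1}–\eqref{BddProjEQ2} already produce a uniform $\tau_0$ once $F$ is fixed, and $F$ was chosen (in the paragraph preceding \eqref{BddProjEQ1}) so that $\{h,k\}\cup F$ — and in the present Myrberg setting, the relevant loxodromics parametrizing $\mathcal B$ — are pairwise independent with the $f_i$'s. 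If in the Myrberg application one wants $\tau$ to also absorb a bound coming from comparing $\mathrm{Ax}(f_i)$ with $[o,bo]$ uniformly over $b\in\mathcal B$, this is already covered by \eqref{BddProjEQ2}, which quantifies over \emph{all} $g\in G$, in particular all $b_n$. Hence no new estimate is needed.
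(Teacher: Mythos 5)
Your argument is correct and is essentially the paper's own proof: the paper also applies Lemma~\ref{BddProjLem} twice (once to $[o,ao]$, once to $[o,bo]$) to find, for each $a$, a common $f\in F$ good for both, and then pigeonholes over the three elements of $F$ to extract $A$ with $3|A|\ge|\tilde A|$. You have merely written out the two-sets-of-size-two-in-a-three-element-set intersection step and the dependence of $\tau$ more explicitly than the paper does.
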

\begin{proof}
	By applying Lemma \ref{BddProjLem}  twice, we have the following.
For each $a\in A$ there exists $f\in F$ so that $[o,ao]$ and $[o,bo]$ have $\tau$-bounded projection to $\ax(f)$. As $F$ consists of three elements,  (1) follows by picking a common $f$ for a subset   $A$ of  $\tilde A$
 of cardinality at least one-third.  
\end{proof}

\noindent\textbf{Large annular sets.} 
Fix $\Delta\ge 1$. Recall the annular set with parameter  $n, \Delta$ (Definition \ref{def-annularset}): 
$$
A(n, \Delta,o)=\{g\in G: |d(o, go)-n|\le\Delta\}
$$
for which we have
\begin{equation}\label{AnnulusGrowth}
\e G=\limsup_{n\to\infty}\frac{\log |A(n, \Delta,o)|}{n}
\end{equation}

%Fix a sequence of large integers  $\{K_i: i\ge 1\}$ so that $$\frac{M_i}{K_i}\to 0.$$ 

We fix $\tau>0$ as in Lemma \ref{ChooseAi} and let  $L, R>0$ be given by Lemma \ref{QuasiRadialTree}.  We assume that $d(o,fo)>L$ for each $f\in F$ by taking high powers if necessary. Note that $\tau$ remains the same as it depends only on the axes $\ax(f)$.

Fix a divergent sequence of numbers  $L_n\to \infty$ with $L_n\ge L$ so that 
\begin{enumerate}
    \item 
    $\frac{L_{m+1}+\Delta_{m+1}}{\sum_{n=1}^m L_n+\Delta+B_n}\to 0$.  
    \item 
    $|A(L_n, \Delta,o)|\ge \mathrm{e}^{L_n\omega_n}$.
    \item 
    $\omega_n\to \e G$
\end{enumerate} 
where  Item (2) follows by (\ref{AnnulusGrowth}). Thus, the parameters $(L_n,\Delta,K_n)$ with $K_n=1$ satisfy the assumptions  (\ref{ChoiceKnEq}) (\ref{ChoiceKnEq2}) of Lemmas \ref{LargeTreeGrpVersion} and \ref{HDLargeTree}.

By a covering argument, we  see that $A(L_n, \Delta,o)$ contains a maximal $(2R+2\Delta)$-separated subset $\tilde A_n$ so that  $$|\tilde A_n|\ge N_0 \mathrm{e}^{L_n\omega_n}$$ where $N_0:=|\{g\in G: d(o,go)\le 2R+2\Delta\}|$ depends only on $R,\Delta$. 

By Lemma \ref{ChooseAi}, there exist a sequence of subsets $A_n\subseteq \tilde A_n$ and $f_n, h_n\in F$ so that
\begin{enumerate}
    \item 
    $|A_n|>\mathrm{e}^{L_n \omega_n}$.
    \item
    for each $a\in A_n$, $[o,ao]$ and $[o, b_no]$ have $\tau$-bounded projection to $\ax(f_n)$.
    \item 
    for each $a\in A_{n+1}$, $[o,ao]$ and $[o, b_no]$ have $\tau$-bounded projection to $\ax(h_n)$.
\end{enumerate}
\begin{rem}
By Lemma \ref{ChooseAi}, we should have $|A_n|>(N_0/3)\mathrm{e}^{L_n \omega_n}$ in Item (1). We may take   even larger  values of $L_n$ to absorb the coefficient $N_0/3$ before $\mathrm{e}^{L_n \omega_n}$.    
\end{rem} 
We may drop (and re-index) finitely many elements $b_n\in \mathcal B$  so that $B_n=d(o,b_no)\ge L$ for any $n\ge 1$. The last two criteria above imply the following analog of Lemma \ref{lem:localqginhyp}. 
\begin{lem}\label{lem:localqg-Myrberg}
There exist $c, \tau'$ depending only $\tau$ satisfying the following. For any $a_n\in A_n,a_{n+1}\in A_{n+1}$,  the path labeled by $a_nf_n  b_n  h_{n} a_{n+1}$, i.e.\  $$[o,a_no]\cup (a_n[o, f_no])\cup(a_nf_n[o, b_no])\cup(a_nf_n b_n[o,h_{n}o])\cup(a_nf_n b_nh_n[o,a_{n+1}o])$$ is an $L$-local $\tau'$-quasi-geodesic. Hence it is a $c$-quasi-geodesic by Lemma \ref{localtoglobal}. 
\end{lem}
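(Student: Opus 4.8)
\textbf{Proof plan for Lemma~\ref{lem:localqg-Myrberg}.}

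The statement is, up to re-labelling, the same kind of local-to-global argument already used in the proof of Lemma~\ref{lem:localqginhyp}, so the plan is to reduce it to the two elementary geometric facts that have already been established: first, that concatenating two $c_0$-quasi-geodesics whose common endpoint has $\tau$-bounded projection onto one of them produces a $\tau'$-quasi-geodesic (the fact invoked in the proof of Lemma~\ref{DCExtension}, where the thin-triangle inequality gives $\tau'$ depending only on $\tau$, $c_0$ and $\delta$); and second, the local-to-global principle Lemma~\ref{localtoglobal}, which upgrades an $L$-local $\tau'$-quasi-geodesic to a global $c$-quasi-geodesic once $L$ is large enough.

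First I would fix notation: write the five geodesic pieces of the path as $p_1=[o,a_no]$, $p_2=a_n[o,f_no]$, $p_3=a_nf_n[o,b_no]$, $p_4=a_nf_nb_n[o,h_no]$, $p_5=a_nf_nb_nh_n[o,a_{n+1}o]$, each a genuine geodesic segment in $X$. By the construction of $A_n$ (items (2),(3) in the list preceding the lemma), $[o,a_no]$ has $\tau$-bounded projection onto $\ax(f_n)$, hence so does the terminal segment of $p_1$ onto $p_2$ (since $p_2$ is an $a_n$-translate of a sub-segment of $\ax(f_n)$ up to a bounded error, $f_n\in F$ being loxodromic with $\ax(f_n)$ a uniform quasi-geodesic), and symmetrically $[o,b_no]$ has $\tau$-bounded projection onto $\ax(f_n)$, which controls the junction $p_2p_3$; the junction $p_3p_4$ is handled by the bounded projection of $[o,b_no]$ onto $\ax(h_n)$, and $p_4p_5$ by the bounded projection of $[o,a_{n+1}o]$ onto $\ax(h_n)$. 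Applying the concatenation fact at each of the four junctions (and absorbing the fixed number four of applications into the constant) yields a constant $\tau'$, depending only on $\tau$, the uniform quasi-geodesic constant of the axes, and $\delta$, such that any sub-path spanning at most two consecutive pieces is a $\tau'$-quasi-geodesic. I would then note that, because $d(o,fo)>L$ for every $f\in F$, $B_n=d(o,b_no)\ge L$, and $L_n\ge L$, every sub-path of the whole concatenation of length $\le L$ is contained in the union of at most two consecutive pieces $p_i$, hence is a $\tau'$-quasi-geodesic; that is, the path labelled by $a_nf_nb_nh_na_{n+1}$ is an $L$-local $\tau'$-quasi-geodesic. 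Finally, increasing $L$ and invoking Lemma~\ref{localtoglobal} with parameter $\tau'$ produces the promised global constant $c=c(\tau')$, so the path is a $c$-quasi-geodesic, and both $c$ and $\tau'$ depend only on $\tau$ (and the fixed data $F$, $\delta$).

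The only mild subtlety — and the step I would be most careful about — is making precise the claim that the translated geodesic segments $a_n[o,f_no]$ ``are'' quasi-geodesic sub-arcs of $\ax(f_n)$ with uniformly bounded error, so that $\tau$-bounded projection onto $\ax(f_n)$ really does control the junction between $p_1$ and $p_2$ (and likewise for $f_n b_n$ versus $\ax(f_n)$, etc.). This is where one uses that $F$ is a \emph{fixed} finite set of loxodromic elements, so there is a single constant governing the Hausdorff distance between $[o,f_no]$ and a sub-segment of $\ax(f_n)$ through $o$; once that is in hand the projection-constant $\tau$ of Lemma~\ref{ChooseAi} transfers, up to an additive constant depending only on $F$ and $\delta$, to a projection bound between the relevant geodesic pieces, and the rest is the routine concatenation-and-local-to-global machinery. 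I expect no genuine obstacle here, only bookkeeping of constants, exactly as in the proofs of Lemmas~\ref{lem:localqginhyp} and~\ref{DCExtension}.
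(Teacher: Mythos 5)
Your proposal is correct and follows essentially the same route as the paper, which in fact offers no written proof beyond asserting that conditions (2)--(3) of the construction, the length bounds $d(o,fo)>L$, $B_n\ge L$, $L_n\ge L$, and Lemma~\ref{localtoglobal} yield the claim; your four-junction bookkeeping and the observation that a subpath of length $\le L$ meets at most two consecutive pieces is exactly the intended argument. The one point worth pinning down (which you flag but do not fully resolve) is that at the junctions $a_no$ and $a_nf_nb_no$ the Gromov products to be bounded are $\langle a_n^{-1}o,f_no\rangle_o$ and $\langle b_n^{-1}o,h_no\rangle_o$, so what is literally needed is a bounded projection of $[o,a_n^{-1}o]$ onto $\ax(f_n)$ and of $[o,b_n^{-1}o]$ onto $\ax(h_n)$ (equivalently, of $[o,a_no]$ onto the translate $a_n\ax(f_n)$, etc.), not just the projections stated in (2)--(3); this is supplied by Lemma~\ref{BddProjLem}, whose bound is over all $G$-translates $\Ax(f)$, so it is a notational looseness shared with the paper rather than a gap.
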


\noindent\textbf{Construction of Myrberg limit points.} 
Set
$$
\mathcal W:=\bigcup_{m\ge 0} \prod_{n=1}^m A_nf_n    b_n h_{n}. 
$$
Then $\mathcal W$ consists of \textit{admissible}  words 
alternating over $A_n$ ($n\ge 1$) and $\mathcal B$ as follows:
$$
W_m= (a_1f_1   b_1h_1)  \cdots (a_nf_n  b_n  h_{n})\cdots (a_mf_m  b_m h_m)
$$
By construction  $\mathcal W$ has a natural tree structure. Let $\mathcal W_\infty$ denote the set of infinite  words whose prefixes are all admissible. 
Let $c$ be given by Lemma \ref{lem:localqg-Myrberg}. The following  key  fact
will be useful.
\begin{lem}\label{EndingMyrbergPts}
Let $W_\infty=\prod_{n=1}^\infty A_nf_n   b_nh_{n}$ be an infinite admissible word. Then 
the sequence of points $W_m o $ with $m\ge 1$ forms a $c$-quasi-geodesic ray ending at a Myrberg point denoted by $\xi_W$. 
\end{lem}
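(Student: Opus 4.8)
The plan is to verify two things: that $W_m o$ traces a quasi-geodesic ray, and that its endpoint is a Myrberg point. For the first, I would observe that by Lemma~\ref{lem:localqg-Myrberg} every finite prefix path labeled by $a_nf_n b_n h_n a_{n+1}$ is an $L$-local $\tau'$-quasi-geodesic, and concatenating these (since consecutive blocks overlap consistently and all edge-segments have length $\ge L$) shows the whole infinite path is an $L$-local $\tau'$-quasi-geodesic, hence a $c$-quasi-geodesic ray by Lemma~\ref{localtoglobal}. Since $X$ is proper, a quasi-geodesic ray converges to a well-defined boundary point $\xi_W \in \pU$; moreover $d(o, W_m o) \to \infty$ because the lengths $L_n, B_n$ are bounded below by $L$. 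Call this endpoint $\xi_W$.

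The main work is showing $\xi_W$ is a Myrberg point, for which I would use the characterization in Lemma~\ref{lem:CharMyrberg}: it suffices to show that for every loxodromic $h \in G$ there is a sequence of translates $g_n \ax(h)$ whose $r$-neighborhoods meet $[o,\xi_W]$ in segments of diameter $\to \infty$. The bridges $b_n$ enumerate \emph{all} loxodromic elements of $G$ (including all nontrivial powers), so for a fixed loxodromic $h$, the element $h^k$ occurs as $b_{n(k)}$ for infinitely many indices, with $B_{n(k)} = d(o, h^k o) \to \infty$ as $k \to \infty$. Along the quasi-geodesic $[o, \xi_W]$, the subpath corresponding to the block $a_{n(k)} f_{n(k)} b_{n(k)} h_{n(k)}$ contains a translate of $[o, b_{n(k)} o] = [o, h^k o]$, which fellow-travels a translate of the axis $\ax(h)$ over a length comparable to $B_{n(k)} = d(o,h^k o)$, up to the Morse constant $R_0(c)$ and the bounded-projection constant $\tau$. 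Since $B_{n(k)} \to \infty$, these fellow-travelling segments have diameter tending to infinity. Applying Lemma~\ref{lem:CharMyrberg} with this sequence of axis-translates (and a uniform constant $r$ absorbing $c$, $\tau$, $R_0$ and the hyperbolicity constant) yields that $\xi_W$ is a Myrberg point.

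The step I expect to be the main obstacle is making precise the claim that the translate of $[o, b_n o]$ inside $[o,\xi_W]$ genuinely stays uniformly close to a single translate of $\ax(h)$ over a proportion of its length tending to $100\%$ — one must check that the powers $h^k$ have $d(o, h^k o)$ growing linearly in $k$ (true for loxodromic elements, since $o$ lies within bounded distance of $\ax(h)$ and $h$ has positive stable translation length) and that the axis $\ax(h^k) = \ax(h)$ is itself a uniform quasi-geodesic, so that a long subsegment of $[o, h^k o]$ lies in a uniform neighborhood of $\ax(h)$. This is where the bounded-projection conditions (the $f_n, h_n$ flanking each $b_n$) are used: they guarantee the block path does not backtrack, so the Morse Lemma applied to the $c$-quasi-geodesic $[o,\xi_W]$ places the relevant subsegment uniformly close to $\ax(h)$. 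Once this geometric bookkeeping is set up carefully, the rest is a direct application of the stated lemmas.
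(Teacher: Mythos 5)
Your proposal is correct and follows essentially the same route as the paper: Lemma~\ref{lem:localqg-Myrberg} plus the local-to-global principle gives the $c$-quasi-geodesic ray, and the Myrberg property is verified via Lemma~\ref{lem:CharMyrberg} by locating each power $h^k=b_{n(k)}$ in the word and noting that the corresponding translate of $[o,h^ko]$ fellow-travels a translate of $\ax(h)$ over a length comparable to $d(o,h^ko)\to\infty$. The "geometric bookkeeping" you flag is exactly what the paper also leaves implicit, and your resolution of it (linear growth of $d(o,h^ko)$, quasi-convexity of the axis, and the Morse Lemma applied to the ambient $c$-quasi-geodesic) is the intended one.
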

\begin{proof}
By Lemma \ref{lem:localqg-Myrberg}, the  path $\gamma$ labeled by an infinite word $W_\infty$ is an $L$-local $\tau'$-quasi-geodesic, so it is a $c$-quasi-geodesic ray in $X$. Let $\xi\in \Lambda G$ be the end point of $\gamma$. Of course, $\xi$ is necessarily a limit point. 

To see that $\xi$  is a Myrberg limit point, we make use of Lemma \ref{lem:CharMyrberg}. For any given $ b\in\mathcal B$, we need to find a sequence of translates $g_n b$ so that for some $R>0$, $N_R(g_n b)$ intersects $\gamma$ in an unbounded set as $n\to\infty$. This is guaranteed by the nature of the construction. Indeed,   all powers  $\{ b^i: i\ge 1\}$ of $ b$ are contained in $\mathcal B$. So they appear in the infinite word $W_\infty$. Let $g_n$ denote the element represented by the prefix subword just before the occurrence of $ b^n$ in $W_\infty$.  Thus, there exists 
$R$ depending on $c$, such that for all $n$,
   $N_R(g_n\ax( b_n))$    intersects $\gamma$ in a set of diameter comparable to $d(o, b^no)$. The endpoint of $\gamma$ is thus a Myrberg limit point by Lemma \ref{lem:CharMyrberg}.
\end{proof}

Let us define the map $\Phi: \mathcal W \longrightarrow X$ as follows $$\begin{aligned}
W=\prod_{i=1}^m a_nf_n    b_n h_{n} &\longmapsto Wo
\end{aligned}$$% by assigning a Myrberg point $\xi_W$ to an infinite word $W$.

\begin{lem}\label{lem:qrtreeforMyrbergHyp}
The map $\Phi$  is injective and the limit set of the image of $\Phi$ has Hausdorff dimension $\frac{\e G}{\epsilon}$.
\end{lem}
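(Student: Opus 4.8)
The plan is to invoke the machinery of \textsection\ref{subsec-qtreegroup} essentially verbatim, after observing that the sequence of annular sets $A_n$, the auxiliary elements $\beta_n := f_n b_n h_n$ (the bridges, now incorporating the two fixed loxodromic ``buffers'' from $F$), and the repetitions $K_n \equiv 1$ satisfy all the hypotheses (\ref{LargeGrowth1}), (\ref{LargeGrowth2}), (\ref{Separation}), (\ref{LocalStraight1}), (\ref{LocalStraight2}), (\ref{ChoiceKnEq}), (\ref{ChoiceKnEq2}) imposed in Lemmas~\ref{QuasiRadialTree}, \ref{LargeTreeGrpVersion}, \ref{HDLargeTree}. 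Concretely: (\ref{LargeGrowth1}) and (\ref{LargeGrowth2}) hold because $A_n \subseteq A(L_n,\Delta,o)$ with $|A_n| \ge \mathrm{e}^{L_n\omega_n}$ by construction; (\ref{Separation}) holds because $A_n$ was extracted from a maximal $(2R+2\Delta)$-separated subset $\tilde A_n$; the straightness conditions (\ref{LocalStraight1}), (\ref{LocalStraight2}) are exactly the content of the bounded-projection properties arranged via Lemma~\ref{ChooseAi} and recorded in Lemma~\ref{lem:localqg-Myrberg}, which says the paths labelled by admissible words are $L$-local $\tau'$-quasi-geodesics and hence $c$-quasi-geodesics. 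Since $K_n = 1$ and $\Delta_n \equiv \Delta$ is constant, (\ref{ChoiceKnEq}) reduces to $B_n/L_n \to 0$ which we may assume by choosing $L_n$ growing fast enough relative to $B_n$ (recall $B_n$ depends only on $b_n\in\mathcal B$, while $L_n$ is free), and (\ref{ChoiceKnEq2}) is exactly item (1) in the list preceding Lemma~\ref{lem:localqg-Myrberg}.

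With these verifications in hand, the proof of Lemma~\ref{lem:qrtreeforMyrbergHyp} runs as follows. First, injectivity of $\Phi$ is immediate from Lemma~\ref{QuasiRadialTree} (or directly from the argument in its proof: distinct admissible words first differ in some $A_n$-letter $a_n \ne a_n'$, and since both $a_no, a_n'o$ lie within $R_0$ of the geodesic $[o,Wo]$ at comparable distances from $o$, one gets $d(a_no,a_n'o) \le 4R_0 + 2\Delta$, contradicting (\ref{Separation}) once $R > 2R_0$). Second, Lemma~\ref{QuasiRadialTree} shows the image $T := \Phi(\mathcal W)$ is a quasi-radial tree and that the shadows $\Pi_{v_0}(v,R)$ are either nested or disjoint, with sibling shadows separated in $\rho_\epsilon$-distance by $C\mathrm{e}^{-\epsilon\widetilde L}$. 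Third, applying Lemma~\ref{LargeTreeGrpVersion} with $\omega = \liminf\omega_n = \e G$ gives that the growth rate of $T$ is $\ge \e G$; then Lemma~\ref{HDLargeTree} upgrades this to $\HD(\partial T) \ge \e G/\epsilon$. Fourth, every boundary point of $T$ is the endpoint $\xi_W$ of a $c$-quasi-geodesic ray labelled by an infinite admissible word, which by Lemma~\ref{EndingMyrbergPts} is a Myrberg limit point, so $\partial T \subseteq \mG$. Hence $\HD(\mG) \ge \e G/\epsilon$.

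For the reverse inequality $\HD(\mG) \le \e G/\epsilon$: since $\mG \subseteq \cG$ (Myrberg points are conical), it suffices to know $\HD(\cG) \le \e G/\epsilon$. This is the Bishop--Jones upper bound, which in the present generality follows from a standard Patterson--Sullivan / shadow-covering argument — cover $\cG$ by shadows of orbit points in annuli $A(n,\Delta,o)$, whose $\rho_\epsilon$-diameters are $\asymp \mathrm{e}^{-\epsilon n}$ and whose number is $\mathrm{e}^{(\e G + o(1))n}$, so the $s$-dimensional Hausdorff content vanishes for $s > \e G/\epsilon$. (This half is surely already established earlier in the paper's treatment of $\HD(\cG)$ or cited to Bishop--Jones \cite{BJ97}; I would simply invoke it.) Combining the two inequalities yields $\HD(\mG) = \e G/\epsilon$, which is both Lemma~\ref{lem:qrtreeforMyrbergHyp} and Theorem~\ref{MyrbergHdimHypThm}.

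The main obstacle — and the only genuinely nontrivial input — is Lemma~\ref{EndingMyrbergPts}: verifying that the endpoint of the infinite admissible word is actually a \emph{Myrberg} point, not merely a conical one. The key device is Lemma~\ref{lem:CharMyrberg}, which reduces ``Myrberg'' to the condition that for \emph{every} loxodromic $h$, some sequence of translates $g_n\ax(h)$ meets the ray $\gamma = [o,\xi_W)$ in subsets of diameter $\to\infty$. This is arranged by the bookkeeping trick of including \emph{all} powers $b^i$ of \emph{every} loxodromic element in the enumeration $\mathcal B$: since $W_\infty$ contains each $b^n$ as a subword at some position, the prefix $g_n$ up to that occurrence satisfies that $N_R(g_n\ax(b))$ fellow-travels $\gamma$ along a segment of length comparable to $d(o,b^no) \to \infty$. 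One must be a little careful that the $f_n, h_n$ ``buffers'' inserted around each $b_n$ do not interfere with this fellow-travelling — but this is exactly why the bounded-projection conditions were imposed, ensuring the relevant subsegment of $\gamma$ stays uniformly close to $g_n\ax(b_n)$ thanks to the Morse lemma applied to the $c$-quasi-geodesic $\gamma$. Everything else is a direct transcription of Section~\ref{sec-hausd}.
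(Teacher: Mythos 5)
Your proposal is correct and follows essentially the same route as the paper: injectivity and the quasi-radial tree structure are obtained by re-running the argument of Lemma~\ref{QuasiRadialTree} with the local quasi-geodesic input supplied by Lemma~\ref{lem:localqg-Myrberg}/\ref{EndingMyrbergPts} and the $(2R+2\Delta)$-separation of $A_n$, and the dimension bound then comes from Lemma~\ref{HDLargeTree} with $K_n=1$ and $\omega_n\to\e G$. Your explicit verification of conditions (\ref{ChoiceKnEq})--(\ref{ChoiceKnEq2}) and of the matching upper bound via the standard shadow-covering argument only makes explicit what the paper leaves implicit.
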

\begin{proof}
The injectivity of  $\Phi$ in the proof of Lemma \ref{QuasiRadialTree} relies on the following two facts : 
\begin{enumerate}
    \item 
    Lemma \ref{lem:localqginhyp} shows that for every $W\in\mathcal W$ and every prefix $W_m$, the path labeled by $W$ is a $c$-quasi-geodesic and intersects the $R_0$-neighborhood of  $W_mo$. This is proved here in Lemma \ref{EndingMyrbergPts}.
\item 
$A_n$ consists of $2(\Delta+R)$-separated elements.
\end{enumerate}
The same argument then proves the injectivity, and thus the image $\Phi(\mathcal W)$ is a quasi-radial tree.
By the above choice of $L_n$ and using the fact that $\omega_n\to\omega$, the statement about Hausdorff dimension follows by Lemma \ref{HDLargeTree}.
\end{proof}

 Theorem \ref{MyrbergHdimHypThm} now follows. \hfill $\Box$

\section{Further generalizations: groups with contracting elements}\label{sec-contracting}

In this section, we explain how the main construction in Section~\ref{sec-hausd} generalizes to groups with contracting elements. In particular, this allows us to compute the Hausdorff dimension of the Myrberg limit set in the Floyd boundary (Theorem \ref{MyrbergHdimFloyd} below).

\subsection{Preliminaries on contracting elements}
Let $Z$ be a closed subset of $ X$ and let $x$ be a point in $ X$. We define the set of nearest-point projections from $x$ to $Z$ as follows
\[ \pi_{Z}(x) : = \big \{ y\in Z: d(x, y) = d(x, Z) \big \} \]
where  $d(x, Z) : = \inf \big \{ d(x, y): y \in Z \big \}.$
Since $X$ is a proper metric space, 
$\pi_{Z}(x)$ is non empty.  Denote $\proj_Z(x,y)=\mathrm{diam}(\pi_{Z}(x)\cup \pi_{Z}(y))$.

\begin{defn} \label{Def:Contracting}
We say that a closed subset $Z \subset X$ is \emph{$C$--contracting} for a constant $C>0$ if,
for all pairs of points $x, y \in X$, we have
\[
d(x, y) \leq d(x, Z) \quad  \Longrightarrow  \quad \proj_Z(x,y) \leq  C.
\]
Any such $C$ is called a \emph{contracting constant} for $Z$.
\end{defn}

The property (1) actually characterizes the contracting property.  
\begin{lem}\cite[Corollary 3.4, Lemma 3.8]{BF1}\label{lem-contracting-property}
Let $Z$ be a closed $C$-contracting subset. Then
the following hold.
\begin{enumerate}
    \item  There exists $C'$ such that
    any geodesic outside $N_{C'}(Z)$ has $C'$-bounded projection to $Z$. 
    \item Given $D > 0$, there exists $C'$ such that
    if $W$ is a closed subset with Hausdorff distance at most $D$ from $Z$, then $W$ is $C'$-contracting.
\end{enumerate}
\end{lem}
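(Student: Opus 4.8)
The statement to prove is Lemma~\ref{lem-contracting-property}, which the paper attributes to \cite[Corollary 3.4, Lemma 3.8]{BF1}. Let me sketch proofs of both parts.

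\textbf{Plan for part (1).} The plan is to show that if a geodesic $\gamma$ stays outside a sufficiently large neighborhood $N_{C'}(Z)$ of a $C$-contracting set $Z$, then $\operatorname{diam}(\pi_Z(\gamma))$ is bounded. First I would recall that the contracting hypothesis controls $\proj_Z(x,y)$ whenever $d(x,y) \le d(x,Z)$; the idea is to decompose $\gamma$ into a bounded number of subsegments each short enough relative to its distance to $Z$. Concretely, let $x,y$ be the endpoints of $\gamma$ and pick points $x = z_0, z_1, \dots, z_k = y$ along $\gamma$ with $d(z_{i-1}, z_i)$ comparable to $C$ (say $\le C$). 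Since $\gamma$ lies outside $N_{C'}(Z)$ with $C' \ge C$, each consecutive pair satisfies $d(z_{i-1}, z_i) \le C \le C' \le d(z_{i-1}, Z)$, so the contracting property gives $\proj_Z(z_{i-1}, z_i) \le C$. By the triangle inequality for diameters of projections, $\operatorname{diam}(\pi_Z(x) \cup \pi_Z(y)) \le \sum_{i=1}^k \proj_Z(z_{i-1}, z_i) \le kC$. The remaining issue is that $k$ itself grows with the length of $\gamma$, so this crude bound is not enough — the refinement is to observe that one only needs the \emph{endpoints} of $\gamma$ to have nearby projections once the projection map is coarsely constant on $\gamma$; a cleaner route is the standard one: if two points $a,b$ on $\gamma$ had far-apart projections, take a geodesic from $a$ to its projection $\pi_Z(a)$, and argue it must enter $N_{C'}(Z)$, producing a point on that geodesic-plus-$\gamma$ configuration witnessing a contradiction with $d(\cdot,\cdot) \le d(\cdot, Z)$. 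I would follow the argument in \cite{BF1}: fix the constant $C' = C' (C)$ large (on the order of $3C + $ a few multiples of $C$), suppose $\operatorname{diam} \pi_Z(\gamma) > C'$, and extract points $p, q \in \gamma$ with $d(\pi_Z(p), \pi_Z(q))$ large but $d(p,q) $ as small as possible subject to this; minimality forces $d(p,q)$ itself to be comparable to $\operatorname{diam}\pi_Z(\gamma)$, while $d(p, Z) \ge C' $ is large, and applying Definition~\ref{Def:Contracting} directly (possibly after subdividing $[p,q]$ once) yields $\proj_Z(p,q) \le C$, contradicting the choice of $p,q$. This bootstrap closes part (1).

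\textbf{Plan for part (2).} Here I would show that the contracting property is stable under Hausdorff perturbation: if $\operatorname{d}_{\mathrm{Haus}}(W, Z) \le D$, then $W$ is $C'$-contracting with $C' = C'(C, D)$. The key observation is that nearest-point projections to $W$ and to $Z$ differ by a bounded amount: for any $x$, if $w \in \pi_W(x)$ and $z \in \pi_Z(x)$, then $d(x, W) \le d(x,z) + D \le d(x, Z) + D$ and symmetrically $d(x,Z) \le d(x,W) + D$, so $|d(x,W) - d(x,Z)| \le D$; moreover any $w' \in \pi_W(x)$ lies within $2D$ of some point of $Z$ which is then within $D + 2D$-ish of $\pi_Z(x)$ — so $\pi_W(x) \subset N_{c(D)}(\pi_Z(x))$ for $c(D) = O(D)$. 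Hence $\proj_W(x,y) \le \proj_Z(x,y) + 2c(D)$ for all $x,y$. Now given $x,y$ with $d(x,y) \le d(x,W)$: if $C'$ is chosen with $C' \ge D$ then $d(x,y) \le d(x,W) \le d(x,Z) + D$; this is slightly weaker than the hypothesis $d(x,y) \le d(x,Z)$ needed to invoke contraction for $Z$ directly. To fix this I would either (a) insert a point $x'$ on $[x,y]$ at distance roughly $d(x,Z)$ from $x$, apply contraction on $[x,x']$ and bound the short tail $[x',y]$ (length $\le D$) crudely by $D$, giving $\proj_Z(x,y) \le C + $ const; or (b) note the standard fact that the contracting property for $Z$ with constant $C$ implies the slightly looser inequality $\proj_Z(x,y) \le C''(C)$ whenever $d(x,y) \le d(x,Z) + D$, which is itself a one-line consequence of (a). Combining, $\proj_W(x,y) \le C''(C) + 2c(D) + D =: C'$, which is the desired bound, and $C'$ depends only on $C$ and $D$.

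\textbf{Main obstacle.} The routine parts are the triangle-inequality manipulations for diameters of projection sets and the Hausdorff-closeness estimate in part (2). The genuinely delicate point is the bootstrapping in part (1): turning the hypothesis ``$d(x,y) \le d(x,Z)$'' (a constraint coupling the \emph{pair} to its distance from $Z$) into a statement about a long geodesic whose length may vastly exceed its distance to $Z$. The trick — extracting a pair $(p,q)$ on $\gamma$ realizing the maximal projection-spread with minimal separation, and showing this forces $d(p,q)$ to be comparable to that spread while $d(p,Z)$ stays large — is exactly where \cite{BF1} does the real work, and I would reproduce that minimality argument carefully rather than attempt the naive subdivision, which loses a factor growing with $\operatorname{length}(\gamma)$. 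Once part (1) is established, part (2) and the coarse-constancy corollaries that the rest of Section~\ref{sec-contracting} will rely on follow formally.
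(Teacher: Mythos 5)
The paper does not prove this lemma at all --- it is quoted verbatim from \cite{BF1} as background --- so there is no internal argument to compare against; your proposal has to stand on its own. Your plan for part (2) does: the estimates $|d(x,W)-d(x,Z)|\le D$ and $\pi_W(x)\subset N_{O(C+D)}(\pi_Z(x))$ are correct, and your fix (a) (insert $x'$ at distance $d(x,Z)$ along $[x,y]$, apply contraction to $[x,x']$, and absorb the tail of length $\le D$ by the crude bound $d(\pi_Z(x'),\pi_Z(y))\le d(x',Z)+d(x',y)+d(y,Z)$) closes the argument with $C'=C'(C,D)$.

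Part (1), however, has a genuine gap exactly at the step you flag as the ``real work.'' Your minimality argument cannot produce the claimed contradiction. Minimizing $d(p,q)$ subject to $\proj_Z(p,q)>2C$ does force $d(p,q)\ge d(p,Z)+d(q,Z)\ge 2C'$ (via your two-step observation), but that is the \emph{wrong} inequality: to ``apply Definition~\ref{Def:Contracting} directly'' you need $d(p,q)\le d(p,Z)$, and you have just shown the opposite. Subdividing once does not help, and the halving trick fails structurally because $\proj_Z$ is \emph{subadditive} under concatenation: if the midpoint $m$ of a minimal pair satisfies $\proj_Z(p,m),\proj_Z(m,q)\le K$, you only conclude $\proj_Z(p,q)\le 2K$, which doubles the bound instead of contradicting it. The missing idea is that a definite projection gap (say $>3C$) must force the geodesic $[p,q]$ to enter a uniform neighborhood of $Z$ --- i.e.\ the ``bounded geodesic image'' property. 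Proving that requires using the length-minimizing property of $\gamma$ in an essential way (comparing $d(p,q)$ with the detour $p\to\pi_Z(p)\to\pi_Z(q)\to q$, whose length is $d(p,Z)+d(q,Z)+O(\proj_Z(p,q))$, together with an iterated subdivision in which the distances $d(z_i,Z)$ are tracked, not just the projections). A single invocation of the contraction axiom, with or without one subdivision, cannot bridge the regime where $d(p,q)$ vastly exceeds $d(p,Z)$; this is precisely the implication ``strongly contracting $\Rightarrow$ bounded geodesic image,'' which is a real theorem in \cite{BF1} and not a formal consequence of the definition.
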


An isometry $h$ of infinite order is called \textit{contracting} if for some  $o\in X$, the orbital map $n\in\mathbb Z\mapsto h^no\in X$ is a quasi-isometric embedding and the image $\{h^no: n\in \mathbb Z\}$ is a contracting subset in $X$. The definition does not depend on $o$ by Lemma \ref{lem-contracting-property}.

A group $G$ is called \textit{elementary} if it is virtually a cyclic group.
Let us  consider a proper and isometric action of a group $G$ on $X$. 

\begin{lem}\cite[Lemma 2.11]{YANG10}\label{elementarygroup}
A contracting element $h\in G$ is contained in a unique  maximal elementary subgroup denoted by $E(h)$. Moreover,
$$
E(h)=\{g\in G: \exists n\in \mathbb N_{> 0}, (\;gh^ng^{-1}=h^n)\; \lor\;  (gh^ng^{-1}=h^{-n})\}.
$$
\end{lem}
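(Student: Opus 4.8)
The plan is to follow the argument of \cite{YANG10}, organised around the contracting quasi-axis $Q:=\{h^n o: n\in\mathbb Z\}$, which by hypothesis is a $C$--contracting quasi-line (and $n\mapsto h^no$ is a quasi-isometric embedding). I would set
$$
E(h):=\{g\in G:\ g Q\ \text{and}\ Q\ \text{are at finite Hausdorff distance}\}
$$
and prove the statement in four steps: \textbf{(i)} $E(h)$ is a subgroup containing $h$; \textbf{(ii)} $E(h)$ is elementary; \textbf{(iii)} every elementary subgroup containing $h$ is contained in $E(h)$, so $E(h)$ is the unique maximal one; \textbf{(iv)} $E(h)$ equals the explicit set on the right-hand side. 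Step (i) is immediate, since Hausdorff distance is symmetric and $G$--invariant and $hQ=Q$.

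Step (ii) is the heart of the matter. Because $G$ acts by isometries, each translate $gQ$ is $C$--contracting \emph{with the same constant} $C$, and $n\mapsto g h^n o$ is a quasi-geodesic with the same constants as $n\mapsto h^n o$. The key point is then to upgrade ``finite Hausdorff distance'' to ``uniformly bounded Hausdorff distance'': by the Morse property of contracting quasi-geodesics -- a standard consequence of Lemma~\ref{lem-contracting-property} to the effect that two uniformly contracting quasi-geodesics lying at finite Hausdorff distance must fellow-travel, with fellow-travelling constant depending only on $C$ and the quasi-geodesic constants -- there is $D_0=D_0(C,h,o)$ with $g Q\subseteq N_{D_0}(Q)$ for every $g\in E(h)$. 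In particular $go\in N_{D_0}(Q)$, so $d\big(h^{-m(g)}g\,o,\,o\big)\le D_0$ for some integer $m(g)$; since $G\curvearrowright X$ is proper, the set $\{f\in G: d(fo,o)\le D_0\}$ is finite, hence $E(h)$ is covered by finitely many cosets of $\langle h\rangle$ and $[E(h):\langle h\rangle]<\infty$. Thus $E(h)$ is virtually infinite cyclic, i.e.\ elementary. I expect this passage -- from finite to \emph{uniform} coarse stabilisation of the quasi-axis -- to be the only genuinely substantial step; it is exactly where contracting (rather than merely quasi-isometrically embedded) is used.

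For Step (iii), let $F\le G$ be elementary with $h\in F$. Since $F$ is virtually cyclic and $h$ has infinite order, $\langle h\rangle$ has finite index in $F$. For $g\in F$, both $\langle h\rangle$ and $g\langle h\rangle g^{-1}$ have finite index in $F$, so $\langle h\rangle\cap g\langle h\rangle g^{-1}$ has finite index in each; comparing generators yields $a,b>0$ with $g h^b g^{-1}=h^{\pm a}$. Now the stable translation length $\ell(g):=\lim_n d(o,g^n o)/n$ exists by subadditivity, is a conjugacy invariant, satisfies $\ell(h^k)=|k|\,\ell(h)$, and $\ell(h)>0$ because the orbit map of $h$ is a quasi-isometric embedding; applying $\ell$ to both sides of $g h^b g^{-1}=h^{\pm a}$ forces $a=b=:n$, so $g h^n g^{-1}=h^{\pm n}$. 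A direct computation (carried out in Step (iv)) shows $g\in E(h)$, whence $F\subseteq E(h)$ and $E(h)$ is the unique maximal elementary subgroup containing $h$.

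Finally, Step (iv): write $\widetilde E:=\{g:\ \exists n>0,\ g h^n g^{-1}=h^n\ \text{or}\ g h^n g^{-1}=h^{-n}\}$. If $g\in\widetilde E$ with $g h^n g^{-1}=h^{\pm n}$, then $g h^{kn}o=h^{\pm kn}(go)$ for all $k$, so each such point lies within $d(o,go)$ of $Q$; interpolating over residues $0\le r<n$ gives $gQ\subseteq N_{d(o,go)+n\,d(o,ho)}(Q)$, and since $g^{-1}$ satisfies $g^{-1}h^n g=h^{\pm n}$ as well, the symmetric inclusion holds, so $g\in E(h)$. Conversely, if $g\in E(h)$, then by Step (ii) $\langle h\rangle$ has finite index in $E(h)$, and running the finite-index intersection argument of Step (iii) with $F=E(h)$ produces $n>0$ with $g h^n g^{-1}=h^{\pm n}$, i.e.\ $g\in\widetilde E$. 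Hence $E(h)=\widetilde E$, completing the proof. Apart from Step (ii), everything is formal group theory together with the routine translation-length computation.
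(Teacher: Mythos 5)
Your proposal is correct and follows the standard route: the paper itself gives no proof of this lemma (it is quoted from \cite[Lemma 2.11]{YANG10}), and the argument there is organised exactly as yours is, around the coarse stabiliser of the quasi-axis $Q=\{h^no\}$. You rightly isolate the only substantive point, namely upgrading ``finite Hausdorff distance'' to a uniform constant $D_0$; just be aware that this is not a literal restatement of Lemma~\ref{lem-contracting-property} but follows from it via the observation that finiteness of $d_H(gQ,Q)$ forces $\pi_Q(gQ)$ to be unbounded in both directions, so $gQ$ re-enters a uniform neighbourhood of $Q$ on either side of any given point, after which the Morse property for quasi-geodesic segments with endpoints near $Q$ pins the whole translate down uniformly. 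The remaining steps (finite index via properness, the finite-index intersection of conjugate cyclic subgroups, and the stable translation length computation forcing $a=b$) are all as in the cited source.
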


In contrast to the axis $\ax(h)$ in hyperbolic space (Definition \ref{def-qa}), we take the following definition of axis depending on the basepoint $o\in X$. Define the \textit{axis} of $h$ to be the following quasi-geodesic 
\begin{equation}\label{axisdefn}
\ax(h)=\{f o: f\in E(h)\}.
\end{equation} Notice that $\ax(h)=\ax(k)$ and $E(h)=E(k)$    for any contracting element   $k\in E(h)$.
 
%An element $g\in G$ \textit{preserves the orientation} of a bi-infinite quasi-geodesic $\gamma$ if $\alpha$ and $g\alpha$ are at finite Hausdorff distance from each other for any half ray $\alpha$ of  $\gamma$. Let $E^+(h)$ be the subgroup of $E(h)$ of  index at most 2 consisting of elements  that preserve  the orientation of the axis of $h$. Then we have 
%$$
%E^+(h)=\{g\in G: \exists n\in \mathbb N_{> 0}, \;gh^ng^{-1}=h^n\}.
%$$
%Further, $E^+(h)$ contains all contracting elements in $E(h)$.

Two  contracting elements $h_1, h_2\in G$  are called \textit{independent} if the collection $\{g\ax(h_i): g\in G;\ i=1, 2\}$ is a contracting system with bounded intersection. Note that two conjugate contracting elements with disjoint fixed points are not independent in our sense.
 
\begin{lem}\cite[Lemma  2.12]{YANG10}
Assume that $G$ is a non-elementary group with a contracting element. Then $G$ contains infinitely many pairwise independent contracting elements.     
\end{lem}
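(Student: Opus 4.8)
The plan is to bootstrap from a single contracting element, combining a ping-pong construction with the bounded-intersection properties of contracting subsets (Lemma~\ref{lem-contracting-property}) and the description of maximal elementary subgroups (Lemma~\ref{elementarygroup}).

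First I would fix a contracting element $h\in G$ and note that, since $G$ is non-elementary (hence not virtually cyclic), Lemma~\ref{elementarygroup} forces $E(h)\subsetneq G$; pick $g\in G\setminus E(h)$. A standard consequence of the contracting property is that if two $G$-translates of the contracting set $\ax(h)$ overlap in a subset of large diameter then they lie at bounded Hausdorff distance, and then, by properness of the action together with the explicit form of $E(h)$ in Lemma~\ref{elementarygroup}, they differ by an element of $E(h)$. Hence $\{f\ax(h):f\in G\}$ is a contracting system with uniformly bounded intersection; in particular $\ax(h)$ and $g\ax(h)$ are transverse. Passing to a large power $h^N$ (note $E(h^N)=E(h)$, and $h^N$ is again contracting), ping-pong — using that $h^N$ and $gh^Ng^{-1}$ move the basepoint a large amount along the two transverse contracting axes — shows that $a:=h^N$ and $b:=gh^Ng^{-1}$ generate a rank-$2$ free subgroup $F=\langle a,b\rangle\le G$ whose generators are independent contracting elements.

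Next I would write down the candidate family $h_k:=ab^k=h^{N}\,g\,h^{Nk}\,g^{-1}$, $k\ge 1$. The orbit $\langle h_k\rangle o$ traces a bi-infinite \emph{staircase} built by alternately running a segment of length $\asymp N$ inside a translate of $\ax(h)$, hopping a bounded amount (via $g^{\pm 1}$), running a segment of length $\asymp Nk$ inside a translate of $g\ax(h)$, hopping again, and so on; consecutive straight pieces are transverse. For $N$ large, the usual ``concatenation of transverse contracting pieces is a contracting quasi-geodesic'' argument (again via Lemma~\ref{lem-contracting-property}) shows that each $h_k$ is contracting and that $\ax(h_k)$ coincides, up to bounded Hausdorff distance, with this periodic staircase, whose period length is $\asymp (k+1)N$. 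Since conjugation is an isometry and preserves this period, the $h_k$ are pairwise non-conjugate in $G$ once $N$ is large, so none of the pairs among them is of the excluded ``conjugate with disjoint fixed points'' type flagged after the definition of independence.

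Finally, the crux is pairwise independence, i.e.\ that $\mathbb Y:=\{f\ax(h_k):f\in G,\ k\ge 1\}$ is a contracting system with \emph{uniformly} bounded intersection. If $f\ax(h_k)$ and $\ax(h_j)$ were to overlap in a subset much longer than $(k+j)N$, then along the overlap the long straight runs lie simultaneously in a translate of $\ax(h)$ (or of $g\ax(h)$) coming from $h_k$ and in one coming from $h_j$; by the bounded-intersection property of the systems $\{f\ax(h):f\in G\}$ and $\{fg\ax(h):f\in G\}$ these translates must coincide, so the two staircases match step for step along the overlap. Matching the lengths of the long runs forces $Nk=Nj$ up to bounded error, hence $k=j$; matching the whole pattern then forces $f$ to coarsely preserve $\ax(h_k)$, i.e.\ $f\in E(h_k)$ and $f\ax(h_k)=\ax(h_k)$. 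Thus distinct members of $\mathbb Y$ have intersection bounded by a constant independent of $f$, $k$, $j$, so $\{h_k\}_{k\ge 1}$ are infinitely many pairwise independent contracting elements. The step I expect to be the main obstacle is exactly this last one — making ``the straight runs match up'' precise and uniform over all $f\in G$ — which is handled entirely by the bounded-projection and bounded-Hausdorff-distance statements of Lemma~\ref{lem-contracting-property} together with Lemma~\ref{elementarygroup}; everything else is routine.
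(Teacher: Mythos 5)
The paper itself gives no argument for this lemma; it is quoted directly from \cite{YANG10}, so the only comparison available is with the standard proof in that source. Your proposal reconstructs essentially that construction: choose $g\notin E(h)$, build elements whose axes are periodic staircases alternating long runs along the transverse translates $\ax(h)$ and $g\ax(h)$, distinguish the resulting elements by the lengths of their long runs, and verify bounded intersection of the translated axes by properness plus the algebraic description of $E(h)$ in Lemma~\ref{elementarygroup}. Modulo standard background facts, this is a viable route and matches the cited argument in spirit.

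Two steps need repair, though neither is fatal. First, your intermediate remark that $a=h^N$ and $b=gh^Ng^{-1}$ are \emph{independent} contradicts the paper's own caveat: $b$ is conjugate to $a$, and conjugate contracting elements are not independent in this sense (the collection $\{f\ax(a),f\ax(b):f\in G\}$ contains distinct parallel members, e.g.\ $\ax(b)$ and $g\ax(a)$, so bounded intersection fails). This is harmless only because what you actually use later is that $\ax(h)$ and $g\ax(h)$ are distinct members of the single orbit $\{f\ax(h):f\in G\}$. Second, the claim that a large overlap of two translates of $\ax(h)$ forces them to lie at bounded Hausdorff distance is not a consequence of the contracting property alone: two contracting quasi-geodesics can share a long subsegment and then diverge. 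The correct route is the one you gesture at afterwards, but in the opposite order: a sufficiently large overlap of $N_r(\ax(h))$ and $N_r(f\ax(h))$ yields, by properness and a pigeonhole on elements displacing a point a bounded amount, a relation $fh^mf^{-1}=h^{\pm n}$ with $m,n\neq 0$, hence $f\in E(h)$ by Lemma~\ref{elementarygroup}, and only then coincidence of the axes; the same remark applies to your final step deducing $f\in E(h_k)$ from a long overlap of $f\ax(h_k)$ with $\ax(h_k)$. Finally, the contraction of the staircase axes of $h_k$ and the fellow-travelling control on their corner points are not consequences of Lemmas~\ref{lem-contracting-property} and~\ref{elementarygroup} alone; they require the admissible-path machinery (cf.\ Proposition~\ref{admisProp}) together with the standard fact that periodic admissible paths are contracting, which is available in the cited literature. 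With these adjustments your argument goes through and is of the same nature as the proof in \cite{YANG10}.
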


%\begin{defn}\label{barriers}
%Fix $r>0$ and a set $F$ in $G$.  A geodesic $\gamma$ contains an \textit{$(r, f)$-barrier} for $f\in F$   if there exists    an element $h \in G$ so that 
%\begin{equation}\label{barrierEQ}
%\max\{d(h\cdot o, \gamma), \; d(h\cdot fo, \gamma)\}\le r.
%\end{equation}
%By abuse of language,    the point $ho$ or  the axis $h\ax(f)$ is called {$(r, F)$-barrier} of type $f$ on $\gamma$.
%\end{defn}

\subsubsection{Convergence boundary}\label{sub-conv-bdry}
Consider a metrizable compactification $\bU:=\pU\cup \U$, so that $\U$ is open and dense in $\bU$. We also assume that the action of $\isom(\U)$ extends by homeomorphism to  $\pU$. We follow the exposition in \cite{YANG22} closely and refer to it for additional details.

We   equip $\pU$    with an  $\isom(\U)$--invariant  partition $[\cdot]$:   $[\xi]=[\eta]$ implies $[g\xi]=[g\eta]$ for any $g\in \isom(\U)$.  We say that $\xi$ is \textit{minimal} if $[\xi]=\{\xi\}$, and a subset $U$ is \textit{saturated} if $U=[U]$. In general, $[\cdot]$ may not be closed, \emph{e.g.}, the horofunction boundary with finite difference relation.  

We say that $x_n$ \textit{tends} to (resp. \textit{accumulates} on)  $[\xi]$ if the limit point (resp. any accumulation point) is contained in the subset $[\xi]$. This implies that $[x_n]$ tends to or accumulates on $[\xi]$ in the quotient space $[\pG]$. So, an infinite ray $\gamma$ \textit{terminates} at $[\xi] \in \pU$ if any sequence of points in $\gamma$ accumulates on $[\xi]$.  We say that $\xi$ is \textit{non-pinched} if whenever $x_n, y_n\in \U$ are two sequences of points converging to $[\xi]$, the 
sequence of geodesic segments $[x_n,y_n]$ is an escaping.

\begin{defn}\label{ConvBdryDefn}
    
We say that $(\bU,[\cdot])$ is a \textit{convergence compactification} of $\U$ if the following  hold.
\begin{enumerate}
    \item[\textbf{(A)}]Any contracting geodesic ray $\gamma$ accumulates on a closed subset $[\xi]$ for some  $\xi\in \pU$; and any sequence $y_n\in \U$ with escaping projections $\pi_\gamma(y_n)$ tends to $[\xi]$. 

    \item[\textbf{(B)}]
    Let $\{Z_n\subseteq \U :n\ge 1\}$ be an escaping sequence of $C$--contracting  quasi-geodesics for some $C>0$. Then for any given $x\in \U$, there exists a  subsequence of $ Y_n$ defined as follows  
    $$Y_n:=Z_n\cup \{y\in \U:  \|[x,y]\cap N_C(Z_n)\|\ge 10C\}$$ 
    and $\xi\in \pU$ such that $Y_n$  accumulates to $[\xi]$, i.e.\  any convergent sequence of points $y_n\in Y_n$ tends to a point in $[\xi]$.
    \item[\textbf{(C)}]
    The set $\mathcal C$ of {non-pinched} points  $\xi\in \pU$ is non-empty. 
\end{enumerate}  
\end{defn} 
Assumption (C) excludes trivial examples given by the one-point compactification. Note that any Hausdorff  quotient of a convergence boundary is again a convergence boundary.
The convergence boundary in Definition~\ref{ConvBdryDefn} allows us to treat the following examples in a unified language.  
\begin{examples}\label{ConvbdryExamples}
The first three convergence boundaries below are equipped with  a \textit{maximal} partition $[\cdot]$ (that is, $[\cdot]$--classes are singletons).
    \begin{enumerate}
        \item 
        Hyperbolic space $\U$ with Gromov boundary $\pU$, where  all boundary points are non-pinched.
        \item 
        CAT(0) space $\U$ with visual boundary $\pU$ (homeomorphic to the horofunction boundary), where all boundary points are non-pinched.
        \item
        The Cayley graph $\U$ of a relatively hyperbolic group equipped with the Bowditch or Floyd boundary $\pU$, where  conical limit points are non-pinched. See \textsection \ref{sub-app-Floyd} for more details.

        If $\U$ is infinite ended, we could also take $\pU$ as the 
        space of ends. The same conclusions hold.
        \item
        Teichm\"{u}ller space $\U$ with the Thurston  boundary $\pU$, where $[\cdot]$ is given by the Kaimanovich-Masur partition \cite{KaMasur}. Uniquely ergodic   points are non-pinched, and their $[\cdot]$-classes are singleton.
        \item
        Any proper metric space $\U$ with the horofunction boundary $\pU$, where $[\cdot]$ is given by finite difference partitions and all boundary points are non-pinched (\cite[Theorem 1.1]{YANG22}).
        If $\U$ is a  CAT(0) cubical space, a result of Bader-Guralnik says that the horofunction boundary is exactly the Roller boundary (\cite[Prop. 6.20]{FLM}).  
        If $\U$ is the Teichm\"{u}ller space with Teichm\"{u}ller metric, the horofunction boundary is  the Gardiner-Masur  boundary (\cite{LS14,Walsh19}). 
    \end{enumerate}
\end{examples}
 
\subsubsection{Limit set and Myrberg limit points}\label{sub-Myrberg}
The \textit{limit set} $\pG$ of $G$ is defined to be the union of $[\cdot]$-classes of accumulation points of some (any) orbit $Go$ in $\pU$. The limit set is independent of the basepoint $o\in X$ by Assumption \textbf{(B)} in Definition \ref{ConvBdryDefn}.
Let $h$ be a contracting element.  By Assumption \textbf{(A)}, the two half-rays of the axis $\ax(h)$ accumulate on two $[\cdot]$-classes of boundary points denoted by $[h^-]$ and $[h^+]$. By definition, the union $[h^\pm]$ belongs to $\pG$.
We say that $h$ is \textit{non-pinched} if $[h^-]\ne [h^+]$. Equivalently,  $[h^\pm]$ are non-pinched points by \cite[Lemma 3.19]{YANG22}. We are only interested in  convergence boundaries with non-pinched contracting elements. This is the case for all examples as above.

Let $h$ be a non-pinched contracting element. The assumptions \textbf{(A)} and \textbf{(C)} allow us to extend the nearest
point projection $\pi_{\ax(h)}$ to the boundary. 
\begin{lem}\cite[Lemma 3.24]{YANG22}
The  projection   $\pi_{\ax(h)}: X\to \ax(h)$ extends to boundary points in $\pU\setminus [h^\pm]$ in the following sense.
There exists a constant $D$ depending on $\ax(h)$ so that if $x_n\in X\to \xi\in \pU\setminus [h^\pm]$, then  $\pi_{\ax(h)}(\xi)$ is contained in a $D$-neighborhood of  $\pi_{\ax(h)}(x_n)$  for all sufficiently large $n$. 
\end{lem}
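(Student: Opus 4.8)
The statement to prove asserts that the nearest-point projection $\pi_{\ax(h)}$ extends continuously to boundary points $\xi \in \pU \setminus [h^\pm]$, in the sense that for a sequence $x_n \to \xi$, the limiting projection $\pi_{\ax(h)}(\xi)$ stays within a uniformly bounded neighborhood of $\pi_{\ax(h)}(x_n)$ for large $n$. The plan is to exploit the contracting property of $\ax(h)$ together with Assumptions \textbf{(A)} and \textbf{(C)} of the convergence boundary to rule out the only bad scenario, namely that the projections $\pi_{\ax(h)}(x_n)$ escape to infinity along $\ax(h)$.

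First I would suppose for contradiction that the projections $\pi_{\ax(h)}(x_n)$ do \emph{not} stay in any bounded subset of $\ax(h)$. After passing to a subsequence, $\pi_{\ax(h)}(x_n)$ escapes to one of the two ends of $\ax(h)$, say the $h^+$ end. I would then invoke the contracting property (Definition~\ref{Def:Contracting} and Lemma~\ref{lem-contracting-property}): since $\ax(h)$ is $C$-contracting, the geodesic segments $[x_n, p]$ from $x_n$ to a fixed point $p \in \ax(h)$ (or to $o$) either enter the $C'$-neighborhood of $\ax(h)$ near $\pi_{\ax(h)}(x_n)$, or have $C'$-bounded projection. This forces $[o, x_n]$ to fellow-travel $\ax(h)$ for longer and longer stretches heading towards $h^+$. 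By Assumption~\textbf{(A)} of Definition~\ref{ConvBdryDefn} (applied to the contracting geodesic ray which is the forward half of $\ax(h)$, noting that $x_n$ has escaping projections onto this half-ray), this implies $x_n$ tends to $[h^+]$, so $\xi \in [h^+]$, contradicting $\xi \notin [h^\pm]$. Hence the sequence $\pi_{\ax(h)}(x_n)$ is bounded in $\ax(h)$.

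Given boundedness, $\pi_{\ax(h)}(x_n)$ subconverges to some point (or bounded set) $q \in \ax(h)$; I need to see that this does not depend on the subsequence and coincides up to bounded error with a well-defined $\pi_{\ax(h)}(\xi)$. For this I would use the standard stability of nearest-point projection under the contracting hypothesis: if $x_n \to \xi$ and $x_n' \to \xi$ are two sequences, then for large $n$ the points $x_n$ and $x_n'$ are "on the same side" relative to $\ax(h)$, and a contracting-space estimate (again from Lemma~\ref{lem-contracting-property}, comparing $[x_n, x_n']$ with $\ax(h)$) shows $d(\pi_{\ax(h)}(x_n), \pi_{\ax(h)}(x_n'))$ is bounded by a constant $D$ depending only on $C$ and the quasi-geodesic constants of $\ax(h)$. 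This lets us \emph{define} $\pi_{\ax(h)}(\xi)$ as (the bounded set of) any such accumulation point, and simultaneously yields the claimed estimate: for all large $n$, $\pi_{\ax(h)}(\xi)$ lies in the $D$-neighborhood of $\pi_{\ax(h)}(x_n)$.

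The main obstacle I anticipate is the first step — ruling out escape of the projections — because it requires correctly matching the abstract Assumption~\textbf{(A)} (phrased in terms of sequences with escaping projections onto a contracting geodesic ray tending to the boundary class) to the concrete situation here. One must be careful that the forward half-ray of $\ax(h)$ is genuinely a contracting geodesic (or quasi-geodesic, which suffices after applying Lemma~\ref{lem-contracting-property}(2) to replace it by a nearby genuine geodesic ray), and that "$\pi_{\ax(h)}(x_n)$ escaping to the $h^+$ end" is exactly the hypothesis "$\pi_\gamma(y_n)$ escaping" appearing in Assumption~\textbf{(A)}. Everything else is a routine application of the contracting-projection toolkit, so the proof should be short once this identification is made cleanly.
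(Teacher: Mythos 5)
First, note that this paper does not actually prove the statement: it is imported verbatim from \cite[Lemma 3.24]{YANG22}, so there is no in-paper proof to compare against. Judged on its own merits, your first step is the right one and is the heart of the matter: if $\pi_{\ax(h)}(x_n)$ were unbounded, a subsequence of projections would escape along one half of $\ax(h)$, and Assumption \textbf{(A)} of Definition~\ref{ConvBdryDefn} (applied to a genuine contracting geodesic ray at bounded Hausdorff distance from that half, as you correctly flag) forces $x_n\to[h^+]$ or $[h^-]$, contradicting $\xi\notin[h^\pm]$. The points you identify as needing care there (quasi-geodesic versus geodesic, and the coarse equality of projections to Hausdorff-close contracting sets) are routine.

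The gap is in your second step. You assert that for two sequences $x_n,x_n'\to\xi$ the quantity $d(\pi_{\ax(h)}(x_n),\pi_{\ax(h)}(x_m'))$ is bounded by a constant $D$ depending only on $\ax(h)$, "by a contracting-space estimate comparing $[x_n,x_m']$ with $\ax(h)$" because the points are "on the same side". But Lemma~\ref{lem-contracting-property}(1) only gives $\proj_{\ax(h)}(x_n,x_m')\le C'$ when the geodesic $[x_n,x_m']$ stays outside $N_{C'}(\ax(h))$; in the complementary case the geodesic cuts close to $\ax(h)$ and travels from near $\pi_{\ax(h)}(x_n)$ to near $\pi_{\ax(h)}(x_m')$, and nothing in the contracting toolkit alone bounds how far apart these can be, nor with a constant independent of $\xi$ and of the sequences. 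To close this case one must use that such a crossing forces $[x_n,x_m']$ to pass through a \emph{fixed bounded} region (available from your step 1), and then derive a contradiction from the fact that both endpoints converge to $\xi$ — e.g.\ via non-pinchedness of $\xi$, which makes the segments $[x_n,x_m']$ escape every bounded set. This is exactly where the non-pinchedness/Assumption \textbf{(C)} input that the paper's preamble advertises ("The assumptions \textbf{(A)} and \textbf{(C)} allow us to extend the nearest point projection...") must enter, and your proposal never uses it. As written, "on the same side" is a label for the conclusion rather than an argument for it; you need to either add the escaping/non-pinched argument or explain how the crossing case is excluded for an arbitrary $\xi\in\pU\setminus[h^\pm]$.
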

From this we obtain the North-South dynamics \cite[Lemma 3.27, Corollary 3.28]{YANG22}.
\begin{lem}\label{SouthNorthLem}
The action of $\langle h\rangle$ on $\pU\setminus [h^\pm]$ has the North--South dynamics: for any two open sets   $[h^+] \subseteq U$ and $[h^-]\subseteq V$ in $\pU$, there exists an integer $n>0$ such that $h^n (\pU \setminus V)\subseteq U$ and $h^{-n} (\pU \setminus U)\subseteq V$.  In particular, if $\proj_{\ax(h)}(x_n,y_n)\to \infty$ for $x_n, y_n\in X\cup \pU\setminus [h^\pm]$, we have $(x_n,y_n)$ converges to $([h^-],[h^+])$.
\end{lem}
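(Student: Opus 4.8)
The statement asserts the North--South dynamics for the action of $\langle h\rangle$ on $\pU\setminus[h^\pm]$, plus the consequence about convergence of pairs. The strategy is to reduce the dynamical statement to the projection estimate supplied by the preceding lemma, and then to reformulate that projection behavior in terms of the topology of the convergence compactification using Assumptions \textbf{(A)} and \textbf{(B)}. First I would fix the basepoint $o$ and recall that $\ax(h)=\{fo: f\in E(h)\}$ is a contracting quasi-geodesic, with the two half-rays accumulating on the closed saturated sets $[h^+]$ and $[h^-]$; non-pinchedness guarantees these are disjoint. The key observation is that for $n$ large, $h^n$ moves $o$ (hence any fixed bounded set of orbit points) very far toward the $h^+$ end of $\ax(h)$; quantitatively, $\pi_{\ax(h)}(h^n x)$ lies deep in the $h^+$ direction whenever $x$ avoids a neighborhood of $[h^-]$.

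The main steps, in order: (i) Given open $U\supseteq[h^+]$ and $V\supseteq[h^-]$, I want to show that the complement of $V$ projects (via the extended $\pi_{\ax(h)}$) into a bounded sub-ray of $\ax(h)$ on the $h^+$ side — more precisely, that there is a compact arc $J\subset\ax(h)$ and a bound $R$ so that every $\zeta\in\pU\setminus V$ has $\pi_{\ax(h)}(\zeta)$ within $R$ of $J$ but \emph{not} arbitrarily far toward $h^-$. This uses the extension lemma for $\pi_{\ax(h)}$ together with the fact that points of $\pU$ projecting arbitrarily far toward $h^-$ must accumulate on $[h^-]$, which is exactly Assumption \textbf{(A)} applied to the $h^-$ half-ray (points with escaping projections onto that contracting ray tend to $[h^-]$). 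Combined with the compactness of $\bU$ and a diagonal/subsequence argument, one gets such a $J$ and $R$. (ii) Then $h^n(\pU\setminus V)$ has projection shifted by $n$ units of translation length toward $h^+$; for $n$ large this projection escapes every bounded subset of $\ax(h)$ in the $h^+$ direction. (iii) Apply Assumption \textbf{(A)} once more, this time to the $h^+$ half-ray: any sequence of points whose projections onto this contracting ray escape toward $h^+$ must tend to $[h^+]$, hence eventually lie in the open set $U$. A uniformity argument (again via compactness of the complement $\pU\setminus V$, which is closed hence compact) upgrades "eventually for each point" to "for a single $n$ all of $h^n(\pU\setminus V)\subseteq U$"; the symmetric argument with $h^{-n}$ and the roles of $U,V$ swapped handles the other inclusion. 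Taking the larger of the two exponents proves the first assertion.

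For the "in particular" clause: suppose $x_n,y_n\in X\cup\pU\setminus[h^\pm]$ with $\proj_{\ax(h)}(x_n,y_n)=\mathrm{diam}(\pi_{\ax(h)}(x_n)\cup\pi_{\ax(h)}(y_n))\to\infty$. After passing to a subsequence one of the two projection sequences escapes toward $h^-$ and the other toward $h^+$ (if both escaped the same end the diameter would stay bounded once both are deep on the same side — here I'd use that $\ax(h)$ is a quasi-geodesic so "deep on the same side, diameter large" is impossible unless they straddle; one must check the straddling case is ruled out by boundedness of intersections / the contracting property, or simply observe that a subsequence with both on the same side and diverging diameter cannot occur). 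Then Assumption \textbf{(A)} gives $x_n\to[h^-]$ and $y_n\to[h^+]$ (or vice versa) along that subsequence; since every subsequence has a further subsequence with this behavior and the target pair is the same up to swapping, and since the hypothesis is symmetric in the two sequences only up to relabeling, one concludes $(x_n,y_n)\to([h^-],[h^+])$ in $[\pU]\times[\pU]$, understood as accumulation on these classes.

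\textbf{Main obstacle.} The delicate point is the uniformity in step (iii): passing from "each point of $\pU\setminus V$ is eventually pushed into $U$ by $h^n$" to "a single $n$ works for all of $\pU\setminus V$ simultaneously." Because the partition $[\cdot]$ need not be closed (the excerpt explicitly flags the horofunction/finite-difference case), one must be careful that the relevant sets $U,V$ and their complements interact well with the partition and with the topology of $\bU$. The clean way is to work entirely with the projection data on $\ax(h)$ — a genuinely metric, compactness-friendly object — and only invoke Assumption \textbf{(A)} at the very end to translate back to the boundary topology; the cited reference \cite[Lemma 3.27, Corollary 3.28]{YANG22} presumably does exactly this, so I would model the argument on that, filling in the projection estimates from the extension lemma stated just above.
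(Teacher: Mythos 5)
First, a point of context: the paper does not actually prove Lemma \ref{SouthNorthLem}; it is quoted from \cite[Lemma 3.27, Corollary 3.28]{YANG22}, so there is no in-paper argument to compare against. Your plan for the North--South statement itself is sound and is the standard one: points of $\pU\setminus V$ have extended projections to $\ax(h)$ that are uniformly bounded in the $h^-$ direction (otherwise, by the extension lemma and Assumption \textbf{(A)} applied to the $h^-$ half-ray, a sequence of them would converge into $V$, contradicting closedness of $\pU\setminus V$ in the compact set $\pU$); equivariance of $\pi_{\ax(h)}$ under $h$ then pushes these projections toward $h^+$ at linear speed, and a second application of Assumption \textbf{(A)} together with the compactness argument you describe yields a single $n$ that works. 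Two routine points you should make explicit: the points of $[h^+]$ itself lie in $\pU\setminus V$ but have no extended projection and must be handled separately (they are preserved by $h$); and Assumption \textbf{(A)} is stated for contracting geodesic rays, whereas $\ax(h)$ is only a contracting quasi-geodesic, so one should pass to a geodesic ray at bounded Hausdorff distance via Lemma \ref{lem-contracting-property}.

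The genuine gap is in your treatment of the ``in particular'' clause. You assert that, after passing to a subsequence, divergence of $\mathrm{diam}\bigl(\pi_{\ax(h)}(x_n)\cup\pi_{\ax(h)}(y_n)\bigr)$ forces one projection sequence to escape toward $h^-$ and the other toward $h^+$. That is false: take $x_n=o$ fixed and $y_n=h^n o$; the diameter diverges and $y_n\to[h^+]$, but $x_n$ certainly does not converge to $[h^-]$. Your parenthetical hedge only addresses the case where both projections escape to the same end, not the case where one of them stays bounded, which is the actual failure mode. The clause is only correct under the orientation convention, implicit in how it is used in Lemma \ref{lem:CharMyrberg} and Lemma \ref{CharMyrberg-general}, that $\pi_{\ax(h)}(x_n)$ escapes toward the $h^-$ end and $\pi_{\ax(h)}(y_n)$ toward the $h^+$ end; with that hypothesis the conclusion is immediate from the extension lemma plus Assumption \textbf{(A)}, and no case analysis is needed. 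As written, your subsequence argument does not (and cannot) establish the literal statement.
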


We now formulate the analog  of Myrberg limit points in a general convergence boundary. Let $\pG\Join\pG$ denote the distinct $[\cdot]$-pairs in $\pG$. We equip $\pG$ with the quotient topology by identifying each $[\cdot]$ to a point. 
\begin{defn}\label{defn-Myrberg-general}
A non-pinched point $\xi \in  \pU$ is called a \textit{Myrberg limit point} if for any $x\in \U$, the set of $G$-translates of the ordered pair $(x, \xi)$ is dense in the space  $\pG\Join \pG$  in the following sense:
\begin{itemize}
    \item For any $[\zeta]\ne[\eta]\in [\pG]$ there exists $g_n\in G$ so that $g_nx\to [\zeta]$ and $g_n\xi\to [\eta]$ in the quotient topology.
\end{itemize}
\end{defn}    
\begin{rem}
By definition, the property of being a Myrberg point is a property of the $[\cdot]$-equivalence class.  When the $[\cdot]$ partition is maximal as in the first three Examples \ref{ConvbdryExamples} then the definition of Myrberg limit points coincides with Definition \ref{MyrbergDefn}.   
\end{rem}

In \cite[Lemma 3.15]{YANG22}, the fixed point pairs $([h^+], [h^-])$ of all non-pinched elements $h\in G$ are dense in the set $\pG\Join\pG$ of distinct pairs  of limit points.  Along similar lines in  Lemma \ref{lem:CharMyrberg} with  Lemma \ref{SouthNorthLem}, we could then prove the following.
\begin{lem}\cite[Lemma 4.16]{YANG22}\label{CharMyrberg-general}
A  point $\xi\in \pU$ is a Myrberg limit  point if and only if the following holds.
Let   $h\in G$ be a non-pinched contracting element. There is a sequence of elements $g_n\in G$ so that the projection of a geodesic ray $\gamma$ ending at $[\xi]$ to  $g_n\ax(h)$ tends to $\infty$.
\end{lem}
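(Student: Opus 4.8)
The statement to prove is Lemma \ref{CharMyrberg-general}, which characterizes a Myrberg limit point $\xi\in\pU$ by the condition that for every non-pinched contracting element $h\in G$ there exist $g_n\in G$ with $\proj_{g_n\ax(h)}(\gamma)\to\infty$, where $\gamma$ is a geodesic ray terminating at $[\xi]$. The plan is to mimic the argument in Lemma \ref{lem:CharMyrberg}, substituting the hyperbolic-geometry inputs (North-South dynamics on $\pU$, extension of nearest-point projection to the boundary, density of fixed-point pairs) by their convergence-boundary analogues: Lemma \ref{SouthNorthLem}, the projection-extension lemma preceding it, and \cite[Lemma 3.15]{YANG22}. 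As in Lemma \ref{lem:CharMyrberg}, the first observation is that the condition on $\gamma$ is insensitive to the choice of ray terminating at $[\xi]$: if $\gamma,\gamma'$ both terminate at $[\xi]$, then by Assumption \textbf{(A)} and the escaping/non-escaping dichotomy built into the convergence-boundary axioms, the projections $\pi_{Z}(\gamma)$ and $\pi_{Z}(\gamma')$ to any contracting quasi-geodesic $Z=g_n\ax(h)$ differ by a bounded amount (using that $Z$ is contracting so $\gamma,\gamma'$ fellow-travel near $Z$, or more precisely using Lemma \ref{lem-contracting-property} together with the fact that $[\xi]\ne[h^\pm]$). So one may work with any convenient $\gamma$.

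\textbf{The two directions.} For $(\Rightarrow)$, suppose $\xi$ is Myrberg and let $h\in G$ be a non-pinched contracting element. Apply Definition \ref{defn-Myrberg-general} to the distinct pair $([h^-],[h^+])\in\pG\Join\pG$ (distinct precisely because $h$ is non-pinched), obtaining $g_n\in G$ with $g_n^{-1}x\to[h^-]$ and $g_n^{-1}\xi\to[h^+]$ for a basepoint $x\in X$ on $\gamma$. I would then invoke the boundary extension of $\pi_{\ax(h)}$ to conclude that $\pi_{\ax(h)}(g_n^{-1}x)$ and $\pi_{\ax(h)}(g_n^{-1}\xi)$ lie within bounded neighborhoods of the two ends of $\ax(h)$ respectively; since these ends are at infinite distance along $\ax(h)$, it follows that $\proj_{\ax(h)}(g_n^{-1}x, g_n^{-1}\xi)\to\infty$, hence $\proj_{g_n\ax(h)}(x,\xi)\to\infty$, and the geodesic ray $\gamma=[x,\xi]$ has projection to $g_n\ax(h)$ of diameter tending to $\infty$. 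For $(\Leftarrow)$, I would reverse this. Given the projection condition, for a fixed non-pinched $h$ choose $g_n$ as in the hypothesis; then $\proj_{\ax(h)}(g_n^{-1}x, g_n^{-1}\xi)\to\infty$ with $g_n^{-1}x, g_n^{-1}\xi\in X\cup\pU\setminus[h^\pm]$ (the latter because the projections are escaping, so for large $n$ neither point can lie in $[h^\pm]$), and Lemma \ref{SouthNorthLem} gives $(g_n^{-1}x, g_n^{-1}\xi)\to([h^-],[h^+])$ in the quotient topology. To upgrade from "one $h$" to "all distinct pairs $([\zeta],[\eta])$" one uses density: by \cite[Lemma 3.15]{YANG22} the fixed-point pairs of non-pinched contracting elements are dense in $\pG\Join\pG$, so a diagonal argument over a countable dense family of such $h$'s produces a single sequence realizing an arbitrary target pair $([\zeta],[\eta])$.

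\textbf{Main obstacle.} The routine part is the two bi-implications above; the delicate point is the passage from "the projection to $g_n\ax(h)$ is large" to "$(g_n^{-1}x, g_n^{-1}\xi)$ converges to $([h^-],[h^+])$ in the quotient topology of $\pG$". In the hyperbolic setting this is immediate from visual topology, but in a general convergence boundary one must be careful that convergence of projections along $\ax(h)$ controls convergence in $\pU$ only up to the partition $[\cdot]$, which is exactly what Lemma \ref{SouthNorthLem} is designed to provide — so the key is to check that its hypotheses apply, namely that $g_n^{-1}x$ and $g_n^{-1}\xi$ eventually avoid the saturated sets $[h^-]$ and $[h^+]$. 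This follows because the nearest-point projections of these points to $\ax(h)$ escape to the two ends, hence are well-defined finite-diameter sets, forcing the points out of $[h^\pm]$; I would spell this out using Assumption \textbf{(A)} and the boundary-projection lemma. The diagonalization in the last step also requires that $\pG\Join\pG$ be second countable, which holds since $\bU$ is metrizable and hence so is the quotient $[\pG]$; this is the remaining bookkeeping detail.
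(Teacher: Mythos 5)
Your proposal follows essentially the same route the paper intends for this statement: the paper offers no detailed argument, citing \cite[Lemma 4.16]{YANG22} and remarking only that the proof runs ``along similar lines'' as Lemma \ref{lem:CharMyrberg}, using Lemma \ref{SouthNorthLem} together with the density of fixed-point pairs from \cite[Lemma 3.15]{YANG22} and the boundary extension of the nearest-point projection, which is precisely the skeleton you flesh out in both directions. The delicate points you single out (independence of the chosen ray, escape of projections for points converging to $[h^\pm]$, and the diagonal/countability bookkeeping in the quotient topology) are exactly the details the paper defers to \cite{YANG22}, so your sketch is consistent with, and no less complete than, the paper's own treatment.
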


\subsection{Admissible paths and Extension Lemma}

Let $\mathbb F$ be a family of uniformly contracting sets. Assume that $\mathbb F$ has  \textit{bounded intersection} property. That is, for any $r>0$ there exists $D=D(r)$ so that $\mathrm{diam}(N_r(Z)\cap N_r(Z'))\le D$ for any $Z\ne Z'\in \mathbb F$. The notion of admissible paths allows   us  to construct   quasi-geodesics  by concatenating geodesics via $\mathbb F$.
\begin{defn}[Admissible Path]\label{AdmDef} Given $L,\tau\geq0$, a path $\gamma$ is called $(L,\tau)$-\textit{admissible} in $ X$, if $\gamma$ is a concatenation of geodesics $p_0q_1p_1\cdots q_np_n$ $(n\in\mathbb{N})$, where the two endpoints of each $p_i$ lie in some $Z_i\in \mathbb{F}$, and   the following   properties hold:
\begin{enumerate}
\item[(LL)] \textit{Long local property:}  Each $p_i$  for $1\le i< n$ has length bigger than $L$. We allow  the initial and final geodesic segments.  $p_0,p_n$ to be trivial, i.e.\ points.
\item[(BP)] \textit{Bounded Projection property:}  For each $Z_i$, we have $Z_i\ne Z_{i+1}$ and $$\max\{\mathrm{diam}(\pi_{Z_i}(q_i)),\mathrm{diam}(\pi_{Z_i}(q_{i+1}))\}\leq\tau,$$ where $q_0:=\gamma_-$ and $q_{n+1}:=\gamma_+$ by convention.
\end{enumerate} 
The collection $\{Z_i: 1\le i\le n\}$  is referred to as a contracting subset associated to the admissible path.
\end{defn}
\begin{rem}\label{ConcatenationAdmPath}
The path $q_i$ is allowed to be trivial, so that by the (BP) condition, it suffices to check $Z_i\ne Z_{i+1}$. It will be useful to note that admissible paths could be concatenated as follows. Let $p_0q_1p_1\cdots q_np_n$ and $p_0'q_1'p_1'\cdots q_n'p_n'$ be $(L,\tau)$-admissible. If $p_n=p_0'$ has length bigger than $L$, then the concatenation $(p_0q_1p_1\cdots q_np_n)\cdot (q_1'p_1'\cdots q_n'p_n')$ has a natural $(L,\tau)$-admissible structure.  
\end{rem}

\begin{prop}\label{admisProp}\cite[Proposition 3.1]{YANG6}
For any $\tau>0$, there exist $c, L,  R_0>0$ depending only on $\tau, C$ such that the following holds. Let $\gamma = p_0 q_1 p_1 \cdots q_n p_n$ be an $(L, \tau)-$admissible
path. Then  $\gamma$ is a $c$-quasi-geodesic and any geodesic joining $\gamma_-,\gamma_+$ intersects the $R_0$-neighborhood of the endpoints of every $q_i$. 
\end{prop}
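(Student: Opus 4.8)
\textbf{Plan of proof for Proposition~\ref{admisProp}.}
The statement is the "local-to-global" principle for admissible paths in a space with a contracting system $\mathbb F$ of bounded intersection, and it is the exact analog of Lemma~\ref{localtoglobal} in the hyperbolic setting. The plan is to follow the standard contracting-projection bootstrap. First I would record the elementary consequences of the contracting property that are uniform over $\mathbb F$: for a $C$-contracting set $Z$, any path staying outside $N_{C'}(Z)$ has $C'$-bounded projection to $Z$ (Lemma~\ref{lem-contracting-property}(1)), and if a geodesic $[x,y]$ has $\mathrm{diam}(\pi_Z(x)\cup\pi_Z(y))$ large then $[x,y]$ must enter a uniformly bounded neighborhood of $Z$ and fellow-travel $\pi_Z(x)$-to-$\pi_Z(y)$ for a definite proportion of its length. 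I would fix the constants $C$ (contracting constant for $\mathbb F$), $\tau$, and the bounded-intersection gauge $D=D(r)$ at the outset, and choose $L$ large compared to all of these; then $c$ and $R_0$ will be extracted at the end as explicit (but unimportant) functions of $C,\tau$.

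The core of the argument is to show that a geodesic $\beta=[\gamma_-,\gamma_+]$ must pass close to the endpoints of each $q_i$, equivalently to the terminal pieces of each $p_i$. The key step: because each $p_i$ ($1\le i<n$) has length $>L$ inside $Z_i$, and because consecutive $q_i,q_{i+1}$ have $\tau$-bounded projection to $Z_i$, the projection of the \emph{whole} complementary part of $\gamma$ (everything before $q_i$ and everything after $q_{i+1}$) to $Z_i$ is bounded by a constant depending only on $\tau$ and the bounded-intersection constants — this uses that any $Z_j$ with $j\neq i$ contributes only $D$ to the projection onto $Z_i$. Hence $\pi_{Z_i}(\gamma_-)$ and $\pi_{Z_i}(\gamma_+)$ are within a bounded distance of the two endpoints of $p_i$, so $\mathrm{diam}(\pi_{Z_i}(\gamma_-)\cup\pi_{Z_i}(\gamma_+))\ge L - O(\tau)$, which for $L$ large forces $\beta$ to enter $N_{R_0}(Z_i)$ and to come within $R_0$ of both endpoints of $p_i$ (hence of the endpoints of $q_i$ and $q_{i+1}$). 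Ordering these fly-by points along $\beta$ (they occur in the same order as along $\gamma$, again by bounded projection) gives a coarse synchronization between $\gamma$ and $\beta$.

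Once $\beta$ is known to track $\gamma$ at the junction points, the length estimate is routine: on each segment between consecutive fly-by points, $\gamma$ restricted there is a bounded concatenation $q_i p_i q_{i+1}$, and $\beta$ runs between two points each within $R_0$ of $\gamma$, so $\ell(q_ip_iq_{i+1})\le d(\beta_-,\beta_+)\big|_{\text{that segment}} + O(R_0)$; summing over $i$ and absorbing the $q_i$ lengths (which are controlled because $q_i$ has bounded projection to both $Z_{i-1}$ and $Z_i$, forcing $q_i$ itself to be coarsely geodesic and not much longer than the distance between its endpoints on $\beta$) yields $\ell(\gamma)\le c\, d(\gamma_-,\gamma_+)+c$ for a uniform $c$. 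I would cite \cite[Proposition 3.1]{YANG6} for the precise bookkeeping rather than reproduce it.

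The main obstacle — and the only place requiring genuine care — is the uniform bound on the "back-projection" $\mathrm{diam}(\pi_{Z_i}(\gamma_-)\cup\pi_{Z_i}(\gamma_+))$ being close to $\ell(p_i)$ and not larger: one has to rule out that the portion of $\gamma$ lying before $q_i$ wanders back near $Z_i$. This is where the bounded-intersection hypothesis on $\mathbb F$ (not just contracting-ness of individual members) is essential, together with an induction on $n$ or a "first/last return" argument showing that once $\gamma$ leaves $N_{R_0}(Z_i)$ after $p_i$, the $\tau$-bounded-projection condition prevents it from returning. Making the constants genuinely independent of $n$ is the subtle point, and it is exactly what the long-local length threshold $L$ buys.
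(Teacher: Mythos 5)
The paper does not prove this proposition at all: it is imported verbatim as \cite[Proposition 3.1]{YANG6}, so there is no in-paper argument to compare against. Your sketch does reproduce the strategy of the cited proof — bound the projection of everything outside $q_ip_iq_{i+1}$ onto $Z_i$, conclude that $\mathrm{diam}\bigl(\pi_{Z_i}(\gamma_-)\cup\pi_{Z_i}(\gamma_+)\bigr)\ge L-O(\tau)$, force the geodesic $[\gamma_-,\gamma_+]$ into $N_{R_0}(Z_i)$ near both endpoints of $p_i$, and then sum length contributions — so the approach is the right one.

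Two cautions. First, the step "any $Z_j$ with $j\neq i$ contributes only $D$ to the projection onto $Z_i$" does not by itself bound the projection of the whole complementary subpath: projections of consecutive pieces need not concatenate additively, and with $n$ pieces a naive sum would give a bound growing in $n$. The actual argument is an induction (from the outermost pieces inward) in which one simultaneously establishes that the subpath preceding $q_i$ is a quasi-geodesic lying outside $N_{C'}(Z_i)$, whence Lemma~\ref{lem-contracting-property}(1) gives a projection bound independent of $n$; you correctly flag this as the crux but do not carry it out, and instead defer the "precise bookkeeping" to the very proposition being proved, which is circular as written. Second, the constants must be checked to depend only on $\tau$ and $C$ (not on the particular $Z_i$ or on $n$); this is exactly what the uniform contracting constant for $\mathbb F$ and the threshold $L$ provide, and your outline is consistent with that, but the claim should be made explicit when the induction is executed.
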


Fix a set $\{h_1,h_2,h_3\}$ of three pairwise independent non-pinched contracting elements in $G$. The following is proved in \cite[Lemma 2.14]{YANG10} via similar ingredients (\ref{BddProjEQ2}) in proving Lemma \ref{DCExtension}. 

\begin{lem}[Extension Lemma]\label{extend3}
There exist   $L, \tau>0$ depending only on $C$ with the following property.  
Choose  elements $f_i\in \langle h_i\rangle$ for  $1\le i\le 3$  to obtain a set $F$ satisfying $d(o,f_io)\ge L$. Let $g_1, g_2\in G$ be any two elements.
There exists an element $f \in F$ such that $\mathrm{diam}(\pi_{\ax(f)}([o,g_io]))\le \tau$ for each $i=1,2$. In particular,  the path  $$\gamma:=[o, g_1o]\cdot(g_1[o, fo])\cdot(g_1f[o,g_2o])$$ is an $(L, \tau)$-admissible path relative to $\mathbb F$.
%\item
%The point  $go$  is an $(r, f)$-barrier for any geodesic  $[\gamma^-,\gamma^+]$.	%In particular, the path $[o, go][go, gfo][gfo, gfho]$ is a $(1, 4\epsilon_0)$-quasi-geodesic.
%\end{enumerate}
\end{lem}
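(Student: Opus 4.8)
The plan is to fix the admissibility constants by Proposition~\ref{admisProp}, then to produce $f$ by a pigeonhole argument over the three independent axes, and finally to check conditions (LL) and (BP) of Definition~\ref{AdmDef} for the concatenation $\gamma$. Let $\mathbb F$ be the contracting system of all $G$--translates of $\ax(h_1),\ax(h_2),\ax(h_3)$; since the $h_i$ are pairwise independent, $\mathbb F$ has bounded intersection with some uniform contracting constant $C$, and I take $L,\tau,R_0$ (and the quasi-geodesic constant $c$) to be the output of Proposition~\ref{admisProp} for this $C$. Enlarging $\tau$ if necessary, the first thing to record is the contracting-system analogue of Lemma~\ref{BddProjLem}: for every geodesic $\sigma$ in $X$ and every pair of distinct $Z\ne Z'$ in $\mathbb F$ one has
\[
\min\{\mathrm{diam}(\pi_Z(\sigma)),\ \mathrm{diam}(\pi_{Z'}(\sigma))\}\le\tau.
\]
This is obtained from bounded intersection together with Lemma~\ref{lem-contracting-property}, in the same manner that (\ref{BddProjEQ2}) is derived from (\ref{BddProjEQ1}) in the proof of Lemma~\ref{DCExtension}: a geodesic with $>\tau$ projection to both $Z$ and $Z'$ would fellow-travel a long subsegment of each, contradicting the bounded-intersection gauge.

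Next I would run the pigeonhole. Fixing $F=\{f_1,f_2,f_3\}$ with $f_i\in\langle h_i\rangle$ and $d(o,f_io)\ge L$, note that $\ax(f_i)=\ax(h_i)\in\mathbb F$ because $f_i$ and $h_i$ generate the same maximal elementary subgroup $E(h_i)$ (Lemma~\ref{elementarygroup} and (\ref{axisdefn})). Given $g_1,g_2\in G$, apply the dichotomy above to each of the two geodesics $[o,g_1^{-1}o]$ and $[o,g_2o]$ against the three axes: each of them has $\pi$-diameter exceeding $\tau$ onto at most one of $\ax(h_1),\ax(h_2),\ax(h_3)$, so at most two indices are excluded and some $i_0$ serves both, giving $\mathrm{diam}(\pi_{\ax(h_{i_0})}([o,g_1^{-1}o]))\le\tau$ and $\mathrm{diam}(\pi_{\ax(h_{i_0})}([o,g_2o]))\le\tau$. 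Set $f:=f_{i_0}\in F$; then $\ax(f)=\ax(h_{i_0})$ and $d(o,fo)\ge L$. The version of the first assertion literally stated, with $[o,g_1o]$ in place of $[o,g_1^{-1}o]$, follows by the same argument applied to the geodesic $[o,g_1o]$; the inverse is the form actually needed in the admissibility conclusion.

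Finally I would verify that $\gamma=[o,g_1o]\cdot(g_1[o,fo])\cdot(g_1f[o,g_2o])$ is $(L,\tau)$--admissible relative to $\mathbb F$. Write $\gamma=p_0q_1p_1q_2p_2$ with $p_0=\{o\}$ and $p_2=\{g_1fg_2o\}$ the permitted trivial endpoint pieces, $q_1=[o,g_1o]$, $p_1=g_1[o,fo]$, $q_2=g_1f[o,g_2o]$, and $Z_1:=g_1\ax(f)\in\mathbb F$. Condition (LL) is immediate since $p_1$ has length $d(o,fo)\ge L$ and its endpoints $g_1o,g_1fo$ lie on $Z_1$. For (BP), equivariance of nearest-point projection yields $\pi_{Z_1}(q_1)=g_1\,\pi_{\ax(f)}([o,g_1^{-1}o])$ and $\pi_{Z_1}(q_2)=g_1f\,\pi_{\ax(f)}([o,g_2o])$, both of diameter $\le\tau$ by the choice of $f$; the distinctness requirements $Z_0\ne Z_1\ne Z_2$ and the automatically satisfied projection bounds at the degenerate pieces $p_0,p_2$ follow from the standard conventions for trivial initial and terminal segments. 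Invoking Proposition~\ref{admisProp} then certifies $\gamma$ as a $c$--quasi-geodesic.

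The step I expect to be the main obstacle is the bookkeeping in (BP): one has to track the isometries $g_1$ and $g_1f$ carefully so that the projection conditions for the two bridging geodesics reduce exactly to the two bounds produced by the pigeonhole — in particular, the relevant segment for $q_1$ is $[o,g_1^{-1}o]$, not $[o,g_1o]$ — and one has to confirm that taking the power of $h_{i_0}$ needed to reach $d(o,fo)\ge L$ leaves all of this intact, which it does since it does not change $\ax(f)=\ax(h_{i_0})$.
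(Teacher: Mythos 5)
Your overall strategy is the right one: the paper does not prove this lemma in-house but defers to \cite[Lemma 2.14]{YANG10}, remarking that the proof uses the same ingredient (\ref{BddProjEQ2}) as Lemma~\ref{DCExtension}, and that is exactly the pigeonhole-over-three-axes argument you run. Your bookkeeping at the end is also correct and even catches a real subtlety: condition (BP) for $q_1=[o,g_1o]$ against $Z_1=g_1\ax(f)$ reduces, after applying $g_1^{-1}$, to a bound on $\pi_{\ax(f)}([o,g_1^{-1}o])$, not on $\pi_{\ax(f)}([o,g_1o])$ as the statement literally reads, and your use of $f\ax(f)=\ax(f)$ for $q_2$ is right.

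There is, however, one genuine flaw: the intermediate dichotomy you record is false as stated. For an arbitrary geodesic $\sigma$ and an arbitrary pair $Z\ne Z'$ in the full system $\mathbb F$ of $G$-translates, it is perfectly possible that $\mathrm{diam}(\pi_Z(\sigma))$ and $\mathrm{diam}(\pi_{Z'}(\sigma))$ are both large: already in $\Hyp^2$ a geodesic can fellow-travel one axis for a long time and later fellow-travel a second, disjoint axis for a long time, and this does not contradict bounded intersection because the two fellow-traveled subsegments of $\sigma$ are disjoint. Your justification (``would fellow-travel a long subsegment of each, contradicting the bounded-intersection gauge'') therefore does not produce a contradiction in the generality you claim. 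What is true, and what you actually use, is the restricted statement of Lemma~\ref{BddProjLem} / (\ref{BddProjEQ2}): the geodesics are $[o,g_1^{-1}o]$ and $[o,g_2o]$, both issuing from $o$, and the sets are the three base axes $\ax(h_i)=E(h_i)o$, each of which contains $o$. In that situation a projection of diameter $>\tau$ forces the fellow-traveled portion to be (coarsely) an \emph{initial} segment of the geodesic of length comparable to $\tau$ --- since the first entry point into $N_{C'}(\ax(h_i))$ must project near $\pi_{\ax(h_i)}(o)=o$ by Lemma~\ref{lem-contracting-property}(1) --- so two such long portions necessarily overlap and land a long segment in $N_{C'}(\ax(h_i))\cap N_{C'}(\ax(h_j))$, which does violate bounded intersection for $\tau$ large. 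State the dichotomy only in this based form and supply the ``initial segment'' step, and the rest of your argument goes through unchanged.
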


\begin{rem}
Since admissible paths are given by local conditions, we can use $F$
to connect   any number of elements  $g\in G$ to get an $(L, \tau)$-admissible path. We refer the reader to \cite[Lemma 2.16]{YANG10} for a precise formulation. 
\end{rem}

%\begin{lem}
%The Hausdorff dimension of horospehrical limit set is bigger than $\e G/2$.
%\end{lem}

The main result of this subsection reads as follows.

\begin{thm}\label{thm-qrtree-Myrberg-general}
Suppose that $G$ act properly on a proper geodesic metric space $X$ with   a convergence boundary $\pU$.  Assume that $G$ contains non-pinched  contracting elements. Then there exists a quasi-radial tree $T$ with vertices in the orbit $Go$ rooted at $o$ so that the growth rate of $T$ is equal to $\e G$ and the limit set of $T$ consists of Myrberg limit points.     
\end{thm}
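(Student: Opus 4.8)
The plan is to combine the construction of quasi-radial trees developed for group actions (Section~\ref{sec-hausd}) with the admissible-path technology for contracting elements (the Extension Lemma~\ref{extend3} and Proposition~\ref{admisProp}), mirroring the proof of Theorem~\ref{MyrbergHdimHypThm} but replacing Gromov-hyperbolic estimates by their contracting analogues. First I would enumerate $\mathcal B = \{b_1, b_2, \dots\}$, a list of \emph{all} non-trivial powers of \emph{all} non-pinched contracting elements in $G$, and fix a set $F = \{f_1, f_2, f_3\}$ of three pairwise independent non-pinched contracting elements (using \cite[Lemma 2.12]{YANG10}), taking powers so that $d(o, f_i o) \ge L$ with $L, \tau$ from the Extension Lemma. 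Writing $B_n = d(o, b_n o)$, I would then invoke the Extension Lemma at each junction: for each $b_n$ and each candidate set of elements, there is an $f \in F$ making the relevant concatenations $(L,\tau)$-admissible, and by the pigeonhole principle a fixed proportion (at least one-third) of any annular set $A(L_n, \Delta, o)$ shares a common such $f$, exactly as in Lemma~\ref{ChooseAi}.

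Next I would set up the large annular sets: fix $\Delta \ge 1$ and choose $L_n \to \infty$ with $|A(L_n,\Delta,o)| \ge e^{L_n \omega_n}$ where $\omega_n \to \e G$ (possible by the definition of the critical exponent and the annular growth formula~(\ref{AnnulusGrowth})), and arrange that $(L_n, \Delta, B_n)$ with repetitions $K_n = 1$ satisfy conditions~(\ref{ChoiceKnEq}) and~(\ref{ChoiceKnEq2}). Extract from each $A(L_n,\Delta,o)$ a maximal $(2R+2\Delta)$-separated subset and then, via the Extension Lemma, a further subset $A_n$ with $|A_n| \ge e^{L_n\omega_n}$ all of whose elements $a$ have $[o,ao]$ with $\tau$-bounded projection to $\ax(f_n)$ and $\ax(h_{n-1})$ for appropriate $f_n, h_{n-1} \in F$. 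Then form the admissible words $\mathcal W = \bigcup_{m\ge 0}\prod_{n=1}^m A_n f_n b_n h_n$; by Proposition~\ref{admisProp} each labeled path is a uniform $c$-quasi-geodesic, giving a map $\Phi : \mathcal W \to X$ whose image is a quasi-radial tree once injectivity is checked — and injectivity follows the argument of Lemma~\ref{QuasiRadialTree} verbatim, since that argument uses only that each prefix $W_m o$ lies within $R_0$ of the geodesic $[o, Wo]$ (now supplied by Proposition~\ref{admisProp}) together with the $(2R+2\Delta)$-separation of $A_n$. The growth rate of $T$ equals $\e G$ by Lemma~\ref{LargeTreeGrpVersion}.

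It remains to identify the accumulation points of $T$ as Myrberg limit points, and here I would use the characterization Lemma~\ref{CharMyrberg-general}: a point $\xi$ is Myrberg iff for every non-pinched contracting $h \in G$ there exist $g_n \in G$ with the projection of a ray $\gamma$ ending at $[\xi]$ to $g_n \ax(h)$ tending to $\infty$. Given an infinite admissible word $W_\infty = \prod_n A_n f_n b_n h_n$, its labeled ray $\gamma$ is a $c$-quasi-geodesic (Proposition~\ref{admisProp}), hence accumulates on a non-pinched $[\cdot]$-class $[\xi_W] \in \pG$ by Assumption~\textbf{(A)} of the convergence boundary (Definition~\ref{ConvBdryDefn}). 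Since every power $b^i$ of every non-pinched contracting $h = b$ appears somewhere in $W_\infty$, letting $g_n$ be the group element represented by the prefix just before the occurrence of $b^n$, the path $\gamma$ fellow-travels $g_n \ax(b^n)$ over a segment of length comparable to $d(o, b^n o) \to \infty$; by the bounded-projection/North-South behaviour (Lemma~\ref{SouthNorthLem}) this forces $\proj_{g_n \ax(b)}(\gamma)$ to diverge, so Lemma~\ref{CharMyrberg-general} applies and $\xi_W$ is a Myrberg point. Finally the Hausdorff dimension statement — not needed for this theorem but recorded en route — comes from Lemma~\ref{HDLargeTree}, whose proof only needs the shadow-separation estimate; I expect the main technical obstacle to be verifying that this shadow-separation argument (Step 2 of Lemma~\ref{HDLargeTree} and part (2) of Lemma~\ref{QuasiRadialTree}), which was stated using visual-metric estimates of Lemma~\ref{ShadowApproxBalls}, carries over to the general convergence boundary. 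For the present theorem, which only asserts that $T$ exists with growth rate $\e G$ and Myrberg accumulation points, this obstacle does not arise; the growth-rate and Myrberg claims are purely combinatorial/geometric and rely only on admissible-path theory and Assumptions \textbf{(A)}--\textbf{(C)}.
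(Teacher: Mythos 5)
Your proposal is correct and follows essentially the same route as the paper's proof: enumerate all powers of non-pinched contracting elements as bridges, use the Extension Lemma to insert connectors from a fixed independent set $F$, build the admissible words $\prod A_n f_n b_n h_n$, get quasi-geodesity and injectivity from Proposition~\ref{admisProp}, and conclude the Myrberg property via Lemma~\ref{CharMyrberg-general} from the fact that the ray passes within $R_0$ of both endpoints of each translated segment $g_m[o,b_mo]$, forcing divergent projections to $g_m\ax(b_m)$. You also correctly identify that the shadow/visual-metric estimates are only needed for the Hausdorff dimension statement (handled separately in Theorem~\ref{MyrbergHdimFloyd} via Lemma~\ref{lemma3.16PY}) and not for this theorem.
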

\begin{proof}
The proof follows closely that of Theorem \ref{MyrbergHdimHypThm} presented  in Section \ref{sec-Myrberg}.
We list all non-pinched contracting  elements in $G\act X$ as follows.
$$
\mathcal B=\{ b_1,   b_2, \cdots,  b_n, \cdots \}
$$
which includes all non-trivial powers of contracting elements. Denote $B_n=d(o,b_no)$.

Fix $\Delta>1$ and let $L,\tau, R_0$ be given by Lemma \ref{extend3}. Choose a divergent sequence $L_n\to \infty$ with $L_n>L$. 
Let $\tilde A_n$ be a maximal  $(2R+2\Delta)$-separated subset of $A(L_n,\Delta,o)$.
With Lemma \ref{ChooseAi} replaced by Lemma \ref{extend3}, we can find as in Section  \ref{sec-Myrberg} a sequence of subsets $A_n\subseteq \tilde A_n$ and $f_n, h_n\in F$ so that
\begin{enumerate}
    \item 
    $|A_n|>\mathrm{e}^{L_n \omega_n}$.
    \item
    for each $a\in A_n$, $[o,ao]$ and $[o, b_no]$ have $\tau$-bounded projection to $\ax(f_n)$.
    \item 
    for each $a\in A_{n+1}$, $[o,ao]$ and $[o, b_no]$ have $\tau$-bounded projection to $\ax(h_n)$.
\end{enumerate}

Let $\mathcal W$ be the set of all words with form $W=\prod_{i=1}^m a_nf_n    b_n h_{n}$, where $a_n\in A_n$. We  define the map $\Phi: \mathcal W \longrightarrow X$ as follows $$\begin{aligned}
\Phi:\;\; &\mathcal W \longrightarrow X\\
  &W\longmapsto Wo
\end{aligned}$$ 
The injectivity of  $\Phi$ follows by a similar argument as  in  Lemma \ref{QuasiRadialTree}. We indicate the two main ingredients. 
\begin{enumerate}
    \item 
    Lemma \ref{lem:localqginhyp} shows that for every prefix $W_m$ of $W$, the geodesic  $[o,Wo]$   intersects the $R_0$-neighborhood of   $W_mo$. Here this follows from Lemma \ref{admisProp}, as $W$ labels a $(L,\tau)$-admissible path relative to $\mathbb F$.
\item 
$A_n$ consists of $2(\Delta+R)$-separated elements.
\end{enumerate}
Thus the image $T:=\Phi(\mathcal W)$ is a quasi-radial tree, and  the growth rate of $T$ is equal to $\e G$. 

Analogous to Lemma \ref{lem:localqg-Myrberg}, it remains to show that each branch in $T$ terminates at a Myrberg point.    
\begin{claim}
Let $W_\infty=\prod_{n=1}^\infty a_nf_n   b_nh_{n}$ be an infinite word. Then 
the sequence of points $W_m o $ for every prefix in $W$ with length $m\ge 1$ forms a $c$-quasi-geodesic ray $\gamma$ which accumulates on the $[\cdot]$-class of  a Myrberg point.     
\end{claim}
\begin{proof}[Proof of the Claim]
By construction, $\gamma$ is a $(L,\tau)$-admissible path relative to $\mathbb F$, so it is a $c$-quasi-geodesic by Proposition \ref{admisProp}. Moreover, if  we denote $g_m=\prod_{n=1}^m A_nf_n$, we have $d(g_mo, [o,\xi])\le R_0$ and $d(g_mb_mo, [o,\xi])\le R_0$. This implies $\pi_{g_m\ax(b_m)}(\gamma)>d(o,b_mo)-2R_0$ so the end point of $\gamma$ is a Myrberg point  by Lemma \ref{CharMyrberg-general}.    
\end{proof}

The proof is complete.
\end{proof}

Compared with Theorem \ref{MyrbergHdimHypThm}, we do not have here the estimate on the Hausdorff dimension, as there is no known visual metric on $\pU$ with properties as in Lemma \ref{VisualMetric} and Lemma \ref{ShadowApproxBalls}. However, in the special case of the Floyd metric, 
we can indeed apply Theorem \ref{thm-qrtree-Myrberg-general} to  compute the Hausdorff dimension of the Myrberg limit set in the Floyd boundary.

\subsection{Applications: Floyd boundary}\label{sub-app-Floyd} 
We first introduce the compactification of a locally finite graph due to W. Floyd \cite{Floyd}. The Cayley graph of a finitely generated group shall be our main focus. We follow closely the  exposition in      \cite{Ge2}, \cite{GePo2} and \cite{Ka}.

Let $G$ be a group with a finite generating set $S$. Assume that
$1\notin S$ and $S=S^{-1}$.  Let  $\Gx$ denote  the \textit{Cayley graph} of $G$ with respect to $S$, equipped with   the word  metric  $d$. We define a Floyd metric  on $\Gx$ by rescaling the word metric as follows.

Fix $0 < \lambda <1$ throughout the construction. The \textit{Floyd
length} $\ell_\lambda^o(e)$ of an edge $e$ in $\Gx $ is $\lambda^n$, where $n =
d(o, e)$. The Floyd length $\ell_\lambda^o(\gamma)$ of a path $\gamma$ is the sum of Floyd lengths of its edges. This induces a length metric $\rho_\lambda^o$ on $\Gx$, which is the infimum of Floyd lengths of all possible paths between two points.

Let $\Gf$ be the Cauchy completion of $G$ with respect to $\rho_\lambda^o$.
The complement $\pGf$ of $\Gx$ in $\Gf$ is called \textit{Floyd
boundary} of $G$. The boundary $\pGf$ is called \textit{non-trivial} if
$\sharp \pGf>2$. Non-triviality of the Floyd boundary does not depend on the choice of generating sets \cite[Lemma~7.1]{YANG6}. Most  groups have trivial Floyd boundary \cite{KN04, Lev20}. Currently, the  most general class of groups known  to have non-trivial Floyd boundary are relatively hyperbolic groups \cite{Ge2}.

%In what follows, we shall omit the Floyd function $f$ in the notations $\fdo, \flo$ and write $\rho_o, \ell_o$ for simplicity.

By construction, we have the following equivariant property 
\begin{align}
\label{equiv}
\rho_\lambda^o (x, y) =
\rho_\lambda^{go}(gx, gy)\\
\label{lambdabilip}
\lambda^{d(o, o')} \le \frac{\rho_\lambda^{o}(x, y)}{\rho_\lambda^{o'}(x, y)} \le
\lambda^{-d(o, o')}    
\end{align}
for any two points $o, o' \in G$. So for different basepoints, the corresponding Floyd compactifications are bi-Lipschitz. Hence, the left-multiplication by each $g\in G$ on $G$ extends to the boundary as a bi-Lipschitz homeomorphism.  Note  that the topology  may  depend on the choice of the rescaling function and the generating set. 
When $G$ is hyperbolic, the Floyd metric $\rho_\lambda^o$ is, up to bi-Lipschitz equivalence,  the same as the visual metric $\rho_\epsilon^o$
(Section~\ref{sec-prel}) with $\epsilon:=-\log \lambda$ in  \cite[Appendix]{PYANG}. We shall write the Floyd metric ${\rho}_\lambda$ when the basepoint is identity.

The action on the Floyd boundary provides an important source of convergence group actions. If $|\pGf|\ge 3$, Karlsson proved in \cite{Ka}  that $\Gamma$ acts  by homeomorphism on $\pGf$ as a convergence group action. 
Moreover, the cardinality of $\pGf$ is either 0, 1, 2 or uncountably infinite. By   \cite[Proposition~7]{Ka}, $|\pGf|= 2$ exactly when the group $\Gamma$ is virtually infinite cyclic.
These follow from the following fundamental fact  in   \cite{Ka}.
\begin{lem}[Visibility lemma] \label{karlssonlem} 
For any $c\ge 1$, there is a function $\varphi: \mathbb R_{\ge 0} \to \mathbb R_{\ge 0}$ such that for
any $v \in G$ and any $c$-quasi-geodesic $\gamma$ in $\Gx$, 
$\ell_\lambda^v(\gamma) \ge \kappa$ implies that $d(v,
\gamma) \le \varphi(\kappa)$.
\end{lem}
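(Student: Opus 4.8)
The plan is to exploit the geometric decay built into the Floyd rescaling: an edge $e$ with $d(v,e)=n$ has Floyd length $\lambda^{n}$, and since $0<\lambda<1$ these lengths decay exponentially with distance from $v$. Thus a path that stays far from $v$ can contribute only a small amount of Floyd length, \emph{provided} it does not linger for too long at any given distance from $v$ — and the $c$-quasi-geodesic hypothesis is exactly what rules this out, since it forces $d(v,\cdot)$ to grow linearly along $\gamma$ once one moves far enough along it. So I would prove the contrapositive: give an explicit upper bound of the shape
$\ell_\lambda^v(\gamma)\le C(c)(R+1)\lambda^{R}$, where $R:=d(v,\gamma)$ and $C(c)$ depends only on $c$ and $\lambda$; since the right-hand side tends to $0$ as $R\to\infty$, large $\ell_\lambda^v(\gamma)$ forces $R$ to be bounded.

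\textbf{Main steps.} First I would fix a point $p\in\gamma$ with $d(v,p)=R$ and parametrise $\gamma$ by its edges, counted combinatorially outward from $p$ in each of the two directions. Every point of $\gamma$, hence every edge of $\gamma$, lies at word-distance $\ge R$ from $v$, so each edge contributes Floyd length $\le\lambda^{R-1}$ (the harmless shift by $1$ coming from edge length is absorbed into constants below). Next, if an edge $e$ is reached from $p$ by traversing $k$ edges of $\gamma$, the defining inequality $\ell(\beta)\le c\,d(\beta_-,\beta_+)+c$ applied to that subpath $\beta$ gives $d(p,e)\ge k/c-1$, whence $d(v,e)\ge d(p,e)-d(v,p)\ge k/c-1-R$; in particular, once $k\ge k_0:=2c(R+1)$ one has $d(v,e)\ge k/(2c)$, so $\ell_\lambda^v(e)\le \lambda^{k/(2c)-1}$. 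Splitting $\gamma$ into the at most $2k_0$ edges with combinatorial parameter $<k_0$ and the (possibly infinite) remaining tail, and summing,
$\ell_\lambda^v(\gamma)\le 2k_0\,\lambda^{R-1}+2\sum_{k\ge k_0}\lambda^{k/(2c)-1}\le C(c)(R+1)\lambda^{R}$,
where the geometric tail converges precisely because $\lambda<1$ and its sum contributes a factor depending only on $c$ and $\lambda$. Note that this estimate is insensitive to whether $\gamma$ is finite, semi-infinite, or bi-infinite.

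\textbf{Conclusion.} Since $(R+1)\lambda^{R}\to 0$ as $R\to\infty$, the set $\{R\ge 0: C(c)(R+1)\lambda^{R}\ge\kappa\}$ is bounded for each $\kappa>0$, and I would define $\varphi(\kappa)$ to be its supremum (equivalently, any decreasing majorant of it). Then $\ell_\lambda^v(\gamma)\ge\kappa$ forces $C(c)(R+1)\lambda^{R}\ge\kappa$ by the estimate above, hence $d(v,\gamma)=R\le\varphi(\kappa)$; and $\varphi$ depends only on $c$ and $\lambda$ (and the fixed generating set), not on $v$ or $\gamma$, as required.

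\textbf{Expected difficulty.} I do not anticipate a genuine obstacle: the proof is essentially a single direct estimate. The only points deserving care are the bookkeeping between edge-length and vertex-distance (the various additive constants), checking that the bound $C(c)$ is uniform in the basepoint $v$ and in $\gamma$, and observing that the tail sum argument applies unchanged to infinite quasi-geodesics — so that the same $\varphi$ works across all cases.
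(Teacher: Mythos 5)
The paper does not prove this lemma; it is quoted directly from Karlsson's work \cite{Ka}, so there is no in-paper argument to compare against. Your proof is correct and is in fact the standard argument for the visibility lemma: the quasi-geodesic inequality forces $d(v,\cdot)$ to grow linearly in the combinatorial parameter away from the closest point $p$, so the Floyd lengths of the edges form (up to the initial block of $O(c(R+1))$ edges each of length $\le\lambda^{R-1}$) a convergent geometric series with ratio $\lambda^{1/(2c)}<1$, giving the bound $\ell_\lambda^v(\gamma)\le C(c)(R+1)\lambda^R$ and hence the contrapositive. The only (harmless) caveat is that the implication is vacuous, and $\varphi$ need only be finite, for $\kappa>0$.
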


By the theory of  convergence groups, elements in $G$ can be divided into the categories of elliptic, parabolic and hyperbolic elements. Hyperbolic elements in $G$ are infinite order elements with exactly two fixed points in $\pGf$. Moreover, they are contracting by \cite[Lemma 7.2]{YANG6}, so the previous discussion applies in the current setup.  

The Floyd boundary is \textit{visual}: any quasi-geodesic ray converges to a boundary point, and any two points $x, y\in G\cup\pGf$ can be connected by a bi-infinite or semi-infinite geodesic. See \cite[Prop. 2.4]{GePo2} for a proof.
For $x,y\in G$, we define  the \emph{shadow} of a ball  $B(y,r)$ from the source $x$ to be
$$\Pi_{x}(y, r) := \{\xi\in \pGf: \exists [x,\xi]\cap B(y,r)\ne\emptyset\}$$
We have the following analog of Lemma \ref{ShadowApproxBalls}, which compares   balls with shadows at large Floyd distance.  When $G$ is a relatively   hyperbolic group, Property (2) is proved  in \cite[Lemma~3.15]{PYANG}  for transitional points $v$ on $\gamma$. With the same proof, we generalize it to any group for  points $v$ with large Floyd distance. Property (1) is proved in \cite[Lemma~3.14]{PYANG}. We provide their short proofs for completeness. 
\begin{lem}\label{lemma3.16PY}
Given $\xi\in \pGf$, let $\gamma$ be a geodesic  between $1$ and $\xi$. Let $v$ be  any point on $\gamma$ and denote  $r=\lambda^{d(1,v)}$. Then
\begin{enumerate}
    \item For any $R>0$, there exist $C_1=C_1(R)>0$ so that $\Pi_1(v, R)\subset B_{{\rho}_\lambda}(\xi,C_1 r)$.
    \item 
    For any $\kappa>0$, there exist $R=R(\kappa), C_2=C_2(\kappa)>0$ so that if $\rho_\lambda^v(1,\xi)\ge \kappa$ then
$B_{{\rho}_\lambda}(\xi, C_2r)\subset  \Pi_1(v, R)$.
\end{enumerate}
\end{lem}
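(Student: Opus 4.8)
The plan is to prove the two inclusions separately, following the template of Lemma~\ref{ShadowApproxBalls} but with the exponential estimates for the visual metric replaced by direct estimates on Floyd lengths, and with the Morse Lemma replaced by Karlsson's Visibility Lemma~\ref{karlssonlem}. Throughout I set $n=d(1,v)$, so that $r=\lambda^{n}$, and I keep the change-of-basepoint inequality (\ref{lambdabilip}) at hand. Both parts generalize \cite[Lemmas~3.14, 3.15]{PYANG} from relatively hyperbolic groups with transitional $v$ to an arbitrary finitely generated group with $v$ of large Floyd distance, but the arguments are essentially the same.

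For part (1), I would take $\eta\in\Pi_1(v,R)$, fix a geodesic $[1,\eta]$ and a vertex $w$ on it with $d(v,w)\le R$, and estimate $\rho_\lambda(\xi,\eta)$ by the Floyd length of the concatenated path: the sub-ray of $\gamma$ from $\xi$ back to $v$, a geodesic segment $[v,w]$, and the sub-ray of $[1,\eta]$ from $w$ out to $\eta$. Since $\gamma$ and $[1,\eta]$ are geodesics issuing from $1$, their sub-rays beyond $v$ (resp.\ beyond $w$) consist of edges at distance $\ge n$ (resp.\ $\ge n-R$) from $1$, so each contributes a Floyd length bounded by a geometric series of the form $\sum_{k\ge 0}\lambda^{n-R+k}=\lambda^{-R}r/(1-\lambda)$, while $[v,w]$ has at most $R$ edges each of Floyd length $\le\lambda^{n-R}$, contributing at most $R\lambda^{-R}r$. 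Summing yields $\rho_\lambda(\xi,\eta)\le C_1(R)r$ for an explicit $C_1(R)$ depending only on $R$ and $\lambda$. This is pure bookkeeping on geometric sums.

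For part (2), given $\kappa>0$ I would set $C_2:=\kappa/2$ and $R:=\varphi(\kappa/2)$, where $\varphi$ is the function from the Visibility Lemma~\ref{karlssonlem} for $c=1$. If $\rho_\lambda(\xi,\eta)<C_2 r$, then by (\ref{lambdabilip}) one has $\rho_\lambda^{v}(\xi,\eta)\le\lambda^{-n}\rho_\lambda(\xi,\eta)=r^{-1}\rho_\lambda(\xi,\eta)<C_2$, so the hypothesis $\rho_\lambda^{v}(1,\xi)\ge\kappa$ and the triangle inequality give $\rho_\lambda^{v}(1,\eta)\ge\kappa-C_2=\kappa/2$. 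Any geodesic ray $[1,\eta]$ is a path from $1$ to $\eta$, hence $\ell_\lambda^{v}([1,\eta])\ge\rho_\lambda^{v}(1,\eta)\ge\kappa/2$; applying the Visibility Lemma (to a long enough finite subray if one prefers a compact quasi-geodesic, whose Floyd length still exceeds $\kappa/3$, say) gives $d(v,[1,\eta])\le\varphi(\kappa/2)=R$, i.e.\ $\eta\in\Pi_1(v,R)$. Thus $B_{\rho_\lambda}(\xi,C_2 r)\subset\Pi_1(v,R)$.

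I do not anticipate a genuine obstacle: part (1) is an elementary estimate and part (2) is a two-line deduction from the change-of-basepoint inequality and the Visibility Lemma. The only points I would be careful to state cleanly are that the Floyd length of an infinite geodesic ray bounds the Floyd distance of its endpoints from below (immediate from the definition of $\rho_\lambda$ as an infimum over paths), and that the Visibility Lemma is legitimately applied to such a ray (or to a sufficiently long finite subray). These are matters of exposition rather than of mathematical content.
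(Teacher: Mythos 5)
Your proposal is correct and follows essentially the same route as the paper: part (1) bounds $\rho_\lambda(\xi,\eta)$ by the Floyd length of the concatenation of the two geodesic tails beyond $v$ and $w$ (each a geometric series of order $r$) plus the short connector $[v,w]$, and part (2) is exactly the paper's two-step deduction via the change-of-basepoint inequality (\ref{lambdabilip}), the triangle inequality for $\rho_\lambda^v$, and Karlsson's Visibility Lemma with $C_2=\kappa/2$, $R=\varphi(\kappa/2)$. The only cosmetic difference is your choice of $w$ with $d(v,w)\le R$ rather than the paper's choice of $w\in[1,\eta)$ with $d(1,w)=d(1,v)$ (whence $d(v,w)\le 2R$); both yield the same constants up to renaming.
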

\begin{proof}
(1) Let $\eta\in \Pi_1(v, R)$ and  $w\in [1,\eta)$ so that $d(1,v)=d(1,w)$. As $d(v,[1,\eta])\le R$ we have $d(v,w)\le 2R$ and thus $\rho_\lambda(v,w)\le 2R\cdot \lambda^{d(1,v)-2R}$ by definition of Floyd metric. Note that $[v,\xi]$ is a $\rho_\lambda$-geodesic from $v$ to $\xi$ (\cite[Lemma 2.7]{PYANG}), so $\rho_{\lambda}(v, \xi)\le \frac{\lambda^{d(1,v)}}{1-\lambda}$. The same holds for $\rho_{\lambda}(w, \eta)$. Thus we  obtain
$$
\begin{array}{lll}
 \rho_{\lambda}(\xi, \eta) &\le \rho_{\lambda}(v, \xi)+\rho_{\lambda}(w, \eta) + \rho_{\lambda}(g, w) \\
\\
&\le (\frac{2}{1-\lambda}+\frac{2R}{\lambda^{2R}})\cdot \lambda^{d(1,v)}.
\end{array}
$$ Setting $C_1:=\frac{2}{1-\lambda}+\frac{2R}{\lambda^{2R}}$ completes the proof.

(2). Let   $\eta \in B_{\rho_{\lambda}}(\xi, \kappa r/2)$. Using
the property (\ref{lambdabilip}), we have $$\rho_{\lambda}^v(\eta, \xi)
\le \lambda^{-d(v,1)}\rho_{\lambda}(\eta, \xi) \le \kappa /2$$ Thus, $\rho_{\lambda}^v(1, \eta)\ge \rho_{\lambda}^v(1, \xi)-\rho_{\lambda}^v(\xi, \eta) \ge C_2:= \kappa/2$, and   $d(g, [1, \eta]) \le R:= \phi(\kappa/2)$ by  Lemma \ref{karlssonlem}. Hence,  $B_{\rho_{\lambda}}(\xi, C_2r) \subset \Pi_1(v, R)$, proving the lemma.    
\end{proof} 

The following easy consequence of Lemma \ref{karlssonlem} will be used.
\begin{lem}\label{LargeFloydDist}
Given $c>1, \kappa>0$ there exists $L=L(c,\kappa)>0$ with the following property.
Let $\gamma=\gamma_1\alpha\gamma_2$ be a $c$-quasi-geodesic. Assume that $\len(\alpha)>L$ and $\rho_\lambda^x(\alpha_-,\alpha_+)\ge \kappa$ for the midpoint $x$ of $\alpha$. Then $\rho_\lambda^x(\gamma_-,\gamma_+)\ge \kappa/2$. 
\end{lem}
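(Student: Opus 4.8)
Lemma \ref{LargeFloydDist} asserts that if a $c$-quasi-geodesic $\gamma = \gamma_1\alpha\gamma_2$ has a sufficiently long middle segment $\alpha$ (longer than some $L = L(c,\kappa)$) whose endpoints have Floyd distance $\geq \kappa$ as seen from the midpoint $x$ of $\alpha$, then the full quasi-geodesic $\gamma$ still has endpoints at Floyd distance $\geq \kappa/2$ as seen from $x$.

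\textbf{Approach.} The plan is to use the Visibility Lemma (Lemma \ref{karlssonlem}) in its contrapositive form: if the Floyd length (from $x$) of a $c$-quasi-geodesic drops below some threshold, then $x$ must be close to that quasi-geodesic in the word metric. The idea is that $\gamma$ \emph{contains} $\alpha$ as a subpath, so $\rho_\lambda^x(\gamma_-,\gamma_+)$ is bounded below in terms of the Floyd length (from $x$) of the subpath $\alpha$; and since $\alpha$ passes through $x$ itself, its Floyd length from $x$ is at least that of a geodesic segment emanating from $x$ of length $\ell(\alpha)/(2c)$ or so. The key point is to control the Floyd \emph{length} $\ell_\lambda^x(\alpha)$ from below by a constant approaching $1/(1-\lambda)$-ish as $\ell(\alpha)\to\infty$, which forces $d(x,\gamma_{\mathrm{rest}})$ to be large, hence the terminal subpaths $\gamma_1,\gamma_2$ contribute little.

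\textbf{Key steps.} First, I would observe that $\alpha$ is a $c$-quasi-geodesic through its own midpoint $x$, so each of its two halves is a $c$-quasi-geodesic ray issuing (essentially) from $x$; each half has word-length $\geq \ell(\alpha)/2$, hence reaches word-distance roughly $\ell(\alpha)/(2c) - c$ from $x$. Its Floyd length from $x$ is therefore at least $\sum_{n=0}^{\lfloor \ell(\alpha)/(2c)-c\rfloor} \lambda^n \cdot(\text{number of edges at distance }n)$; more carefully, I'd just note $\ell_\lambda^x(\alpha) \geq 2\sum_{n=1}^{m}\lambda^n = \frac{2\lambda(1-\lambda^m)}{1-\lambda}$ where $m \asymp \ell(\alpha)/(2c)$, which tends to $\frac{2\lambda}{1-\lambda}$ as $\ell(\alpha)\to\infty$ — in particular it is bounded below by some positive constant $\kappa_0 = \kappa_0(c,\lambda)$ once $\ell(\alpha) > L_0(c)$. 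Second — and this is the real content — I claim that $\rho_\lambda^x(\gamma_-,\gamma_+)$ cannot be much smaller than $\rho_\lambda^x(\alpha_-,\alpha_+)$: suppose $\rho_\lambda^x(\gamma_-,\gamma_+) < \kappa/2$; then by Lemma \ref{karlssonlem} applied to (an extension to the boundary of) $\gamma$, we get $d(x,\gamma')\le \varphi(\kappa/2)$ — wait, that's the wrong direction. Let me instead argue: the Floyd path $\gamma$ itself is a $\rho_\lambda^x$-path from $\gamma_-$ to $\gamma_+$, so $\rho_\lambda^x(\gamma_-,\gamma_+) \le \ell_\lambda^x(\gamma)$; that bounds above, not below. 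The correct route is: $\rho_\lambda^x(\alpha_-,\alpha_+) \le \rho_\lambda^x(\gamma_-,\alpha_-) + \rho_\lambda^x(\gamma_-,\gamma_+) + \rho_\lambda^x(\gamma_+,\alpha_+)$ — no. Rather, one uses that $[\alpha_-,\alpha_+]$-subpath lies on $\gamma$, and the Floyd geodesic realizing $\rho_\lambda^x(\gamma_-,\gamma_+)$ must, by the Visibility Lemma, pass near $x$ only if its Floyd length is large; so instead I would bound $\rho_\lambda^x(\alpha_\pm,\gamma_\pm)$: the subpaths $\gamma_1$ (from $\gamma_-$ to $\alpha_-$) and $\gamma_2$ (from $\alpha_+$ to $\gamma_+$) lie on $\gamma$, and crucially every point of $\gamma_1\cup\gamma_2$ has word-distance $\geq \ell(\alpha)/2$ from $x$ (up to the quasi-geodesic constant), since $x$ is the midpoint of $\alpha$ and $\gamma$ is a quasi-geodesic. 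Hence $\rho_\lambda^x(\gamma_-,\alpha_-) \le \ell_\lambda^x(\gamma_1) \le \frac{\lambda^{\ell(\alpha)/(2c)-c}}{1-\lambda}$, which is $< \kappa/4$ once $\ell(\alpha) > L_1(c,\kappa)$, and similarly for $\gamma_2$.

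\textbf{Conclusion and main obstacle.} Combining: set $L = L(c,\kappa) := \max\{L_0(c), L_1(c,\kappa)\}$ (increased if necessary so that $\kappa_0 \ge \kappa$ is not needed — only the terminal estimate is). Then $\rho_\lambda^x(\gamma_-,\gamma_+) \ge \rho_\lambda^x(\alpha_-,\alpha_+) - \rho_\lambda^x(\gamma_-,\alpha_-) - \rho_\lambda^x(\gamma_+,\alpha_+) \ge \kappa - \kappa/4 - \kappa/4 = \kappa/2$, using the triangle inequality for the metric $\rho_\lambda^x$. The main obstacle is pinning down the geometric claim that \emph{every point of the two terminal subpaths $\gamma_1,\gamma_2$ is word-far from $x$}: this requires that $\gamma$ restricted to $\gamma_1\alpha$ (resp.\ $\alpha\gamma_2$) is a $c$-quasi-geodesic, so that a point $y\in\gamma_1$ at word-distance $t$ from $\alpha_-$ along $\gamma$ satisfies $d(x,y) \ge d(\alpha_-,y)/c - c$ only if the concatenation $\gamma_1\alpha$ doesn't "fold back" — which is exactly guaranteed by $\gamma$ being a (global) $c$-quasi-geodesic. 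So the estimate $d(x,y) \ge \tfrac{1}{c}(t + \ell(\alpha)/2) - c \ge \tfrac{1}{c}\cdot\ell(\alpha)/2 - c$ holds for all $y$ on $\gamma_1$, and symmetrically on $\gamma_2$, which is what feeds into the Floyd-length geometric-series bound. Once this is clear the rest is routine summation.

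\medskip

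\noindent\emph{Remark on the role of this lemma.} This estimate is used, together with Lemma \ref{lemma3.16PY}, to verify in the Floyd-boundary setting the separation of shadows needed for the lower bound on Hausdorff dimension of the Myrberg limit set in Theorem \ref{MyrbergHdimFloyd}; the hypothesis that the middle segment $\alpha$ be long is precisely what makes the ``bridges'' in the admissible-path construction negligible in the Floyd metric.
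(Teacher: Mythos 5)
Your proposal is correct and follows essentially the same route as the paper: since $x$ is the midpoint of $\alpha$ and $\gamma$ is a $c$-quasi-geodesic, every point of $\gamma_1\cup\gamma_2$ lies at word-distance at least roughly $\ell(\alpha)/(2c)$ from $x$, so for $L$ large each of $\ell_\lambda^x(\gamma_1),\ell_\lambda^x(\gamma_2)$ is below $\kappa/4$, and the triangle inequality for $\rho_\lambda^x$ gives the claim. The only immaterial difference is that the paper obtains the bound $\ell_\lambda^x(\gamma_i)\le \kappa/4$ by invoking the Visibility Lemma (Lemma~\ref{karlssonlem}) in contrapositive form, whereas you use a direct geometric-series estimate (which, to be fully precise, needs an extra multiplicative constant depending on $c$ to account for the several edges a quasi-geodesic may have at a given distance from $x$ --- harmless, since the tail still tends to $0$); your first ``key step'' lower-bounding $\ell_\lambda^x(\alpha)$ is, as you yourself note, not needed.
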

\begin{proof}
Since $\gamma$ is a $c$-quasi-geodesic,  $d(x,\gamma_1), d(x,\gamma_2)$ is large compared with $L$. Choose $L$ large enough depending on $c$ and $\kappa$ so that $\fl^x(\gamma_1)\le \kappa/4$ and $\fl^x(\gamma_2)\le \kappa/4$ by Lemma \ref{karlssonlem}. The triangle inequality shows that   $\rho_\lambda^x(\gamma_-,\gamma_+)\ge \rho_\lambda^x(\alpha_-,\alpha_+)-\fl^x(\gamma_1)-\fl^x(\gamma_2) \ge \kappa/2$.  
\end{proof}

The action of $G$ on the Floyd boundary is a convergence group action, so we could define the Myrberg limit set as in Definition \ref{MyrbergDefn}. From an alternate point of view, the Floyd boundary satisfies the assumptions (A)(B)(C) in Definition \ref{ConvBdryDefn} where the partition is maximal. That is, $[\cdot]$-classes are singletons and we could omit $[\cdot]$ in Lemma \ref{CharMyrberg-general}. Recall the notion of family paths from the discussion preceding Lemma~\ref{LargeTreePtsVersion}

\begin{prop}\label{qrtreeinFloyd}
There exist a quasi-radial tree $T$ rooted at $1$  and a constant $\kappa>0$ with the following properties 
\begin{enumerate}
    \item   $\e T=\e G$;
    \item 
    each family path $v_n\in T$ $(n\ge 0)$ is a $c$-quasi-geodesic ray ending at a Myrberg point $\xi$ so that $\rho_\lambda^{v_n}(v_0,\xi)\ge \kappa$. 
    
\end{enumerate}     
\end{prop}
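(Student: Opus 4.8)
The plan is to run the construction of Theorem \ref{thm-qrtree-Myrberg-general} in the Cayley graph $\Gx$ with the Floyd metric in place of a visual metric, and to add the extra bookkeeping needed to control Floyd distances along family paths. The underlying combinatorial object is identical: we list all hyperbolic (equivalently, contracting, by \cite[Lemma 7.2]{YANG6}) elements of $G$ as a sequence $\mathcal B=\{b_1,b_2,\dots\}$ closed under taking powers, we fix a triple $F$ of pairwise independent hyperbolic elements furnished by the Extension Lemma (Lemma \ref{extend3}), and we build the set $\mathcal W$ of admissible words $W=\prod_{n=1}^m a_nf_n b_n h_n$ with $a_n$ ranging over a well-separated large annular subset $A_n\subseteq A(L_n,\Delta,1)$ chosen so that $|A_n|\ge \mathrm{e}^{L_n\omega_n}$ with $\omega_n\to\e G$, exactly as in \S\ref{sec-Myrberg} and in the proof of Theorem \ref{thm-qrtree-Myrberg-general}. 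The map $\Phi\colon\mathcal W\to\Gx$, $W\mapsto W$, is injective by the same argument as in Lemma \ref{QuasiRadialTree}: the two inputs are (i) that each admissible word labels an $(L,\tau)$-admissible path, hence a $c$-quasi-geodesic whose geodesic passes within $R_0$ of each prefix vertex (Proposition \ref{admisProp}), and (ii) that $A_n$ consists of $2(\Delta+R)$-separated elements. This gives a quasi-radial tree $T=\Phi(\mathcal W)$ with $\e T=\liminf\omega_n=\e G$, establishing item (1).

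\textbf{The Floyd-distance estimate.} For item (2), first recall that since the Floyd boundary is a convergence boundary with maximal partition (Examples \ref{ConvbdryExamples}(3)), the endpoint of each family path is a Myrberg limit point by the same argument proving the Claim inside Theorem \ref{thm-qrtree-Myrberg-general}, using Lemma \ref{CharMyrberg-general}: writing $g_m$ for the element spelled by the prefix ending just before $b_m$, we have $\pi_{g_m\ax(b_m)}(\gamma)\ge d(1,b_m)-2R_0\to\infty$. It remains to produce the uniform lower bound $\rho_\lambda^{v_n}(v_0,\xi)\ge\kappa$. The key point is that the local geodesic segment $f_n$ (sitting between $a_n$ and $b_n$ in $W$, of length $d(1,f_no)\ge L$) has definite Floyd length seen from its own midpoint: since $f_n$ is a power of a fixed element of $F$, the quantity $\rho_\lambda^{x}(\,(f_n)_-,(f_n)_+)$ is bounded below by some $\kappa_0>0$ depending only on $F,\lambda$ — indeed choosing the powers defining $F$ large once and for all makes each such segment a long subsegment of a fixed axis, and the Floyd length of a long segment of a quasi-geodesic viewed from a point near its middle is bounded below (Lemma \ref{karlssonlem} gives the converse implication, and the direct bound follows because the segment is far from the basepoint-at-infinity in the word metric, cf. the computation in Lemma \ref{lemma3.16PY}). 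Now apply Lemma \ref{LargeFloydDist} with $\alpha$ the segment $f_n$, $\gamma$ the $c$-quasi-geodesic family path through $v_n$: provided $L$ is chosen large enough in terms of $c$ and $\kappa_0$, the midpoint $x$ of $f_n$ satisfies $\rho_\lambda^x(v_0,\xi)\ge\kappa_0/2$. Finally, since $v_n$ and $x$ are at bounded word distance (both lie within a uniformly bounded neighbourhood of the same transition region between $a_n$ and $b_n$), the bi-Lipschitz comparison (\ref{lambdabilip}) converts this into $\rho_\lambda^{v_n}(v_0,\xi)\ge\kappa$ for $\kappa:=\lambda^{D_0}\kappa_0/2$, where $D_0$ bounds $d(v_n,x)$. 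Setting $\kappa$ so completes item (2).

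\textbf{Assembling.} The proof then consists of: (a) reciting the construction of $A_n$, $F$, $\mathcal W$ with parameters $(L_n,\Delta,K_n=1,B_n)$ satisfying (\ref{ChoiceKnEq}) and (\ref{ChoiceKnEq2}), citing \S\ref{sec-Myrberg} and Theorem \ref{thm-qrtree-Myrberg-general} verbatim for injectivity of $\Phi$, the growth rate, and the Myrberg property; and (b) inserting the Floyd-distance lemma above to upgrade each family path to a ray with the claimed uniform Floyd separation from its endpoint. I expect the only genuinely new point — and hence the main obstacle — to be step (b): one must be slightly careful that the constant $\kappa_0$ (Floyd length of the middle segment seen from its midpoint) is genuinely uniform over all $n$, which forces the choice of the defining powers of the elements of $F$ to be made \emph{before} the sequence $L_n$ is fixed, and one must check that enlarging those powers does not disturb the Extension Lemma constants $L,\tau$ (it does not, since those depend only on the contracting constant $C$, which is stable under taking powers by Lemma \ref{lem-contracting-property}(2)). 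Everything else is a direct transcription of the hyperbolic-space argument with $\rho_\epsilon$ replaced by $\rho_\lambda$, using Lemma \ref{lemma3.16PY} wherever Lemma \ref{ShadowApproxBalls} was used and Lemma \ref{karlssonlem}/\ref{LargeFloydDist} wherever hyperbolicity was used.
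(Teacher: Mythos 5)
Your proposal is correct and follows essentially the same route as the paper: cite Theorem \ref{thm-qrtree-Myrberg-general} for the tree, its growth rate, and the Myrberg property of the endpoints, then get the uniform Floyd bound from the finiteness of $F$ (uniform lower bound $\kappa_0$ on $\rho_\lambda^x(\alpha_-,\alpha_+)$ for the $F$-labeled segment with midpoint $x$), Lemma \ref{LargeFloydDist}, and the bi-Lipschitz inequality (\ref{lambdabilip}) to move the viewpoint from $x$ to the vertex. The only slip is the choice of segment: since the family-path vertices are the endpoints $W_m o$ of full blocks $a_mf_mb_mh_m$, you should take $\alpha$ to be the adjacent segment labeled by $h_m$ (whose endpoint \emph{is} $v_m$, so $d(v_m,x)\le \max_{f\in F}d(1,fo)$), not the $f_m$-segment, which sits a distance $B_m\to\infty$ away from $v_m$ and would break the final bounded-distance transfer.
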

\begin{proof}
The construction of the quasi-radial tree has been described in Theorem \ref{thm-qrtree-Myrberg-general}. In particular,    $\e T=\e G$ and each family path $v_n$ $(n\ge 0)$ is a $c$-quasi-geodesic ray $\gamma$ ending at a Myrberg point $\xi$. We now prove $\rho_{v_n}(v_0,\xi)\ge \kappa$  by 
using Lemma \ref{LargeFloydDist}. Indeed,   by construction  $v_n$ is an end point of a contracting segment $\alpha$ labeled by a loxodromic element $f\in F$. The set $F$ is finite, so $\rho_x(\alpha^-,\alpha^+) $ has a uniform lower bound  $\kappa>0$. This implies that $\rho_x(1,\xi)\ge \kappa/2$ by  Lemma \ref{LargeFloydDist}.  Up to rescaling $\kappa$ again depending on $L=\max\{d(1,f):f\in F\}$, we can move $x$ to the vertex $v$ by the bi-Lipschitz inequality (\ref{lambdabilip}). The proof is then complete. 
\end{proof}
\begin{thm}\label{MyrbergHdimFloyd}
Assume that $\pGf$ is nontrivial for $1>\lambda>0$. Then the Hausdorff dimension of the Myrberg limit set in the Floyd boundary $\pGf$ is equal to $\omega_G/-\log \lambda$.    
\end{thm}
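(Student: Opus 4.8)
The plan is to combine the quasi-radial tree constructed in Proposition~\ref{qrtreeinFloyd} with the comparison estimates between Floyd balls and shadows from Lemma~\ref{lemma3.16PY}, and then run a mass-distribution argument entirely parallel to the proof of Lemma~\ref{HDLargeTree}. The upper bound is free: the Myrberg limit set is contained in $\pGf$, and in \cite{PYANG} the Hausdorff dimension of $\pGf$ (in the Floyd metric) is shown to be $\omega_G/(-\log\lambda)$, so $\HD(\mG)\le \omega_G/(-\log\lambda)$. Thus the whole content is the lower bound $\HD(\mG)\ge \omega_G/(-\log\lambda)$.

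\textbf{Step 1: set up the tree and the measure.}
Let $T$ be the quasi-radial tree rooted at $1$ furnished by Proposition~\ref{qrtreeinFloyd}, with $\omega_T=\omega_G$, and let $\kappa>0$ be the associated constant so that along every family path $v_0,v_1,\dots$ ending at a Myrberg point $\xi$ one has $\rho_\lambda^{v_n}(v_0,\xi)\ge\kappa$. Write $T=\cup_{l\ge0}V_l$ with $V_l$ the $l$-th generation, and for $v\in V_l$ let $\Pi_1(v,R)$ denote the Floyd shadow. By Lemma~\ref{lemma3.16PY}(1), $\Pi_1(v,R)\subset B_{\rho_\lambda}(\xi_v, C_1\lambda^{d(1,v)})$ for any $\xi_v$ in the shadow; and crucially, since $\rho_\lambda^{v_n}(v_0,\xi)\ge\kappa$ along every family path, Lemma~\ref{lemma3.16PY}(2) applies at the branch vertices $v_n$, giving $B_{\rho_\lambda}(\xi, C_2\lambda^{d(1,v_n)})\subset\Pi_1(v_n,R(\kappa))$. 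This two-sided comparison is exactly what replaces the visual-metric estimates of Lemma~\ref{VisualMetric} and Lemma~\ref{ShadowApproxBalls} used in Section~\ref{sec-hausd}. I then define a probability measure $\nu$ on $\partial T\subset\pGf$ by the same recursive formula as in \eqref{nuShadowEq}, namely
$$
\nu(\Pi_1(v,R))=\frac{\lambda^{s d(1,v)}}{\sum_{w\in[\overset\leftrightarrow v]}\lambda^{s d(1,w)}}\,\nu(\Pi_1(\hat v,R)),
$$
for a fixed $s<\omega_G/(-\log\lambda)$, writing $\lambda^{t}=\mathrm{e}^{-(-\log\lambda)t}$ so that $-\log\lambda$ plays the role of $\epsilon$.

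\textbf{Step 2: the two estimates and conclusion.}
First, exactly as in Step~1 of the proof of Lemma~\ref{HDLargeTree}, the choice of repetitions $K_n$ satisfying \eqref{ChoiceKnEq} and \eqref{ChoiceKnEq2} together with the cardinality bound $|A_n|\ge\mathrm{e}^{\omega_nL_n}$ yields $\nu(\Pi_1(v,R))\le\lambda^{s d(1,v)}$ for all $v\in T$; this step is purely combinatorial (summing geometric-type series along family paths) and transfers verbatim once $\epsilon$ is replaced by $-\log\lambda$. Second, for $\xi\in\partial T$ and small $t>0$, one takes the lowest-generation branch vertex $v_l$ whose shadow contains $B_{\rho_\lambda}(\xi,t)$; as in Step~2 of Lemma~\ref{HDLargeTree}, the separation of children shadows forces $C\lambda^{\widetilde L_m}<2t$ (using Lemma~\ref{QuasiRadialTree}(2) adapted to the Floyd metric, which is legitimate precisely because of the remark after Lemma~\ref{QuasiRadialTree} pointing to Lemma~\ref{lemma3.16PY}), and the condition \eqref{ChoiceKnEq2} forces $d(1,v_l)/(d(1,v_l)+L_m+\Delta_m)\to1$, so that $\nu(B_{\rho_\lambda}(\xi,t))\le\nu(\Pi_1(v_l,R))\le\lambda^{s d(1,v_l)}\le (2/C)^{s_0}t^{s_0}$ for any $s_0<s$ and all small $t$. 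By the mass distribution principle, $\HD(\partial T)\ge s_0$, hence $\HD(\partial T)\ge\omega_G/(-\log\lambda)$. Since every point of $\partial T$ is a Myrberg limit point by Proposition~\ref{qrtreeinFloyd}(2), we get $\HD(\mG)\ge\omega_G/(-\log\lambda)$, and combined with the upper bound the theorem follows.

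\textbf{Main obstacle.}
The delicate point is verifying that the shadow-separation estimate of Lemma~\ref{QuasiRadialTree}(2) — originally proved with the hyperbolic visual metric and its clean comparison $\rho_\epsilon(\xi,\eta)\asymp\mathrm{e}^{-\epsilon d(o,[\xi,\eta])}$ — goes through with the Floyd metric, where no such exact visibility identity holds in general. The resolution is that one only needs the comparison \emph{at the branch vertices} $v_n$ of the tree, and at these vertices Proposition~\ref{qrtreeinFloyd}(2) guarantees a uniform lower bound $\rho_\lambda^{v_n}(v_0,\xi)\ge\kappa$; this is exactly the hypothesis under which Lemma~\ref{lemma3.16PY}(2) provides the needed lower comparison, and Lemma~\ref{LargeFloydDist} ensures this largeness is stable under the admissible-path concatenations in the construction. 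So the argument is really a matter of checking that every invocation of a visual-metric fact in Section~\ref{sec-hausd} can be rerouted through these two Floyd-metric lemmas; I expect this bookkeeping, rather than any new idea, to be the bulk of the work.
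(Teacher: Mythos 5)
Your proposal is correct and follows essentially the same route as the paper: the upper bound is quoted from \cite{PYANG} (the Bourdon bound), and the lower bound runs the mass-distribution argument of Lemma~\ref{HDLargeTree} on the quasi-radial tree of Proposition~\ref{qrtreeinFloyd}, with the visual-metric shadow estimates replaced by Lemma~\ref{lemma3.16PY}, applicable precisely because the family-path vertices have uniformly large Floyd distance. Your identification of the shadow-separation step as the only point needing rerouting through Lemma~\ref{lemma3.16PY} (and Lemma~\ref{LargeFloydDist}) is exactly the modification the paper indicates; the only cosmetic slip is invoking the repetition conditions (\ref{ChoiceKnEq}), (\ref{ChoiceKnEq2}) with general $K_n$, whereas the Myrberg tree uses $K_n=1$ with $L_n$ chosen so that these conditions hold.
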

\begin{proof}
The upper bound is due to Marc Bourdon and a proof is given in \cite[Lemma 4.1]{PYANG}. We only need to prove the lower bound.

Let $T$ be the quasi-radial tree given by Proposition \ref{qrtreeinFloyd}, whose accumulation points are Myrberg points. The argument for the Hausdorff dimension is along the same lines as Lemma \ref{HDLargeTree}. We indicate the modifications.
 Lemma \ref{HDLargeTree} was stated for the visual metric on the Gromov boundary.
 However, we only used the visual metric there  to establish bounds for  shadows of vertices in the quasi-radial tree $T$.   By Lemma \ref{ShadowApproxBalls} shadows in the hyperbolic situation are roughly the same as balls with appropriate radius. In the Floyd metric, we have  the same estimates as in Lemma \ref{ShadowApproxBalls} for the vertices with large Floyd distance by Lemma \ref{lemma3.16PY}. Note that the vertices on each family path have large Floyd distance by Proposition \ref{qrtreeinFloyd}. Thus the lower bound on $\mG$ follows exactly as Lemma \ref{HDLargeTree}. 
\end{proof}
\subsection{Applications: mapping class groups} This subsection sketches an application of the construction in Theorem \ref{thm-qrtree-Myrberg-general} to  the mapping class group action on Teichm\"uller space.

Let $G=\mcg$ denote the orientation-preserving mapping class group of a closed surface $\Sigma_g$ with $g\ge 2$. The group $G$ acts properly on the Teichm\"uller space $\T_g$ equipped with the Teichm\"uller metric.  Pseudo-Anosov elements are strongly contracting  \cite{Minsky}.  Thurston showed that $\T_g$ can be naturally compactified by the space of projective measured foliations $\pmf$. In \cite{YANG22}, the second author 
studied a partition $[\cdot]$ of $\pmf$ due to Kaimanovich-Masur \cite{KaMasur}  from the point of view of topological dynamics. It was shown there that  Assumptions (A)(B)(C) in Definition \ref{ConvBdryDefn}
are satisfied. The partition $[\cdot]$ restricts to singletons on uniquely ergodic points. We can then use Definition \ref{defn-Myrberg-general} to study Myrberg points in $\pmf$. 

By Lemma \ref{CharMyrberg-general},   for any geodesic ray $\sigma$ ending at a Myrberg point, there exists $R>0$ satisfying the following. Let $\{\gamma_n\}$ be an enumeration of closed
geodesics in moduli space. Let $N_R(\gamma_{n})$ denote its $R-$neighborhood.
Then $\sigma$ spends 
arbitrarily long times in $N_R(\gamma_{n})$.
  Masur's criterion \cite{Ma80}   then shows that Myrberg points are necessarily uniquely ergodic points.
By the above discussion, the next result follows from Theorem \ref{thm-qrtree-Myrberg-general}.
\begin{thm}\label{MyrberginMCG}
Fix a basepoint $o\in \T_g$.
There exists a quasi-radial tree $T$ rooted at $o$ in $\mathcal T_g$ with vertices contained in $Go$ so that $\e T=6g-6$ and each radial ray issuing from $o\in T$ ends at a Myrberg point.   
\end{thm}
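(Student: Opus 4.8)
The plan is to apply Theorem~\ref{thm-qrtree-Myrberg-general} directly to the action of $G=\mcg$ on the Teichm\"uller space $\T_g$ equipped with the Teichm\"uller metric. There are three things to check: that this action falls within the framework of Theorem~\ref{thm-qrtree-Myrberg-general}, that the resulting growth exponent is $6g-6$, and that the accumulation points of the quasi-radial tree in $\pmf$ are genuinely Myrberg limit points (in the sense appropriate to the Kaimanovich--Masur partition, and hence uniquely ergodic).

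First I would verify the hypotheses. The action $G\act\T_g$ is proper, $\T_g$ is a proper geodesic metric space, and by Minsky's product-regions/contraction theorem \cite{Minsky} pseudo-Anosov elements are strongly contracting; since $G$ is non-elementary it contains infinitely many pairwise independent pseudo-Anosov elements, and any two with distinct fixed foliations give independent contracting axes. The Thurston compactification $\overline{\T_g}=\T_g\cup\pmf$ is metrizable and the $\mcg$-action extends by homeomorphisms. Equipping $\pmf$ with the Kaimanovich--Masur partition $[\cdot]$, the work cited as \cite{YANG22} shows this is a convergence compactification in the sense of Definition~\ref{ConvBdryDefn}: Assumptions (A), (B), (C) hold, and the partition restricts to singletons on uniquely ergodic points, which are exactly the non-pinched points here. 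A pseudo-Anosov $h$ has distinct stable/unstable foliations, so $[h^-]\ne[h^+]$, i.e.\ $h$ is a non-pinched contracting element. Thus all hypotheses of Theorem~\ref{thm-qrtree-Myrberg-general} are met, producing a quasi-radial tree $T$ rooted at $o$ with vertices in $Go$, whose growth rate equals $\e G$ and whose limit set in $\pmf$ consists of Myrberg limit points (Definition~\ref{defn-Myrberg-general}).

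Next I would identify $\e G=6g-6$. This is the standard fact that the critical exponent of $\mcg$ acting on $\T_g$ with the Teichm\"uller metric equals the topological entropy of the Teichm\"uller geodesic flow, which is $6g-6=\dim_\R\T_g$; it follows from Veech's work and the Athreya--Bufetov--Eskin--Mirzakhani count of lattice points in balls, i.e.\ $\sharp\{g\in G: d_{\T_g}(o,go)\le R\}\asymp e^{(6g-6)R}$. Feeding this into Theorem~\ref{thm-qrtree-Myrberg-general} gives $\e T=6g-6$. Finally, to conclude that each radial ray issuing from $o$ in $T$ ends at a Myrberg point: by construction each such family path is a $c$-quasi-geodesic ray, and by Lemma~\ref{CharMyrberg-general} its endpoint $[\xi]$ satisfies the projection criterion --- for every non-pinched contracting $h\in G$ (all powers of all pseudo-Anosov elements appear as bridges $\mathcal B$) there is a sequence $g_n\in G$ with $\pi_{g_n\ax(h)}(\gamma)\to\infty$. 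Via Masur's criterion \cite{Ma80}, a Teichm\"uller ray that spends arbitrarily long times near closed geodesics in moduli space is uniquely ergodic, so $[\xi]=\{\xi\}$ is a genuine Myrberg point of $\pmf$. Assembling these gives the statement.

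\emph{Main obstacle.} The one genuinely non-formal point is confirming that the partition-based notion of Myrberg limit point (Definition~\ref{defn-Myrberg-general}) coincides, on the accumulation set of $T$, with the classical one in $\pmf$, i.e.\ that these points are singletons under $[\cdot]$. This is where one must invoke Masur's criterion together with the recurrence of the constructed rays to closed geodesics in moduli space, to promote ``non-pinched Myrberg in the convergence-boundary sense'' to ``uniquely ergodic.'' The growth-rate identification $\e G=6g-6$ is also an input from the literature rather than something proved here, but it is well documented; the subtle interplay is really the dynamics--ergodic-theory translation in the last step.
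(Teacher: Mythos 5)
Your proposal is correct and follows essentially the same route as the paper: verify that the $\mcg$-action on $(\T_g,\pmf)$ with the Kaimanovich--Masur partition satisfies the hypotheses of Theorem~\ref{thm-qrtree-Myrberg-general}, feed in the known critical exponent $\e G=6g-6$, and use Lemma~\ref{CharMyrberg-general} together with Masur's criterion to see that the endpoints of the radial rays are uniquely ergodic, hence genuine (singleton-class) Myrberg points. The only cosmetic caveat is your aside that non-pinched points are ``exactly'' the uniquely ergodic ones --- the paper only asserts the inclusion of uniquely ergodic points among non-pinched ones, which is all the argument needs.
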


% For alignments use AmS-LaTeX constructions not \eqnarray.

% For acknowledgements use \ack or \acks immediately before the references

\bibliographystyle{alpha}
 \bibliography{bibliography}

\begin{thebibliography}{CDST25}

\bibitem[Ago04]{agol-tameness}
I.~Agol.
\newblock Tameness of hyperbolic 3-manifolds.
\newblock {\em preprint, arXiv:math.GT/0405568}, 2004.

\bibitem[AHM94]{AHM}
Beat Aebischer, Sungbok Hong, and Darryl McCullough.
\newblock Recurrent geodesics and controlled concentration points.
\newblock {\em Duke Math. J.}, 75(3):759--774, 1994.

\bibitem[AM99]{AdamsM}
Colin Adams and Frank Morgan.
\newblock Isoperimetric curves on hyperbolic surfaces.
\newblock {\em Proc. Amer. Math. Soc.}, 127(5):1347--1356, 1999.

\bibitem[BCM12]{minsky-elc2}
J.~F. Brock, R.~D. Canary, and Y.~N. Minsky.
\newblock The {C}lassification of {K}leinian surface groups {II}: {T}he
  {E}nding {L}amination {C}onjecture.
\newblock {\em Ann. of Math. 176 (1), arXiv:math/0412006}, pages 1--149, 2012.

\bibitem[Bea71]{Beardon}
A.~F. Beardon.
\newblock Inequalities for certain {F}uchsian groups.
\newblock {\em Acta Math.}, 127:221--258, 1971.

\bibitem[BF02]{BF1}
M.~Bestvina and K.~Fujiwara.
\newblock Bounded cohomology of subgroups of mapping class groups.
\newblock {\em Geom. Topol.}, pages 69--89, 2002.

\bibitem[BH99]{bridson-haefliger}
Martin~R. Bridson and Andr\'e Haefliger.
\newblock {\em Metric spaces of non-positive curvature}, volume 319 of {\em
  Grundlehren der mathematischen Wissenschaften [Fundamental Principles of
  Mathematical Sciences]}.
\newblock Springer-Verlag, Berlin, 1999.

\bibitem[BJ97a]{BJ97}
Christopher~J. Bishop and Peter~W. Jones.
\newblock Hausdorff dimension and {K}leinian groups.
\newblock {\em Acta Math.}, 179(1):1--39, 1997.

\bibitem[BJ97b]{BJ97bddgeo}
Christopher~J. Bishop and Peter~W. Jones.
\newblock The law of the iterated logarithm for {K}leinian groups.
\newblock In {\em Lipa's legacy ({N}ew {Y}ork, 1995)}, volume 211 of {\em
  Contemp. Math.}, pages 17--50. Amer. Math. Soc., Providence, RI, 1997.

\bibitem[BM74]{BeaMas}
A.~Beardon and B.~Maskit.
\newblock Limit sets of kleinain groups and finite sided fundamental polyhedra.
\newblock {\em Acta. Math.}, 132:1--12, 1974.

\bibitem[Bon86]{bonahon-bouts}
F.~Bonahon.
\newblock Bouts de varietes hyperboliques de dimension 3.
\newblock {\em Ann. Math. vol.124}, pages 71--158, 1986.

\bibitem[Can93]{canary}
R.~D. Canary.
\newblock Ends of hyperbolic 3 manifolds.
\newblock {\em J. Amer. Math. Soc.}, pages 1--35, 1993.

\bibitem[CDST25]{CDST}
R\'emi Coulon, Rhiannon Dougall, Barbara Schapira, and Samuel Tapie.
\newblock Twisted {P}atterson-{S}ullivan measures and applications to
  amenability and coverings.
\newblock {\em Mem. Amer. Math. Soc.}, 305(1539):v+93, 2025.

\bibitem[CG06]{gab-cal}
D.~Calegari and D.~Gabai.
\newblock {Shrink-wrapping and the Taming of Hyperbolic 3-manifolds}.
\newblock {\em J. Amer. Math. Soc. 19, no. 2}, pages 385--446, 2006.

\bibitem[Coo93]{coornaert}
Michel Coornaert.
\newblock Mesures de {P}atterson-{S}ullivan sur le bord d'un espace
  hyperbolique au sens de {G}romov.
\newblock {\em Pacific J. Math.}, 159(2):241--270, 1993.

\bibitem[Cor90]{Corlette}
K.~Corlette.
\newblock Hausdorff dimensions of limit sets {I}.
\newblock {\em Invent. math.}, 102:521--542, 1990.

\bibitem[FLM18]{FLM}
T.~Fern\'{o}s, J.~L\'{e}cureux, and F.~Math\'{e}us.
\newblock Random walks and boundaries of {$\rm CAT(0)$} cubical complexes.
\newblock {\em Comment. Math. Helv.}, 93(2):291--333, 2018.

\bibitem[Flo80]{Floyd}
W.~Floyd.
\newblock Group completions and limit sets of {K}leinian groups.
\newblock {\em Inventiones Math.}, 57:205--218, 1980.

\bibitem[FM01]{FM01}
Jos\'{e}~L. Fern\'{a}ndez and Mar\'{\i}a~V. Meli\'{a}n.
\newblock Escaping geodesics of {R}iemannian surfaces.
\newblock {\em Acta Math.}, 187(2):213--236, 2001.

\bibitem[FM20]{FM20}
Kurt Falk and Katsuhiko Matsuzaki.
\newblock On horospheric limit sets of {K}leinian groups.
\newblock {\em J. Fractal Geom.}, 7(4):329--350, 2020.

\bibitem[FSU18]{FSU}
Lior Fishman, David Simmons, and Mariusz Urba\'nski.
\newblock Diophantine approximation and the geometry of limit sets in {G}romov
  hyperbolic metric spaces.
\newblock {\em Mem. Amer. Math. Soc.}, 254(1215):v+137, 2018.

\bibitem[G\"08]{gonye}
Zsuzsanna G\"{o}nye.
\newblock Dimension of escaping geodesics.
\newblock {\em Trans. Amer. Math. Soc.}, 360(10):5589--5602, 2008.

\bibitem[GdlH90]{GhH}
E.~Ghys and P.~de~la Harpe(eds.).
\newblock Sur les groupes hyperboliques d'apres {M}ikhael {G}romov.
\newblock {\em Progress in Math. vol 83, Birkhauser, Boston Ma.}, 1990.

\bibitem[GdlH97]{GriH}
R.~Grigorchuk and P.~de~la Harpe.
\newblock On problems related to growth, entropy and spectrum in group theory.
\newblock {\em J. Dyn. Control Syst.}, 3(1):51 -- 89, 1997.

\bibitem[Ger12]{Ge2}
Victor Gerasimov.
\newblock Floyd maps for relatively hyperbolic groups.
\newblock {\em Geom. Funct. Anal.}, 22(5):1361--1399, 2012.

\bibitem[GP13]{GePo2}
V.~Gerasimov and L.~Potyagailo.
\newblock Quasi-isometries and {F}loyd boundaries of relatively hyperbolic
  groups.
\newblock {\em J. Eur. Math. Soc.}, 15:2115 -- 2137, 2013.

\bibitem[HP97]{HP97}
Sa'ar Hersonsky and Fr\'ed\'eric Paulin.
\newblock On the rigidity of discrete isometry groups of negatively curved
  spaces.
\newblock {\em Comment. Math. Helv.}, 72(3):349--388, 1997.

\bibitem[HYZ23]{HYZ}
S.Z. Han, W.Y. Yang, and Y.Q. Zou.
\newblock Counting double cosets with application to generic 3-manifolds, 2023.

\bibitem[Kan85]{Kanai}
Masahiko Kanai.
\newblock Rough isometries, and combinatorial approximations of geometries of
  noncompact {R}iemannian manifolds.
\newblock {\em J. Math. Soc. Japan}, 37(3):391--413, 1985.

\bibitem[Kar03]{Ka}
A.~Karlsson.
\newblock Free subgroups of groups with non-trivial {F}loyd boundary.
\newblock {\em Comm. Algebra.}, 31:5361--5376, 2003.

\bibitem[KL19]{KL19}
Michael Kapovich and Beibei Liu.
\newblock Geometric finiteness in negatively pinched {H}adamard manifolds.
\newblock {\em Ann. Acad. Sci. Fenn. Math.}, 44(2):841--875, 2019.

\bibitem[KL20]{KL20}
Michael Kapovich and Beibei Liu.
\newblock Hausdorff dimension of non-conical limit sets.
\newblock {\em Trans. Amer. Math. Soc.}, 373(10):7207--7224, 2020.

\bibitem[KM96]{KaMasur}
V.~Kaimanovich and H.~Masur.
\newblock The poisson boundary of the mapping class group.
\newblock {\em Invent. math.}, 125:221--264, 1996.

\bibitem[KN04]{KN04}
A.~Karlsson and G.~Noskov.
\newblock Some groups having only elementary actions on metric spaces with
  hyperbolic boundaries.
\newblock {\em Geom. Dedicata}, 104:119--137, 2004.

\bibitem[Lev20]{Lev20}
Ivan Levcovitz.
\newblock Thick groups have trivial {F}loyd boundary.
\newblock {\em Proc. Amer. Math. Soc.}, 148(2):513--521, 2020.

\bibitem[LS84]{ls}
Terry Lyons and Dennis Sullivan.
\newblock Function theory, random paths and covering spaces.
\newblock {\em J. Differential Geom.}, 19(2):299--323, 1984.

\bibitem[LS14]{LS14}
L.X. Liu and W.X. Su.
\newblock The horofunction compactification of the {T}eichm\"{u}ller metric.
\newblock In {\em Handbook of {T}eichm\"{u}ller theory. {V}ol. {IV}}, volume~19
  of {\em IRMA Lect. Math. Theor. Phys.}, pages 355--374. Eur. Math. Soc.,
  Z\"{u}rich, 2014.

\bibitem[Mas80]{Ma80}
H.~Masur.
\newblock Uniquely ergodic quadratic differentials.
\newblock {\em Comment. Math. Helv.}, 55(2):255--266, 1980.

\bibitem[Min92]{minsky-top}
Y.~N. Minsky.
\newblock Teichmuller {G}eodesics and {E}nds of 3-{M}anifolds.
\newblock {\em Topology}, pages 1--25, 1992.

\bibitem[Min94]{minsky-jams}
Y.~N. Minsky.
\newblock On {R}igidity, {L}imit {S}ets, and {E}nd {I}nvariants of {H}yperbolic
  3-{M}anifolds.
\newblock {\em J. Amer.Math.Soc., vol.7}, pages 539--588, 1994.

\bibitem[Min96]{Minsky}
Y.~Minsky.
\newblock Quasi-projections in {T}eichm\"{u}ller space.
\newblock {\em J. Reine Angew. Math.}, 473:121--136, 1996.

\bibitem[Min99]{minsky-torus}
Y.~N. Minsky.
\newblock The {C}lassification of {P}unctured {T}orus {G}roups.
\newblock {\em Ann. of Math.149}, pages 559--626, 1999.

\bibitem[Min01]{minsky-bddgeom}
Y.~N. Minsky.
\newblock Bounded geometry in {K}leinian groups.
\newblock {\em Invent. Math. 146}, pages 143--192, 2001.

\bibitem[Min10]{minsky-elc1}
Y.~N. Minsky.
\newblock {The Classification of Kleinian surface groups I: Models and Bounds}.
\newblock {\em Ann. of Math. 171(1), math.GT/0302208}, pages 1--107, 2010.

\bibitem[Mit98]{mitra-trees}
M.~Mitra.
\newblock Cannon-{T}hurston {M}aps for {T}rees of {H}yperbolic {M}etric
  {S}paces.
\newblock {\em Jour. Diff. Geom.48}, pages 135--164, 1998.

\bibitem[Mj10]{mahan-bddgeo}
M.~Mj.
\newblock {Cannon-Thurston Maps and Bounded Geometry}.
\newblock {\em Teichmuller theory and moduli problem, Ramanujan Math. Soc.
  Lect. Notes Ser., 10, Ramanujan Math. Soc., Mysore, arXiv:math.GT/0603729},
  pages 489--511, 2010.

\bibitem[Mj11]{mahan-ibdd}
M.~Mj.
\newblock {Cannon-Thurston Maps, i-bounded Geometry and a Theorem of McMullen}.
\newblock {\em Actes du s\'eminaire Th\'eorie spectrale et g\'eom\'etrie,
  Grenoble, vol 28, 2009-10, arXiv:math.GT/0511104}, pages 63--108, 2011.

\bibitem[Mj14a]{mahan-split}
M.~Mj.
\newblock {Cannon-Thurston Maps for Surface Groups}.
\newblock {\em Ann. of Math., 179(1)}, pages 1--80, 2014.

\bibitem[Mj14b]{mahan-elct}
M.~Mj.
\newblock {Ending Laminations and Cannon-Thurston Maps, with an appendix by S.
  Das and M. Mj}.
\newblock {\em Geom. Funct. Anal. 24}, pages 297--321, 2014.

\bibitem[Mj16]{mahan-amalgeo}
M.~Mj.
\newblock {Cannon-Thurston Maps for Surface Groups: An Exposition of
  Amalgamation Geometry and Split Geometry}.
\newblock {\em Geometry, Topology, and Dynamics in Negative Curvature, London
  Mathematical Society Lecture Note Series volume 425, arXiv:math.GT/0512539},
  pages 221--271, 2016.

\bibitem[Mj17]{mahan-kl}
Mahan Mj.
\newblock Cannon-{T}hurston maps for {K}leinian groups.
\newblock {\em Forum Math. Pi}, 5:e1, 49, 2017.

\bibitem[Mj24]{mms}
Mahan Mj.
\newblock Cubulating surface-by-free groups.
\newblock {\em J. Topol.}, 17(4):Paper No. e70011, 65 pp.;, 2024.
\newblock With an appendix by Jason Manning, Mj, and Michah Sageev.

\bibitem[MM99]{masur-minsky}
H.~A. Masur and Y.~N. Minsky.
\newblock Geometry of the complex of curves {I}: {H}yperbolicity.
\newblock {\em Invent. Math.138}, pages 103--139, 1999.

\bibitem[MM00]{masur-minsky2}
H.~A. Masur and Y.~N. Minsky.
\newblock Geometry of the complex of curves {I}: {H}ierarchical structure.
\newblock {\em Geom. Funct. Anal. 10}, pages 902--974, 2000.

\bibitem[Moh88]{Moh88}
Bojan Mohar.
\newblock Isoperimetric inequalities, growth, and the spectrum of graphs.
\newblock {\em Linear Algebra Appl.}, 103:119--131, 1988.

\bibitem[MRT19]{MRT19}
Mar\'{\i}a~V. Meli\'{a}n, Jos\'{e}~M. Rodr\'{\i}guez, and Eva Tour\'{\i}s.
\newblock Escaping geodesics in {R}iemannian surfaces with variable negative
  curvature.
\newblock {\em Adv. Math.}, 345:928--971, 2019.

\bibitem[MW89]{MW89}
Bojan Mohar and Wolfgang Woess.
\newblock A survey on spectra of infinite graphs.
\newblock {\em Bull. London Math. Soc.}, 21(3):209--234, 1989.

\bibitem[Myr31]{Myr31}
P.~J. Myrberg.
\newblock Ein {A}pproximationssatz f\"{u}r die {F}uchsschen {G}ruppen.
\newblock {\em Acta Math.}, 57(1):389--409, 1931.

\bibitem[Pat76]{Patt}
S.~J. Patterson.
\newblock The limit set of a {F}uchsian group.
\newblock {\em Acta Math.}, 136(3-4):241--273, 1976.

\bibitem[Pau97]{Paulin2}
F.~Paulin.
\newblock On the critical exponent of a discrete group of hyperbolic
  isometries.
\newblock {\em Differential Geom. Appl.}, pages 231--236, 1997.

\bibitem[PP16]{PP16}
Jouni Parkkonen and Fr\'{e}d\'{e}ric Paulin.
\newblock Counting arcs in negative curvature.
\newblock In {\em Geometry, topology, and dynamics in negative curvature},
  volume 425 of {\em London Math. Soc. Lecture Note Ser.}, pages 289--344.
  Cambridge Univ. Press, Cambridge, 2016.

\bibitem[PY19]{PYANG}
Leonid Potyagailo and Wen-yuan Yang.
\newblock Hausdorff dimension of boundaries of relatively hyperbolic groups.
\newblock {\em Geom. Topol.}, 23(4):1779--1840, 2019.

\bibitem[Sul79]{Sul}
D.~Sullivan.
\newblock The density at infinity of a discrete group of hyperbolic motions.
\newblock {\em Publ. Math. IHES}, pages 171--202, 1979.

\bibitem[Thu80]{thurstonnotes}
W.~P. Thurston.
\newblock The {G}eometry and {T}opology of 3-{M}anifolds.
\newblock {\em Princeton University Notes}, 1980.

\bibitem[Wal19]{Walsh19}
C.~Walsh.
\newblock The asymptotic geometry of the {T}eichm\"{u}ller metric.
\newblock {\em Geom. Dedicata}, 200:115--152, 2019.

\bibitem[Yan14]{YANG6}
Wenyuan Yang.
\newblock Growth tightness for groups with contracting elements.
\newblock {\em Math. Proc. Cambridge Philos. Soc}, 157:297 -- 319, 2014.

\bibitem[Yan19]{YANG10}
Wen-yuan Yang.
\newblock Statistically convex-cocompact actions of groups with contracting
  elements.
\newblock {\em Int. Math. Res. Not. IMRN}, (23):7259--7323, 2019.

\bibitem[Yan23]{YANG22}
Wenyuan Yang.
\newblock Conformal dynamics at infinity for groups with contracting elements.
\newblock arXiv: 2208.04861, 2023.

\end{thebibliography}

\end{document}